\let\@@pmod\pmod
\DeclareRobustCommand{\pmod}{\@ifstar\@pmods\@@pmod}
\def\@pmods#1{\mkern4mu({\operator@font mod}\mkern 6mu#1)}
\DeclarePairedDelimiter\floor{\lfloor}{\rfloor}
\newcommand{\tpmod}[1]{\mkern 8mu({\operator@font mod}\mkern 6mu#1)}
\author{Leon Fairbanks}
\begin{document}
\begin{flushleft}
\title{{\textbf{Notes On An Approach To Apery's Constant}}}
\maketitle
\begin{abstract}
The Basel problem, solved by Leonhard Euler in 1734, asks to resolve $\zeta(2)$, the sum of the reciprocals of the squares of the natural numbers, i.e. the sum of the infinite series:
\begin{equation}
 \sum _{i=1}^{\infty}\frac{1}{n^2}=\frac{1}{1^2}+\frac{1}{2^2}+\frac{1}{3^2}+\ldots\notag
\end{equation}
 The same question is posed regarding the summation of the reciprocals of the cubes of the natural numbers, $\zeta(3)$. The resulting constant is known as Apery's constant.\\
A YouTube channel, 3BlueBrown, produced a video entitled, ``Why is pi here? And why is it squared? A geometric answer to the Basel problem". The video presents the work of John W{\"a}stlund. The equations can be extended to $\zeta(n)$, but the geometric argument is lost. We try to explore these equations for $\zeta(n)$.
\end{abstract}
\tableofcontents

\newtheorem{theorem}{Theorem}[section]
\newtheorem{lemma}[theorem]{Lemma}
\newtheorem{proposition}[theorem]{Proposition}
\newtheorem{corollary}[theorem]{Corollary}
\newtheorem{definition}[theorem]{Definition}

\section{Introduction}
The video focuses on summation over odd integer values for $\zeta(2)$.
\begin{lemma}
Assume $m\in\mathbb{Z}$, $m>0$ then
$$\sum_{i=1}^{\infty}\frac{1}{i^m}= \left(\frac{2^m}{2^{m}-1}\right)( \sum_{i=1}^{\infty} \frac{1}{(2i-1)^m})$$\notag
\end{lemma}
\begin{proof}
\begin{align}\notag
 \sum_{i=1}^{\infty} \frac{1}{i^m}&=( \sum_{i=1}^{\infty} \frac{1}{(2i-1)^m})( \sum_{i=0}^{\infty} \frac{1}{(2^i)^m})\\\notag
 &=( \sum_{i=1}^{\infty} \frac{1}{(2i-1)^m})(\frac{1}{1-\frac{1}{2^m}})\\\notag
 &=\frac{2^m}{2^m-1}( \sum_{i=1}^{\infty} \frac{1}{(2i-1)^m})\\\notag
 \end{align}
\end{proof}
\begin{lemma}
$$x-\frac{x^3}{6}<\sin(x)<x-\frac{2x^3}{3\pi^2}\ \ \rm{;}\ x\in(0,\pi/2)$$\notag
\end{lemma}
\begin{proof}
Reference R. Kl\'en et al [17]
\end{proof}
\begin{lemma}
Assume $m,n\ge 1$ then
$$\sum_{i=1}^{2^n-1}\frac{1}{i^m}<=n$$
\end{lemma}
\begin{proof}
Break the sum up into $n$ sums, the first containing the first term, 1, the second containing the next $2^1$ terms, $\frac{1}{2^m}$,$\frac{1}{3^m}$, the third containing the next $2^2$ terms,  $\frac{1}{4^m}$,$\frac{1}{5^m}$, $\frac{1}{6^m}$,$\frac{1}{7^m}$, up to the n-th sum containing the last $2^{n-1}$ terms. Each term in the j-th sum is bounded by $\frac{1}{2^{j-1}}$. Hence
$$\sum_{i=1}^{2^n-1}\frac{1}{i^m}<=\sum_{i=1}^{n}2^{j-1}\frac{1}{2^{j-1}}=n$$\notag
\end{proof}
\begin{proposition}
Sum over odd values:
$$\zeta(2)=\lim_{n\to\infty}\left(\frac{4\pi^2}{3}\right) \sum _{i=1}^{2^{n-2}}\frac{1}{ \left(2^n\sin\left(\frac{\left(2 i-1\right)\pi}{2^n}\right)\right)^2} $$\notag\\
\end{proposition}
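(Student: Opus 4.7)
The plan is to show that for every $n \ge 2$ the quantity inside the limit is \emph{identically} equal to $\pi^{2}/6 = \zeta(2)$, so the limit is cosmetic and the work reduces to an exact evaluation of a finite trigonometric sum. The central tool is the classical identity
$$\sum_{k=1}^{N-1}\csc^{2}\!\left(\frac{k\pi}{N}\right) = \frac{N^{2}-1}{3},$$
which follows, for example, by twice differentiating the logarithm of the factorization $\sin(Nz) = 2^{N-1}\prod_{k=0}^{N-1}\sin(z+k\pi/N)$ and letting $z\to 0$, or by recognizing the numbers $2\cos(k\pi/N)$ as roots of a Chebyshev polynomial and invoking Newton's identities.

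Next I would separate that sum into odd and even indices. For even $k=2j$ one has $\csc^{2}(2j\pi/N) = \csc^{2}(j\pi/(N/2))$, so the even-indexed piece is an exact copy of the same sum with $N$ replaced by $N/2$. When $N$ is even this yields the telescoping relation
$$\sum_{\substack{k=1\\k\text{ odd}}}^{N-1}\csc^{2}\!\left(\frac{k\pi}{N}\right) = \frac{N^{2}-1}{3} - \frac{(N/2)^{2}-1}{3} = \frac{N^{2}}{4}.$$

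Third, I would exploit the symmetry $\csc^{2}(\pi-x)=\csc^{2}(x)$, which pairs the odd index $2i-1$ with the odd index $2^{n}-(2i-1)$. This folds the $2^{n-1}$ odd indices strictly between $0$ and $2^{n}$ into $2^{n-2}$ equal pairs, so with $N=2^{n}$
$$\sum_{i=1}^{2^{n-2}}\csc^{2}\!\left(\frac{(2i-1)\pi}{2^{n}}\right) = \frac{1}{2}\cdot\frac{(2^{n})^{2}}{4} = 2^{2n-3}.$$
Substituting into the right-hand side of the proposition gives $\frac{4\pi^{2}}{3}\cdot 2^{-2n}\cdot 2^{2n-3} = \pi^{2}/6$, independent of $n$, which is exactly $\zeta(2)$.

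The only non-mechanical step is the base identity $\sum_{k=1}^{N-1}\csc^{2}(k\pi/N) = (N^{2}-1)/3$: every short proof of it requires either the complex-analytic factorization of $\sin(Nz)$ or a short detour through Chebyshev polynomials, and that is where I expect essentially all of the work to sit. Once it is in hand, the odd/even splitting and the $x\mapsto \pi-x$ folding are purely arithmetic, and the $n\to\infty$ limit is vacuous because the sequence is constant in $n$.
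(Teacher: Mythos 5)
Your argument is correct and complete, and it is genuinely different from what the paper does: the paper offers no derivation at all, simply deferring to W\"astlund's geometric proof (lighthouses on a circle, the inverse Pythagorean theorem, and a doubling construction whose limit identifies the sum over odd integers). You instead reduce everything to the classical finite identity $\sum_{k=1}^{N-1}\csc^{2}(k\pi/N)=(N^{2}-1)/3$, subtract the even-indexed copy at scale $N/2$ to get $N^{2}/4$ for the odd indices, and fold by $x\mapsto\pi-x$ to land exactly on the range $i=1,\dots,2^{n-2}$; the arithmetic checks out (e.g.\ at $n=2$ the summand is $1/8$ and $\tfrac{4\pi^{2}}{3}\cdot\tfrac18=\tfrac{\pi^{2}}{6}$), so the quantity inside the limit is identically $\zeta(2)$ for every $n\ge 2$ and the limit is indeed vacuous. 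What your route buys is a short, self-contained, purely trigonometric proof that makes the exact constancy in $n$ transparent --- the same invariance that W\"astlund's doubling argument encodes geometrically --- at the cost of importing the $\csc^{2}$ identity (for which your Chebyshev or $\sin(Nz)$-factorization sketches are both standard and adequate); the geometric route, by contrast, is what motivates the paper's later ``dynamic sum'' generalizations to $\zeta(m)$, where the evaluation is no longer constant in $n$ and a genuine limit must be taken.
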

\begin{proof}
Reference [1]
\end{proof}
\begin{proposition}
Sum over odd values:\\
Assume $m\in\mathbb{Z}$, $m>2$ then
$$\zeta(m)=\lim_{n\to\infty}\left(\frac{2^m\pi^m}{2^{m}-1}\right) \sum _{i=1}^{2^{n-2}}\frac{1}{ \left(2^n\sin\left(\frac{\left(2 i-1\right)\pi}{2^n}\right)\right)^m} $$\notag\\
\end{proposition}
\begin{proof}
Let $$A_j=\sum_{i=1}^{2^{j-2}}\frac{1}{(2i-1)^m}$$\notag
$$B_j= \sum _{i=1}^{2^{j-2}}\pi^m\frac{1}{ \left(2^{ j}\sin\left(\frac{\left(2 i-1\right)\pi}{2^j}\right)\right)^m}$$\notag
We want to show $$\lim_{j\to\infty}A_j=\lim_{j\to\infty}B_j$$\notag
We show $$\lim_{j\to\infty}(B_j-A_j)=0$$\notag
\begin{align}
B_j-A_j&=\sum_{i=1}^{2^{j-2}}\frac{1}{ \left(\frac{2^j}{\pi}\sin\left(\frac{\left(2 i-1\right)\pi}{2^j}\right)\right)^m}-\frac{1}{(2i-1)^m}\\\notag
&=\sum_{i=1}^{2^{j-2}}\frac{(2i-1)^m- \left(\frac{2^j}{\pi}\sin\left(\frac{\left(2 i-1\right)\pi}{2^j}\right)\right)^m}{ (2i-1)^m\left(\frac{2^j}{\pi}\sin\left(\frac{\left(2 i-1\right)\pi}{2^j}\right)\right)^m}\\\notag
&<\sum_{i=1}^{2^{j-2}}\frac{ (2i-1)^m- \left(  \frac{2^j}{\pi}\left(  \left(\frac{\left(2 i-1\right)\pi}{2^j}\right)-\left(\frac{\left(2 i-1\right)\pi}{2^j}\right)  \right)^3/6  \right)^m }{ (2i-1)^m\left(  \frac{2^j}{\pi}\left(  \left(\frac{\left(2 i-1\right)\pi}{2^j}\right)-\left(\frac{\left(2 i-1\right)\pi}{2^j}\right)  \right)^3/6  \right)^m}\\\notag
&=\sum_{i=1}^{2^{j-2}}\frac{ (2i-1)^m- \left(          \left(2 i-1\right)\left(    1-  \left(\frac{\left(2 i-1\right)\pi}{2^j} \right)^2/6   \right)           \right)^m  }{ (2i-1)^m\left(          \left(2 i-1\right)\left(    1-  \left(\frac{\left(2 i-1\right)\pi}{2^j} \right)^2/6   \right)           \right)^m }\\\notag
&=\sum_{i=1}^{2^{j-2}}\frac{ 1-        \left(    1-  \left(\frac{\left(2 i-1\right)\pi}{2^j} \right)^2/6   \right)^m  }{ (2i-1)^m\left(     \left(    1-  \left(\frac{\left(2 i-1\right)\pi}{2^j} \right)^2/6   \right)           \right)^m }\\\notag
&=\sum_{i=1}^{2^{j-2}}\frac{    \left( \frac{\left(2 i-1\right)\pi}{2^j} \right)^2/6\left ( 1+      \left(    1-  \left(\frac{\left(2 i-1\right)\pi}{2^j} \right)^2/6   \right)+\cdots+ \left(    1-  \left(\frac{\left(2 i-1\right)\pi}{2^j} \right)^2/6   \right)^{m-1}        \right)      }                 { (2i-1)^m\left(     \left(    1-  \left(\frac{\left(2 i-1\right)\pi}{2^j} \right)^2/6   \right)           \right)^m }\\\notag
&<\sum_{i=1}^{2^{j-2}}\frac{m}{2^{2j-2} (2i-1)^{m-2}\left(     \left(    1-  \left(\frac{\left(2 i-1\right)\pi}{2^j} \right)^2/6   \right)           \right)^m }\\\notag
&<\frac{m}{2^{2j-2-3m}}\sum_{i=1}^{2^{j-2}}\frac{1} { (2i-1)^{m-2}}\\\notag
&<\frac{(j-1)m}{2^{2j-2-3m}}\\\notag
\implies&\lim_{j\to\infty}B_j-A_j=0\\\notag
\end{align}
\end{proof}

\begin{proposition}
Sum over all values:
$$\zeta(m)=\lim_{n\to\infty}\pi^{m}\sum _{i=1}^{2^{n-2}}\frac{1}{ \left(2^n\sin\left(\frac{i\pi}{2^n}\right)\right)^m} $$\notag\\
\end{proposition}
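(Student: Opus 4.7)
The plan is to partition the sum in Proposition 3 according to the parity of the summation index $i$, reducing the even-index part to the same expression with $n$ replaced by $n-1$ and identifying the odd-index part with the sum in Proposition 2 up to a negligible tail. Write $T_n := \sum_{i=1}^{2^{n-2}} (2^n \sin(i\pi/2^n))^{-m}$ and $U_n := \sum_{i=1}^{2^{n-2}} (2^n \sin((2i-1)\pi/2^n))^{-m}$, so that Proposition 2 reads $\lim_{n\to\infty} \pi^m U_n = (2^m-1)\zeta(m)/2^m$.

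First I would treat the even-index contribution: grouping $i=2j$ with $j=1,\ldots,2^{n-3}$ and applying the doubling identity $2^n\sin(2j\pi/2^n) = 2 \cdot 2^{n-1}\sin(j\pi/2^{n-1})$, this piece collapses to $2^{-m} T_{n-1}$. Second, the odd-index contribution ($i=2j-1$, $j=1,\ldots,2^{n-3}$) is precisely the first half of $U_n$; its complement inside $U_n$ is a tail of $2^{n-3}$ terms whose angles $(2j-1)\pi/2^n$ lie in $(\pi/4,\pi/2)$, where $\sin \geq \sqrt{2}/2$. That tail is therefore bounded by $2^{n-3}(2^n\sqrt{2}/2)^{-m} = O(2^{n(1-m)})$ and disappears as $n\to\infty$ for $m \geq 2$. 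Combining yields the near-recursion $T_n = 2^{-m} T_{n-1} + U_n - o(1)$.

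Setting $L := \lim_{n\to\infty} \pi^m T_n$ and passing to the limit converts this into the fixed-point equation $L = 2^{-m}L + (2^m-1)\zeta(m)/2^m$, which forces $L = \zeta(m)$. The main obstacle I foresee is the tacit assumption that $\pi^m T_n$ actually converges, since a fixed-point relation does not by itself establish existence of the limit. I would settle this via Jordan's inequality $\sin x \geq 2x/\pi$ on $[0,\pi/2]$, which gives $2^n\sin(i\pi/2^n) \geq 2i$ and hence the summable majorant $(2i)^{-m}$; the dominated convergence theorem then secures convergence of $T_n$ and simultaneously delivers the evaluation $\lim_{n\to\infty} \pi^m T_n = \sum_{i=1}^{\infty} \pi^m/(i\pi)^m = \zeta(m)$ directly, providing a self-contained cross-check on the value obtained from the recursion.
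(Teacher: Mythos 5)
Your argument is correct, but it is genuinely different from what the paper does: the paper offers no analytic proof at all, simply declaring the statement ``another generalization of Theorem 1,'' i.e.\ of W\"astlund's geometric result from reference [1] for $\zeta(2)$ over odd indices. Your second paragraph's device — Jordan's inequality $\sin x\ge 2x/\pi$ on $[0,\pi/2]$ giving the uniform majorant $(2i)^{-m}$, combined with the pointwise limit $2^n\sin(i\pi/2^n)\to i\pi$ and dominated convergence (Tannery's theorem for series) — is in fact a complete, self-contained proof by itself, valid for any real $m>1$, and it would prove Proposition 1.2 in exactly the same stroke by majorizing with $(2(2i-1))^{-m}$; it is also very much in the spirit of the paper's own later remark (Section 3, after Lemma \ref{domcon}) that such limits are ``trivially true by the dominated convergence theorem.'' The parity-splitting recursion $T_n=2^{-m}T_{n-1}+U_n-o(1)$ with the fixed-point evaluation is also sound as you set it up — the doubling identity, the tail estimate $O(2^{n(1-m)})$ on the angles in $(\pi/4,\pi/2)$, and your explicit acknowledgment that the fixed-point equation needs a separate convergence proof are all handled correctly — but once the dominated-convergence step is in place the recursion is redundant, and it has the structural drawback of resting on Proposition 1.2, which the paper likewise asserts without proof. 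So the cleanest reading of your proposal is that the ``cross-check'' is the proof, and the recursion is an independent consistency verification linking the all-integer and odd-integer dynamic sums.
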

\begin{proof}
As in previous proposition.
\end{proof}
\begin{definition}
Note that, as with infinite series, these are partial sums whose number of terms increases to infinity. However, unlike seris, the individual terms of the sum change as it evolves. We might call these ``dynamic sums''.
\end{definition}
The video presents a geomertic proof. However, we note that since $\lim_{x\to 0}\frac{\sin(x)}{x}=1$, it is perhaps not surprising that $$\lim_{n\to\infty} \sum_{i=1}^{2^{n-2}} \frac{\pi^m}{ \left(2^n\sin\left(\frac{(2i-1)\pi}{2^n}\right)\right)^m}=\lim_{n\to\infty} \sum_{i=1}^{2^{n-2}} \frac{1}{(2i-1)^m}$$ Indeed, instead of summing to $2^{n-2}$, we could increase the number of terms in the sum more slowly and ensure that, in the limit, the number of terms goes to infinity while each term $\sin\left(\frac{(2i-1)\pi}{2^n}\right)$ converges to $0$. And

 \section{Zeta At Even Integer Values}
\begin{lemma}
$$\sum_{i=1}^{2^{n-2}}\left(\frac{1}{  \sin\left(\frac{\left(2 i-1\right)\pi}{2^n}\right)   }\right)^2=2^{2n-3}$$
\end{lemma}
\begin{proof}
\begin{align}\notag
\sum_{i=1}^{2^{n-2}}\left(    \frac{1}{  \sin\left(\frac{\left(2 i-1\right)\pi}{2^n}\right)   }   \right)^2&=\sum_{i=1}^{2^{n-2}} \frac{2}{ 1- \cos\left(\frac{\left(2 i-1\right)\pi}{2^{n-1}}\right) }   \\\notag
&=\sum_{i=1}^{2^{n-3}}\frac{2}{ 1- \cos\left(\frac{\left(2 i-1\right)\pi}{2^{n-1}}\right)   }+\frac{2}{ 1+\cos\left(\frac{\left(2 i-1\right)\pi} {2^{n-1}}\right)}\\\notag
&=4\sum_{i=1}^{2^{n-3}}\frac{1}{ 1- \cos^2\left(\frac{\left(2 i-1\right)\pi}{2^{n-1}}\right)   }\\\notag
&=4\sum_{i=1}^{2^{n-3}}\frac{1}{ \sin^2\left(\frac{\left(2 i-1\right)\pi}{2^{n-1}}\right)   }\\\notag
\end{align}
Since for $n=2$ the sum equals $2$, by induction, at $n>2$ the sum equals $2*2^{2n-4}$
\end{proof}
\begin{proposition}
$$\zeta(2)=\frac{\pi^2}{6}$$
\end{proposition}
\begin{proof}
\begin{align}\notag
\zeta(2)&=\lim_{n\to\infty} \left(\frac{4\pi^2}{3}\right) \sum _{i=1}^{2^{n-2}}\frac{1}{  \left(2^n\sin\left(\frac{\left(2 i-1\right)\pi}{2^n}\right)\right)^2  }\\\notag
&=\lim_{n\to\infty}\left(\frac{4\pi^2}{3}\right) \frac{1}{ 2^{2n}}2^{2n-3}\\\notag
&=\frac{\pi^2}{6} \\\notag
\end{align}
\end{proof}
\begin{lemma}
$$\sum_{i=1}^{2^{n-2}}\left(\frac{1}{   \sin\left(\frac{\left(2 i-1\right)\pi}{2^n}\right)   }\right)^4=2^{2n-2}(2^{2n-3}-1)/3$$
\end{lemma}
\begin{proposition}
$$\zeta(4)=\frac{\pi^4}{90} $$
\end{proposition}
\begin{proof}
\begin{align}\notag
\zeta(4)&=\lim_{n\to\infty}\left(\frac{16\pi^4}{15}\right) \sum _{i=1}^{2^{n-2}}\frac{1}{ \left(2^n\sin\left(\frac{\left(2 i-1\right)\pi}{2^n}\right)\right)^4}\\\notag
&=\lim_{n\to\infty}\left(\frac{16\pi^4}{15}\right) \frac{1}{ 2^{4n}}2^{2n-2}(2^{2n-3}-1)/3\\\notag
&=\frac{\pi^4}{90} \\\notag
\end{align}
\end{proof}
\begin{lemma}
$$\sum_{i=1}^{2^{n-2}}\left(\frac{1}{   \sin\left(\frac{\left(2 i-1\right)\pi}{2^n}\right)   }\right)^6=\frac{1}{960}( 2^{2n+6}+5*2^{4n+1}+2^{6n})$$
\end{lemma}
\begin{proposition}
$$\zeta(6)=\frac{\pi^6}{945} $$
\end{proposition}
\begin{proof}
\begin{align}\notag
\zeta(6)&=\lim_{n\to\infty}\left(\frac{64\pi^6}{63}\right) \sum _{i=1}^{2^{n-2}}\frac{1}{ \left(2^n\sin\left(\frac{\left(2 i-1\right)\pi}{2^n}\right)\right)^6}\\\notag
&=\lim_{n\to\infty}\left(\frac{64\pi^6}{63}\right) \frac{1}{ 2^{6n}}\frac{1}{960}( 2^{2n+6}+5*2^{4n+1}+2^{6n})\\\notag
&=\frac{\pi^6}{945} \\\notag
\end{align}
\end{proof}
\section{Zeta(3)}
$\zeta(3)$ using odd values would be

\begin{align*}
\zeta(3)&=\lim_{n\to\infty}\left(\frac{8\pi^3}{7}\right) \sum _{i=1}^{2^{n-2}}\frac{1}{ \left(2^n\sin\left(\frac{\left(2 i-1\right)\pi}{2^n}\right)\right)^3} \\
&=\lim_{n\to\infty}\left(\frac{8}{7\ 2^{3 n}}\right) \sum  _{i=1}^{2^{n-2}} \left(\Gamma \left(\frac{2 i-1}{2^n}\right) \Gamma \left(1-\frac{2 i-1}{2^n}\right)\right)^3
\end{align*}

If we sum over all integers, 
\begin{align*}
\zeta(3)&=\lim_{n\to\infty}\left(\frac{\pi^3}{2^{3(n+1)}}\right) \sum _{i=1}^{2^{n}}\frac{1}{ \left(\sin\left(\frac{i \pi}{2^{n+1}}\right)\right)^3} \\
&=\lim_{n\to\infty}\left(\frac{1}{ 2^{3( n+1)}}\right) \sum  _{i=1}^{2^{n}} \left(\Gamma \left(\frac{i}{2^{n+1}}\right) \Gamma \left(1-\frac{i}{2^{n+1}}\right)\right)^3
\end{align*}

We will follow the video and focus initially on summation over odd values for $\zeta(3)$. Define $S_n$ as
\begin{align*}
S_n&=\sum  _{i=1}^{2^{n-2}}\frac{1}{ \left(2^n\sin\left(\frac{\left(2 i-1\right)\pi}{2^n}\right)\right)^3} \\
\end{align*}
We look at some values.

\begin{align*}\notag
S_2&=\frac{1}{ \left(2^2\sin\left(\frac{\pi}{4}\right)\right)^3} \\ 
&=\frac{1}{2^6}\sqrt{2}\\
S_3&=\sum _{i=1}^{2}\frac{1}{ \left(2^3\sin\left(\frac{\left(2 i-1\right)\pi}{8}\right)\right)^3} \\
&=\frac{1}{2^9}\left(\frac{1}{\left(\frac{1}{2} \left(1-\frac{1}{\sqrt{2}}\right)\right)^{3/2}}+\frac{1}{\left(\frac{1}{2} \left(1+\frac{1}{\sqrt{2}}\right)\right)^{3/2}}\right)\\
&=\frac{1}{2^7}\sqrt{2(10+\sqrt{2})}\\
S_4&=\frac{1}{2^{12}}\Big[\frac{1}{\left(\frac{1}{2}\left(1-\sqrt{\frac{1}{2} \left(1-\frac{1}{\sqrt{2}}\right)}\right)\right)^{3/2}}+\frac{1}{\left(\frac{1}{2}\left(1+\sqrt{\frac{1}{2} \left(1-\frac{1}{\sqrt{2}}\right)}\right)\right)^{3/2}}\\&+\frac{1}{\left(\frac{1}{2}\left(1-\sqrt{\frac{1}{2} \left(1+\frac{1}{\sqrt{2}}\right)}\right)\right)^{3/2}}
+\frac{1}{\left(\frac{1}{2}\left(1+\sqrt{\frac{1}{2} \left(1+\frac{1}{\sqrt{2}}\right)}\right)\right)^{3/2}}\Big]\\
&=\frac{1}{2^{9}}\sqrt{300-6 \sqrt{2}+\sqrt{302 \sqrt{2}+436}}
\end{align*}

\small
\begin{align*}
\small
S_5&=\frac{1}{2^{12}}\Big[\frac{1}{\left(2-\sqrt{2-\sqrt{2-\sqrt{2}}}\right)^{3/2}}+\frac{1}{\left(2+\sqrt{2-\sqrt{2-\sqrt{2}}}\right)^{3/2}}+\frac{1}{\left(2-\sqrt{2+\sqrt{2-\sqrt{2}}}\right)^{3/2}}\\&+\frac{1}{\left(2+\sqrt{2+\sqrt{2-\sqrt{2}}}\right)^{3/2}}+\frac{1}{\left(2-\sqrt{2-\sqrt{2+\sqrt{2}}}\right)^{3/2}}+\frac{1}{\left(2+\sqrt{2-\sqrt{2+\sqrt{2}}}\right)^{3/2}}\\&+\frac{1}{\left(2-\sqrt{2+\sqrt{2+\sqrt{2}}}\right)^{3/2}}+\frac{1}{\left(2+\sqrt{2+\sqrt{2+\sqrt{2}}}\right)^{3/2}}\Big]\\
&=\frac{1}{2^{12}}\sqrt{2 \left(2 \left(4516+44 \sqrt{2}-\sqrt{3466-137 \sqrt{2}}\right)+\sqrt{428260-79990 \sqrt{2}+\sqrt{32962925198 \sqrt{2}+52633222484}}\right)}
\end{align*}
In passing from $S_n$ to $S_{n+1}$ we make use of the trig identities

\begin{align*}\notag
\sin(\frac{\theta}{2})&=\pm\sqrt{\frac{1-\cos(\theta)}{2}}\\\notag
\sin(\frac{\pi-\theta}{2})&=\pm\sqrt{\frac{1+\cos(\theta)}{2}}
\end{align*}
Note that, for example, the term
\begin{equation}
\frac{1}{\left(\frac{1}{2} \left(1-\sqrt{\frac{1}{2} \left(1-\sqrt{\frac{1}{2} \left(1-\sqrt{\frac{1}{2} \left(1-\frac{1}{\sqrt{2}}\right)}\right)}\right)}\right)\right)^{3/2}}\notag
\end{equation}
can be wrtitten more compactly as
\begin{equation}
\frac{8}{\left(2-\sqrt{2-\sqrt{2-\sqrt{2-\sqrt{2}}}}\right)^{3/2}}\notag
\end{equation}
and equals
\small
\begin{equation}
\left(2\left(2-\sqrt{2}\right) \left(2-\sqrt{2-\sqrt{2}}\right) \left(2-\sqrt{2-\sqrt{2-\sqrt{2}}}\right) \left(2+\sqrt{2-\sqrt{2-\sqrt{2-\sqrt{2}}}}\right)\right)^{3/2}\notag
\end{equation}
\begin{definition}
Let $V_n$ be the set of terms of the form $\sin(\frac{i\pi}{2^n}))$ where $i\le 2^{n-1}$. Call the members of $V_n$ the elementary terms. 
\end{definition}
Then we let $E_n$ be the extension of $Q$ generated by adjoining elemrents of $V_n$. Trig functions show that any product of terms in $E_n$ can be written as a sum of elementary terms with rational coefficients. Hence the degree of the extension $E_n$ is $\le 2^{n-1}$. We will work with terms of the form $2\sin(\frac{i\pi}{2^n})$ for convenience., as in,\\
$$2\sin(\frac{7\pi}{16})=\sqrt{2+\sqrt{2+\sqrt{2}}}$$
We call the maximum number of nested roots the depth of the term. The term above has depth 3. Terms comprising $S_n$ have depth $n-1$. There are $2^{n-1}$ elementary terms of depth $n$. The terms in $V_n$ have depth $\le n-1$. The minimal polynomial of terms of depth $n$ over $Q$ is
$$p_n(x)=(\cdots(((x^2-2)^2-2)^2-2)^2-2\cdots)^2-2$$ where $n$ power of $2$ appear, yielding deg$(p(x))= 2^{n}$. The roots of $p(x)$ are the elementary terms of depth $n$, each appearing with a "+" and a "-" sign. The extension is normal, the $2^n$ powers of a root (elementary term of depth $n$) form a basis of the extension which has rank $2^n$. The extension $E_n$ contains all the terms of $V_{n+1}$. The terms of $V_{n+1}$ form an alternate basis. The Galois group of $p_n(x)$ is the cyclic group $\sigma(n)$.\\

\begin{proposition}
The minimal polynomial for $\sin(\frac{i\pi}{2^n})$ is
$$f_n(x)=((\cdots((((2x)^2-2)^2-2)^2-2)^2-2\cdots)^2-2)/2$$ 
where the degree of $f_n(x)$ is $2^{n-1}$. The depth of the roots is $n-1$.
In $f(x)$, we let $c_{n,m}$ be the coefficient of $x^{2m}$ (the $m+1$-st term), then
$$f_n(x)=1+c_{n,1}x^2+c_{n,2}x^4+\cdots+c_{n,2^{n-2}}x^{2^{n-1}}$$
where
\begin{align}\notag
c_{n,m}&=(-1)^m \frac{ 2^{n+2m-2}} {2^{n-2}+m}\binom{2^{n-2}+m}{2^{n-2}-m},\ \ \  1\le m\le2^{n-2}\\\notag
\end{align}

\end{proposition}
Note if you replace each $x^k$ with $x^{r*k}$, where $r\in \mathbb{Z}^+$, this becomes the minimal polynomial for  $(\sin(\frac{i\pi}{2^n}))^{1/r}$.\\
Example: depth 4, $n=5$
$$\sin(\frac{\pi}{32})=\frac{\sqrt{2-\sqrt{2+\sqrt{2+\sqrt{2}}}}}{2}$$
\begin{align}\notag
f_n(x)=&((((((2x)^2-2)^2-2)^2-2)^2-2)^2-2)/2\\\notag
=&1-128 x^2+2688 x^4-21504 x^6+84480 x^8-180224 x^{10}+212992 x^{12}-131072 x^{14}+32768 x^{16}\\\notag
=&2^{15}(x-\frac{\sqrt{2-\sqrt{2+\sqrt{2+\sqrt{2}}}}}{2})(x+\frac{\sqrt{2-\sqrt{2+\sqrt{2+\sqrt{2}}}}}{2})(x-\frac{\sqrt{2-\sqrt{2+\sqrt{2-\sqrt{2}}}}}{2})\\\notag
&(x+\frac{\sqrt{2-\sqrt{2+\sqrt{2-\sqrt{2}}}}}{2})(x-\frac{\sqrt{2-\sqrt{2-\sqrt{2-\sqrt{2}}}}}{2})(x+\frac{\sqrt{2-\sqrt{2-\sqrt{2-\sqrt{2}}}}}{2})\\\notag
&(x-\frac{\sqrt{2-\sqrt{2-\sqrt{2+\sqrt{2}}}}}{2})(x+\frac{\sqrt{2-\sqrt{2-\sqrt{2+\sqrt{2}}}}}{2})(x-\frac{\sqrt{2+\sqrt{2-\sqrt{2+\sqrt{2}}}}}{2})\\\notag
&(x+\frac{\sqrt{2+\sqrt{2-\sqrt{2+\sqrt{2}}}}}{2})(x-\frac{\sqrt{2+\sqrt{2-\sqrt{2-\sqrt{2}}}}}{2})(x+\frac{\sqrt{2+\sqrt{2-\sqrt{2-\sqrt{2}}}}}{2})\\\notag
&(x-\frac{\sqrt{2+\sqrt{2+\sqrt{2-\sqrt{2}}}}}{2})(x+\frac{\sqrt{2+\sqrt{2+\sqrt{2-\sqrt{2}}}}}{2})(x-\frac{\sqrt{2+\sqrt{2+\sqrt{2+\sqrt{2}}}}}{2})\\\notag
&(x+\frac{\sqrt{2+\sqrt{2+\sqrt{2+\sqrt{2}}}}}{2})\notag
\end{align}
\begin{definition}
As a shorthand notation, we can refer to an elementary term by its sequence of signs. For example, $ e=\sqrt{2-\sqrt{2+\sqrt{2-\sqrt{2+\sqrt{2}}}}}$ would correspond to $(-+-+)$.\\ We define the $\textrm{sign}(e)$ to equal $(-1)^{k+1}$, where $k=$ number of ``-'' signs in the notation for $e$. 
\end{definition}
Each elementary term $e$ has a corresponding function $e(x)$. For example, $\sqrt{2+\sqrt{2-\sqrt{2+\sqrt{2}}}}$ corresponds to $\sqrt{2+\sqrt{2-\sqrt{2+2x}}}$.

When presenting a list of elementary terms, we order them from smallest to largest in magnitude. The following table gives the correspondence between the angle $\theta$ and the value of $2 \sin(\theta)$ for elementary terms of depth $\le 5$.\\

\begin{center}
\begin{tabular}{|c c c c c c c c c c |} 
 \hline
$\pi/64$ &$\pi/32$ &$3\pi/64$&$\pi/16$ &$5\pi/64$ &$3\pi/32$ &$7\pi/64$ &$\pi/8$ &$9\pi/64$&$5\pi/32$\\ 
 \hline
(- + + +)&(- + +) &(- + + -)&(- +)&(- + - -)&(- + -)&(- + - +)&(-) & (- - - +)&(- - -) \\
 \hline
\end{tabular}

\bigskip
\begin{tabular}{|c c c c c c c c c c |} 
 \hline
$11\pi/64$ &$3\pi/16$  &$13\pi/64$ &$7\pi/32$ &$15\pi/64$ &$\pi/4$ &$17\pi/64$&$9\pi/32) $&$19\pi/64$&$5\pi/16$\\ 
 \hline
(- - - -)&(- -) &(- - + -) & (- - +) &(- - + +)&  () & (+ - + +)& (+ - +)&(+ - + -)  &(+ -) \\
 \hline
\end{tabular}

\bigskip
\begin{tabular}{|c c c c c c c c c c c|} 
 \hline
$21\pi/64$ &$11\pi/32$ &$23\pi64$ &$3\pi/8$ &$25\pi/64$ &$13\pi/32$ &$27\pi/64$&$7\pi/16$ &$29\pi/64$ &$15\pi/32$ &$31\pi/64$\\
 \hline
(+ - - -)& (+ - -)&(+ - - +) & (+) & (+ + - +) &(+ + -)& (+ + - -)& (+ +)&  (+ + + -)& (+ + +) &(+ + + +)\\
 \hline
\end{tabular}
\end{center}

\begin{align*}
S_3&=\frac{1}{2^9}\left(4\left(-1\sqrt{2-\sqrt{2}}+3\sqrt{2+\sqrt{2}}\right)\right)\\
S_4&=\frac{1}{2^{12}}\left(8\left(3\sqrt{2-\sqrt{2+\sqrt{2}}}+11\sqrt{2-\sqrt{2-\sqrt{2}}}-2\sqrt{2+\sqrt{2-\sqrt{2}}}+4\sqrt{2+\sqrt{2+\sqrt{2}}}\right)\right)\\
S_5&=\frac{1}{2^{15}}\Big[8 \Big[-34 \sqrt{2-\sqrt{2+\sqrt{2+\sqrt{2}}}}-8 \sqrt{2-\sqrt{2+\sqrt{2-\sqrt{2}}}}+25 \sqrt{2-\sqrt{2-\sqrt{2-\sqrt{2}}}}+15 \sqrt{2-\sqrt{2-\sqrt{2+\sqrt{2}}}}\\&-45 \sqrt{2+\sqrt{2-\sqrt{2+\sqrt{2}}}}+29 \sqrt{2+\sqrt{2-\sqrt{2-\sqrt{2}}}}+64 \sqrt{2+\sqrt{2+\sqrt{2+\sqrt{2}}}}\Big]\Big]\\
S_6&=\frac{1}{2^{18}}\Big[8 \Big[244 \sqrt{2-\sqrt{2+\sqrt{2+\sqrt{2+\sqrt{2}}}}}-48 \sqrt{2-\sqrt{2+\sqrt{2+\sqrt{2-\sqrt{2}}}}}+85 \sqrt{2-\sqrt{2+\sqrt{2-\sqrt{2-\sqrt{2}}}}}\\&+401 \sqrt{2-\sqrt{2+\sqrt{2-\sqrt{2+\sqrt{2}}}}}-243 \sqrt{2-\sqrt{2-\sqrt{2-\sqrt{\sqrt{2}+2}}}}-137 \sqrt{2-\sqrt{2-\sqrt{2-\sqrt{2-\sqrt{2}}}}}\\&-16 \sqrt{2-\sqrt{2-\sqrt{2+\sqrt{2-\sqrt{2}}}}}+382 \sqrt{2-\sqrt{2-\sqrt{2+\sqrt{2+\sqrt{2}}}}}+74 \sqrt{2+\sqrt{2-\sqrt{2+\sqrt{2+\sqrt{2}}}}}\\&+214 \sqrt{2+\sqrt{2-\sqrt{2+\sqrt{2-\sqrt{2}}}}}+97 \sqrt{2+\sqrt{2-\sqrt{2-\sqrt{2-\sqrt{2}}}}}+135 \sqrt{2+\sqrt{2-\sqrt{2-\sqrt{2+\sqrt{2}}}}}\\&-15 \sqrt{2+\sqrt{2+\sqrt{2-\sqrt{2+\sqrt{2}}}}}+109 \sqrt{2+\sqrt{2+\sqrt{2-\sqrt{2-\sqrt{2}}}}}-16 \sqrt{2+\sqrt{2+\sqrt{2+\sqrt{2-\sqrt{2}}}}}\\&-184 \sqrt{2+\sqrt{2+\sqrt{2+\sqrt{2+\sqrt{2}}}}}\Big]\Big]\\
\end{align*}

Note
\begin{equation}\notag
\left(
\begin{array}{c}
\left(\sin(\frac{\pi}{16}\right)^{-3} \\
\left(\sin(\frac{3\pi}{16}\right)^{-3} \\
\left(\sin(\frac{5\pi}{16}\right)^{-3}  \\
\left(\sin(\frac{7\pi}{16}\right)^{-3} \\
\end{array}
\right)=
2^3
\left(
\begin{array}{c}
\left(2-\sqrt{2+\sqrt{2}}\right)^{-3/2} \\
\left(2-\sqrt{2-\sqrt{2}}\right)^{-3/2}\\
\left(2+\sqrt{2-\sqrt{2}}\right)^{-3/2} \\
\left(2+\sqrt{2+\sqrt{2}}\right)^{-3/2}\\
\end{array}
\right)
\end{equation}
While
\begin{align}\notag
S_4&=\frac{1}{2^{3*4}}\sum_{i=1}^{4}\left(    \sin\left(   \frac{\left(2i-1\right)\pi}{16}     \right )    \right)^{-3}\\\notag
\end{align}
We can express the terms in the $S_n$ of the form $1/\sin^3(\frac{\left(2 i-1\right)\pi}{2^n})$ as linear sums of elementary terms, e.g.,
\begin{equation}\label{basis_change}\notag
\left(
\begin{array}{c}
\left(2-\sqrt{2+\sqrt{2}}\right)^{-3/2} \\
\left(2-\sqrt{2-\sqrt{2}}\right)^{-3/2}\\
\left(2+\sqrt{2-\sqrt{2}}\right)^{-3/2} \\
\left(2+\sqrt{2+\sqrt{2}}\right)^{-3/2}\\
\end{array}
\right)=\frac{1}{2}\left(
\begin{array}{cccc}
 2 & 5 & 7 & 8 \\
 7 & 2 & -8 & 5 \\
 5 & 8 & 2 & -7 \\
 -8 & 7 & -5 & 2 \\
\end{array}
\right)\left(
\begin{array}{c}
\sqrt{2-\sqrt{2+\sqrt{2}}} \\
\sqrt{2-\sqrt{2-\sqrt{2}}}\\
\sqrt{2+\sqrt{2-\sqrt{2}}} \\
\sqrt{2+\sqrt{2+\sqrt{2}}}\\
\end{array}
\right)
\end{equation}
So, in the above equation, summing the terms on the left yields $\frac{1}{2^{3(n-1)}}S_n$.
The matrix for the depth 3 case:\\
\begin{equation}\notag
\left(
\begin{array}{cc}
 1 & 2 \\
 -2 & 1 \\
\end{array}
\right)
\end{equation}
The depth 4 case:
\begin{equation}\notag
\left(
\begin{array}{cccc}
 2 & 5 & 7 & 8 \\
 7 & 2 & -8 & 5 \\
 5 & 8 & 2 & -7 \\
 -8 & 7 & -5 & 2 \\
\end{array}
\right)
\end{equation}
The depth 5 case:\\
\begin{equation}\notag
\left(
\begin{array}{cccccccc}
 4 & 11 & 17 & 22 & 26 & 29 & 31 & 32 \\
 -29 & 4 & 26 & -31 & 11 & 22 & -32 & 17 \\
 31 & -22 & 4 & 17 & -29 & 32 & -26 & 11 \\
 -26 & -17 & 31 & 4 & -32 & 11 & 29 & -22 \\
 -22 & -29 & 11 & 32 & 4 & -31 & -17 & 26 \\
 -11 & -26 & -32 & -29 & -17 & 4 & 22 & 31 \\
 17 & 32 & 22 & -11 & -31 & -26 & 4 & 29 \\
 -32 & 31 & -29 & 26 & -22 & 17 & -11 & 4 \\
\end{array}
\right)
\end{equation}
\medskip
Call these transformation matrices $M_n$, the above being $M_5$. The dimension of $M_n$ is $2^{n-2}\text{x }2^{n-2}$, that of $v_n$ is $2^{n-2}$. 
The algorithm for producing these $M_n$ matricies is due to Aaron Fairbanks. The Mathematica code for producing the matrix for terms of $M_m$ is given below. 
\begin{lstlisting}
n = 2^(m-2); M = IdentityMatrix[n];
For[i = 1, i <= n, i++, 
 For[j = 1, j <= n, j++, r = Mod[(i + j - 1) (2 i - 1)^(n - 1), 2 n]; 
  M[[i, j]] = ((-1)^((r (2 i - 1) - (i + j - 1))/(2 n))) (1/
      2) (2 n r - r^2 + r - n);
  ]]
\end{lstlisting}
The code for producing $(2^{3(m-1)})S_m$ is
\begin{lstlisting}
n=2^(m-2);S=0;
For[i=1,i<=n,i++, 
 For[j=1,j<=n,j++, 
  r=Mod[(i+j-1)(2i-1)^(n-1),2n]; 
  S=S+((-1)^((r(2i-1)-(i+j-1))/(2n)))* 
        (1/4)(2nr-r^2+r-n)Sin[(2j-1)Pi/(4n)];
 ]
]
\end{lstlisting}
Another version:
\begin{lstlisting}
n=2^(m-2);S=0;
For[i=1,i<=n,i++, 
 For[j=1,j<=n,j++, 
  p=i+(j-1)(2(i-1)+1);
  k=Mod[p-1,2n]+1;
  S=S+(-1)^((p-k)/(2n))*
       (1/4)(2nj-j^2+j-n)Sin[(2k-1)Pi/(4n)];
 ]
]
\end{lstlisting}
\begin{proposition}
Regarding the $2^{n-2}x2^{n-2}$ matrices, $M_n$ which satisfy $$\frac{1}{(\sin(\frac{(2j-1)\pi}{2^n}))^3}=8 \sum_{k=1}^{2^{n-2}}M_n(j,k)\sin(\frac{(2k-1)\pi}{2^n})$$
Let
\begin{align}\notag
p &= i + (j - 1) (2 (i - 1) + 1)\\\notag
k &= \floor{\frac{(p - 1)}{2^{n-2}}}\\\notag
m &= (-1)^k (p - k(2^{n-2}))\pmod*{(2^{n-2}+1)}\notag
\end{align}
then
\begin{align}\notag
M_n[i,m]&= (1/2) (-1)^{ k }(2^{n-1}j - j^2 + j -2^{n-2})
\end{align}
\end{proposition}
\begin{proof}
We will refer to the set of elementary terms as the alternate basis. The first few matricies can be verified using basic trig identities. Consider terms of the form
$$\frac{1}{ \left(\sin\left(\frac{\left(2 i-1\right)\pi}{2^n}\right)\right)^3} $$ 
One can rationalize the denominator by repeated use of 
$$\sin(\alpha)\cos(\alpha)=\frac{1}{2}\sin(2\alpha)$$ multiplying numerator and denominator by $\cos(\alpha)$ for the appropriate $\alpha$. One can then use formulas such as $$\cos(\alpha)\cos(\beta)=\frac{1}{2}(\cos(\alpha-\beta)+\cos(\alpha+\beta))$$ to eliminate all products. The end result can then be expressed in terms of the new basis.
Note given one root $\rho$ of the minimal polynomial for the alternate basis, the powers of $\rho$: $1,\rho,\rho^2,\rho^3,\cdots$ also form a basis. One can multiply out these powers and solve for the alternate basis in terms of these powers. Then, if we map another root $\gamma$ to $\rho$, the morphism applies to polynomials in $\rho$, and we can see how the other roots are permuted. Hence, for example, given $\frac{1}{ \left(\sin\left(\frac{\left(2 i-1\right)\pi}{2^n}\right)\right)^3} $ expressed in terms of the alternate basis, we can apply the permutation to express the other terms in the alternate basis. This shows that the set of values in each row is the same, the order and sign change. Given our ordering of basis elements, the first row has the form $M[1,2^{n-2}]=2^{n}$,$ M[1,2^{n-2}-j]=M[1,2^{n-2}-(j-1)]-j$ for $j\ge 1$. This can be verified by the tedious process described above. The general pattern of the permutations becomes apparent after working out the first few transformation matricies. 

\end{proof}
\begin{corollary}
\begin{align}\notag
M_n[i,j]&= (-1)^{  \floor{   \frac{((i+j-1)(2i-1)^{(2^{n-2}-1)} }{2^{n-1}}   }     }\Biggl[ \Bigl[  (2^{n-1}+1)\left((i+j-1)(2i-1)^{2^{n-2}-1}\pmod*{2^{n-1}}\right)\\\notag
&-    \left( (i+j-1)(2i-1)^{2^{n-2}-1}\pmod*{2^{n-1}}\right)^{2}-2^{n-2}  \Bigr]\Biggr]\\\notag
\end{align}
\end{corollary}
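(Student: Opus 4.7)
The plan is to deduce the Corollary from the Proposition by inverting the column-index map $(i,j')\mapsto (i, m(i,j'))$ used in the Proposition. Setting $N = 2^{n-2}$, the Proposition expresses $M_n[i,m]$ as a function of a ``source'' index $j'\in\{1,\dots,N\}$ via $p = i + (j'-1)(2i-1) = (2i-1)j' - (i-1)$. To obtain the closed form $M_n[i,j]$ of the Corollary, I would solve $(2i-1)j' \equiv (i+j-1) \pmod{2^{n-1}}$ for $j'$ in terms of the target column $j$, then substitute into the Proposition's value formula.

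The inversion proceeds by Euler's theorem. Because $\gcd(2i-1,\,2^{n-1}) = 1$ and the group of units modulo $2^{n-1}$ has order $\phi(2^{n-1}) = 2^{n-2}$, one has $(2i-1)^{2^{n-2}-1} \equiv (2i-1)^{-1} \pmod{2^{n-1}}$, which is the source of the exponent $2^{n-2}-1$ appearing throughout the Corollary. Setting $X := (i+j-1)(2i-1)^{2^{n-2}-1}$ and $r := X \bmod 2^{n-1}$, the integer $r$ plays the role of the source index $j'$. Substituting $r$ for $j'$ in the Proposition's value $(1/2)(-1)^k(2^{n-1}j' - (j')^2 + j' - 2^{n-2})$ and rewriting $2^{n-1}r - r^2 + r - 2^{n-2} = (2^{n-1}+1)r - r^2 - 2^{n-2}$ reproduces exactly the bracketed factor of the Corollary (up to the overall $1/2$ inherited from the Proposition).

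The delicate point is matching the sign conventions: the Proposition's exponent $k$, a count of reflect-and-shift moves during the folding of $p$ into $[1,N]$, must equal $\lfloor X/2^{n-1}\rfloor$ modulo $2$. The idea is to write $(2i-1)r = (i+j-1) + q\cdot 2^{n-1}$ and $X - r = q'\cdot 2^{n-1}$, then use the $2$-adic identity $(2i-1)^{2^{n-2}} = 1 + s\cdot 2^{n-1}$ to deduce the linear relation $q + (2i-1)q' = s(i+j-1)$. Reducing modulo $2$ gives $q \equiv q' + s(i+j-1)\pmod{2}$, which collapses to $q\equiv q'\pmod 2$ provided $s$ is even. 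The parity of $s$ is not supplied by Euler's theorem alone; instead it requires the sharper estimate $v_2\!\left((2i-1)^{2^{n-2}}-1\right)\ge n$. This is the main obstacle, and it follows from the lifting-the-exponent lemma at the prime $2$: for odd $a = 2i-1$ and $k\ge 1$, $v_2(a^{2^k}-1) = v_2(a-1)+v_2(a+1)+k-1$, and since $v_2(2i-2)+v_2(2i)\ge 3$ for every $i\ge 1$ (consecutive integers $i-1,i$ contribute a factor of $2$, and the doubling contributes another $2^2$), taking $k = n-2$ yields the required bound. Once the $2$-adic calculation is in place, the sign exponents agree modulo $2$ and the Corollary follows by direct substitution.
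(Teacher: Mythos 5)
Your route is the one the paper intends --- its entire proof of this corollary is ``solve for $j$ in the previous Proposition'' --- and the two substantive ingredients you supply are correct. Since $\phi(2^{n-1})=2^{n-2}$, the factor $(2i-1)^{2^{n-2}-1}$ is indeed the inverse of $2i-1$ modulo $2^{n-1}$, which explains the exponent; and your parity computation is exactly right: from $(2i-1)r=(i+j-1)+q\,2^{n-1}$, $X-r=q'\,2^{n-1}$ and $(2i-1)^{2^{n-2}}=1+s\,2^{n-1}$ one does get $q+(2i-1)q'=s(i+j-1)$, and lifting the exponent gives $v_2\bigl((2i-1)^{2^{n-2}}-1\bigr)= v_2(2i-2)+v_2(2i)+(n-2)-1\ge n$, so $s$ is even and $q\equiv q'\pmod 2$ --- a fact Euler's theorem alone (which only yields $v_2\ge n-1$) cannot supply. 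This is already more than the paper writes down.

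Two steps are glossed, and one of them is a genuine gap as stated. First, $r=X\bmod 2^{n-1}$ lies in $\{1,\dots,2^{n-1}-1\}$ and equals the source index $j'$ only when $r\le 2^{n-2}$; otherwise $j'=2^{n-1}+1-r$. So ``substituting $r$ for $j'$'' is legitimate only because the quadratic $2^{n-1}t-t^2+t-2^{n-2}$ is invariant under $t\mapsto 2^{n-1}+1-t$; that check is missing and is what makes the bracket valid for all columns. Second, your lemma equates the two \emph{shift} counts $q$ and $q'$, but you then identify $q$ with ``the Proposition's exponent $k$.'' Taken literally this fails: for $n=4$, $i=2$, source index $2$ one has $p=5$, so the Proposition's $k=\floor{(p-1)/2^{n-2}}=1$, yet the entry reached is $M_4[2,4]=+5$, and indeed $q=2$, $q'=16$ there --- the Proposition's printed block-of-$2^{n-2}$ folding and $(-1)^k$ rule is itself garbled, so it cannot be the other side of your congruence. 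What your argument actually needs, and what must be proved, is that in reducing $\sin\bigl((2p-1)\pi/2^{n}\bigr)$ to the basis only translations of $p$ by $2^{n-1}$ (its antiperiod) flip the sign, while the reflection $p\mapsto 2^{n-1}+1-p$ within a period does not; writing $p=q\,2^{n-1}+m$ in the unreflected case and $p=c\,2^{n-1}+(1-m)$ in the reflected one and expanding the sine shows the folded sign is $(-1)^{q}$ in both cases, and only then does $q\equiv q'$ finish the proof. Finally, the factor $\tfrac12$ you flag is not an artifact of your derivation: at $(i,j)=(1,1)$ the bracket gives $9-1-4=4$ while $M_4[1,1]=2$, so the Corollary as printed has simply dropped the Proposition's $\tfrac12$.
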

\begin{proof}
Solve for $j$ in previous Proposition.
\end{proof}
\begin{corollary}
\begin{align}\notag
S_n=\frac{1}{2^{3n-2}}&\sum_{i=1}^{2^{n-2}}\sum_{j=1}^{2^{n-2}}(-1)^{\floor*{\frac{2 i j-i-j}{2^{n-1}}}}(2^{n-1}j-j^2+j-2^{n-2})\sin(      \frac{ (  2[ (2 i j-i-j)\pmod*{2^{n-1}}] +1 )\pi } {2^{n}}      )\\\notag
&=\frac{1}{2^{3n-2}}\sum_{j=1}^{2^{n-2}}(2^{n-1}j-j^2+j-2^{n-2})\sum_{i=1}^{2^{n-2}}(-1)^{\floor*{\frac{2 i j-i-j}{2^{n-1}}}}\sin(      \frac{ (  2[ (2 i j-i-j)\pmod*{2^{n-1}}] +1 )\pi } {2^{n}}      )\\\notag
\end{align}
\end{corollary}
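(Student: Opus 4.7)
The plan is to substitute the closed-form expression for the entries of $M_n$ given in the preceding Proposition directly into the basis-expansion formula for $S_n$. Starting from the definition $S_n = (1/2^{3n})\sum_{i=1}^{2^{n-2}} 1/\sin^3((2i-1)\pi/2^n)$, and using the basis-change relation exemplified in equation \eqref{basis_change}, which for each $i$ reads $1/(2\sin((2i-1)\pi/2^n))^3 = (1/2)\sum_j M_n[i,j]\cdot 2\sin((2j-1)\pi/2^n)$, we obtain $1/\sin^3((2i-1)\pi/2^n) = 8\sum_j M_n[i,j]\sin((2j-1)\pi/2^n)$. Summing over $i$ and pulling out the $8/2^{3n}$ prefactor yields
\[S_n = \frac{8}{2^{3n}}\sum_{i=1}^{2^{n-2}}\sum_{j=1}^{2^{n-2}} M_n[i,j]\,\sin\!\left(\frac{(2j-1)\pi}{2^n}\right),\]
which is the vehicle for the substitution.

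Next I plug in the Proposition's formula $M_n[i,m] = (1/2)(-1)^{k}(2^{n-1}j - j^2 + j - 2^{n-2})$, reindexing the column sum over the preimage $j \mapsto m(i,j)$ rather than $m$ itself. The internal factor of $1/2$ combines with $8/2^{3n}$ to produce the stated prefactor $1/2^{3n-2}$. The integer coefficient $(2^{n-1}j - j^2 + j - 2^{n-2})$ depends only on $j$ and remains in the outer $j$-sum unchanged; what moves is the column label $m$, and hence the argument of the sine.

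The combinatorial heart is to rewrite the pair $(k,m)$ purely in terms of $i$ and $j$. Substituting $p = i + (j-1)(2i-1) = 2ij - i - j + 1$ gives $p - 1 = 2ij - i - j$, so $k$ emerges as $\lfloor (2ij - i - j)/2^{n-1}\rfloor$ and the reduced column index as $(2ij - i - j)\bmod 2^{n-1}$, with the resulting sine argument $(2[\,\cdot\,] + 1)\pi/2^n$. Because the reflection identity $\sin((2^n - \alpha)\pi/2^n) = \sin(\alpha\pi/2^n)$ preserves sine values, residues $\ge 2^{n-2}$ fold back to genuine elementary angles, and the sign $(-1)^k$ is exactly the sign that the Galois-theoretic argument underlying the Proposition attaches when the fold occurs. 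This fold-with-sign is precisely what allows the summation to range formally over all of $\{0,\dots,2^{n-1}-1\}$ rather than only the unreduced half.

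The main obstacle is bookkeeping rather than ideas: three layers of indexing (the original column index $j$ in the Proposition, the permuted index $m$, and the ultimate sine argument in the Corollary) must be aligned, and the sign conventions of the Proposition must be reconciled with those produced by the reflection. To guard against off-by-one or sign errors before committing to the general substitution, I would explicitly verify the formula against the tabulated expansions of $S_3$, $S_4$, and $S_5$ given above; once those three cases agree, the remaining steps are a mechanical chase of indices.
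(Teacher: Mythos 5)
Your proposal is correct and is essentially the paper's own proof: the paper disposes of this corollary with the single line ``Sum terms in previous Proposition,'' and your substitution of $M_n[i,m]=\tfrac12(-1)^{k}\bigl(2^{n-1}j-j^2+j-2^{n-2}\bigr)$ into $S_n=\tfrac{1}{2^{3n-3}}\sum_{i,j}M_n[i,j]\sin\bigl((2j-1)\pi/2^{n}\bigr)$, with $p-1=2ij-i-j$ driving the floor and mod bookkeeping, is exactly that summation spelled out, and your prefactor arithmetic $8\cdot\tfrac12\cdot 2^{-3n}=2^{-(3n-2)}$ checks out. Your plan to confirm against the tabulated $S_3$, $S_4$, $S_5$ is prudent, since the Proposition's stated moduli ($2^{n-2}$) differ from the $2^{n-1}$ used in the Mathematica code and in this corollary, and the numerical check resolves that ambiguity in favor of the formula you use.
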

\begin{proof}
Sum terms in previous Proposition.
\end{proof}
\begin{proposition}
Assume $r\in\mathbb{Z}$, $r$ odd. Regarding the $2^{n-2}x2^{n-2}$ matrices, $M_n$ which satisfy $$\frac{1}{(\sin(\frac{(2j-1)\pi}{2^n}))^r}=2^r \sum_{k=1}^{2^{n-2}}M_n(j,k)\sin(\frac{(2k-1)\pi}{2^n})$$ We summarize some facts. 
\begin{itemize}
\item the $M_n$ are normal matricies
\item the set of $M_n$ for a fixed $n$ and all $r$ commute
\item in a given $M_n$ each row contains the same elements with the order permuted and possible sign chage
\item for a fixed $n$ and all $r$ the permutations are the same for all $M_n$
\item for a fixed $n$ and all $r> 0$ the permutations and sign changes are the same for all $M_n$
\item for a fixed $n$ and all $r< 0$ the permutations and sign changes are the same for all $M_n$
\end{itemize}
\end{proposition}
\begin{proof}
Proofs can be found in ref. 20.
\end{proof}
\begin{proposition}
Let $$S_n=\sum_{k=1}^{2^{n-2}}\frac{1}{\sin^r(\frac{(2k-1)\pi}{2^n})}$$
Regarding the $2^{n-2}x2^{n-2}$ matrices, $M_n$ which satisfy $$\frac{1}{(\sin(\frac{(2j-1)\pi}{2^n}))^r}=2^r \sum_{k=1}^{2^{n-2}}M_n(j,k)\sin(\frac{(2k-1)\pi}{2^n})$$
$$S_n=2^{r-1} \sum_{j=1}^{2^n-2}M(1,j)\frac{1}{\sin(\frac{(2j-1)\pi}{2^n})}$$
\end{proposition}
\begin{proof}
See ref. 20.
\end{proof}

\begin{theorem}\label{cscthm}
\begin{align}\notag
\zeta(3)&=\frac{2\pi^3}{7}\int_{0}^{1/2}(x(1-x))\left(  \frac{ 1  }{  \sin\left(\pi x\right)  }  \right)dx\\\notag
&=\frac{2}{7}\int_{0}^{\pi/2}(x(\pi-x)) \csc\left( x\right)dx\\\notag
&=\frac{1}{7}\int_{0}^{\pi}(x(\pi-x)) \csc\left( x\right)dx\\\notag
\end{align}
\end{theorem}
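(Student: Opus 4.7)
The plan is to reduce the three displayed integrals to the single identity $\int_{0}^{\pi} x(\pi-x)\csc x\,dx = 7\zeta(3)$, and then to evaluate that integral by a Fourier series expansion. The substitution $u=\pi x$ turns the first integral into $\frac{2}{7}\int_{0}^{\pi/2}u(\pi-u)\csc u\,du$, matching the second; and because the integrand $u(\pi-u)\csc u$ is symmetric about $u=\pi/2$, one has $\int_{0}^{\pi}=2\int_{0}^{\pi/2}$, which gives the third form. So it is enough to prove the integral in question equals $7\zeta(3)$.

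For that, I would first expand $g(x)=x(\pi-x)$ in its Fourier sine series on $[0,\pi]$. Two integrations by parts yield $\int_{0}^{\pi} x(\pi-x)\sin(nx)\,dx = 2\bigl(1-(-1)^{n}\bigr)/n^{3}$, so the coefficients $b_{n}=(2/\pi)\int_{0}^{\pi} g(x)\sin(nx)\,dx$ vanish for even $n$ and equal $8/\bigl(\pi(2k+1)^{3}\bigr)$ for $n=2k+1$. Since $g\in C^{1}[0,\pi]$ and $g(0)=g(\pi)=0$, the series
\[
x(\pi-x) \;=\; \frac{8}{\pi}\sum_{k=0}^{\infty}\frac{\sin\bigl((2k+1)x\bigr)}{(2k+1)^{3}}
\]
converges absolutely and uniformly on $[0,\pi]$.

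I would then divide formally by $\sin x$. The Dirichlet-kernel identity $\sin\bigl((2k+1)x\bigr)/\sin x = 1+2\sum_{j=1}^{k}\cos(2jx)$ shows that each quotient extends continuously to all of $[0,\pi]$, admits the pointwise bound $\bigl|\sin((2k+1)x)/\sin x\bigr|\le 2k+1$, and integrates to exactly $\pi$. Consequently $\bigl|b_{2k+1}\sin((2k+1)x)/\sin x\bigr|\le 8/\bigl(\pi(2k+1)^{2}\bigr)$, so the series for $x(\pi-x)\csc x$ still converges uniformly on $[0,\pi]$ and termwise integration is legitimate:
\[
\int_{0}^{\pi} x(\pi-x)\csc x\,dx \;=\; \frac{8}{\pi}\sum_{k=0}^{\infty}\frac{1}{(2k+1)^{3}}\cdot\pi \;=\; 8\cdot\frac{7\zeta(3)}{8} \;=\; 7\zeta(3),
\]
where I used the classical evaluation $\sum_{k\ge 0}(2k+1)^{-3}=(1-2^{-3})\zeta(3)$.

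The only delicate step is the termwise division of the Fourier series by $\sin x$; the uniform bound $8/(\pi(2k+1)^{2})$ furnished by the Dirichlet kernel dispatches it cleanly. Every other ingredient is integration by parts or a standard identity, so once those bounds are in place the rest of the computation is bookkeeping.
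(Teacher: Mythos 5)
Your proposal is correct, and it proves the theorem by a genuinely different route than the paper. You verify the equivalence of the three displayed forms (substitution $u=\pi x$ and symmetry of the integrand about $\pi/2$), then evaluate $\int_{0}^{\pi}x(\pi-x)\csc x\,dx$ directly: the Fourier sine expansion $x(\pi-x)=\frac{8}{\pi}\sum_{k\ge 0}(2k+1)^{-3}\sin((2k+1)x)$, termwise division by $\sin x$ justified via the identity $\sin((2k+1)x)/\sin x=1+2\sum_{j=1}^{k}\cos(2jx)$ (which gives both the bound $2k+1$ and the value $\pi$ of each integral), and the classical fact $\sum_{k\ge 0}(2k+1)^{-3}=\tfrac{7}{8}\zeta(3)$. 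The paper instead obtains the integral as the $n\to\infty$ limit of its dynamic sums $S_n$: starting from the double-sum expression of Corollary 1.8, it rearranges the floor/mod terms, collapses the inner sine sum into a cosecant via product-to-sum identities, and recognizes the result as a Riemann sum for $\frac{2\pi^{3}}{7}\int_{0}^{1/2}x(1-x)\csc(\pi x)\,dx$. What the paper's route buys is the connection to the nested-radical sums $S_n$ and the matrix machinery of Section 1, which is the motivation for the whole development; what your route buys is brevity, self-containedness, and a firmer logical footing, since the paper's chain rests on Proposition 1.5, whose proof is only sketched ("the general pattern of the permutations becomes apparent"), whereas every step of yours (integration by parts, Weierstrass M-test, Dirichlet-kernel identity) is standard and fully justified. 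Note also the difference in direction: the paper transports a known representation of $\zeta(3)$ into integral form, while you evaluate the integral independently, using the known odd-reciprocal-cube sum; the two are logically equivalent but serve different expository purposes.
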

\begin{proof}
\begin{align}\notag
\zeta(3)&=\lim_{n\to\infty}\frac{\pi^3}{7(2^{3n-5})}\sum_{j=1}^{2^{n-2}}(2^{n-1}j-j^2+j-2^{n-2})\sum_{i=1}^{2^{n-2}}(-1)^{\floor*{\frac{2 i j-i-j}{2^{n-1}}}}\sin(      \frac{ (  2[ (2 i j-i-j)\pmod*{2^{n-1}}] +1 )\pi } {2^n}      )\\\notag
&=\lim_{n\to\infty}\frac{\pi^3}{7(2^{3n-5})}\sum_{j=1}^{2^{n-2}}((2^{n-1}-j)(j-1)+2^{n-2})\sum_{i=1}^{2^{n-2}}(-1)^{\floor*{\frac{2 i j-i-j}{2^{n-1}}}}\sin(      \frac{ (  2[ (2 i j-i-j)\pmod*{2^{n-1}}] +1 )\pi } {2^n}      )\\\notag
&=\lim_{n\to\infty}\frac{\pi^3}{7(2^{3n-4})}\sum_{j=1}^{2^{n-2}}((2^{n-1}-j)(j-1)+2^{n-2})\left(  \frac{ 1  }{  \sin\left(\frac{(2 j-1)\pi}{2^n}\right)  }  \right)\\\notag
&=\lim_{n\to\infty}\frac{\pi^3}{7(2^{n-2})}\sum_{j=1}^{2^{n-2}}((1-\frac{j}{2^{n-1}})(\frac{j}{2^{n-1}}))\left(  \frac{ 1  }{  \sin\left(\frac{j\pi}{2^{n-1}}\right)  }  \right)\\\notag
&=\frac{2\pi^3}{7}\int_{0}^{1/2}(x(1-x))\left(  \frac{ 1  }{  \sin\left(\pi x\right)  }  \right)dx\\\notag
\end{align}
Step 3 is justified by the previous proposition.
\end{proof}
\section{Zeta At Odd Integer Values}
Recall

$$\zeta(m)=\lim_{n\to\infty}\left(\frac{2^m\pi^m}{2^{m}-1}\right) \sum _{i=1}^{2^{n-2}}\frac{1}{ \left(2^n\sin\left(\frac{\left(2 i-1\right)\pi}{2^n}\right)\right)^m} $$\notag\\
We can repeat the analysis for $n=5$.
$$\zeta(5)=\lim_{n\to\infty}\left(\frac{32\pi^5}{31}\right) \sum _{i=1}^{2^{n-2}}\frac{1}{ \left(2^n\sin\left(\frac{\left(2 i-1\right)\pi}{2^n}\right)\right)^5} $$\notag\\
We can express the terms in the $S_n$ of the form $1/\sin^5(\frac{\left(2 i-1\right)\pi}{2^n})$ as linear sums of elementary terms, e.g.,
\begin{equation}\label{basis_change}
\left(
\begin{array}{c}
\left(2-\sqrt{2+\sqrt{2}}\right)^{-5/2} \\
\left(2-\sqrt{2-\sqrt{2}}\right)^{-5/2}\\
\left(2+\sqrt{2-\sqrt{2}}\right)^{-5/2} \\
\left(2+\sqrt{2+\sqrt{2}}\right)^{-5/2}\\
\end{array}
\right)=\frac{1}{2}\left(
\begin{array}{cccc}
 11& 31 & 46 & 54 \\
 46 & 11 & -54 & 31 \\
 31 & 54 & 11 & -46 \\
 -54 & 46 & -31 & 11 \\
\end{array}
\right)\left(
\begin{array}{c}
\sqrt{2-\sqrt{2+\sqrt{2}}} \\
\sqrt{2-\sqrt{2-\sqrt{2}}}\\
\sqrt{2+\sqrt{2-\sqrt{2}}} \\
\sqrt{2+\sqrt{2+\sqrt{2}}}\\
\end{array}
\right)
\end{equation}
The matrix for the depth 3 case:\\
\begin{equation}\notag
\left(
\begin{array}{cc}
 3 & 7 \\
 -7 & 3 \\
\end{array}
\right)
\end{equation}
The depth 4 case:
\begin{equation}\notag
\left(
\begin{array}{cccc}
 11 & 31& 46& 54 \\
 46& 11 & -54 & 31 \\
 31& 54 & 11 & -46 \\
 -54 & 46 & -31 & 11 \\
\end{array}
\right)
\end{equation}
The depth 5 case:\\
\begin{equation}\notag
\left(
\begin{array}{cccccccc}
86& 254& 411& 551& 669& 761& 824& 856 \\
-761& 86& 669& -824& 254& 551& -856& 411 \\
824& -551& 86& 411& -761& 856& -669& 254\\
-669& -411& 824& 86& -856& 254& 761& -551 \\
-551& -761& 254& 856& 86& -824& -411& 669 \\
-254& -669& -856& -761& -411& 86& 551& 824 \\
 411& 856& 551& -254& -824& -669& 86& 761\\
-856& 824& -761& 669& -551& 411& -254& 86 \\
\end{array}
\right)
\end{equation}
The Mathematica code for producing the matrix for terms of $M_{m+2}$ is given below.
\begin{lstlisting}
 s = 11; For[j = 3, j <= m, j++, s = 2^(3 (j - 1)) + 2 s; ]
n = 2^m; M = IdentityMatrix[n]; M[[1, 1]] = s;
M[[1, n]] = 2^(2*m - 1);
For[i = 2, i < n + 1, i++, 
 M[[1, i]] = 
  M[[1, 1]] + 
   1/24 (-1 + i) (i - 2 n) (i^2 - i (1 + 2 n) + 
      2 (-1 - 6 M[[1, n]] + n^2))]; 
For[i = 2, i <= n, i++, 
  For[j = 1, j <= n, j++, p = i + (j - 1) (2 (i - 1) + 1);
   k = Floor[(p - 1)/n];
   ind = (-1)^k (p - k*n);
   M[[i, ind]] = (-1)^Floor[(p - 1)/(2 n)] M[[1, j]];]] ;
 \end{lstlisting} 
The code for producing $(2^{5(m-1)})S_m$ is
\begin{lstlisting}
 s = 11; For[j = 3, j <= m, j++, s = 2^(3 (j - 1)) + 2 s; 
n = 2^m; sum = 0;
For[i = 1, i <= n, i++, 
  For[j = 1, j <= n, j++, p = i + (j - 1) (2 (i - 1) + 1);
   k = Floor[(p - 1)/n];
   ind = (-1)^k (p - k*n); 
   If[ind < 0, ind = ind + 2^m + 1, ind = ind]; 
   sum = sum + (-1)^
       Floor[(p - 1)/2^(m + 1)] (s + 
        1/24 (j - 1) (j - 2^(m + 1)) (j^2 - j (1 + 2^(m + 1)) + 
           2 (-1 - 2^(2 m + 1)))) Sin[(2 ind - 1) \[Pi]/(2^(
          m + 2))];]]; 
 \end{lstlisting} 
Another version:
\begin{lstlisting}
 s = 11; For[j = 3, j <= m - 2, j++, s = 2^(3 (j - 1)) + 2 s; 
sum = 0;
sum= Simplify[
   32 Pi^5/(31*2^(
       m - 1)) Sum[(1/
        24 (j/2^(
         m - 1)) (j/2^(m - 1) - 1) ((j/2^(m - 1))^2 - j/2^(m - 1) + 
          2 (-(1/2)))) Sum[(-1)^
         Floor[(2 i j - i - j)/2^(
           m - 1)] Sin[(2 Mod[(2 i j - i - j), 2^(m - 1)] + 
            1) \[Pi]/(2^m)], {i, 1, 2^(m - 2)}], {j, 1, 2^(m - 2)}]];
 \end{lstlisting}     
 \begin{proposition}
Regarding the $2^{n-2}x2^{n-2}$ matrices, $M_n$ which satisfy $$\frac{1}{(\sin(\frac{(2j-1)\pi}{2^n}))^5}=32 \sum_{k=1}^{2^{n-2}}M_n(j,k)\sin(\frac{(2k-1)\pi}{2^n})$$
Let
\begin{align}\notag
p &= i + (j - 1) (2 (i - 1) + 1)\\\notag
k &= \floor{\frac{(p - 1)}{2^{n-2}}}\\\notag
m &= (-1)^k (p - k(2^{n-2}))\pmod*{(2^{n-2}+1)}\notag
\end{align}
then
\begin{align}\notag
M_n[i,m]&=  (-1)^{ k }\Bigl(\frac{1}{24} (j-1) \left(j-2^{n-1}\right) \left(j^2-j \left(2^{n-1}+1\right)+2 \left(-6\ 2^{2 n-5}+2^{2 n-4}-1\right)\right)\\\notag
&+\frac{1}{3} 2^{n+1} \left(4^{n-4}-1\right)+11\ 2^{n-4}\Bigr)\\\notag
&= (-1)^{ k }\frac{1}{24} \left(j^4-2 j^3 \left(2^{n-1}+1\right)+j^2 \left(3\ 2^{n-1}-1\right)+2 j \left(2^{n-2}+2^{3 n-4}+1\right)-2^{n-1} \left(2^{2 n-3}+1\right)\right)\\\notag
\end{align}
\end{proposition}       
 As in the case of $\zeta(3)$
 
\begin{theorem}\label{cscthm2}
\begin{align}\notag
\zeta(5)&=\frac{2\pi^5}{93}\int_{0}^{1/2}(x (x - 1) (x^2 - x - 1))\left(  \frac{ 1  }{  \sin\left(\pi x\right)  }  \right)dx\\\notag
\end{align}
\end{theorem}
\begin{proof}
Procedes as in previous theorem.
\end{proof}
We can continue with $\zeta(7)$, $\zeta(9)$, etc.

\begin{align}
\zeta(3)&=\frac{2\pi^3}{7}\int_{0}^{1/2}(x(1-x))\csc (\pi  x)dx\\\notag
\zeta(5)&=\frac{2\pi^5}{93}\int_{0}^{1/2}(x (x - 1) (x^2 - x - 1))\csc (\pi  x)dx\\\notag
\zeta(7&)=-\frac{4 \pi ^7}{5715}\int_{0}^{1/2} x (x-1) \left(x^4-2 x^3-2 x^2+3 x+3\right) \csc (\pi  x)dx\\\notag
\zeta(9)&=\frac{2 \pi ^9}{160965}\int_{0}^{1/2}x (x-1) \left(x^6-3 x^5-3 x^4+11 x^3+11 x^2-17 x-17\right)\csc (\pi  x)dx\\\notag
\zeta(11&)=-\frac{4 \pi ^{11}}{29016225}\int_{0}^{1/2}x (x-1) \left(x^8-4 x^7-4 x^6+26 x^5+26 x^4-100 x^3-100 x^2+155 x+155\right)\csc (\pi  x)dx\\\notag
\zeta(13)&=\frac{4 \pi ^{13}}{3831545025}\int_{0}^{1/2}x (x-1) \Bigl(x^{10}-5 x^9-5 x^8+50 x^7+50 x^6-346 x^5-346 x^4+1337 x^3+1337 x^2\\
&-2073 x-2073\Bigr)\csc (\pi  x)dx\\\notag
\zeta(15)&=-\frac{8 \pi ^{15}}{1394810091675}\int_{0}^{1/2}x (x-1) \Bigl(x^{12}-6 x^{11}-6 x^{10}+85 x^9+85 x^8-916 x^7-916 x^6+6377 x^5+6377 x^4\\
&-24654 x^3-24654 x^2+38227 x+38227\Bigr)\csc (\pi  x)dx\\\notag
\zeta(17)&=\frac{2 \pi ^{17}}{83690521039125}\int_{0}^{1/2}x (x-1) \Bigl(x^{14}-7 x^{13}-7 x^{12}+133 x^{11}+133 x^{10}-2051 x^9-2051 x^8+22259 x^7\\
&+22259 x^6-155061 x^5-155061 x^4+599511 x^3+599511 x^2-929569 x-929569\Bigr)\csc (\pi  x)dx\\\notag
&\vdots\\\notag
\zeta(2j+1)&=\frac{2^{2j}\pi^{2j+1}}{(2j)!(2^{2j+1}-1)}\int_{0}^{1/2}x(x-1)\Bigl(x^{2j-2}-(j-1)(x^{2j-3}+x^{2j-4})+\frac{1}{6}(j-1)(j-2)(2j+3)(x^{2j-5}+x^{2j-6})\\\notag
&-\frac{1}{30}(j-1)(j-2)(j-3)(4j^2+4j+5)(x^{2j-7}+x^{2j-8})+\frac{1}{2520}(j-1)(j-2)(j-3)(j-4)(136j^3+\\\notag
&-68j^2+174j+105)(x^{2j-9}+x^{2j-10})+\cdots+2(4^j-1)B(2j)(x+1)\Bigr)\csc (\pi  x)dx\\\notag
\end{align}
where $B(k)$ is the $k$-th Bernoulli number.\\
Note that
\begin{proposition}
Regarding the $2^{n-2}x2^{n-2}$ matrices, $M_n$ which satisfy $$\frac{1}{\sin(\frac{(2j-1)\pi}{2^n})}=2 \sum_{k=1}^{2^{n-2}}M_n(j,k)\sin(\frac{(2k-1)\pi}{2^n})$$
Let
\begin{align}\notag
p &= i + (j - 1) (2 (i - 1) + 1)\\\notag
k &= \floor{\frac{(p - 1)}{2^{n-2}}}\\\notag
m &= (-1)^k (p - k(2^{n-2}))\pmod*{(2^{n-2}+1)}\notag
\end{align}
then
\begin{align}\notag
M_n[i,m]&=  (-1)^{ k }\\\notag
\end{align}
\end{proposition}       
\section{Some Trig Integrals}
\begin{definition}
$\eta(k)=(1-2^{1-k})\zeta(k)$ is the Dirichlet Eta function.
\end{definition}
\begin{lemma}
If $f$ is periodic of period $\pi$ then
$$\int_{0}^{\pi/2}f(x) dx=\int_{0}^{\infty}f(x)\frac{\sin(x)}{x}dx$$
\end{lemma}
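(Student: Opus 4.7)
The plan is to partition the half-line into intervals of length $\pi$, exploit periodicity together with the sign-flip $\sin(u+k\pi) = (-1)^k \sin u$, and identify the resulting alternating kernel with the Mittag--Leffler partial fraction expansion of $\csc$. First I would write $\int_0^\infty f(x)\tfrac{\sin x}{x}\,dx = \sum_{k=0}^\infty \int_{k\pi}^{(k+1)\pi} f(x)\tfrac{\sin x}{x}\,dx$ and substitute $x = u+k\pi$ on each piece; periodicity of $f$ and the sign flip collapse this to
\[
  \int_0^\pi f(u)\sin(u)\sum_{k=0}^\infty \frac{(-1)^k}{u+k\pi}\,du.
\]

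Next I would fold the sub-interval $(\pi/2,\pi)$ onto $(0,\pi/2)$ via $u \mapsto \pi - u$, using $f(\pi - u) = f(u)$ (equivalent, for a period-$\pi$ function, to $f$ being even). Combining the two halves, the kernel on $(0,\pi/2)$ becomes
\[
  \sum_{k=0}^\infty (-1)^k\!\left[\frac{1}{u+k\pi} + \frac{1}{(k+1)\pi - u}\right] \;=\; \sum_{n\in\mathbb{Z}} \frac{(-1)^n}{u - n\pi},
\]
which equals $\csc(u)$ by the classical identity $\pi\csc(\pi z) = \sum_{n\in\mathbb{Z}}(-1)^n/(z-n)$ applied with $z = u/\pi$. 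Cancelling against the already-present $\sin(u)$ factor leaves exactly $\int_0^{\pi/2} f(u)\,du$, as required.

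The hard part will be justifying the manipulations, since $\sum_k (-1)^k/(u+k\pi)$ is only conditionally convergent and swapping it with $\int_0^\pi$ is not automatic; my planned fix is to pair consecutive terms so that $\frac{1}{u+2k\pi} - \frac{1}{u+(2k+1)\pi} = O(k^{-2})$ uniformly in $u\in[0,\pi]$, giving an absolutely convergent series over pairs and hence Fubini. A secondary caveat I would flag is that the symmetry $f(\pi - u) = f(u)$ used for the fold is not implied by period $\pi$ alone (the test case $f(x) = \sin 2x$ makes the stated identity fail, with LHS $= 1$ versus RHS $= \tfrac{1}{2}\ln 3$), so $f$ should be tacitly assumed even; equivalently, one can first extend to $\int_{-\infty}^\infty$, use the evenness of both $f$ and the kernel $\sin(x)/x$, and apply the two-sided identity $\sum_{n\in\mathbb{Z}}(-1)^n/(u+n\pi) = \csc(u)$ directly over a full period.
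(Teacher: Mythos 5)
Your argument is sound, and it is genuinely different from what the paper does: the paper offers no derivation at all, simply citing Lobachevsky, whereas you reconstruct the classical proof of the Lobachevsky--Dirichlet integral formula from scratch --- splitting $[0,\infty)$ into period intervals, using $\sin(u+k\pi)=(-1)^k\sin u$, folding $(\pi/2,\pi)$ onto $(0,\pi/2)$, and identifying the resulting kernel with $\csc u$ via $\csc u=\sum_{n\in\mathbb{Z}}(-1)^n/(u-n\pi)$; your pairing of consecutive terms to get an absolutely convergent series of $O(k^{-2})$ blocks is the standard and adequate way to license the interchange of sum and integral (for bounded, integrable $f$, which covers the paper's use case). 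What your approach buys, beyond self-containedness, is that it exposes a real defect in the statement as printed: periodicity alone is \emph{not} sufficient, and your counterexample $f(x)=\sin 2x$ (left side $1$, right side $\tfrac{1}{2}\ln 3$) shows the lemma is false without the additional symmetry $f(\pi-x)=f(x)$ (equivalently, evenness of the $\pi$-periodic extension), which is precisely the hypothesis in Lobachevsky's formula that the paper dropped. The omission is harmless downstream, since the only application is to $f(x)=x(\pi-x)\csc x$ on $[0,\pi]$, which is symmetric about $\pi/2$, but the hypothesis should be added to the lemma; with that emendation your proof is complete and, unlike the paper's, verifiable on the spot.
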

\begin{proof}
(Lobachevsky)
\end{proof}
\begin{corollary}
\begin{align}
\zeta(3)&=\frac{1}{7}\pi^2\left(  1+\sum_{j=1}^{\infty}(-1)^j \left(  (2j+1)+2j(j+1)\ln\left(\frac{j}{j+1}\right)\right) \right)\\\notag
\end{align}
\end{corollary}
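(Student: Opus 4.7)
The plan is to apply Theorem~\ref{cscthm} together with the Lobachevsky lemma and then split the resulting improper integral on $[0,\infty)$ into a sum of elementary integrals over periods of $\sin$, each of which is evaluated in closed form by a single polynomial division.

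First set $f(x)=x(\pi-x)\csc(x)$ on $(0,\pi)$. Since $\lim_{x\to 0^{+}}f(x)=\lim_{x\to\pi^{-}}f(x)=\pi$ and $f(\pi-x)=f(x)$, the function $f$ extends continuously to a $\pi$-periodic function on $\mathbb{R}$ that is symmetric about $\pi/2$, so the hypotheses of the Lobachevsky lemma are met. Using the $x\mapsto\pi-x$ symmetry to halve the integral in Theorem~\ref{cscthm} and then applying the lemma gives
$$\tfrac{7}{2}\zeta(3)=\int_{0}^{\pi/2}f(x)\,dx=\int_{0}^{\infty}f(x)\,\frac{\sin x}{x}\,dx.$$

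Second, decompose the right-hand side as $\sum_{j=0}^{\infty}\int_{j\pi}^{(j+1)\pi}$, substitute $x=u+j\pi$ on each interval, and use $\sin(u+j\pi)=(-1)^{j}\sin u$ together with the periodicity $f(u+j\pi)=u(\pi-u)\csc u$; the $\sin u$ exactly cancels the $\csc u$, leaving
$$\tfrac{7}{2}\zeta(3)=\sum_{j=0}^{\infty}(-1)^{j}\int_{0}^{\pi}\frac{u(\pi-u)}{u+j\pi}\,du.$$
The $j=0$ integral contributes $\pi^{2}/2$. For $j\ge 1$, rescale $u=\pi t$ and then either divide $t(1-t)$ by $t+j$ or substitute $s=t+j$ to obtain
$$\int_{0}^{\pi}\frac{u(\pi-u)}{u+j\pi}\,du=\pi^{2}\left[\frac{2j+1}{2}+j(j+1)\ln\frac{j}{j+1}\right].$$
Substituting and pulling out the prefactor $\pi^{2}/7$ yields exactly the stated corollary.

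The main obstacle is justifying the splitting of the Lobachevsky integral into the series, since $\int_{0}^{\infty}|f(x)\sin(x)/x|\,dx=\infty$ and so the interchange of sum and integral is not automatic. I would handle this by truncating at $N\pi$ and passing to the limit: the general term of the alternating series is $O(1/j)$ (from $\ln(j/(j+1))=-1/j+1/(2j^{2})-\cdots$, which makes $(2j+1)/2+j(j+1)\ln(j/(j+1))=O(1/j)$), so the series converges by the alternating-series test; and the tail $\int_{N\pi}^{\infty}f(x)\sin(x)/x\,dx$ vanishes as $N\to\infty$ by a Dirichlet-type estimate (boundedness of $f$ combined with the cancellation of $\sin x$). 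Everything else is routine partial-fraction bookkeeping.
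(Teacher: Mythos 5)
Your proposal is correct and follows essentially the same route as the paper: invoke Theorem \ref{cscthm}, apply the Lobachevsky lemma to the $\pi$-periodic extension of $x(\pi-x)\csc(x)$, split $\int_{0}^{\infty}$ over the intervals $[j\pi,(j+1)\pi]$ so that $\sin(x)$ cancels the cosecant with sign $(-1)^{j}$, and evaluate $\int_{0}^{\pi}\frac{u(\pi-u)}{u+j\pi}\,du=\pi^{2}\bigl(\tfrac{2j+1}{2}+j(j+1)\ln\tfrac{j}{j+1}\bigr)$ before reassembling. Your added care about the symmetry hypothesis and the truncation/convergence argument only tightens steps the paper leaves implicit.
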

\begin{proof}
From Theorem \ref{cscthm},
$$\zeta(3)=\frac{2}{7}\int_{0}^{\pi/2}(x(\pi-x)) \csc( x)dx$$
Let $f(x)$ be the periodic function with period $\pi$ that is defined on $[0,\pi]$ as $x(\pi-x) \csc( x)$. Then by the above lemma, 
\begin{align}\notag
\zeta(3)&=\frac{2}{7}\int_{0}^{\pi/2}(x(\pi-x)) \csc( x)dx\\\notag
&=\frac{2}{7}\sum_{j=0}^{\infty}\int_{j \pi}^{(j+1)\pi}((x-j \pi)(\pi-(x-j\pi))\csc(x-j\pi)\frac{\sin(x)}{x}dx\\\notag
&=\frac{2}{7}\sum_{j=0}^{\infty}(-1)^j\int_{j \pi}^{(j+1)\pi}(1/x)(x-j\pi)((j+1)\pi-x)dx\\\notag
&=\frac{2}{7}\sum_{j=0}^{\infty}(-1)^j\int_{0}^{\pi}x\frac{(\pi-x)}{x+j\pi}dx\\\notag
&=\frac{2}{7}\sum_{j=0}^{\infty}(-1)^j\left(   x((1+j)\pi-\frac{x}{2})-j(j+1)\pi^2\ln(x+j \pi)\right)|_{x=0}^{x=\pi}\\\notag
&=\frac{1}{7}\Bigl(2\pi x-x^2+\sum_{j=1}^{\infty}(-1)^j\left(   x(2(1+j)\pi-x)+2j(j+1)\pi^2\ln(\frac{j\pi}{j\pi+x})\right)\Bigr)|_{x=\pi}\\\notag
&=\frac{1}{7}\pi^2\left(  1+\sum_{j=1}^{\infty}(-1)^j \left(  (2j+1)+2j(j+1)\ln\left(\frac{j}{j+1}\right)\right) \right)\\\notag
\end{align}
\end{proof}
\begin{corollary}
Let

$$f(x)=\frac{1}{7}\Bigl(2\pi x-(1+\ln(2))x^2+2\sum_{j=3}^{\infty}(-1)^{j+1}\frac{1}{j \pi^{j-2}}\left(\eta(j-1)+\eta(j-2)\right)x^j\Bigr)\notag$$
Then
$$f(\pi)=\zeta(3)$$
\end{corollary}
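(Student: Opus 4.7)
\medskip
\noindent\textbf{Proof proposal.}
The plan is to start from the mid-proof identity of the preceding corollary,
\begin{equation*}
7\zeta(3) \;=\; \Bigl(2\pi x - x^2 + \sum_{j=1}^{\infty}(-1)^j\bigl[x(2(j+1)\pi - x) + 2j(j+1)\pi^2 \ln\tfrac{j\pi}{j\pi+x}\bigr]\Bigr)\Big|_{x=\pi},
\end{equation*}
denote the right-hand side (before the substitution) by $g(x)$, and show that $g(x)=7f(x)$ as functions on $[0,\pi]$; specializing to $x=\pi$ then yields $f(\pi)=\zeta(3)$.

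To derive the power series, I would expand $\ln\tfrac{j\pi}{j\pi+x}=\sum_{k\ge1}(-1)^kx^k/(k(j\pi)^k)$ and use $j(j+1)/j^k = 1/j^{k-2}+1/j^{k-1}$ to rewrite
\begin{equation*}
2j(j+1)\pi^2 \ln\tfrac{j\pi}{j\pi+x} \;=\; 2\sum_{k=1}^{\infty}\frac{(-1)^k}{k\pi^{k-2}}\Bigl(\frac{1}{j^{k-2}}+\frac{1}{j^{k-1}}\Bigr)x^k.
\end{equation*}
Combined with the polynomial piece $2\pi x(j+1)-x^2$, the $k=1$ contribution $-2\pi x(j+1)$ exactly cancels $2\pi x(j+1)$, and the $j$-independent $x^2$ from the $k=2$ contribution cancels $-x^2$, so the $j$-th summand reduces to $(-1)^j\bigl[x^2/j + 2\sum_{k\ge3}(-1)^k(1/j^{k-2}+1/j^{k-1})x^k/(k\pi^{k-2})\bigr]$. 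Interchanging the orders of summation and applying $\sum_{j\ge 1}(-1)^j/j^s=-\eta(s)$, the $(-1)^jx^2/j$ piece sums to $-x^2\ln 2$, and the inner $j$-sum at each $k\ge3$ yields $-(\eta(k-2)+\eta(k-1))$. Adding back $2\pi x-x^2$ gives
\begin{equation*}
g(x) \;=\; 2\pi x - (1+\ln 2)x^2 + 2\sum_{k=3}^{\infty}\frac{(-1)^{k+1}(\eta(k-1)+\eta(k-2))}{k\pi^{k-2}}x^k \;=\; 7f(x).
\end{equation*}

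The main obstacle is justifying the swap of summations, since the double series is not absolutely convergent (already the $k=3$ row has a divergent harmonic $j$-sum). I would handle this by truncating the $j$-sum at $J$: for each finite $J$ the $k=1,2$ cancellations against the polynomial pieces are algebraic identities, so $\sum_{j=1}^J (-1)^j S_j(x) = x^2\sum_{j=1}^J(-1)^j/j + 2\sum_{k=3}^{\infty}(-1)^kx^k A_{J,k}/(k\pi^{k-2})$, where $A_{J,k}=\sum_{j=1}^J(-1)^j(1/j^{k-2}+1/j^{k-1})$. Since $1/j^{k-2}+1/j^{k-1}$ is decreasing in $j$ for $k\ge3$, the alternating-series estimate gives $|A_{J,k}|\le 2$ uniformly in $J$, so the $k$-series is dominated by $\sum 4|x|^k/(k\pi^{k-2})$ which converges for $|x|<\pi$; dominated convergence then delivers $g(x)=7f(x)$ on $(-\pi,\pi)$. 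To pass to $x=\pi$, I would observe that $g$ is continuous on $[0,\pi]$ (the estimate $\ln(1+x/(j\pi))=x/(j\pi)+O(1/j^2)$ shows $S_j(x)=O(1/j)$ uniformly in $x\in[0,\pi]$, so $\sum(-1)^jS_j$ converges uniformly there), and $f$ is left-continuous at $\pi$ by Abel's theorem, since its coefficient series converges alternately at $x=\pi$. Hence $f(\pi)=\lim_{x\to\pi^-}f(x)=\lim_{x\to\pi^-}g(x)/7=g(\pi)/7=\zeta(3)$.
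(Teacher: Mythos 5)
Your proposal follows essentially the same route as the paper's own proof: expand $\ln\frac{j\pi}{j\pi+x}$ in powers of $x/(j\pi)$, cancel the $k=1,2$ contributions against the polynomial pieces, interchange the $j$- and $k$-summations to produce the coefficients $\eta(k-2)+\eta(k-1)$ and the $-x^2\ln 2$ term, and then identify the value at $x=\pi$ with $\zeta(3)$ via the preceding corollary. The only difference is that you supply the analytic justifications the paper leaves tacit (the truncation/dominated-convergence argument for the swap, uniform convergence of the $j$-sum on $[0,\pi]$, and Abel's theorem at the boundary point $x=\pi$), which strengthens rather than changes the argument.
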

\begin{proof}
From proof of previous corollary,
\begin{align}\notag
f(x)&=\frac{2}{7}\sum_{j=0}^{\infty}(-1)^j\left(   x((1+j)\pi-\frac{x}{2})-j(j+1)\pi^2\ln(x+j \pi)\right)|_{x=0}^{x=x}\\\notag
&=\frac{1}{7}\Bigl(2\pi x-x^2+\sum_{j=1}^{\infty}(-1)^j\left(   x(2(1+j)\pi-x)-2j(j+1)\pi^2\ln(\frac{j\pi+x}{j\pi})\right)\Bigr)\\\notag
&=\frac{1}{7}\Bigl(2\pi x-x^2+\sum_{j=1}^{\infty}(-1)^j\left(   x(2(1+j)\pi-x)-2j(j+1)\pi^2\ln(1+\frac{x}{j\pi})\right)\Bigr)\\\notag
&=\frac{1}{7}\Bigl(2\pi x-x^2+\sum_{j=1}^{\infty}(-1)^j\left(   x(2(1+j)\pi-x)-2j(j+1)\pi^2\sum_{k=1}^{\infty}(-1)^{k-1}\frac{1}{k}(\frac{x}{j\pi})^k\right)\Bigr)\\\notag
&=\frac{1}{7}\Bigl(2\pi x-x^2+\sum_{j=1}^{\infty}(-1)^j\left(   x(2(1+j)\pi-x)-2j(j+1)\pi^2\Bigl(\frac{x}{j\pi}-\frac{1}{2}\left(\frac{x}{j\pi}\right)^2+\sum_{k=3}^{\infty}(-1)^{k-1}\frac{1}{k}(\frac{x}{j\pi})^k\Bigr)\right)\Bigr)\\\notag
&=\frac{1}{7}\Bigl(2\pi x-x^2+\sum_{j=1}^{\infty}(-1)^j\left(   x(2(1+j)\pi-x)-2(j+1)\left(\pi x-\frac{x^2}{2 j}\right)+2j(j+1)\pi^2\sum_{k=3}^{\infty}(-1)^{k}\frac{1}{k}(\frac{x}{j\pi})^k\right)\Bigr)\\\notag
&=\frac{1}{7}\Bigl(2\pi x-x^2+\sum_{j=1}^{\infty}(-1)^j\left( (\frac{1}{j})x^2+2j(j+1)\pi^2\sum_{k=3}^{\infty}(-1)^{k}\frac{1}{k}(\frac{x}{j\pi})^k\right)\Bigr)\\\notag
&=\frac{1}{7}\Bigl(2\pi x-(1+\ln(2))x^2+2\sum_{j=1}^{\infty}\sum_{k=3}^{\infty}(-1)^{j+k}\frac{1+j}{k j^{k-1}\pi^{k-2}}x^k\Bigr)\\\notag
&=\frac{1}{7}\Bigl(2\pi x-(1+\ln(2))x^2+2\sum_{j=3}^{\infty}(-1)^{j+1}\frac{1}{j \pi^{j-2}}\left(\eta(j-1)+\eta(j-2)\right)x^j\Bigr)\\\notag
\end{align}
\end{proof}
\begin{corollary}\label{etacor1}
\begin{align}
\zeta(3)&=\frac{2}{7}\pi^2\lim_{n->\infty}\Bigl(\frac{1}{n+2}\eta(n+1)+\sum_{j=1}^{n}\frac{1}{(j+1)(j+2)}\eta(j) \Bigr)\\\notag
&=\frac{2}{7}\pi^2\sum_{j=1}^{\infty}\frac{1}{(j+1)(j+2)}\eta(j)\\
\end{align}
Note: The first formula converges much more rapidly than the second.
\end{corollary}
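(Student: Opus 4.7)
The plan is to evaluate $T := \sum_{s=1}^{\infty}\eta(s)/((s+1)(s+2))$ in closed form and identify it with $7\zeta(3)/(2\pi^{2})$. First I rewrite the previous corollary's closed form for $\zeta(3)$: its bracketed expression $(2j+1)+2j(j+1)\ln(j/(j+1))$ equals $2A_{j}$, where $A_{j} := j + \tfrac{1}{2} - j(j+1)\ln\tfrac{j+1}{j}$. Thus
\[
\zeta(3) = \frac{2\pi^{2}}{7}\left(\tfrac{1}{2} + \sum_{j\ge 1}(-1)^{j}A_{j}\right) = \frac{2\pi^{2}}{7}\sum_{j\ge 0}(-1)^{j}A_{j},
\]
with $A_{0} := 1/2$ absorbing the leading constant. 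This is the expression I must match on the $\eta$ side.

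Next, expand $\eta(s)=\sum_{n\ge 1}(-1)^{n-1}/n^{s}$ inside $T$ and (subject to the interchange below) write $T = \sum_{n\ge 1}(-1)^{n-1}g(1/n)$, where $g(x) := \sum_{s\ge 1}x^{s}/((s+1)(s+2))$. Partial fractions $\tfrac{1}{(s+1)(s+2)} = \tfrac{1}{s+1}-\tfrac{1}{s+2}$ together with the Mercator series for $-\ln(1-x)$ give the closed form
\[
g(x) = \frac{(1-x)\ln(1-x)}{x^{2}} + \frac{1}{x} - \frac{1}{2}, \qquad g(1)=\tfrac{1}{2}.
\]
Setting $x=1/n$ and simplifying yields $g(1/n) = n - \tfrac{1}{2} - n(n-1)\ln\tfrac{n}{n-1} = A_{n-1}$ for every $n\ge 1$. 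Hence $T = \sum_{n\ge 1}(-1)^{n-1}A_{n-1} = \sum_{j\ge 0}(-1)^{j}A_{j} = 7\zeta(3)/(2\pi^{2})$, which is the second identity.

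The main obstacle is justifying the interchange of summation; straightforward absolute convergence fails because the majorant contains $\sum_{s}\zeta(s)/((s+1)(s+2))$, which diverges through $s=1$. I would argue by finite truncation: for each $N$, the equality
\[
\sum_{s\ge 1}\frac{1}{(s+1)(s+2)}\sum_{n=1}^{N}\frac{(-1)^{n-1}}{n^{s}} = \sum_{n=1}^{N}(-1)^{n-1}g(1/n)
\]
is immediate (it swaps a finite sum with a convergent one). Letting $N\to\infty$, the left side converges to $T$ by dominated convergence: the inner partial sum satisfies $\bigl|\sum_{n=1}^{N}(-1)^{n-1}/n^{s}\bigr|\le 1$ uniformly in $N$ and $s\ge 1$ by the alternating-series estimate, while $1/((s+1)(s+2))$ is summable in $s$. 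The first identity in the statement then follows from the second since $\eta(n+1)\to 1$ and $1/(n+2)\to 0$ force $\eta(n+1)/(n+2)\to 0$; its superior numerical convergence comes from Abel-summing the tail, $\sum_{j>n}\eta(j)/((j+1)(j+2)) = \eta(n+1)/(n+2) + \sum_{k>n+1}(\eta(k)-\eta(k-1))/(k+1)$, where $\eta(k)-\eta(k-1)=O(2^{-k})$ leaves only an exponentially small residual.
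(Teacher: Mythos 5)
Your proof is correct, and it is essentially the paper's double-sum manipulation run in the reverse direction. The paper starts from the preceding corollary's logarithmic formula, expands $\ln\left(1-\frac{1}{j+1}\right)$ in powers of $\frac{1}{j+1}$, swaps the two summations to produce the differences $\eta(k-2)-\eta(k-1)$, and then applies an Abel summation by parts, which makes the first display (the limit with the boundary term $\frac{\eta(n+1)}{n+2}$) appear as an intermediate step before the boundary term is dropped to give the infinite series. You instead start from the target series $\sum_{s\ge 1}\eta(s)/((s+1)(s+2))$, expand each $\eta(s)$ as a Dirichlet series, sum over $s$ first in closed form via $g(x)=\frac{(1-x)\ln(1-x)}{x^{2}}+\frac{1}{x}-\frac{1}{2}$, and observe that $g(1/n)=A_{n-1}$, so the whole sum collapses back onto the previous corollary's alternating series; the limit form of the statement then follows immediately since $\eta(n+1)/(n+2)\to 0$. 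What your route buys: no summation by parts is needed, and the interchange of summations is justified cleanly by finite truncation plus dominated convergence using the uniform bound $\le 1$ on the alternating partial sums of $\eta$ --- a step the paper carries out without comment. What the paper's route buys: the rapidly convergent limit formula with the $\frac{\eta(n+1)}{n+2}$ correction arises organically rather than being appended afterwards (your closing remark, Abel-summing the tail, essentially reconstructs that correction term and explains the note on convergence speed). Both arguments rest equally on the preceding corollary, and your closed-form computation of $g$ and the identification $g(1/n)=A_{n-1}$ check out.
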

\begin{proof}
From previous corollary,
\begin{align}
\zeta(3)&=\frac{1}{7}\pi^2\left(  1+\sum_{j=1}^{\infty}(-1)^j \left(  (2j+1)+2j(j+1)\ln\left(\frac{j}{j+1}\right)\right) \right)\\\notag
&=\frac{1}{7}\pi^2\left(  1+\sum_{j=1}^{\infty}(-1)^j \left(  (2j+1)+2j(j+1)\ln\left(1-\frac{1}{j+1}\right)\right) \right)\\\notag
&=\frac{1}{7}\pi^2\left(  1+\sum_{j=1}^{\infty}(-1)^j \left(  (2j+1)+2j(j+1)\left(-\sum_{k=1}^{\infty} \left(\frac{1}{k(j+1)^k}\right) \right)\right)\right) \\\notag
&=\frac{1}{7}\pi^2\left(  1+\sum_{j=1}^{\infty}(-1)^j\Bigl(1+ 2j\Bigl(1-\sum_{k=1}^{\infty}\frac{1}{k(j+1)^{k-1}}\Bigr) \Bigr)\right)\\\notag
&=\frac{1}{7}\pi^2\lim_{n->\infty}\Bigl(  1+2\sum_{j=1}^{n}(-1)^j\Bigl(\frac{1}{2}+ j-\sum_{k=1}^{n}\Bigl(\frac{1}{k(j+1)^{k-2}}-\frac{1}{k(j+1)^{k-1}}\Bigr)\Bigr) \Bigr)\\\notag
&=\frac{1}{7}\pi^2\lim_{n->\infty}\Bigl(  1+2\sum_{j=1}^{n}(-1)^j\Bigl(\frac{1}{2}+  j-\sum_{k=1}^{2}\Bigl(\frac{1}{k(j+1)^{k-2}}-\frac{1}{k(j+1)^{k-1}}\Bigr)-\sum_{k=3}^{n}\Bigl(\frac{1}{k(j+1)^{k-2}}-\frac{1}{k(j+1)^{k-1}}\Bigr)\Bigr) \Bigr)\\\notag
&=\frac{1}{7}\pi^2\lim_{n->\infty}\Bigl(  1+\sum_{j=1}^{n}(-1)^{j}\Bigl(\frac{1}{j+1}\Bigr)-2\sum_{k=3}^{n}\frac{1}{k}\Bigl(\eta(k-2)-\eta(k-1)\Bigr)\Bigr)\\\notag
&=\frac{1}{7}\pi^2\lim_{n->\infty}\Bigl(  \ln(2)-2\sum_{j=1}^{n}\frac{1}{j+2}\Bigl(\eta(j)-\eta(j+1)\Bigr) \Bigr)\\\notag
&=\frac{1}{7}\pi^2\lim_{n->\infty}\Bigl( \frac{1}{3} \ln(2)+2\Bigl(\frac{1}{n+2}\eta(n+1)+\sum_{j=2}^{n}\frac{1}{(j+1)(j+2)}\eta(j) \Bigr)\Bigr)\\\notag
&=\frac{2}{7}\pi^2\lim_{n->\infty}\Bigl(\frac{1}{n+2}\eta(n+1)+\sum_{j=1}^{n}\frac{1}{(j+1)(j+2)}\eta(j) \Bigr)\\\notag
&=\frac{2}{7}\pi^2\sum_{j=1}^{\infty}\frac{1}{(j+1)(j+2)}\eta(j)\\\notag
\end{align}
\end{proof}
\begin{lemma}
\begin{align}
csc(x)&=\frac{1}{x}+\sum_{j=1}^{\infty}\frac{1}{\pi^{2j}}\left(2-\frac{1}{2^{2(j-1)}}\right)\zeta(2j)x^{2j-1}\\\notag
\end{align}
\end{lemma}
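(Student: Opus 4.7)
The plan is to start from the Mittag--Leffler (partial fraction) expansion of the cosecant, namely
\begin{equation}\notag
\csc(x) = \frac{1}{x} + 2x\sum_{n=1}^{\infty}\frac{(-1)^n}{x^2 - n^2\pi^2},
\end{equation}
which is a standard consequence of the product formula for $\sin$ (or may be obtained by setting $z = x/\pi$ in the usual identity $\pi\csc(\pi z) = 1/z + 2z\sum_{n\ge 1}(-1)^n/(z^2-n^2)$). I would state this as a known fact rather than rederive it.

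Next, for $|x| < \pi$, I would expand each summand as a geometric series in $x^2/(n\pi)^2$,
\begin{equation}\notag
\frac{1}{x^2 - n^2\pi^2} = -\frac{1}{n^2\pi^2}\sum_{k=0}^{\infty}\frac{x^{2k}}{n^{2k}\pi^{2k}},
\end{equation}
giving
\begin{equation}\notag
\csc(x) - \frac{1}{x} = -2\sum_{n=1}^{\infty}(-1)^n\sum_{k=0}^{\infty}\frac{x^{2k+1}}{n^{2k+2}\pi^{2k+2}}.
\end{equation}
The absolute convergence of the double series (uniformly on compact subsets of $|x|<\pi$) justifies interchanging the two sums, yielding
\begin{equation}\notag
\csc(x) - \frac{1}{x} = 2\sum_{k=0}^{\infty}\frac{x^{2k+1}}{\pi^{2k+2}}\sum_{n=1}^{\infty}\frac{(-1)^{n+1}}{n^{2k+2}} = 2\sum_{k=0}^{\infty}\frac{\eta(2k+2)}{\pi^{2k+2}}x^{2k+1}.
\end{equation}

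Finally, reindex with $j = k+1$ and apply the standard identity $\eta(s) = (1 - 2^{1-s})\zeta(s)$. This gives $2\eta(2j) = (2 - 2^{2-2j})\zeta(2j) = (2 - 1/2^{2(j-1)})\zeta(2j)$, producing the stated expansion. The only real technical point is the exchange of summations, which is not much of an obstacle since convergence is absolute on $|x| < \pi$; the bulk of the work is simply the algebraic bookkeeping of exponents and the $\eta$-to-$\zeta$ conversion.
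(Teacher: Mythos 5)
Your argument is correct: the Mittag--Leffler expansion $\csc(x)=\frac{1}{x}+2x\sum_{n\ge1}\frac{(-1)^n}{x^2-n^2\pi^2}$, the geometric expansion of each summand for $|x|<\pi$, the interchange of summations (justified, as you note, by absolute convergence: for fixed $n$ the inner sum is $\frac{|x|}{n^2\pi^2-x^2}$, and these are summable in $n$), and the conversion $2\eta(2j)=\bigl(2-\tfrac{1}{2^{2(j-1)}}\bigr)\zeta(2j)$ all check out, and the exponent bookkeeping after reindexing $j=k+1$ is right. Note, however, that the paper does not prove this lemma at all -- its "proof" is a bare citation to Wesolowski's OEIS entry [7] -- so there is no argument in the paper to compare yours against; you have supplied a self-contained derivation where the author only gave a reference. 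Your route is the natural one, and it also fits the paper's surrounding machinery well: the same partial-fraction/Euler-product idea reappears in Section 3 (Lemma \ref{eulersinlemma} and its corollaries), and your intermediate form $\csc(x)=\frac{1}{x}+2\sum_{j\ge1}\frac{\eta(2j)}{\pi^{2j}}x^{2j-1}$ is exactly the $\eta$-form the paper uses immediately afterwards in the corollaries leading to \ref{etacor2}, so your proof makes explicit a connection the paper leaves implicit.
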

\begin{proof}
Arkadiusz Wesolowski [7]
\end{proof}
\begin{lemma}\label{csclemma2}
\begin{align}
\csc(x)&=\sum_{j=0}^{\infty}(-1)^j\frac{1}{(2j)!}E_{2j}(x-\frac{\pi}{2})^{2j}\\\notag
\end{align}
where $E_n$ is the nth Euler number.
\end{lemma}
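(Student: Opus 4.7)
The plan is to recognize that the claimed series is, essentially, the standard Maclaurin series for $\sec$ in disguise. The key identity is
\[
\sin(x) \;=\; \cos\!\bigl(\tfrac{\pi}{2}-x\bigr) \;=\; \cos\!\bigl(x-\tfrac{\pi}{2}\bigr),
\]
the second equality by evenness of cosine. Taking reciprocals gives $\csc(x)=\sec(x-\tfrac{\pi}{2})$ on the common domain. Once this is in hand, the proof reduces to writing down the Taylor expansion of $\sec$ about $0$ and substituting $u=x-\tfrac{\pi}{2}$.

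For the expansion of $\sec$, I would invoke the standard EGF definition of the Euler numbers $E_n$:
\[
\mathrm{sech}(t)=\sum_{n=0}^{\infty}\frac{E_{n}}{n!}t^{n},\qquad E_{0}=1,\; E_{2}=-1,\; E_{4}=5,\; E_{6}=-61,\ldots,
\]
with $E_{2n+1}=0$. Combining this with the elementary identity $\cos(t)=\cosh(it)$, one has $\sec(t)=\mathrm{sech}(it)$, and expanding gives
\[
\sec(t)=\sum_{n=0}^{\infty}\frac{E_{n}}{n!}(it)^{n}=\sum_{j=0}^{\infty}\frac{(-1)^{j}E_{2j}}{(2j)!}\,t^{2j},
\]
since only even $n$ survive and $i^{2j}=(-1)^{j}$.

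Substituting $t=x-\tfrac{\pi}{2}$ into this Maclaurin series yields exactly the formula in the statement. The expansion is valid wherever it converges, namely the disk $|x-\tfrac{\pi}{2}|<\tfrac{\pi}{2}$ (the nearest singularities of $\sec$ at $\pm\tfrac{\pi}{2}$ translate into the singularities of $\csc$ at $0$ and $\pi$, as expected).

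There is no real obstacle here beyond keeping straight the sign convention for $E_{2j}$: with the signed convention (used above, coming from $\mathrm{sech}$), the factor $(-1)^{j}$ appears explicitly; had we taken the unsigned ``secant numbers'' $|E_{2j}|$ instead, the $(-1)^{j}$ would be absorbed and the formula would look different. Apart from that bookkeeping, every step is a direct substitution.
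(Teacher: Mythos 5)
Your proposal is correct. Note, however, that the paper does not actually prove this lemma at all: its ``proof'' is the single line ``Source: Mathematica,'' i.e.\ a software verification rather than an argument. Your derivation is therefore a genuinely different (and self-contained) route: you reduce the claim to the standard Maclaurin expansion of $\sec$ via the shift identity $\csc(x)=\sec\!\left(x-\tfrac{\pi}{2}\right)$, and obtain that expansion from the exponential generating function $\operatorname{sech}(t)=\sum_{n\ge 0}E_n t^n/n!$ together with $\sec(t)=\operatorname{sech}(it)$, so that $\sec(t)=\sum_{j\ge 0}(-1)^jE_{2j}t^{2j}/(2j)!$; the signs check out against $E_0=1$, $E_2=-1$, $E_4=5$ (giving $1+\tfrac{t^2}{2}+\tfrac{5t^4}{24}+\cdots$), and your care with the signed versus unsigned Euler-number convention is exactly the point where such a formula can silently go wrong. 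Your approach also buys something the paper's citation does not: an explicit region of validity, $\left|x-\tfrac{\pi}{2}\right|<\tfrac{\pi}{2}$, i.e.\ $0<x<\pi$, which is precisely the interval over which the paper later integrates this expansion, so recording it strengthens the subsequent corollaries.
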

\begin{proof}
Source: Mathematica.
\end{proof}
\begin{corollary}
\begin{align}
\frac{1}{7}x(\pi-x)\csc(x)&=\frac{1}{7}\Bigl(\pi-x+\sum_{j=1}^{\infty}\frac{1}{\pi^{2j}}\left(2-\frac{1}{2^{2(j-1)}}\right)\zeta(2j)\left(\pi x^{2j}-x^{2j+1}\right)\Bigr)\\\notag
\end{align}
\end{corollary}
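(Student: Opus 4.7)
The corollary is an immediate algebraic consequence of the preceding lemma, so the ``plan'' is essentially a bookkeeping exercise rather than a genuine proof strategy. I would proceed as follows.

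First, I would invoke the series expansion
$$\csc(x)=\frac{1}{x}+\sum_{j=1}^{\infty}\frac{1}{\pi^{2j}}\left(2-\frac{1}{2^{2(j-1)}}\right)\zeta(2j)\,x^{2j-1}$$
stated in the lemma just above the corollary, and multiply both sides by $\tfrac{1}{7}x(\pi-x)$. The factor $x(\pi-x)$ has been chosen precisely so that it annihilates the singularity of $\csc(x)$ at $x=0$: the leading term $1/x$ gets killed by the $x$, producing the polynomial piece $\tfrac{1}{7}(\pi-x)$, while a generic term $x^{2j-1}$ picks up $x(\pi-x)=\pi x-x^{2}$ to become $\pi x^{2j}-x^{2j+1}$.

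Next, I would collect terms and distribute $\tfrac{1}{7}$ to match the stated form exactly:
\begin{align*}
\tfrac{1}{7}x(\pi-x)\csc(x) &= \tfrac{1}{7}(\pi-x) + \sum_{j=1}^{\infty}\tfrac{1}{7\pi^{2j}}\left(2-\tfrac{1}{2^{2(j-1)}}\right)\zeta(2j)\bigl(\pi x^{2j}-x^{2j+1}\bigr).
\end{align*}
A brief remark on the validity of term-by-term multiplication is warranted: the series for $\csc(x)-1/x$ is a power series with radius of convergence $\pi$, hence absolutely convergent on any compact subset of $(-\pi,\pi)$, so multiplying by the polynomial $\tfrac{1}{7}x(\pi-x)$ preserves convergence and permits the rearrangement.

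There is no genuine obstacle here; the only point requiring any care is keeping the indices and powers straight when the monomial $x^{2j-1}$ is promoted to $\pi x^{2j}-x^{2j+1}$, and verifying that the $1/x$ term yields exactly the polynomial $\tfrac{1}{7}(\pi-x)$ that sits outside the sum in the stated formula. Once these two checks are done, the identity is established.
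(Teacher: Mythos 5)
Your proposal is correct and matches the paper's proof, which simply invokes the preceding lemma (Wesolowski's series for $\csc(x)$) and multiplies through by $\tfrac{1}{7}x(\pi-x)$; you have merely spelled out the bookkeeping and the convergence remark explicitly.
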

\begin{proof}
Arkadiusz Wesolowski's lemma.
\end{proof}
\begin{corollary}
Let
$$f(x)=\frac{1}{7}\int_{0}^{x}(x(\pi-x)) \csc( x)dx\notag$$
then
\begin{align}\label{formula1}
f(x)&=\frac{1}{7}\Bigl(\pi x-\frac{x^2}{2}+\sum_{j=1}^{\infty}\frac{1}{\pi^{2j}}\left(2-\frac{1}{2^{2(j-1)}}\right)\zeta(2j)\left(\frac{\pi x^{2j+1}}{2j+1} -\frac{x^{2j+2}}{2j+2}\right)\Bigr)\\\notag
&=\frac{1}{7}\Bigl(\pi x-\frac{x^2}{2}+2\sum_{j=1}^{\infty}\frac{1}{\pi^{2j}}\eta(2j)\left(\frac{\pi x^{2j+1}}{2j+1} -\frac{x^{2j+2}}{2j+2}\right)\Bigr)\\\notag
\end{align}
\end{corollary}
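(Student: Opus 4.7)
The plan is to simply integrate the power-series identity from the preceding corollary term by term and then rewrite the coefficients in terms of the Dirichlet eta function.

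First, I would take as given the identity
\begin{equation}\notag
\tfrac{1}{7}x(\pi-x)\csc(x)=\tfrac{1}{7}\Bigl(\pi-x+\sum_{j=1}^{\infty}\tfrac{1}{\pi^{2j}}\bigl(2-\tfrac{1}{2^{2(j-1)}}\bigr)\zeta(2j)\bigl(\pi x^{2j}-x^{2j+1}\bigr)\Bigr)
\end{equation}
established in the previous corollary (via Arkadiusz Wesolowski's expansion of $\csc$). Since the Laurent series for $\csc(x)-1/x$ about the origin has radius of convergence $\pi$, the displayed series for $x(\pi-x)\csc(x)$ is a genuine power series convergent on $(-\pi,\pi)$, and in particular converges uniformly on every compact subinterval of that range.

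Second, I would integrate from $0$ to $x$ for $x\in(-\pi,\pi)$. Uniform convergence on $[0,x]$ justifies exchanging the sum and the integral, so I can integrate termwise. The constant $\pi$ integrates to $\pi x$, the linear term $-x$ integrates to $-x^{2}/2$, and each general term integrates via
\begin{equation}\notag
\int_{0}^{x}\bigl(\pi t^{2j}-t^{2j+1}\bigr)\,dt=\frac{\pi x^{2j+1}}{2j+1}-\frac{x^{2j+2}}{2j+2},
\end{equation}
yielding the first asserted formula for $f(x)$.

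Third, to obtain the second form I would invoke the identity $\eta(s)=(1-2^{1-s})\zeta(s)$ specialized to $s=2j$, which gives
\begin{equation}\notag
\Bigl(2-\tfrac{1}{2^{2(j-1)}}\Bigr)\zeta(2j)=2\bigl(1-2^{1-2j}\bigr)\zeta(2j)=2\,\eta(2j),
\end{equation}
and substitute this into the first expression. There is no real obstacle beyond the uniform-convergence justification for the termwise integration, and that is immediate from the radius of convergence of the series being $\pi$; the remaining steps are purely computational.

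\begin{proof}
Integrate the identity of the previous corollary from $0$ to $x$, using uniform convergence of the power series on compact subsets of $(-\pi,\pi)$ to exchange the sum and integral, then apply the identity $\bigl(2-2^{-2(j-1)}\bigr)\zeta(2j)=2\,\eta(2j)$ to pass to the $\eta$ form.
\end{proof}
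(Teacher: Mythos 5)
Your proposal is correct and follows essentially the same route as the paper: the paper's proof is simply termwise integration of the series for $\tfrac{1}{7}x(\pi-x)\csc(x)$ obtained from Wesolowski's expansion of $\csc$, followed by the rewriting $\bigl(2-2^{-2(j-1)}\bigr)\zeta(2j)=2\eta(2j)$. Your added justification of the interchange of sum and integral via uniform convergence on compact subsets of $(-\pi,\pi)$ is a welcome (if routine) extra detail that the paper leaves implicit.
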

\begin{proof}
From previous lemma.
\end{proof}

\begin{corollary}\label{sumpi}
Let
$$f(x,n)=\frac{1}{7}\Bigl(\pi x-\frac{x^2}{2}+2\sum_{j=1}^{n}\frac{1}{\pi^{2j}}\eta(2j)\left(\frac{\pi x^{2j+1}}{2j+1} -\frac{x^{2j+2}}{2j+2}\right)\Bigr)\notag$$
and set
$$a_n=\frac{1}{n!}\frac{d^{n-1}}{dx^{n-1}}\Bigl(\frac{x^n}{f(x,n)^n}\Bigr)|_{x=0}$$
Then if
\begin{align}\notag
g(x)&=\sum_{j=1}^{\infty}a_nx^n\\\notag
&=\frac{7 x}{ \pi }+\frac{49 x^2}{4 \pi ^3}-\frac{343 \left(\pi ^2-9\right) x^3}{72 \pi ^5}-\frac{2401 \left(7 \pi ^2-45\right) x^4}{576 \pi ^7}+\frac{16807 \left(4725-900 \pi ^2+29 \pi ^4\right) x^5}{86400 \pi ^9}+\cdots\\\notag
\end{align}
then 
$$g(\zeta(3))=\pi$$
\end{corollary}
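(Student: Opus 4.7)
The plan is to recognize this as a direct application of the Lagrange inversion theorem. Set $F(x) = \lim_{n\to\infty} f(x,n) = \frac{1}{7}\int_0^x t(\pi-t)\csc(t)\,dt$. The integrand $t(\pi-t)\csc(t)$ extends analytically across $t=0$ (the apparent singularity is removable, with limiting value $\pi$), so $F$ is analytic in a neighborhood of $0$ with $F(0)=0$ and $F'(0)=\pi/7\ne 0$. By Theorem~\ref{cscthm} we have $F(\pi)=\zeta(3)$, so the corollary reduces to showing that $g$ is the local compositional inverse of $F$.

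Lagrange's inversion formula expresses the Taylor coefficients of the analytic inverse as
\begin{align*}
F^{-1}(y) &= \sum_{n\ge 1} b_n\, y^n, \qquad b_n = \frac{1}{n!}\left.\frac{d^{n-1}}{dx^{n-1}}\frac{x^n}{F(x)^n}\right|_{x=0}.
\end{align*}
To identify $b_n$ with $a_n$ as defined in the statement, observe that $(x/F(x))^n$ is analytic at $0$ and its $(n-1)$st derivative there is a polynomial in the Taylor coefficients of $F$ through degree $n$ only. Since $f(\,\cdot\,,n)$ is the polynomial truncation of $F$'s Taylor series at degree $2n+2$, and $2n+2\ge n$, substituting $f(x,n)$ for $F(x)$ inside the formula does not change the value. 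Hence $a_n=b_n$ for every $n\ge 1$, i.e.\ $g$ and $F^{-1}$ agree as formal power series about $0$.

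To promote the formal identity to the asserted numerical equality, I would argue monotonicity plus analytic continuation along the positive real axis. Since $F'(x)=\tfrac{1}{7}x(\pi-x)\csc(x)$ is strictly positive on $(0,\pi)$ and extends continuously to the endpoint value $\pi/7$ at both $0$ and $\pi$, the map $F:[0,\pi]\to[0,\zeta(3)]$ is a real-analytic bijection, and its compositional inverse is single-valued and continuous on $[0,\zeta(3)]$.

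The main obstacle is controlling the radius of convergence of $g$ at $y=\zeta(3)$. Inside the open disk $\{|x|<\pi\}$ the zeros of $\sin$ do not occur except at $0$ (where the factor of $x$ cancels), so $F'$ is nowhere vanishing there; the candidate singularities of $g$ in the $y$-plane are therefore images under $F$ of the boundary points $x=\pm\pi$ (or of genuine poles further out). Since $F(\pi)=\zeta(3)$ sits precisely on the boundary, the argument is delicate: one must either invoke an Abel-type result showing that the Lagrange series converges at its boundary point and evaluates to $F^{-1}(\zeta(3))$, or estimate $\limsup_{n\to\infty}|a_n|^{1/n}$ directly from the Lagrange formula to show that the radius of convergence strictly exceeds $\zeta(3)$, with the final step of equating $g(\zeta(3))$ to $\pi$ following by continuity and the monotone bijectivity of $F$ on $[0,\pi]$.
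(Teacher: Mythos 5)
Your proposal is essentially the paper's own argument: the paper's proof consists of the single instruction to ``apply series reversion'' (citing Morse--Feshbach), which is exactly your identification of $a_n$ with the Lagrange-inversion coefficients of $F^{-1}$ for $F(x)=\frac{1}{7}\int_0^x t(\pi-t)\csc(t)\,dt$, together with $F(\pi)=\zeta(3)$ from Theorem~\ref{cscthm}; your observation that the truncation $f(x,n)$ may replace $F$ because only the Taylor coefficients of $F$ through degree $n$ enter the $n$-th coefficient is a detail the paper leaves implicit. The convergence question you flag --- whether the reverted series $g$ actually converges at $y=\zeta(3)$ and evaluates there to $F^{-1}(\zeta(3))=\pi$ --- is not addressed by the paper at all, so your write-up matches, and in that respect slightly exceeds, the paper's level of rigor.
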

\begin{proof}
Apply series reversion to the function in the previous corollary. See [11].
\end{proof}
\begin{corollary}\label{etacor2}
\begin{align}
\zeta(3)&=\frac{2}{7}\pi^2\Bigl(\frac{1}{4}+\sum_{j=1}^{\infty}\left(\frac{1}{(2j+1)(2j+2)} \right)\eta(2j)\Bigr)\\\notag
&=\frac{2}{7}\pi^2\Bigl(\frac{3}{8}+\sum_{j=1}^{\infty}\left(\frac{2j+3}{2^{2j+1}(2j+1)(2j+2)}\right)\eta(2j)\Bigr)\\\notag
\end{align}
Note: This second series converges very quickly, accurate to 7 decimal places with only 10 terms. This is comparable to approximately 5,000 terms of the series $\sum_{j=1}^{\infty}\frac{1}{j^3}$.
\end{corollary}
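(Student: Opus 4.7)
The plan is to obtain both identities by evaluating the antiderivative $f(x)$ from the preceding corollary at the endpoints $\pi$ and $\pi/2$. By Theorem \ref{cscthm}, $\zeta(3) = \frac{1}{7}\int_0^\pi x(\pi-x)\csc x \, dx = \frac{2}{7}\int_0^{\pi/2} x(\pi-x)\csc x \, dx$, so in the notation of the previous corollary $\zeta(3) = f(\pi) = 2 f(\pi/2)$; the first formula will come from setting $x = \pi$ and the second from $x = \pi/2$.

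For $x = \pi$, the polynomial part of $f$ contributes $\pi^2 - \pi^2/2 = \pi^2/2$, and inside the sum the factor $\pi^{2j}$ in the denominator cancels exactly with the relevant powers of $x$, leaving the bracket $\frac{1}{2j+1} - \frac{1}{2j+2} = \frac{1}{(2j+1)(2j+2)}$. Pulling out the overall factor $2\pi^2/7$ converts the leading $\pi^2/2$ into $1/4$ and produces the first claimed identity directly.

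For $x = \pi/2$, the polynomial part contributes $\pi^2/2 - \pi^2/8 = 3\pi^2/8$. In the sum, $x^{2j+1}/\pi^{2j} = \pi/2^{2j+1}$ and $x^{2j+2}/\pi^{2j} = \pi^2/2^{2j+2}$, so the bracket becomes $\frac{\pi^2}{2^{2j}}\bigl(\frac{1}{2j+1} - \frac{1}{2(2j+2)}\bigr)$; a partial-fraction step gives $\frac{1}{2j+1} - \frac{1}{2(2j+2)} = \frac{2j+3}{2(2j+1)(2j+2)}$. Multiplying through by the extra factor of two (from $\zeta(3) = 2f(\pi/2)$) and regrouping the powers of $2$ yields the promised coefficient $\frac{2j+3}{2^{2j+1}(2j+1)(2j+2)}\eta(2j)$.

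The one delicate point, and the only real obstacle, is the legitimacy of evaluating at $x = \pi$, since the Maclaurin expansion of $\csc$ has radius of convergence exactly $\pi$. This is resolved by observing that the integrand $x(\pi-x)\csc x$ extends continuously to the closed interval $[0,\pi]$, and the tail series $\sum \eta(2j)/((2j+1)(2j+2))$ is dominated by $\sum 1/j^2$ since $\eta(2j) \to 1$, so it converges absolutely. Abel's continuity theorem then guarantees that the value of $f$ at $\pi$ equals the limit of the series, which justifies the substitution and completes both derivations. The $x = \pi/2$ case sits strictly inside the disk of convergence, so no such care is needed there, and this explains why the second series converges so rapidly.
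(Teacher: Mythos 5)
Your proposal is correct and follows essentially the same route as the paper: substitute $x=\pi$ and $x=\pi/2$ into the series for $f(x)$ from the preceding corollary, use $\zeta(3)=f(\pi)=2f(\pi/2)$, and simplify the coefficients (the paper works with $\bigl(2-2^{2-2j}\bigr)\zeta(2j)$ before converting to $\eta(2j)$, which is only cosmetic). Your additional Abel-summation argument justifying the evaluation at the boundary point $x=\pi$ is a welcome piece of rigor that the paper simply omits.
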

\begin{proof}
From previous corollary.
\begin{align}
\zeta(3)&=f(\pi)\\\notag
&=\frac{1}{7}\pi^2\Bigl(\frac{1}{2}+\sum_{j=1}^{\infty}\left(2-\frac{1}{2^{2(j-1)}}\right)\left(\frac{1}{(2j+1)(2j+2)} \right)\zeta(2j)\Bigr)\\\notag
&=\frac{1}{7}\pi^2\Bigl(\frac{1}{2}+\sum_{j=1}^{\infty}\left(2-\frac{2}{2^{2j-1}}\right)\left(\frac{1}{(2j+1)(2j+2)} \right)\zeta(2j)\Bigr)\\\notag
&=\frac{1}{7}\pi^2\Bigl(\frac{1}{2}+2\sum_{j=1}^{\infty}\left(1-\frac{1}{2^{2j-1}}\right)\left(\frac{1}{(2j+1)(2j+2)} \right)\zeta(2j)\Bigr)\\\notag
&=\frac{1}{7}\pi^2\Bigl(\frac{1}{2}+2\sum_{j=1}^{\infty}\left(\frac{1}{(2j+1)(2j+2)} \right)\eta(2j)\Bigr)\\\notag
&=2 f(\pi/2)\\\notag
&=\frac{2}{7}\pi^2\Bigl(\frac{3}{8}+\sum_{j=1}^{\infty}\eta(2j)\left(\frac{1}{2^{2j+1}(2j+1)} -\frac{1}{2^{2j+2}(2j+2)}\right)\Bigr)\\\notag
&=\frac{2}{7}\pi^2\Bigl(\frac{3}{8}+\sum_{j=1}^{\infty}\left(\frac{1}{2^{2j+1}}\right)\eta(2j)\left(\frac{2j+3}{(2j+1)(2j+2)}\right)\Bigr)\\\notag
&=\frac{2}{7}\pi^2\Bigl(\frac{3}{8}+\sum_{j=1}^{\infty}\left(\frac{2j+3}{2^{2j+1}(2j+1)(2j+2)}\right)\eta(2j)\Bigr)\\\notag
\end{align}
\end{proof}
\begin{corollary}
$$\sum_{j=1}^{\infty}\frac{1}{(2j)(2j+1)} \eta(2j-1)=\frac{1}{4}\notag$$
\end{corollary}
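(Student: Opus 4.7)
The plan is to extract this identity by comparing the two distinct series representations for $\zeta(3)$ already established in the paper, namely the ``all indices'' series of Corollary \ref{etacor1} and the ``even indices only'' series of Corollary \ref{etacor2}. Both corollaries express $\frac{7}{2\pi^2}\zeta(3)$ as an $\eta$-weighted sum, so equating them and cancelling the common piece should collapse everything down to a statement about $\eta$ at odd arguments alone, which is exactly the desired identity.

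First I would write down the two expressions side by side:
\begin{align*}
\frac{7}{2\pi^2}\zeta(3) &= \sum_{j=1}^{\infty}\frac{1}{(j+1)(j+2)}\eta(j), \\
\frac{7}{2\pi^2}\zeta(3) &= \frac{1}{4}+\sum_{j=1}^{\infty}\frac{1}{(2j+1)(2j+2)}\eta(2j).
\end{align*}
Then I would split the first sum by parity of $j$. The even-index contribution ($j=2k$) gives exactly the sum $\sum_{k\ge 1}\frac{1}{(2k+1)(2k+2)}\eta(2k)$ appearing on the right-hand side of the second formula. The odd-index contribution ($j=2k-1$) gives $\sum_{k\ge 1}\frac{1}{(2k)(2k+1)}\eta(2k-1)$, which is the sum we wish to evaluate. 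Subtracting the two expressions cancels the common even-index block and leaves
\[
\sum_{k=1}^{\infty}\frac{1}{(2k)(2k+1)}\eta(2k-1)=\frac{1}{4},
\]
which is the claim.

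The only subtle point is legitimacy of the rearrangement: I must confirm that the original series $\sum_{j\ge 1}\eta(j)/((j+1)(j+2))$ converges absolutely so that the split into even and odd subseries is valid. Since $\eta(j)\to 1$ monotonically and $1/((j+1)(j+2))=O(j^{-2})$, absolute convergence is immediate and no regrouping issues arise. Thus the main obstacle is not really an obstacle at all — assuming Corollaries \ref{etacor1} and \ref{etacor2} as stated, the identity follows by direct subtraction and parity bookkeeping, with no new analytic input needed.
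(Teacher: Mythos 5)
Your argument is correct and is exactly the paper's intended route: the paper's proof simply cites Corollaries \ref{etacor1} and \ref{etacor2}, and your subtraction with the parity split (plus the absolute-convergence remark justifying it) is the spelled-out version of that comparison. Nothing further is needed.
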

\begin{proof}
From corollaries \ref{etacor1} and \ref{etacor2}.
\end{proof}
\begin{corollary}
$$\sum_{j=1}^{\infty}\frac{1}{(2j)(2j+1)} \eta(2j)=\frac{1}{2}\left(1-\ln\left(\frac{\pi^2}{2}\right)\right)\notag$$
\end{corollary}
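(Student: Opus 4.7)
My plan is to convert the sum to a definite integral via a generating function, and then reduce that integral to classical log-trig pieces.

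\emph{Step 1 (generating function).} Starting from $\eta(2j)=(1-2^{1-2j})\zeta(2j)$ together with the identity $\sum_{j\ge1}\zeta(2j)x^{2j}/j=-\ln(\sin(\pi x)/(\pi x))$ (obtained by taking the logarithm of the Euler product for $\sin$ and expanding), applied at the arguments $x$ and $x/2$ and combined with the half-angle identity $\sin(\pi x)=2\sin(\pi x/2)\cos(\pi x/2)$, I would establish
\begin{equation*}
\sum_{j=1}^{\infty}\frac{\eta(2j)}{j}x^{2j}=\ln\!\left(\frac{\tan(\pi x/2)}{\pi x/2}\right),\qquad |x|<1.
\end{equation*}

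\emph{Step 2 (sum to integral).} Let $F(x)=\sum_{j=1}^{\infty}\eta(2j)x^{2j+1}/((2j)(2j+1))$. Because the coefficients are $O(1/j^2)$, the series converges absolutely on $[-1,1]$, and by Abel's theorem the target equals $F(1)$. Termwise differentiation gives $F'(x)=\tfrac12\ln(\tan(\pi x/2)/(\pi x/2))$, whose only singularity on $[0,1]$ is an integrable logarithm at $x=1$, so
\begin{equation*}
F(1)=\int_0^1 F'(x)\,dx=\frac{1}{\pi}\int_0^{\pi/2}\ln\!\left(\frac{\tan u}{u}\right)du
\end{equation*}
after the substitution $u=\pi x/2$.

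\emph{Step 3 (evaluation).} Splitting the integrand as $\ln\tan u-\ln u$, the substitution $u\mapsto\pi/2-u$ shows that $\int_0^{\pi/2}\ln\tan u\,du=0$, while $\int_0^{\pi/2}\ln u\,du=(\pi/2)\ln(\pi/2)-\pi/2$ is elementary. Assembling these two pieces with the $1/\pi$ prefactor produced in Step 2 yields $F(1)$ in closed form.

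The main obstacle I anticipate is the final arithmetic in Step 3: namely reconciling the constants produced by the two elementary log integrals with the right-hand side $\tfrac12(1-\ln(\pi^2/2))$ claimed in the corollary. This requires careful tracking of the factors of $\pi$ introduced by the Jacobian $u=\pi x/2$ and by the antiderivative of $\ln u$. Secondary technical points are the justification of termwise differentiation of $F$ on $(0,1)$ and of the passage to the limit $x\to 1^-$, both of which follow from Abel's theorem together with dominated convergence near the integrable log singularity. A numerical comparison of the first few partial sums with the claimed value would serve as a useful safeguard against any sign or factor-of-$\pi$ slip before finalizing the identity.
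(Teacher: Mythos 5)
Your route is sound and genuinely different from the paper's. The paper obtains this identity by subtracting two of its earlier series representations of $\zeta(3)$ --- Corollary \ref{etacor2}, $\zeta(3)=\frac{2}{7}\pi^2\bigl(\frac14+\sum_{j\ge1}\frac{\eta(2j)}{(2j+1)(2j+2)}\bigr)$, and Corollary \ref{latecor}, $\zeta(3)=\frac{1}{7}\pi^2\bigl(\ln(\pi/2)-\frac12+\sum_{j\ge1}\frac{\eta(2j)}{j(j+1)}\bigr)$ --- and then collapsing the difference with the partial-fraction identity $\frac{4}{2j(2j+2)}-\frac{2}{(2j+1)(2j+2)}=\frac{2}{2j(2j+1)}$. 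Your proof is instead self-contained: the generating function $\sum_{j\ge1}\eta(2j)x^{2j}/j=\ln\bigl(\tan(\pi x/2)/(\pi x/2)\bigr)$ is correctly derived from Euler's product and the half-angle identity, the passage $F(1)=\int_0^1F'$ is legitimately justified by Abel's theorem together with the integrability of the logarithmic singularity at $x=1$, and both integrals in Step 3 are classical. What your approach buys is independence from the $\zeta(3)$ machinery of Section 2; what the paper's buys is a two-line derivation given its earlier corollaries.

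The ``obstacle'' you anticipate in Step 3 is real, but it is not a flaw in your derivation. Completing the arithmetic gives
\begin{equation*}
F(1)=\frac{1}{\pi}\Bigl(0-\Bigl(\frac{\pi}{2}\ln\frac{\pi}{2}-\frac{\pi}{2}\Bigr)\Bigr)=\frac{1}{2}\Bigl(1-\ln\frac{\pi}{2}\Bigr),
\end{equation*}
not $\frac12\bigl(1-\ln(\pi^2/2)\bigr)$. The right-hand side as printed in the corollary is a typo: $\frac12\bigl(1-\ln(\pi^2/2)\bigr)\approx-0.298$ is negative, while every term of the sum is positive (numerically the sum is $\approx 0.2742$, which matches $\frac12(1-\ln(\pi/2))$). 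The paper's own subtraction of the two $\zeta(3)$ series likewise produces $\ln(\pi/2)$, and the identity is recorded with $\ln(\pi/2)$ in the Partial List Of Results. So finish the arithmetic of Step 3 and you have a complete proof of the corrected statement; no further work is needed beyond the routine justifications you already identify.
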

\begin{proof}
From corollaries \ref{etacor2} and \ref{latecor}.
\end{proof}
\begin{corollary}
$$\zeta(3)=\frac{80\pi^2}{280-3\pi^2}\Bigl(\frac{\pi^2}{144}+\frac{\ln(2)}{6}+\sum_{j=4}^{\infty}\frac{2^{j-1}-1}{2^{j-1}(j+1)(j+2)}\zeta(j)\Bigr)\notag$$
\end{corollary}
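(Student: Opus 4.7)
The plan is to deduce this from Corollary \ref{etacor1}, which states
$$\zeta(3)=\frac{2\pi^2}{7}\sum_{j=1}^{\infty}\frac{1}{(j+1)(j+2)}\eta(j),$$
by isolating the first three terms of the sum and using their closed forms to self-reference $\zeta(3)$.

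First I would rewrite the series in the target identity so that it matches the series of Corollary \ref{etacor1}. The key observation is that
$$\frac{2^{j-1}-1}{2^{j-1}}\zeta(j)=\left(1-2^{1-j}\right)\zeta(j)=\eta(j),$$
so the tail sum appearing in the statement is exactly $\sum_{j=4}^{\infty}\frac{\eta(j)}{(j+1)(j+2)}$.

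Next I would split the series of Corollary \ref{etacor1} as
$$\sum_{j=1}^{\infty}\frac{\eta(j)}{(j+1)(j+2)}=\frac{\eta(1)}{6}+\frac{\eta(2)}{12}+\frac{\eta(3)}{20}+\sum_{j=4}^{\infty}\frac{\eta(j)}{(j+1)(j+2)},$$
and substitute the known values $\eta(1)=\ln 2$, $\eta(2)=\pi^{2}/12$, and $\eta(3)=\tfrac{3}{4}\zeta(3)$. This produces
$$\frac{7\zeta(3)}{2\pi^2}=\frac{\ln 2}{6}+\frac{\pi^{2}}{144}+\frac{3\zeta(3)}{80}+\sum_{j=4}^{\infty}\frac{\eta(j)}{(j+1)(j+2)}.$$

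Finally I would move the $\tfrac{3\zeta(3)}{80}$ term to the left-hand side, giving a coefficient $\tfrac{7}{2\pi^{2}}-\tfrac{3}{80}=\tfrac{280-3\pi^{2}}{80\pi^{2}}$, and invert to solve for $\zeta(3)$, which yields exactly the stated formula. There is no real obstacle here — the argument is purely algebraic manipulation of the series from Corollary \ref{etacor1} — the only point to watch is the bookkeeping of the constant $\tfrac{280-3\pi^{2}}{80\pi^{2}}$ and the verification that the substitution $\eta(3)=\tfrac{3}{4}\zeta(3)$ together with the reindexing in terms of $\zeta(j)$ versus $\eta(j)$ is carried out consistently.
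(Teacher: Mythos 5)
Your proposal is correct and follows essentially the same route as the paper: both start from Corollary \ref{etacor1}, peel off the $j=1,2,3$ terms using $\eta(1)=\ln 2$, $\eta(2)=\pi^2/12$, $\eta(3)=\tfrac{3}{4}\zeta(3)$ (so the tail in $\eta$ equals the stated tail in $\zeta$), and solve the resulting self-referential equation for $\zeta(3)$ with coefficient $\tfrac{280-3\pi^2}{80\pi^2}$. No gaps.
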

\begin{proof}
From previous corollary,
\begin{align}
\zeta(3)&=\frac{2}{7}\pi^2\sum_{j=1}^{\infty}\frac{1}{(j+1)(j+2)}\eta(j)\\\notag
&=\frac{2}{7}\pi^2\Bigl(\frac{\ln(2)}{6}+\sum_{j=2}^{\infty}\frac{1}{(j+1)(j+2)}\left(1-\frac{1}{2^{j-1}}\right)\zeta(j)\Bigr)\\\notag
&=\frac{2}{7}\pi^2\Bigl(\frac{\pi^2}{144}+\frac{\ln(2)}{6}+\frac{3}{80}\zeta(3)+\sum_{j=4}^{\infty}\frac{1}{(j+1)(j+2)}\left(1-\frac{1}{2^{j-1}}\right)\zeta(j)\Bigr)\\\notag
\end{align}
\end{proof}
\begin{corollary}
$$\zeta(3)=\frac{80\pi^2}{280+3\pi^2}\left(\frac{\pi^2}{144}-\frac{\ln(2)}{6}+\frac{1}{2}+\sum_{j=4}^{\infty}(-1)^{j}\frac{2^{j-1}-1}{2^{j-1} (j+1)(j+2)}\zeta(j)\right)$$
\end{corollary}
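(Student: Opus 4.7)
The plan is to imitate the derivation of the preceding corollary, but to start from the \emph{alternating} version of the $\eta$-series identity. Corollary~\ref{etacor1} gives
\[
\sum_{j=1}^{\infty}\frac{\eta(j)}{(j+1)(j+2)} = \frac{7\,\zeta(3)}{2\pi^{2}},
\]
the first formula of Corollary~\ref{etacor2} gives the even-indexed subseries
\[
\sum_{j=1}^{\infty}\frac{\eta(2j)}{(2j+1)(2j+2)} = \frac{7\,\zeta(3)}{2\pi^{2}} - \frac{1}{4},
\]
and the immediately preceding corollary gives the odd-indexed subseries
$\sum_{j=1}^{\infty}\eta(2j-1)/[2j(2j+1)] = \tfrac14$. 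Subtracting odd from even collapses to
\[
\sum_{j=1}^{\infty}\frac{(-1)^{j}\,\eta(j)}{(j+1)(j+2)} = \frac{7\,\zeta(3)}{2\pi^{2}} - \frac{1}{2},
\]
which I rewrite as
$\zeta(3) = \tfrac{2\pi^{2}}{7}\bigl[\tfrac{1}{2} + \sum_{j\geq 1}(-1)^{j}\eta(j)/[(j+1)(j+2)]\bigr]$.

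From here I peel off the $j=1,2,3$ terms using $\eta(1)=\ln 2$, $\eta(2)=\pi^{2}/12$, and $\eta(3)=\tfrac{3}{4}\zeta(3)$; they contribute $-\ln(2)/6$, $+\pi^{2}/144$, and $-3\zeta(3)/80$ respectively. Moving the $\zeta(3)$ piece to the left side produces the factor $1+3\pi^{2}/280=(280+3\pi^{2})/280$ — note that the sign flip relative to the preceding corollary's $280-3\pi^{2}$ is exactly the effect of the $(-1)^{3}$ on the $j=3$ term. Solving for $\zeta(3)$ gives
\[
\zeta(3)=\frac{80\pi^{2}}{280+3\pi^{2}}\Bigl[\tfrac{\pi^{2}}{144}-\tfrac{\ln(2)}{6}+\tfrac{1}{2}+\sum_{j=4}^{\infty}\tfrac{(-1)^{j}\eta(j)}{(j+1)(j+2)}\Bigr],
\]
and substituting $\eta(j)=(2^{j-1}-1)\zeta(j)/2^{j-1}$ for $j\geq 4$ delivers the claimed form.

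There is no real obstacle here beyond careful bookkeeping; the only subtlety is confirming that the constant $\tfrac{1}{2}=\tfrac{1}{4}+\tfrac{1}{4}$ from the even-minus-odd split survives after the $j=1,2$ terms are stripped, and lands in the parentheses exactly as $+\tfrac{1}{2}$, together with the sign-flipped $\ln 2$ and the unchanged $\pi^{2}/144$.
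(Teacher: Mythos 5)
Your derivation is correct, but it follows a genuinely different route from the paper's. The paper proves this corollary by returning to the integral $\zeta(3)=\frac{1}{7}\int_0^\pi x(\pi-x)\csc(x)\,dx$: it writes the antiderivative as the alternating sum of logarithmic terms, expands each summand as a power series in $x$, regroups the coefficients of $x^k$ (which involve $\zeta(k-2)$ and $\zeta(k-1)$, with $\zeta(3)$ hiding in the $x^4$ and $x^5$ coefficients), evaluates at $x=\pi$, and solves for $\zeta(3)$, the factor $280+3\pi^2$ arising from moving those $\zeta(3)$-bearing coefficients to the left. You never revisit the integral: you build the alternating identity $\sum_{j\ge1}(-1)^j\eta(j)/((j+1)(j+2))=\frac{7\zeta(3)}{2\pi^2}-\frac12$ by subtracting the odd-index sum $\sum_{j\ge1}\eta(2j-1)/(2j(2j+1))=\frac14$ from the even-index sum in Corollary \ref{etacor2} (legitimate, since the full series converges absolutely), and then repeat, with signs, the same three-term peeling the paper used for the $280-3\pi^2$ corollary; this makes transparent that the only changes are the sign flips on the $\eta(1)$ and $\eta(3)$ terms (hence $-\ln(2)/6$ and $280-3\pi^2\to 280+3\pi^2$) and the constant $\frac12$. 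Your route is shorter and stays entirely at the level of already-proved $\eta$-series; the paper's computation is heavier but is self-contained from the integral representation and yields the intermediate form with $\frac{7\pi^4}{21600}$ as a by-product. A side benefit of your intermediate step: it shows that the unnumbered display later in the paper, $\zeta(3)=\frac{2}{7}\pi^2\sum_{j\ge1}(-1)^j\frac{1}{(j+1)(j+2)}\eta(j)$, is missing the additive constant $\frac12$ that your version correctly carries.
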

\begin{proof}
From previous corollary,
\begin{align}\notag
\zeta(3)&=\frac{1}{7}\int_{0}^{\pi}(x(\pi-x)) \csc( x)dx\\\notag
&=\frac{1}{7}\sum_{j=0}^{\infty}(-1)^j\left(   x((1+j)\pi-\frac{x}{2})-j(j+1)\pi^2\ln(x+j \pi)\right)|_{x=0}^{x=\pi}\\\notag
\end{align}
We take each summand as a function of $x$ of the form
$$(-1)^j\left(   x((1+j)\pi-\frac{x}{2})-j(j+1)\pi^2\ln(x+j \pi)\right)|_{x=0}^{x=x}\notag$$
and expand it as a power series to get
$$\zeta(3)=\frac{1}{7}\left(   \pi x-\frac{1}{2}x^2 +\sum_{j=1}^{\infty}(-1)^j\left(  \frac{1}{2j}x^2+\sum_{k=3}^{\infty}(-1)^k\frac{j+1}{k j^{k-1}\pi^{k-2}}x^k  \right)    \right)|_{x=\pi}$$
We then sum the terms that comprise the coefficient of $x^k$
\begin{align}\notag
\zeta(3)&=\frac{2}{7}\Bigl(\pi x-\frac{1}{2}(1+\ln(2))x^2+\frac{\pi^2+12\ln(2)}{36\pi}x^3-\frac{\pi^2+9\zeta(3)}{48\pi^2}x^4+\frac{7\pi^4+540\zeta(3)}{3600\pi^3}x^5\\\notag
&+\sum_{k=6}^{\infty}(-1)^{k+1}\frac{1}{k 2^{k-2}\pi^{k-2}}\left( 2\left(2^{k-3}-1\right)\zeta(k-2)+\left(2^{k-2}-1\right) \zeta(k-1)  \right)x^k\Bigr)|_{x=\pi}\\\notag
\end{align}
We then solve for $\zeta(3)$.
\begin{align}\notag
\zeta(3)&=\frac{80\pi^2}{280+3\pi^2}\left(\frac{\pi^2}{144}+\frac{7\pi^4}{21600}-\frac{1}{6}(\ln(2)-3)+\sum_{j=6}^{\infty}(-1)^{j+1}\frac{2^j-4}{2^j j(j+1)}\zeta(j-1)\right)\\\notag
&=\frac{80\pi^2}{280+3\pi^2}\left(\frac{\pi^2}{144}-\frac{\ln(2)}{6}+\frac{1}{2}+\sum_{j=4}^{\infty}(-1)^{j}\frac{2^{j-1}-1}{2^{j-1} (j+1)(j+2)}\zeta(j)\right)\\\notag
\end{align}
\end{proof}
\begin{corollary}
$$\zeta(3)=\frac{80}{3}\Bigl(\frac{1}{4}-\frac{\ln(2)}{6}-\sum_{j=2}^{\infty}\frac{1}{(2j+2)(2j+3)}\eta(2j+1)\Bigr)\notag$$
\end{corollary}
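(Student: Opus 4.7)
The plan is to extract the $\zeta(3)$ term from the series identity in the immediately preceding corollary and solve for it algebraically. Specifically, I would take the identity
$$\sum_{j=1}^{\infty}\frac{1}{(2j)(2j+1)}\eta(2j-1)=\frac{1}{4}$$
and reindex with $k = j - 1$, so that $2j = 2k+2$ and $2j+1 = 2k+3$, while the argument $2j-1 = 2k+1$. This puts the identity into the shape
$$\sum_{k=0}^{\infty}\frac{1}{(2k+2)(2k+3)}\eta(2k+1)=\frac{1}{4},$$
whose summation range now matches (up to the first two terms) the summation range in the target formula.

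Next I would peel off the $k=0$ and $k=1$ terms. The $k=0$ term contributes $\eta(1)/(2\cdot 3) = \ln(2)/6$, using $\eta(1)=\ln 2$. The $k=1$ term contributes $\eta(3)/(4\cdot 5) = \eta(3)/20$. Thus
$$\frac{\ln(2)}{6} + \frac{\eta(3)}{20} + \sum_{k=2}^{\infty}\frac{\eta(2k+1)}{(2k+2)(2k+3)} = \frac{1}{4}.$$

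Finally, I would solve for $\eta(3)$ and then for $\zeta(3)$ using the elementary relation $\eta(3) = (1 - 2^{-2})\zeta(3) = \tfrac{3}{4}\zeta(3)$, so that $\zeta(3) = \tfrac{80}{3}\cdot\tfrac{\eta(3)}{20}$. Substituting gives exactly
$$\zeta(3) = \frac{80}{3}\Bigl(\frac{1}{4} - \frac{\ln(2)}{6} - \sum_{j=2}^{\infty}\frac{1}{(2j+2)(2j+3)}\eta(2j+1)\Bigr),$$
after relabeling the summation index from $k$ back to $j$. There is no real obstacle: once the reindexing is noticed, the result follows by isolating one term of a previously established series. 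The only point requiring a small amount of care is the bookkeeping of the index shift and the use of the correct value $\eta(3) = \tfrac{3}{4}\zeta(3)$ rather than, say, $\eta(3) = (1-2^{-3})\zeta(3)$.
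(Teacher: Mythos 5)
Your proposal is correct, but it is not the route the paper takes. You start from the earlier identity $\sum_{j\ge 1}\frac{1}{(2j)(2j+1)}\eta(2j-1)=\frac14$ (which, incidentally, is not the immediately preceding corollary but appears several corollaries earlier, proved there from Corollaries \ref{etacor1} and \ref{etacor2}), reindex it as $\sum_{k\ge 0}\frac{\eta(2k+1)}{(2k+2)(2k+3)}=\frac14$, peel off the $k=0$ and $k=1$ terms using $\eta(1)=\ln 2$ and $\eta(3)=\tfrac34\zeta(3)$, and solve; the index bookkeeping and the factor $\tfrac{80}{3}=\tfrac43\cdot 20$ all check out. The paper instead takes the two preceding corollaries, $\zeta(3)=\frac{80\pi^2}{280+3\pi^2}(\cdots)$ with alternating signs and $\zeta(3)=\frac{80\pi^2}{280-3\pi^2}(\cdots)$ without, and forms the linear combination that cancels the even-index $\zeta(j)$ terms and the $\pi^2/144$ constants, leaving exactly the odd-index $\eta(2j+1)$ tail and eliminating the $\pi^2$-dependent prefactor. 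The two derivations share ancestry (both ultimately rest on the $\eta$-series corollaries of Theorem \ref{cscthm}), but yours is the shorter and more transparent one: a single previously established constant identity plus the elementary relation $\eta(3)=(1-2^{1-3})\zeta(3)=\tfrac34\zeta(3)$, with no need to juggle the $280\pm 3\pi^2$ denominators; the paper's route, on the other hand, makes visible why the alternating and non-alternating expansions jointly encode the odd-$\eta$ tail. Your closing caution is also the right one to flag, since writing $\eta(3)=(1-2^{-3})\zeta(3)$ would silently change the constant $\tfrac{80}{3}$.
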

\begin{proof}
We have
\begin{align}\label{form1}
\zeta(3)&=\frac{80\pi^2}{280+3\pi^2}\left(\frac{\pi^2}{144}-\frac{\ln(2)}{6}+\frac{1}{2}+\sum_{j=4}^{\infty}(-1)^{j}\frac{2^{j-1}-1}{2^{j-1} (j+1)(j+2)}\zeta(j)\right)\\\label{form2}
&=\frac{80\pi^2}{280-3\pi^2}\Bigl(\frac{\pi^2}{144}+\frac{\ln(2)}{6}+\sum_{j=4}^{\infty}\frac{2^{j-1}-1}{2^{j-1}(j+1)(j+2)}\zeta(j)\Bigr) \\\notag
\end{align}
Multiply \ref{form2} by $$\frac{280-3\pi^2}{280+3\pi^2}$$
then subtract \ref{form2} from \ref{form1} and multiply the result by $$\frac{1}{1+\frac{280-3\pi^2}{280+3\pi^2}}$$
\end{proof}
Following the pattern of some of these corollaries, various infinite sum representations of $\zeta(3)$ can be found. E.g.,
\begin{align}
\zeta(3)&=\frac{2}{7}\pi^2\sum_{j=1}^{\infty}(-1)^j \frac{1}{(j+1)(j+2)}\eta(j)\\\notag
&=\frac{1}{3}\pi^2\Bigl(-\frac{1}{4}+\sum_{j=1}^{\infty}\frac{1}{(j+1)(j+2)}\zeta(2j)\Bigr)\\\notag
&=-4-3\gamma+36\ln(A)-6\sum_{j=2}^{\infty}\frac{1}{(j+1)(j+2)}\zeta(2j+1)\\\notag
&=-\frac{5}{18}\pi^2-\frac{10}{3}\gamma+40\ln(A)-20\sum_{j=4}^{\infty}\frac{1}{(j+1)(j+2)}\zeta(j)\\\notag
\end{align}
where $\gamma$ is the Euler gamma constant and $A$ is the Glaisher constant.
\begin{proposition}
\begin{align}
\int_{x=0}^{x}\frac{1}{7}x(\pi-x)\csc(x)dx&=\frac{\zeta(3)}{2}+\frac{1}{7}\Bigl(\frac{\pi^2}{4}\left(x-\frac{\pi}{2}\right)\\\notag
&+\sum_{j=1}^{\infty}(-1)^j\frac{1}{2j+1}\left(\frac{\pi^2}{4(2j)!}E_{2j}+\frac{1}{(2j-2)!}E_{2j-2}\right)(x-\frac{\pi}{2})^{2j+1}\Bigr)\\\notag
\end{align}
\end{proposition}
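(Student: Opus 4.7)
The plan is to expand $\csc(x)$ about $x=\pi/2$ using Lemma \ref{csclemma2}, multiply by the polynomial $x(\pi-x)$ rewritten as a polynomial in $u := x-\pi/2$, integrate termwise from $\pi/2$ to $x$, and then use Theorem \ref{cscthm} to identify the constant of integration coming from $\int_0^{\pi/2}$.

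First I would observe the algebraic identity $x(\pi-x) = \tfrac{\pi^2}{4} - (x-\pi/2)^2$, so that setting $u = x - \pi/2$ gives
\begin{align*}
x(\pi-x)\csc(x) &= \Bigl(\tfrac{\pi^2}{4} - u^2\Bigr)\sum_{j=0}^\infty (-1)^j \frac{E_{2j}}{(2j)!}\, u^{2j}.
\end{align*}
Distributing and re-indexing the $u^2$ factor (so that $u^{2j+2}$ in the second piece becomes $u^{2k}$ with $k = j+1$, with a sign flip from $(-1)^j = -(-1)^{k}$) combines both series into a single even power series. Using $E_0 = 1$, the constant term is $\pi^2/4$, and for $j\ge 1$ the coefficient of $u^{2j}$ is exactly
\[
(-1)^j\left(\frac{\pi^2 E_{2j}}{4(2j)!} + \frac{E_{2j-2}}{(2j-2)!}\right),
\]
which matches the shape of the claimed expansion.

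Next I would integrate this power series in $u$ termwise from $\pi/2$ to $x$ (equivalently, from $u=0$ to $u=x-\pi/2$), which produces the stated odd-power series multiplied by the $1/(2j+1)$ factors and the linear term $(\pi^2/4)(x-\pi/2)$. Dividing by $7$ yields
\[
\frac{1}{7}\int_{\pi/2}^{x}t(\pi-t)\csc(t)\,dt \;=\; \frac{1}{7}\Bigl(\tfrac{\pi^2}{4}(x-\pi/2) + \sum_{j=1}^\infty (-1)^j \tfrac{1}{2j+1}\bigl(\tfrac{\pi^2 E_{2j}}{4(2j)!} + \tfrac{E_{2j-2}}{(2j-2)!}\bigr)(x-\pi/2)^{2j+1}\Bigr).
\]
Finally, splitting $\int_0^x = \int_0^{\pi/2} + \int_{\pi/2}^x$ and invoking Theorem \ref{cscthm} in the form $\tfrac{1}{7}\int_0^{\pi/2} t(\pi-t)\csc(t)\,dt = \zeta(3)/2$ gives the claimed identity.

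The only non-routine point is justifying termwise integration: this needs the $\csc$ Taylor series to converge on a neighborhood of $[\pi/2, x]$ lying strictly inside $(0,\pi)$. Since the nearest singularities of $\csc$ are at $0$ and $\pi$, the Taylor series about $\pi/2$ has radius of convergence $\pi/2$, so the identity holds for $x \in (0,\pi)$; uniform convergence on compact subsets lets us integrate termwise without further comment. Everything else is bookkeeping on indices and signs.
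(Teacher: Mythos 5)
Your proposal is correct and follows essentially the same route as the paper: expand $\csc$ about $\pi/2$ via Lemma \ref{csclemma2}, rewrite $x(\pi-x)=\tfrac{\pi^2}{4}-(x-\tfrac{\pi}{2})^2$, integrate the resulting even series termwise, and pin down the constant with $\tfrac{1}{7}\int_0^{\pi/2}t(\pi-t)\csc(t)\,dt=\zeta(3)/2$ from Theorem \ref{cscthm}. The only (cosmetic) difference is that you integrate from $\pi/2$ and split the integral, whereas the paper integrates from $0$ and cancels the boundary terms afterward; your remark on the radius of convergence is a small justification the paper omits.
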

\begin{proof}
From lemma \ref{csclemma2}
\begin{align}
\frac{1}{7}x(\pi-x)\csc(x)&=\frac{1}{7}x(\pi-x)\sum_{j=0}^{\infty}(-1)^j\frac{1}{(2j)!}E_{2j}(x-\frac{\pi}{2})^{2j}\\\notag
&=\frac{1}{7}\left(\frac{\pi^2}{4}-\left(x-\frac{\pi}{2}\right)^2\right)\sum_{j=0}^{\infty}(-1)^j\frac{1}{(2j)!}E_{2j}(x-\frac{\pi}{2})^{2j}\\\notag
&=\frac{1}{7}\Bigl(\frac{\pi^2}{4}+\sum_{j=1}^{\infty}(-1)^j\left(\frac{\pi^2}{4(2j)!}E_{2j}+\frac{1}{(2j-2)!}E_{2j-2}\right)(x-\frac{\pi}{2})^{2j}\Bigr)\\\notag
\end{align}
$\implies$
\begin{align}
\int_{x=0}^{x}\frac{1}{7}x(\pi-x)\csc(x)dx&=\frac{1}{7}\Bigl(\frac{\pi^3}{8}+\frac{\pi^2}{4}\left(x-\frac{\pi}{2}\right)\\\notag
&+\sum_{j=1}^{\infty}(-1)^j\frac{1}{2j+1}\left(\frac{\pi^2}{4(2j)!}E_{2j}+\frac{1}{(2j-2)!}E_{2j-2}\right)(x-\frac{\pi}{2})^{2j+1}\Bigr)\\\notag
&+\frac{1}{7}\Bigl(\sum_{j=1}^{\infty}(-1)^{j+1}\frac{1}{2j+1}\left(\frac{\pi^2}{4(2j)!}E_{2j}+\frac{1}{(2j-2)!}E_{2j-2}\right)(\frac{\pi}{2})^{2j+1}\Bigr)\\\notag
\end{align}
But we know that
$$\int_{x=0}^{\frac{\pi}{2}}\frac{1}{7}x(\pi-x)\csc(x)dx=\frac{\zeta(3)}{2}$$\notag Hence
\begin{align}
\int_{x=0}^{x}\frac{1}{7}x(\pi-x)\csc(x)dx&=\frac{\zeta(3)}{2}+\frac{1}{7}\Bigl(\frac{\pi^2}{4}\left(x-\frac{\pi}{2}\right)\\\notag
&+\sum_{j=1}^{\infty}(-1)^j\frac{1}{2j+1}\left(\frac{\pi^2}{4(2j)!}E_{2j}+\frac{1}{(2j-2)!}E_{2j-2}\right)(x-\frac{\pi}{2})^{2j+1}\Bigr)\\\notag
\end{align}
\end{proof}

\begin{corollary}
$$\zeta(3)=\frac{2}{7}\Bigl(\frac{\pi^3}{8}+\sum_{j=1}^{\infty}(-1)^j\frac{\pi^{2j+1}}{(2j+1)2^{2j+1}}\left(\frac{\pi^2}{4(2j)!}E_{2j}+\frac{1}{(2j-2)!}E_{2j-2}\right)\Bigr)$$
\end{corollary}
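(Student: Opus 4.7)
The plan is to specialize the identity from the preceding proposition at the endpoint $x=\pi$ and combine it with Theorem \ref{cscthm}. First I would recall that Theorem \ref{cscthm} gives
$$\zeta(3)=\frac{1}{7}\int_{0}^{\pi}x(\pi-x)\csc(x)\,dx,$$
so the left-hand side of the preceding proposition, evaluated at $x=\pi$, is simply $\zeta(3)$.

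Next I would substitute $x=\pi$ into the right-hand side. The linear term $\frac{\pi^2}{4}(x-\frac{\pi}{2})$ becomes $\frac{\pi^3}{8}$, and each factor $(x-\frac{\pi}{2})^{2j+1}$ becomes $(\pi/2)^{2j+1} = \pi^{2j+1}/2^{2j+1}$. This yields the equation
$$\zeta(3)=\frac{\zeta(3)}{2}+\frac{1}{7}\Bigl(\frac{\pi^3}{8}+\sum_{j=1}^{\infty}(-1)^j\frac{\pi^{2j+1}}{(2j+1)2^{2j+1}}\left(\frac{\pi^2}{4(2j)!}E_{2j}+\frac{1}{(2j-2)!}E_{2j-2}\right)\Bigr).$$

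Finally I would subtract $\zeta(3)/2$ from both sides and multiply through by $2$, which directly produces the claimed formula. There is essentially no obstacle here: the entire content of the corollary is the specialization $x=\pi$ of the previous proposition together with the identification of the resulting integral via Theorem \ref{cscthm}. The only bookkeeping issue to watch is the power-of-two normalization of $(\pi/2)^{2j+1}$ so that the denominator $2^{2j+1}$ appears correctly in the final summand, but this is purely mechanical.
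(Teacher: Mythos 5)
Your proposal is correct and is essentially the paper's argument in a mildly different dress. The paper extracts the corollary from the constant-of-integration step inside the proof of the preceding proposition: evaluating the termwise-integrated Euler-number series at $x=\pi/2$, where $\int_0^{\pi/2}\frac{1}{7}t(\pi-t)\csc(t)\,dt=\frac{\zeta(3)}{2}$, so that $\frac{\zeta(3)}{2}=\frac{1}{7}\bigl(\frac{\pi^3}{8}+\sum_{j\ge1}(-1)^j\frac{\pi^{2j+1}}{(2j+1)2^{2j+1}}\bigl(\frac{\pi^2}{4(2j)!}E_{2j}+\frac{1}{(2j-2)!}E_{2j-2}\bigr)\bigr)$, which is the stated formula. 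You instead plug $x=\pi$ into the proposition and invoke the $\frac{1}{7}\int_0^{\pi}$ form of Theorem \ref{cscthm}, then cancel the $\frac{\zeta(3)}{2}$; since $(\pi-\frac{\pi}{2})^{2j+1}=(\frac{\pi}{2})^{2j+1}$, the two computations produce the identical identity, the link being the symmetry $\int_0^\pi=2\int_0^{\pi/2}$ already contained in Theorem \ref{cscthm}. The only point your route adds is the need for the proposition's series to be valid at the endpoint $x=\pi$; this does hold, because the factor $x(\pi-x)$ cancels the poles of $\csc$ at $0$ and $\pi$, so the expansion of $x(\pi-x)\csc(x)$ about $\pi/2$ converges on $|x-\frac{\pi}{2}|<\frac{3\pi}{2}$ and termwise integration up to $\pi$ is legitimate — worth a sentence, but not an obstacle.
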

\begin{proof}
From proof of previous proposition.
\end{proof}
\begin{corollary}
$$\zeta(3)=\frac{4}{7}\int_{0}^{\pi/2}x\ln\left(\frac{1+\sin(x)}{\cos(x)}\right)dx$$
\end{corollary}
\begin{proof}
Compare the series expansion of the integrand with the previous corollary.
\end{proof}
\begin{corollary}\label{sumpi2}
Let
\begin{align}
f(x,n)&=\frac{\zeta(3)}{2}+\frac{1}{7}\Bigl(\frac{\pi^2}{4}\left(x-\frac{\pi}{2}\right)\\\notag
&+\sum_{j=1}^{n}(-1)^j\frac{1}{2j+1}\left(\frac{\pi^2}{4(2j)!}E_{2j}+\frac{1}{(2j-2)!}E_{2j-2}\right)(x-\frac{\pi}{2})^{2j+1}\Bigr)\\\notag
\end{align}
and set
$$a_n=\frac{1}{n!}\frac{d^{n-1}}{dx^{n-1}}\Bigl(\frac{\left(x-\frac{\pi}{2}\right)^n}{\left(f(x,n)-\frac{\zeta(3)}{2}\right)^n}\Bigr)|_{x=\frac{\pi}{2}}$$
Then if
\begin{align}\notag
g(x)&=\frac{\pi}{2}+\sum_{j=1}^{\infty}a_nx^n\\\notag
&=\frac{\pi}{2}+\frac{28 }{\pi ^2}\left(x-\frac{\zeta (3)}{2}\right)-\frac{10976 \left(\pi ^2-8\right) }{3 \pi ^8}\left(x-\frac{\zeta (3)}{2}\right)^3+\frac{2151296 \left(640-112 \pi ^2+5 \pi ^4\right)}{15 \pi ^{14}} \left(x-\frac{\zeta (3)}{2}\right)^5+\cdots\\\notag
\end{align}
then 
$$g(\zeta(3))=\pi$$
\end{corollary}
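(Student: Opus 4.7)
The plan is to apply the Lagrange inversion formula to the antiderivative $F(x):=\frac{1}{7}\int_{0}^{x}t(\pi-t)\csc(t)\,dt$, exactly paralleling Corollary \ref{sumpi} but now centered at $x=\pi/2$ rather than $x=0$. Two values of $F$ are already in hand: $F(\pi)=\zeta(3)$ by Theorem \ref{cscthm}, and the midpoint value $F(\pi/2)=\zeta(3)/2$ follows from the symmetry $t\leftrightarrow\pi-t$ of the integrand, which splits the integral over $[0,\pi]$ into two equal halves.

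First I would introduce the local variables $u=x-\pi/2$ and $v=y-\zeta(3)/2$ and set $H(u):=F(\pi/2+u)-\zeta(3)/2$. By the expansion displayed in the preceding proposition, $H$ is an odd power series in $u$ with leading term $H(u)=\frac{\pi^{2}}{28}u+O(u^{3})$, hence $H'(0)\neq 0$ and $H$ has a formal compositional inverse. The Lagrange inversion formula then gives
\begin{equation}\notag
H^{-1}(v)=\sum_{n\ge 1}a_{n}v^{n},\qquad a_{n}=\frac{1}{n!}\left[\frac{d^{\,n-1}}{du^{\,n-1}}\Bigl(\frac{u}{H(u)}\Bigr)^{\!n}\,\right]_{u=0},
\end{equation}
which is precisely the coefficient formula in the statement, since truncating $H$ at degree $2n+1$ leaves the $(n{-}1)$st derivative at $u=0$ of $(u/H(u))^{n}$ unchanged, so the partial sum $f(x,n)-\zeta(3)/2$ may freely replace the full series $H$. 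Translating back, $g(y)=\pi/2+H^{-1}(y-\zeta(3)/2)$ is the local inverse branch of $F$, and specializing the identity $g(F(x))=x$ to $x=\pi$ yields the claimed $g(\zeta(3))=\pi$.

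The step I expect to be the real obstacle is analytic rather than formal. Lagrange inversion produces $H^{-1}$ only as a formal power series, whose actual radius of convergence equals the distance from $v=0$ to the nearest critical value of $H$ in the complex plane. I would need to confirm that $v=\zeta(3)/2$ lies inside this disk and is reached by the real branch along the segment $[0,\zeta(3)/2]$. Since $H'(u)=\frac{1}{7}(\pi/2+u)(\pi/2-u)\csc(\pi/2+u)>0$ on $(-\pi/2,\pi/2)$, the map $H$ is a strictly increasing real-analytic bijection from $(-\pi/2,\pi/2)$ onto $(-\zeta(3)/2,\zeta(3)/2)$, so inversion is unambiguous at the real level; promoting this to convergence of the formal series up to the boundary value $v=\zeta(3)/2$ is the delicate Abel-type step that turns the formal identity into the quoted numerical one.
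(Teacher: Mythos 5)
Your proposal takes essentially the same route as the paper: the paper's proof is simply ``apply series reversion to the function in the previous proposition'' (citing [11]), i.e.\ Lagrange inversion of the $\csc$--integral expanded about $x=\pi/2$, and your write-up supplies exactly the details the paper leaves implicit (the values $F(\pi/2)=\zeta(3)/2$ and $F(\pi)=\zeta(3)$, the oddness and leading coefficient $\pi^{2}/28$ of $H$, and the harmless replacement of $H$ by its degree-$(2n+1)$ truncation in the coefficient formula). The Abel-type convergence issue you flag at $v=\zeta(3)/2$ is real but is not addressed in the paper either, so your argument is, if anything, more careful than the original.
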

\begin{proof}
Apply series reversion to the function in the previous proposition. See [11].
\end{proof}
\begin{proposition}\label{extprop}
$$\zeta(3)=\frac{4}{7}\left(\frac{1}{2}\pi^2\ln(2)+4\int_{0}^{\pi/2}x \ln(\sin(x))dx\right)\notag$$
\end{proposition}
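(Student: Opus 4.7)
The plan is to start from the rightmost form of Theorem \ref{cscthm}, namely $\zeta(3) = \frac{1}{7}\int_{0}^{\pi} x(\pi-x)\csc(x)\,dx$, and transform the $\csc$ integral into one involving $\ln\sin$ by a single integration by parts. Taking $u = x(\pi-x)$ and $dv = \csc(x)\,dx$, so that $du = (\pi-2x)\,dx$ and $v = \ln\tan(x/2)$, the first task is to verify that the boundary contributions vanish. Near $x=0$ we have $\ln\tan(x/2)\sim \ln(x/2)$ while $x(\pi-x)\sim \pi x$; near $x=\pi$, writing $\varepsilon=\pi-x$, we have $\tan(x/2)=\cot(\varepsilon/2)\sim 2/\varepsilon$ so $\ln\tan(x/2)\sim -\ln\varepsilon$ while $x(\pi-x)\sim \pi\varepsilon$. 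Both endpoint terms therefore die, leaving
$$\int_{0}^{\pi} x(\pi-x)\csc(x)\,dx = -\int_{0}^{\pi}(\pi-2x)\ln\tan(x/2)\,dx.$$

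Next, I would substitute $y = x/2$ to rescale the range to $[0,\pi/2]$, obtaining
$$-2\pi\int_{0}^{\pi/2}\ln\tan(y)\,dy + 8\int_{0}^{\pi/2} y\ln\tan(y)\,dy.$$
The first piece vanishes by the antisymmetry $\ln\tan(\pi/2-y)=-\ln\tan(y)$ about $y=\pi/4$. For the second, I would split $\ln\tan(y) = \ln\sin(y) - \ln\cos(y)$ and apply the reflection $z=\pi/2-y$ to the $\ln\cos$ piece, converting $\int_{0}^{\pi/2} y\ln\cos(y)\,dy$ into $\int_{0}^{\pi/2}(\pi/2-z)\ln\sin(z)\,dz$. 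Combined with Euler's classical $\int_{0}^{\pi/2}\ln\sin(z)\,dz = -\frac{\pi\ln 2}{2}$, this gives $\int_{0}^{\pi/2} y\ln\tan(y)\,dy = 2\int_{0}^{\pi/2} y\ln\sin(y)\,dy + \frac{\pi^2\ln 2}{4}$.

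Assembling the pieces yields $\int_{0}^{\pi} x(\pi-x)\csc(x)\,dx = 16\int_{0}^{\pi/2} y\ln\sin(y)\,dy + 2\pi^2\ln 2$, and dividing by $7$ and factoring out $\frac{4}{7}$ produces exactly the claimed identity. The only delicate point is the endpoint analysis of the integration by parts near $x=\pi$, where $\ln\tan(x/2)$ diverges logarithmically but is tamed by the linear factor $(\pi-x)$; once that is in hand, the remainder consists of a pair of substitutions together with the two classical evaluations $\int_{0}^{\pi/2}\ln\tan(y)\,dy = 0$ and $\int_{0}^{\pi/2}\ln\sin(y)\,dy = -\pi\ln(2)/2$, so the algebra is routine.
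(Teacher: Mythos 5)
Your argument is correct, but it is a genuinely different route from the paper's: the paper "proves" this proposition simply by citing reference [3] (the MathWorld entry for Ap\'ery's constant), whereas you derive it internally from Theorem \ref{cscthm}. Your chain — integrating $\int_{0}^{\pi}x(\pi-x)\csc(x)\,dx$ by parts with $v=\ln\tan(x/2)$ (the endpoint terms indeed vanish since the logarithmic divergence at both ends is killed by the linear factors), rescaling $x=2y$, discarding $\int_{0}^{\pi/2}\ln\tan(y)\,dy=0$ by the $y\mapsto\pi/2-y$ antisymmetry, and converting the $\ln\cos$ piece by reflection together with Euler's $\int_{0}^{\pi/2}\ln\sin(y)\,dy=-\tfrac{\pi\ln 2}{2}$ — checks out line by line and assembles to exactly the stated identity after dividing by $7$. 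What your approach buys is self-containment and consistency with the paper's own development: the $\ln\sin$ representation becomes a consequence of the $\csc$ integral already established in Theorem \ref{cscthm}, rather than an externally imported formula, and your intermediate identity $\int_{0}^{\pi}x(\pi-x)\csc(x)\,dx=8\int_{0}^{\pi/2}y\ln\tan(y)\,dy$ independently recovers the representation $\zeta(3)=\frac{8}{7}\int_{0}^{\pi/2}x\ln\tan(x)\,dx$, which the paper only obtains later (from Proposition \ref{extprop} together with Lemma \ref{lnlemma}); since you rely only on Theorem \ref{cscthm} and the classical log-sine integral, there is no circularity. The paper's citation is of course shorter, but it leaves the proposition as an unproved input.
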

\begin{proof}
Formula appears in reference [3].
\end{proof}
\begin{lemma}
\begin{align}\notag
\int_{0}^{\pi/2}\ln(\sin(x)) dx=-\frac{1}{2}\pi\ln(2)\\\notag
\end{align}
\end{lemma}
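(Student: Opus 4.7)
The plan is to prove this by the classical symmetry argument that relates the integral to itself after a doubling-angle substitution. Let $I = \int_{0}^{\pi/2} \ln(\sin x)\,dx$ denote the integral we want to evaluate.

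First I would apply the substitution $x \mapsto \pi/2 - x$, which sends $\sin(x)$ to $\cos(x)$ and preserves the interval of integration, giving the companion identity $I = \int_{0}^{\pi/2} \ln(\cos x)\,dx$. Adding these two expressions for $I$ and using $\ln(\sin x) + \ln(\cos x) = \ln(\sin x \cos x) = \ln(\tfrac{1}{2}\sin 2x)$, I obtain
$$
2I = \int_{0}^{\pi/2} \ln(\sin 2x)\,dx - \frac{\pi}{2}\ln(2).
$$

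Next I would handle the remaining integral with the substitution $u = 2x$, so that $du = 2\,dx$ and the interval becomes $[0, \pi]$:
$$
\int_{0}^{\pi/2} \ln(\sin 2x)\,dx = \frac{1}{2}\int_{0}^{\pi} \ln(\sin u)\,du.
$$
Because $\sin(\pi - u) = \sin u$, the integrand is symmetric about $u = \pi/2$, so $\int_{0}^{\pi} \ln(\sin u)\,du = 2\int_{0}^{\pi/2}\ln(\sin u)\,du = 2I$. Substituting back gives
$$
2I = I - \frac{\pi}{2}\ln(2),
$$
from which $I = -\tfrac{1}{2}\pi\ln(2)$ follows immediately.

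The only mild subtlety is verifying that the (improper) integral converges at $x = 0$ so that the above manipulations are legitimate; this is routine since $\ln(\sin x) \sim \ln x$ near $0$ and $\int_0^{\pi/2} \ln x\,dx$ is finite. No step is really hard — the cleverness is entirely in noticing that the doubling-angle substitution produces $I$ on both sides of the equation, allowing us to solve algebraically rather than evaluate any antiderivative explicitly.
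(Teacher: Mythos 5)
Your proof is correct: the symmetry substitution $x\mapsto \pi/2-x$, the identity $\ln(\sin x)+\ln(\cos x)=\ln\bigl(\tfrac12\sin 2x\bigr)$, and the rescaling $u=2x$ together yield $2I=I-\tfrac{\pi}{2}\ln 2$, and your remark on convergence at $x=0$ covers the only delicate point. The paper states this lemma without any proof (treating it as a classical known value), so your argument simply supplies the standard derivation the paper omits; there is no competing approach in the paper to compare against.
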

\begin{corollary}\label{lnsincor}
\begin{align}\notag
\zeta(3)&=\frac{4}{7}\int_{0}^{\pi/2}(4x-\pi)\ln(\sin(x))dx\notag
\end{align}
\end{corollary}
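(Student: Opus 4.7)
The plan is to derive this corollary directly by combining Proposition \ref{extprop} with the preceding lemma, with no new integration required. The key observation is that the integrand $(4x-\pi)\ln(\sin x)$ is just $4x\ln(\sin x)$ minus $\pi\ln(\sin x)$, so the proposed formula is a linear combination of an integral that already appears in Proposition \ref{extprop} and one whose value is given by the lemma.

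The steps I would carry out, in order, are:
First, start from Proposition \ref{extprop}, namely
$$\zeta(3)=\frac{4}{7}\left(\tfrac{1}{2}\pi^2\ln(2)+4\int_{0}^{\pi/2}x \ln(\sin(x))\,dx\right).$$
Second, use the lemma $\int_{0}^{\pi/2}\ln(\sin x)\,dx=-\tfrac{1}{2}\pi\ln 2$ to rewrite the constant term as an integral:
$$\tfrac{1}{2}\pi^2\ln(2)=-\pi\int_{0}^{\pi/2}\ln(\sin x)\,dx.$$
Third, substitute this back, so that the bracketed expression becomes
$$-\pi\int_{0}^{\pi/2}\ln(\sin x)\,dx+4\int_{0}^{\pi/2}x\ln(\sin x)\,dx=\int_{0}^{\pi/2}(4x-\pi)\ln(\sin x)\,dx,$$
which gives exactly the claimed identity after multiplying through by $4/7$.

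There is essentially no obstacle here, since both ingredients are supplied in the excerpt; the corollary is really just a repackaging of the constant $\tfrac{1}{2}\pi^{2}\ln 2$ as an integral against $-\pi\ln(\sin x)$ and absorbing it into the integrand. The only point to check carefully is the sign and the factor of $\pi$ when applying the lemma, but this is a short arithmetic verification rather than a substantive step.
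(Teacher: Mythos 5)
Your proposal is correct and is exactly the paper's argument: the paper's proof simply cites Proposition \ref{extprop} together with the lemma $\int_{0}^{\pi/2}\ln(\sin x)\,dx=-\tfrac{1}{2}\pi\ln 2$, which is precisely the substitution you carry out. Your version just spells out the two-line arithmetic the paper leaves implicit.
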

\begin{proof}
From the above proposition and lemma,

\end{proof}
\section{Formulas Involving $\frac{(n!)^2}{(n+j)!(n-j)!}$}
\begin{lemma}\label{eulersinlemma}
$$\sin(x)=x\prod_{j=1}^{\infty}\left(1-\frac{x^2}{j^2\pi^2}\right)$$
\end{lemma}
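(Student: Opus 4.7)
The plan is to establish Euler's product via the Weierstrass/Hadamard factorization of the entire function $\sin(\pi z)$ of the complex variable $z$, and then substitute $z = x/\pi$ at the end. This is the standard complex-analytic route, and since the statement is credited (implicitly) to Euler and is used here only as a building block, it is natural to route through a theorem of Hadamard rather than reprove things from scratch.

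First I would observe that $\sin(\pi z)$ is entire with simple zeros at exactly the integers, and the crude bound $|\sin(\pi z)| \le e^{\pi|z|}$ shows it has order at most $1$. Hadamard's factorization theorem therefore yields
$$\sin(\pi z) = z\, e^{A+Bz} \prod_{n \ne 0}\left(1 - \frac{z}{n}\right)e^{z/n},$$
with the genus-$1$ convergence factors $e^{z/n}$ making the product absolutely convergent on compacta.

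Next I would pair the $n$-th and $(-n)$-th factors. The exponentials $e^{z/n}e^{-z/n}=1$ cancel and the linear pieces collapse via $(1 - z/n)(1 + z/n) = 1 - z^2/n^2$, giving
$$\sin(\pi z) = z\, e^{A+Bz}\prod_{n=1}^{\infty}\left(1 - \frac{z^2}{n^2}\right).$$
Since $\sin(\pi z)$ is odd in $z$ while the product is even, $B=0$; and $\lim_{z\to 0}\sin(\pi z)/z = \pi$ while the product tends to $1$ at $z=0$, forcing $e^A = \pi$. Substituting $z = x/\pi$ then gives the claimed identity.

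The main obstacle is the invocation of Hadamard's theorem, which rests on Jensen's formula and order/genus estimates. A more self-contained alternative avoids this machinery entirely: start from the Mittag--Leffler partial-fraction expansion
$$\pi\cot(\pi z) = \frac{1}{z} + \sum_{n\ge 1}\frac{2z}{z^2-n^2},$$
recognize both sides as the logarithmic derivative of the asserted identity, integrate term by term on, say, $(0,x)$ with $0 < x < 1$, exponentiate, and pin down the constant of integration by letting $x \to 0^+$ and comparing $\sin(\pi x)/(\pi x) \to 1$ with the behaviour of the product. Either route reduces the lemma to standard facts, so I would present the Hadamard version for brevity and note the partial-fraction route as an alternative if the reader prefers to avoid Hadamard's theorem.
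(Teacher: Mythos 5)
Your proposal is correct, but note that the paper does not actually prove this lemma at all: its ``proof'' consists solely of the attribution ``(Euler)'', treating the sine product as a classical fact to be cited. You, by contrast, supply a genuine argument. The Hadamard route is sound: $\sin(\pi z)$ is entire of order $1$ with simple zeros exactly at the integers, so the factorization
$\sin(\pi z)=z\,e^{A+Bz}\prod_{n\ne 0}\bigl(1-\tfrac{z}{n}\bigr)e^{z/n}$
holds, pairing $\pm n$ kills the convergence factors, oddness of $\sin(\pi z)$ against the even product forces $B=0$, and $\lim_{z\to 0}\sin(\pi z)/z=\pi$ pins $e^{A}=\pi$; substituting $z=x/\pi$ gives the stated identity for all real (indeed complex) $x$. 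Your alternative via the partial-fraction expansion of $\pi\cot(\pi z)$, integrating the logarithmic derivative and normalizing at $0$, is the other standard elementary route (with the minor remark that the interval restriction $0<x<1$ is then removed by analytic continuation or by repeating the argument between consecutive integers, which the Hadamard version avoids). What your write-up buys over the paper is an actual self-contained justification resting on clearly identified standard theorems; what the paper's citation buys is brevity, which is arguably appropriate since the lemma is classical and is used only as an input to the subsequent corollaries.
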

\begin{proof}
(Euler)
\end{proof}
\begin{corollary}
\begin{align}
\frac{x}{\sin(x)}&=\pi\lim_{n->\infty}\bigl[ \sum_{j=1}^{n}(-1)^{j}\left(\frac{ j (n!)^2}{(n+j)!(n-j!}\right)\left(\frac{1}{x-j\pi}-\frac{1}{x+j\pi}\right)  \bigr]\\ \notag
\end{align}
\end{corollary}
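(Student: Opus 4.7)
The plan is to start from Euler's product formula (stated as Lemma above as \textbf{Lemma \ref{eulersinlemma}}) and invert it term-by-term. Writing
\[
\prod_{j=1}^{n}\left(1-\frac{x^2}{j^2\pi^2}\right)=\frac{1}{(n!)^2\pi^{2n}}\prod_{j=1}^{n}(j^2\pi^2-x^2)=\frac{P_n(x)}{(n!)^2\pi^{2n}},
\]
the reciprocal $Q_n(x):=(n!)^2\pi^{2n}/P_n(x)$ is a rational function in $x$ of total degree $-2n$ with simple poles exactly at $x=\pm k\pi$ for $k=1,\dots,n$. Since $Q_n(x)\to 0$ as $|x|\to\infty$, it equals the sum of its principal parts, so a partial fraction decomposition
\[
Q_n(x)=\sum_{k=1}^{n}\left(\frac{A_k}{x-k\pi}+\frac{B_k}{x+k\pi}\right)
\]
is exact, and it only remains to compute the residues $A_k=\operatorname*{Res}_{x=k\pi}Q_n(x)$ and $B_k=\operatorname*{Res}_{x=-k\pi}Q_n(x)$.

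For the residue at $x=k\pi$, I write $P_n(x)=(k\pi-x)(k\pi+x)\prod_{j\neq k}(j^2\pi^2-x^2)$ and evaluate the non-singular factor at $x=k\pi$: the key computation is
\[
\prod_{\substack{j=1\\j\neq k}}^{n}(j^2-k^2)=\prod_{\substack{j=1\\j\neq k}}^{n}(j-k)(j+k)=(-1)^{k-1}\,\frac{(n-k)!\,(n+k)!}{2k^{2}},
\]
obtained by splitting the products at $j<k$ and $j>k$ and cleaning up the resulting factorials. Substituting back and using the $(n!)^2\pi^{2n}$ prefactor yields
\[
A_k=\frac{(-1)^{k}\,k\,\pi\,(n!)^2}{(n+k)!\,(n-k)!}.
\]
Since $Q_n$ is even, the residue at $x=-k\pi$ satisfies $B_k=-A_k$, so pairing the two poles gives
\[
Q_n(x)=\pi\sum_{k=1}^{n}(-1)^{k}\frac{k\,(n!)^2}{(n+k)!\,(n-k)!}\left(\frac{1}{x-k\pi}-\frac{1}{x+k\pi}\right).
\]

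Finally, I let $n\to\infty$. By Euler's lemma $\prod_{j=1}^{n}(1-x^2/(j^2\pi^2))\to \sin(x)/x$ for every $x$ that is not a nonzero integer multiple of $\pi$, so $Q_n(x)\to x/\sin(x)$ on the same set, and passing to the limit on both sides of the displayed identity gives the claimed formula. The only subtle point is that the identity above is an \emph{exact} rational-function identity for each finite $n$ (not a truncation error), so no estimate on the tail of the partial-fraction sum is required; the limit is inherited directly from Euler's product. The main technical step, and the one most prone to arithmetic slips, is the residue computation, especially keeping track of the $(-1)^{k-1}$ sign and of the $2k^2$ that appears from combining $(k-1)!/k!=1/k$ with the analogous pair $(2k-1)!\,(n+k)!/(k!(2k)!)$ from $\prod_{j\neq k}(j+k)$.
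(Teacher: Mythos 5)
Your proposal is correct and follows essentially the same route as the paper: invert Euler's product for the finite partial product, perform the exact partial-fraction decomposition at the simple poles $x=\pm k\pi$, and pass to the limit using pointwise convergence of the partial products. Your explicit residue computation, including the identity $\prod_{j\neq k}(j^{2}-k^{2})=(-1)^{k-1}\frac{(n-k)!\,(n+k)!}{2k^{2}}$ and the conclusion $B_k=-A_k$ by evenness, correctly fills in the step the paper only asserts as ``partial fraction decomposition.''
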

\begin{proof}
By the previous Lemma, followed by partial fraction decomposition,
\begin{align}\notag
\frac{x}{\sin(x)}&=\frac{1}{\prod_{j=1}^{\infty}\left(1-\frac{x^2}{j^2\pi^2}\right)}\\\notag
&=\lim_{n->\infty}\prod_{j=1}^{n}\left(\frac{1}{1-\frac{x^2}{j^2\pi^2}}\right)\\\notag
&=\lim_{n->\infty}\prod_{j=1}^{n}\left(\frac{j^2\pi^2}{j^2\pi^2-x^2}\right)\\\notag
&=\lim_{n->\infty}\prod_{j=1}^{n}\left(\frac{j^2\pi^2}{(j\pi-x)(j\pi+x)}\right)\\\notag
&=\pi\lim_{n->\infty}\bigl[ \sum_{k=0}^{n-1}(-1)^{k+n}\left(\frac{  (n!)^2(n-k)}{(2n-k)!k!}\right)\left(\frac{1}{x-(n-k)\pi}-\frac{1}{x+(n-k)\pi}\right)  \bigr]\\ \notag
&=\pi\lim_{n->\infty}\bigl[ \sum_{j=1}^{n}(-1)^{j}\left(\frac{ j (n!)^2}{(n+j)!(n-j)!}\right)\left(\frac{1}{x-j\pi}-\frac{1}{x+j\pi}\right)  \bigr]\\ \notag
\end{align}
\end{proof}

\begin{corollary}\label{corafterpartial}
\begin{align}\notag
\zeta(3)&=\frac{2}{7}\pi^2\lim_{n\to\infty}\sum_{j=1}^{n}(-1)^{j}\frac{j(n!)^2}{(n-j)!(n+j)!}\ln\left(    \left(\frac{2j-1}{2j+1}\right)     \left(  \frac{4j^2}{4j^2-1}\right)^{j}\right)\\\notag
&=\frac{2}{7}\pi^2\lim_{n\to\infty}\sum_{j=1}^{n}(-1)^{1-j}\frac{j(n!)^2}{(n-j)!(n+j)!}\bigl(j\ln\left(1-\frac{1}{4j^2}\right)-\ln\left(1-\frac{2}{2j+1}\right)\bigr)\\\notag
&=\frac{4}{7}\pi^2\lim_{n\to\infty}\sum_{j=1}^{n-1}(-1)^{k}\frac{(n!)^2j^2}{(n-j)!(n+j)!}\ln(j\pi)\\\notag
&=\frac{1}{7}\pi^2\Bigl(2\ln\left(2\right)+\sum_{k=1}^{\infty}\frac{1}{k(2k-1)}\left(\eta(2k-2)-1\right)\Bigr)\\\notag
\end{align}
\end{corollary}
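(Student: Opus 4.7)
The plan is to combine Theorem~\ref{cscthm}, namely $\zeta(3) = \frac{2}{7}\int_{0}^{\pi/2}x(\pi-x)\csc(x)\,dx$, with the partial-fraction expansion
\[
\frac{x}{\sin x} = \pi\lim_{n\to\infty}\sum_{j=1}^{n}(-1)^{j}\frac{j(n!)^{2}}{(n+j)!(n-j)!}\left(\frac{1}{x-j\pi}-\frac{1}{x+j\pi}\right)
\]
established in the preceding corollary. After substituting and interchanging the limit, sum, and integral, the entire computation reduces to evaluating one elementary integral per $j$, namely $I_{j}:=\int_{0}^{\pi/2}(\pi-x)\bigl(\frac{1}{x-j\pi}-\frac{1}{x+j\pi}\bigr)\,dx$.

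For $I_{j}$ the key observation is the pair of identities $\pi-x = (1-j)\pi-(x-j\pi) = (j+1)\pi-(x+j\pi)$, which split each fractional summand into a constant (the two constants cancel between the fractions) plus $\frac{(1-j)\pi}{x-j\pi}-\frac{(j+1)\pi}{x+j\pi}$. The antiderivative is therefore $(1-j)\pi\ln|x-j\pi|-(j+1)\pi\ln|x+j\pi|$, and evaluating at the endpoints gives
\[
I_{j} = \pi\bigl[(1-j)\ln((j-\tfrac12)\pi)-(j+1)\ln((j+\tfrac12)\pi)+2j\ln(j\pi)\bigr].
\]
The $\ln\pi$ coefficient is $(1-j)-(j+1)+2j=0$, and writing $\ln((j\pm\tfrac12)\pi) = \ln(j\pi)+\ln(1\pm\tfrac{1}{2j})$ shows the $\ln(j\pi)$ contribution also cancels, so $I_{j}=\pi[(1-j)\ln(1-\tfrac{1}{2j})-(j+1)\ln(1+\tfrac{1}{2j})]$. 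Rearranging the factors produces exactly $\pi\ln\bigl(\tfrac{2j-1}{2j+1}(\tfrac{4j^{2}}{4j^{2}-1})^{j}\bigr)$, and substituting back gives the first displayed form.

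The second form is then purely algebraic: use $\ln\tfrac{2j-1}{2j+1}=\ln(1-\tfrac{2}{2j+1})$, $\ln\tfrac{4j^{2}}{4j^{2}-1}=-\ln(1-\tfrac{1}{4j^{2}})$, and the sign flip $(-1)^{j}=-(-1)^{1-j}$. For the third form I would retain $I_{j}$ in its pre-cancellation shape, regroup the half-integer logarithms $\ln((k+\tfrac12)\pi)$ (each collecting contributions from $j=k$ and $j=k+1$), and exploit the damping factor $\frac{(n!)^{2}}{(n+k+1)!(n-k)!}=O(1/n)$ together with sign alternation to argue that the half-integer pieces wash out in the $n\to\infty$ limit, leaving only the $\ln(j\pi)$ contributions with combined coefficient $2j(-1)^{j}\tfrac{j(n!)^{2}}{(n+j)!(n-j)!}$. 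The fourth form follows by inserting the Mercator expansion $\ln(1-u)=-\sum_{k\ge1}u^{k}/k$ into the second form, swapping the order of summation, and identifying the inner $j$-sums as values of $\eta$ using $\frac{j(n!)^{2}}{(n-j)!(n+j)!}\to 1$ for each fixed $j$.

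The main obstacle is justifying the interchange of $\lim_{n\to\infty}$, the sum, and the integral. Since $\prod_{j=1}^{n}(1-x^{2}/(j\pi)^{2})$ converges uniformly to $\sin(x)/x$ on compact subsets of $(-\pi,\pi)$, and $x(\pi-x)\csc(x)$ is bounded on $[0,\pi/2]$, the interchange for the first form can be secured by dominated convergence applied termwise. The more delicate step is the passage to forms 3 and 4: the rearranged sums contain formally divergent pieces such as $\sum(-1)^{j}j^{2}\ln(j\pi)$ whose convergence is only conditional, ensured by the damping factor $\frac{(n!)^{2}}{(n+j)!(n-j)!}$, so the $n$-limit must be retained throughout the rearrangement rather than taken termwise.
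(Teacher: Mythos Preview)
Your approach coincides with the paper's: start from $\zeta(3)=\tfrac{2}{7}\int_0^{\pi/2}x(\pi-x)\csc x\,dx$, insert the finite partial-fraction expansion of $x/\sin x$, integrate each term (your antiderivative $(1-j)\pi\ln|x-j\pi|-(j+1)\pi\ln|x+j\pi|$ matches the paper's line~4 after reindexing $j=n-k$), and simplify to the first two displayed forms. For the fourth form both you and the paper expand the logarithms in powers of $1/j$ and swap the order of summation, invoking Corollary~\ref{etacor} to recognise the inner sums as $\eta$-values. For the third form your ``half-integer pieces wash out'' argument is precisely what the paper does in disguise: the paper observes that the lower endpoint $x=0$ of the antiderivative contributes $2j\ln(j\pi)$ per term, and asserts (without details) that the $x=\pi/2$ contribution, which carries the half-integer logarithms, vanishes in the limit---your regrouping makes this more explicit but is analytically at the same level. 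Your explicit flagging of the dominated-convergence and conditional-convergence issues is an improvement over the paper, which ignores these points entirely.
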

\begin{proof}
\begin{align}\notag
\zeta(3)&=\frac{2}{7}\int_{0}^{\pi/2}((\pi-x)) \frac{x}{\sin(x)}dx\\\notag
&=\frac{2}{7}\pi\int_{0}^{\pi/2}\lim_{n->\infty}\bigl[ \sum_{k=0}^{n-1}(-1)^{k+n+1}\left(\frac{  (n!)^2(n-k)}{(2n-k)!k!}\right)\left(\frac{\pi-x}{(n-k)\pi-x}+\frac{\pi-x}{(n-k)\pi+x}\right)  \bigr] dx\\\notag
&=\frac{2}{7}\pi\lim_{n->\infty}\bigl[ \sum_{k=0}^{n-1}(-1)^{k+n+1}\int_{0}^{\pi/2}\left(\frac{  (n!)^2(n-k)}{(2n-k)!k!}\right)\left(\frac{\pi-x}{(n-k)\pi-x}+\frac{\pi-x}{(n-k)\pi+x}\right)  \bigr] dx\\\notag
&=\frac{2}{7}\pi^2\lim_{n\to\infty}\sum_{k=0}^{n-1}(-1)^{n+k+1}\frac{(n-k)(n!)^2}{k!(2n-k)!}\left( (n-k-1)\ln((n-k)\pi-x)+ (n-k+1)\ln((n-k)\pi+x)  \right)|_{x=0}^{x=\pi/2}\\\notag
&=\frac{2}{7}\pi^2\lim_{n\to\infty}\sum_{k=0}^{n-1}(-1)^{n+k+1}\frac{(n-k)(n!)^2}{k!(2n-k)!}\ln\left(    \left(  \frac{4(n-k)^2-1}{4(n-k)^2}\right)^{n-k}\left(\frac{2(n-k)+1}{2(n-k)-1}\right)     \right)\\\notag
&=\frac{2}{7}\pi^2\lim_{n\to\infty}\sum_{j=1}^{n}(-1)^{1-j}\frac{j(n!)^2}{(n-j)!(n+j)!}\ln\Bigl(    \left(  \frac{4j^2-1}{4j^2}\right)^{j}\left(\frac{2j+1}{2j-1}\right)     \Bigr)\\\notag
&=\frac{2}{7}\pi^2\lim_{n\to\infty}\sum_{j=1}^{n}(-1)^{1-j}\frac{j(n!)^2}{(n-j)!(n+j)!}\Bigl(j\ln\left(1-\frac{1}{4j^2}\right)+\ln\left(1+\frac{2}{2j-1}\right)\Bigr)\\\notag
&=\frac{2}{7}\pi^2\lim_{n\to\infty}\sum_{j=1}^{n}(-1)^{1-j}\frac{j(n!)^2}{(n-j)!(n+j)!}\Bigl(j\ln\left(1-\frac{1}{4j^2}\right)-\ln\left(1-\frac{2}{2j+1}\right)\Bigr)\\\notag
&=\frac{2}{7}\pi^2\lim_{n\to\infty}\sum_{j=1}^{n}\sum_{k=1}^{n}(-1)^{j-1}\frac{j(n!)^2}{k(n-j)!(n+j)!}\Bigl(\left(\frac{2}{1+2j}\right)^k-\left(\frac{1}{4j^2}\right)^k\Bigr)\\\notag
&=\frac{2}{7}\pi^2\lim_{n\to\infty}\sum_{j=1}^{n}(-1)^{1-j}\frac{j(n!)^2}{(n-j)!(n+j)!}\ln\Bigl(    \left(1-\frac{1}{4j^2}\right)^{j}\left(\frac{1+\frac{1}{2j}}{1-\frac{1}{2j}}\right)     \Bigr)\\\notag
&=\frac{2}{7}\pi^2\lim_{n\to\infty}\sum_{j=1}^{n}(-1)^{1-j}\frac{j(n!)^2}{(n-j)!(n+j)!}\Bigl(j\ln\Bigl(  1-\frac{1}{4j^2}\Bigr)+\ln\left(\frac{1+\frac{1}{2j}}{1-\frac{1}{2j}}\right) \Bigr)\\\notag
&=\frac{2}{7}\pi^2\lim_{n\to\infty}\sum_{j=1}^{n}(-1)^{1-j}\frac{j(n!)^2}{(n-j)!(n+j)!}\Bigl(-\frac{1}{2}\sum_{k=1}^{\infty}\frac{1}{k(2j)^{2k-1}}+2\sum_{k=1}^{\infty}\frac{1}{(2k-1)(2j)^{2k-1}}\Bigr)\\\notag
&=\frac{2}{7}\pi^2\lim_{n\to\infty}\sum_{j=1}^{n}(-1)^{1-j}\frac{(n!)^2}{2(n-j)!(n+j)!}\Bigl(-\frac{1}{2}\sum_{k=1}^{\infty}\frac{1}{k(2j)^{2k-2}}+2\sum_{k=1}^{\infty}\frac{1}{(2k-1)(2j)^{2k-2}}\Bigr)\\\notag
&=\frac{1}{14}\pi^2\lim_{n\to\infty}\sum_{j=1}^{n}(-1)^{1-j}\frac{(n!)^2}{(n-j)!(n+j)!}\sum_{k=1}^{\infty}\frac{2k+1}{k(2k-1)(2j)^{2k-2}}\\\notag
&=\frac{1}{14}\pi^2\lim_{n\to\infty}\sum_{k=1}^{\infty}\frac{2k+1}{k(2k-1)}\sum_{j=1}^{n}(-1)^{j-1}\frac{(n!)^2}{(n-j)!(n+j)!}\frac{1}{(2j)^{2k-2}}\\\notag
&=\frac{1}{14}\pi^2\sum_{k=1}^{\infty}\frac{2k+1}{k(2k-1)2^{2k-2}}\eta(2k-2)\\\notag
\end{align}
For the last step, see corollary \ref{etacor} below.
If we write down the series expansion of the function in line 4 above, about $\pi/2$, then the constant term is
\begin{align}
\frac{4}{7}\pi^2\lim_{n\to\infty}\sum_{j=1}^{n-1}(-1)^{n+j}\frac{(n!)^2(n-j)^2}{j!(2n-j)!}\ln((n-j)\pi)\\\notag
=\frac{4}{7}\pi^2\lim_{n\to\infty}\sum_{k=1}^{n-1}(-1)^{k}\frac{(n!)^2k^2}{(n-k)!(n+k)!}\ln(k\pi)\\\notag
\end{align}
\end{proof}
\begin{corollary}
\begin{align}\notag
\zeta(3)&=\frac{1}{7}\pi^2\Bigl(2\ln{2}+\lim_{n\to\infty}\sum_{j=2}^{n}(-1)^{j}\frac{j(n!)^2}{(n-j)!(n+j)!}\ln\left(    \left(\frac{j-1}{j+1}\right)     \left(  \frac{j^2}{j^2-1}\right)^{j}\right)\Bigr)\\\notag
&=\frac{1}{7}\pi^2\Bigl(2\ln{2}+\lim_{n\to\infty}\sum_{j=1}^{n}(-1)^{j-1}\frac{j(n!)^2}{(n-j)!(n+j)!}\bigl(j\ln\left(1-\frac{1}{j^2}\right)-\ln\left(1-\frac{2}{j+1}\right)\bigr)\Bigr)\\\notag
&=\frac{1}{7}\pi^2\Bigl(2\ln\left(2\right)+\sum_{k=1}^{\infty}\frac{1}{k(2k-1)}\left(\eta(2k-2)-1\right)\Bigr)\\\notag
\end{align}
Note: The last formula provides accuracy to 7 decimal places with only 10 terms.
\end{corollary}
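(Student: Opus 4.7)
The plan is to repeat the derivation of Corollary~\ref{corafterpartial} but starting from the full-interval form of Theorem~\ref{cscthm},
$$\zeta(3) \;=\; \frac{1}{7}\int_{0}^{\pi}(\pi-x)\,\frac{x}{\sin x}\,dx,$$
and substituting the partial-fraction expansion of $x/\sin x$ from the previous corollary. After pulling the outer $\pi$ inside and swapping the integral with $\lim_{n\to\infty}$ and the finite $j$-sum (as in the $[0,\pi/2]$ case), everything reduces to evaluating
$$I_j \;:=\; \int_{0}^{\pi}(\pi-x)\!\left(\frac{1}{x-j\pi}-\frac{1}{x+j\pi}\right)dx.$$

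For $j\ge 2$ the integrand is smooth on $[0,\pi]$; an elementary antiderivative (decompose each piece by writing the numerator as an affine function of the relevant denominator) gives $I_j = \pi\ln\bigl[\tfrac{j-1}{j+1}(\tfrac{j^2}{j^2-1})^{j}\bigr]$. For $j=1$ the pole of the first piece sits at the endpoint $x=\pi$, but the factor $(\pi-x)$ cancels it, and a direct computation gives $I_1 = -2\pi\ln 2$. Since $\tfrac{n(n!)^2}{(n+1)!(n-1)!}=\tfrac{n}{n+1}\to 1$, the $j=1$ term produces the isolated $\tfrac{\pi^2}{7}\cdot 2\ln 2$ displayed in front of the sum, while the terms $j\ge 2$ give the first identity. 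The second identity is the algebraic rewrite $\ln\bigl[\tfrac{j-1}{j+1}(\tfrac{j^2}{j^2-1})^{j}\bigr] = -\bigl[j\ln(1-1/j^2)-\ln(1-2/(j+1))\bigr]$ combined with $(-1)^j=-(-1)^{j-1}$.

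For the rapidly convergent $\eta$-series I would expand $j\ln(1-1/j^2) = -\sum_{k\ge 1}\tfrac{1}{k\,j^{2k-1}}$ and $\ln\tfrac{j+1}{j-1} = 2\sum_{k\ge 1}\tfrac{1}{(2k-1)\,j^{2k-1}}$; the identity $\tfrac{2}{2k-1}-\tfrac{1}{k} = \tfrac{1}{k(2k-1)}$ then collapses the bracket (after the outer $j$) into $\sum_{k\ge 1}\tfrac{1}{k(2k-1)j^{2k-2}}$. Swapping the $j$ and $k$ sums leaves two limits. For $k\ge 2$,
$$\lim_{n\to\infty}\sum_{j=1}^{n}(-1)^{j-1}\frac{(n!)^2}{(n-j)!(n+j)!}\,\frac{1}{j^{2k-2}} \;=\; \eta(2k-2),$$
by dominated convergence, using $\tfrac{(n!)^2}{(n-j)!(n+j)!} = \binom{2n}{n+j}/\binom{2n}{n}\in[0,1]$ with pointwise limit $1$. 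For $k=1$ the inner sum equals $\tfrac{1}{2}$ for every $n\ge 1$, via the identity $\sum_{j=1}^{n}(-1)^{j-1}\binom{2n}{n+j} = \tfrac{1}{2}\binom{2n}{n}$, which follows by expanding $0=(1-1)^{2n}$ and using $\binom{2n}{n+j}=\binom{2n}{n-j}$. Finally, the classical $\sum_{k=1}^{\infty}\tfrac{1}{k(2k-1)} = 2\ln 2$ (partial fractions) is exactly what is needed to redistribute the $k=1$ contribution and the $j=1$ boundary term into the claimed shape $2\ln 2 + \sum_{k}\tfrac{1}{k(2k-1)}(\eta(2k-2)-1)$.

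The main obstacle is the same as in Corollary~\ref{corafterpartial}: justifying the interchange of $\lim_{n\to\infty}$ with the integration and the two summations. The only new technical wrinkle, specific to the $[0,\pi]$ setting, is that the $j=1$ partial-fraction pole lies at the endpoint of integration, so $I_1$ must be computed separately from the general $I_j$ formula, and the resulting $2\ln 2$ tracked through the double-series rearrangement.
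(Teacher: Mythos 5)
Your proposal is correct and follows essentially the same route as the paper: the full-interval integral from Theorem \ref{cscthm}, the partial-fraction expansion of $x/\sin x$, termwise integration with the $j=1$ term isolated to yield the $2\ln 2$, and the $1/j$-expansion of the logarithms collapsed via $\tfrac{2}{2k-1}-\tfrac{1}{k}=\tfrac{1}{k(2k-1)}$ into the $\eta$-series. Your separate evaluation of $I_1$ and your dominated-convergence and binomial-identity justification of the inner $j$-limits simply make explicit steps the paper delegates to its antiderivative bookkeeping and to Corollary \ref{etacor}.
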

\begin{proof}
\begin{align}\notag
\zeta(3)&=\frac{1}{7}\int_{0}^{\pi}((\pi-x)) \frac{x}{\sin(x)}dx\\\notag
&=\frac{1}{7}\pi\int_{0}^{\pi}\lim_{n->\infty}\bigl[ \sum_{k=0}^{n-1}(-1)^{k+n+1}\left(\frac{  (n!)^2(n-k)}{(2n-k)!k!}\right)\left(\frac{\pi-x}{(n-k)\pi-x}+\frac{\pi-x}{(n-k)\pi+x}\right)  \bigr] dx\\\notag
&=\frac{1}{7}\pi\lim_{n->\infty}\bigl[ \sum_{k=0}^{n-1}(-1)^{k+n+1}\int_{0}^{\pi}\left(\frac{  (n!)^2(n-k)}{(2n-k)!k!}\right)\left(\frac{\pi-x}{(n-k)\pi-x}+\frac{\pi-x}{(n-k)\pi+x}\right)  \bigr] dx\\\notag
&=\frac{1}{7}\pi^2\lim_{n\to\infty}\sum_{k=0}^{n-1}(-1)^{n+k+1}\frac{(n-k)(n!)^2}{k!(2n-k)!}\left( (n-k-1)\ln((n-k)\pi-x)+ (n-k+1)\ln((n-k)\pi+x)  \right)|_{x=0}^{x=\pi}\\\notag
&=\frac{1}{7}\pi^2\lim_{n\to\infty}\Bigl(\frac{2 n}{n+1}\ln\left(\frac{\pi+x}{\pi}\right)+\sum_{k=0}^{n-2}(-1)^{n+k+1}\frac{(n-k)(n!)^2}{k!(2n-k)!}\ln\left(    \left(  \frac{(n-k)^2\pi^2-x^2}{(n-k)^2\pi^2}\right)^{n-k}\left(\frac{(n-k)\pi+x}{(n-k)\pi-x}\right)     \right)\Bigr)|_{x=\pi}\\\notag
&=\frac{1}{7}\pi^2\Bigl(2\ln\left(\frac{\pi+x}{\pi}\right)+\lim_{n\to\infty}\sum_{j=2}^{n}(-1)^{j-1}\frac{j(n!)^2}{(n-j)!(n+j)!}\ln\left(    \left(  \frac{j^2\pi^2-x^2}{j^2\pi^2}\right)^{j}\left(\frac{j\pi+x}{j\pi-x}\right)     \right)\Bigr)|_{x=\pi}\\\notag
&=\frac{1}{7}\pi^2\Bigl(2\ln\left(2\right)+\lim_{n\to\infty}\sum_{j=2}^{n}(-1)^{1-j}\frac{j(n!)^2}{(n-j)!(n+j)!}\ln\Bigl(    \left(  \frac{j^2-1}{j^2}\right)^{j}\left(\frac{j+1}{j-1}\right)     \Bigr)\Bigr)\\\notag
&=\frac{1}{7}\pi^2\Bigl(2\ln\left(2\right)+\lim_{n\to\infty}\sum_{j=2}^{n}(-1)^{1-j}\frac{j(n!)^2}{(n-j)!(n+j)!}\Bigl(j\ln\left(1-\frac{1}{j^2}\right)+\ln\left(1+\frac{2}{j-1}\right)\Bigr)\Bigr)\\\notag
&=\frac{1}{7}\pi^2\Bigl(2\ln\left(2\right)+\lim_{n\to\infty}\sum_{j=2}^{n}(-1)^{1-j}\frac{j(n!)^2}{(n-j)!(n+j)!}\Bigl(j\ln\left(1-\frac{1}{j^2}\right)-\ln\left(1-\frac{2}{j+1}\right)\Bigr)\Bigr)\\\notag
&=\frac{1}{7}\pi^2\Bigl(2\ln\left(2\right)+\lim_{n\to\infty}\sum_{j=2}^{n}(-1)^{1-j}\frac{j(n!)^2}{(n-j)!(n+j)!}\ln\Bigl(    \left(1-\frac{1}{j^2}\right)^{j}\left(\frac{1+\frac{1}{j}}{1-\frac{1}{j}}\right)     \Bigr)\Bigr)\\\notag
&=\frac{1}{7}\pi^2\Bigl(2\ln\left(2\right)+\lim_{n\to\infty}\sum_{j=2}^{n}(-1)^{1-j}\frac{j(n!)^2}{(n-j)!(n+j)!}\Bigl(j\ln\Bigl(  1-\frac{1}{j^2}\Bigr)+\ln\left(\frac{1+\frac{1}{j}}{1-\frac{1}{j}}\right) \Bigr)\Bigr)\\\notag
&=\frac{1}{7}\pi^2\Bigl(2\ln\left(2\right)+\lim_{n\to\infty}\sum_{j=2}^{n}(-1)^{1-j}\frac{j(n!)^2}{(n-j)!(n+j)!}\Bigl(-\sum_{k=1}^{\infty}\frac{1}{k(j)^{2k-1}}+2\sum_{k=1}^{\infty}\frac{1}{(2k-1)(j)^{2k-1}}\Bigr)\Bigr)\\\notag
&=\frac{1}{7}\pi^2\Bigl(2\ln\left(2\right)+\lim_{n\to\infty}\sum_{j=2}^{n}(-1)^{j-1}\frac{(n!)^2}{(n-j)!(n+j)!}\sum_{k=1}^{\infty}\frac{1}{k(2k-1)(j)^{2k-2}}\Bigr)\\\notag
&=\frac{1}{7}\pi^2\Bigl(2\ln\left(2\right)+\lim_{n\to\infty}\sum_{j=2}^{n}(-1)^{j-1}\frac{(n!)^2}{(n-j)!(n+j)!}\sum_{k=1}^{\infty}\frac{1}{k(2k-1)(j)^{2k-2}}\Bigr)\\\notag
&=\frac{1}{7}\pi^2\Bigl(2\ln\left(2\right)+\lim_{n\to\infty}\sum_{k=1}^{\infty}\frac{1}{k(2k-1)}\sum_{j=2}^{n}(-1)^{j-1}\frac{(n!)^2}{(n-j)!(n+j)!}\frac{1}{(j)^{2k-2}}\Bigr)\\\notag
&=\frac{1}{7}\pi^2\Bigl(2\ln\left(2\right)+\sum_{k=1}^{\infty}\frac{1}{k(2k-1)}\left(\eta(2k-2)-1\right)\Bigr)\\\notag
\end{align}
For the last step, see corollary \ref{etacor} below.
\end{proof}
\begin{corollary}
Let
$$f(x)=\frac{1}{7}\int_{0}^{x}(x(\pi-x)) \csc( x)dx\notag$$
then
\begin{align}\label{formula2}
f(x)&=\frac{1}{7}\pi^2 \sum_{j=1}^{\infty}\lim_{n\to\infty}2\sum_{i=1}^{n}(-1)^{i-1}\frac{(n!)^2}{i^{2(j-1)}(n-i)!(n+i)!\pi^{2j}}\Bigl(- \frac{1}{2j }x^{2j}+\frac{\pi}{2j-1}x^{2j-1} \Bigr)\\\notag
\end{align}
\end{corollary}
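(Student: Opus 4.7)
The plan is to recognize that the target identity~\ref{formula2} is obtained from the already-established series~\ref{formula1} by reindexing and then substituting a limit representation for $\eta(2(j-1))$ that was derived earlier from Euler's product formula for $\sin$.

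First I would rewrite~\ref{formula1} as
$$f(x)=\frac{1}{7}\Bigl(\pi x-\frac{x^{2}}{2}+2\sum_{j=1}^{\infty}\frac{\eta(2j)}{\pi^{2j}}\Bigl(\frac{\pi x^{2j+1}}{2j+1}-\frac{x^{2j+2}}{2j+2}\Bigr)\Bigr),$$
shift the summation index $j\mapsto j-1$, and notice that the putative $j=1$ term $2\pi^{0}\eta(0)\bigl(\pi x-\tfrac{x^{2}}{2}\bigr)$ agrees with the leading $\pi x-\tfrac{x^{2}}{2}$ on the understanding that $\eta(0)=\tfrac{1}{2}$. This absorbs the two leading monomials into one uniform sum
$$f(x)=\frac{\pi^{2}}{7}\sum_{j=1}^{\infty}\frac{2\eta(2(j-1))}{\pi^{2j}}\Bigl(\frac{\pi x^{2j-1}}{2j-1}-\frac{x^{2j}}{2j}\Bigr).$$

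Next, I would substitute the partial-fraction identity
$$\eta(2(j-1))=\lim_{n\to\infty}\sum_{i=1}^{n}(-1)^{i-1}\frac{(n!)^{2}}{i^{2(j-1)}(n-i)!(n+i)!},$$
which is the very representation used in the proof of the preceding corollary (obtained by pairing the partial-fraction decomposition of $x/\sin x$ from Lemma~\ref{eulersinlemma} with the Taylor expansion of the resulting logarithms). This formula also delivers the needed boundary value $\eta(0)=\tfrac{1}{2}$ at $j=1$, so the $j=1$ slice of the rewritten sum is internally consistent. Plugging the representation inside the outer sum over $j$ and rewriting the inner bracket as $-\tfrac{1}{2j}x^{2j}+\tfrac{\pi}{2j-1}x^{2j-1}$ produces exactly~\ref{formula2}.

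The only analytic concern is the interchange of the outer sum over $j$ with the inner limit in $n$. Since $x(\pi-x)\csc x$ is real-analytic on $(0,\pi)$ and $|\eta(2(j-1))|\le 1$ for every $j$, the Taylor series in $x$ converges uniformly on compact subsets of $(0,\pi)$, which legitimizes the interchange there. Modulo this analytic step, the remainder of the argument is pure index bookkeeping, so I expect the hard part to be verifying that the $j=1$ slice of the new sum reproduces $\pi x-\tfrac{x^{2}}{2}$ via $\eta(0)=\tfrac{1}{2}$ rather than any genuinely new estimation.
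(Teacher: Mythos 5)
Your proposal runs the paper's logic in reverse, and as written it is circular. In the paper, \ref{formula2} is proved \emph{directly}: the previous corollary's proof expresses $f(x)$ (via Lemma \ref{eulersinlemma} and partial fractions for $x/\sin x$) as a limit of sums of terms $(-1)^{i-1}\frac{i(n!)^2}{(n-i)!(n+i)!}\bigl((i-1)\ln(1-\tfrac{x}{i\pi})+(i+1)\ln(1+\tfrac{x}{i\pi})\bigr)$, each logarithm is expanded as a power series in $x/(i\pi)$, and the odd and even powers are regrouped; no value of $\eta$ is used anywhere. The identities $\eta(2j)=\lim_{n\to\infty}\sum_{i=1}^{n}(-1)^{i-1}\frac{(n!)^2}{i^{2j}(n-i)!(n+i)!}$ and, crucially, $\lim_{n\to\infty}\sum_{i=1}^{n}(-1)^{i-1}\frac{(n!)^2}{(n-i)!(n+i)!}=\frac12$ are then \emph{deduced} in corollary \ref{etacor} precisely by comparing \ref{formula1} with \ref{formula2}. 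So you cannot invoke that representation to get \ref{formula2} from \ref{formula1}; your claim that it was ``derived earlier from Euler's product formula'' does not match the paper, where it is downstream of the very statement you are proving (the preceding corollary cites it only with a forward reference).

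The gap is not merely organizational: the $j=1$ slice is exactly the nontrivial piece. For exponents $2(j-1)\ge 2$ the representation does follow from Lemma \ref{domcon} plus dominated convergence, but for exponent $0$ the terms do not decay in $i$, dominated convergence fails, and the value $\frac12$ needs a genuine argument — in the paper it comes either from the comparison of the two formulas or from the exact finite identity $\sum_{i=1}^{n}(-1)^{i-1}\frac{(n!)^2}{(n-i)!(n+i)!}=\frac12$ of corollary \ref{sumfactorials} (proved by Gosper's algorithm). Your index bookkeeping (shifting $j\mapsto j-1$ and checking that the $j=1$ slice reproduces $\pi x-\tfrac{x^2}{2}$ when the limit equals $\tfrac12$) is correct, but you supply no independent proof of the limit identities you lean on, so the argument as proposed assumes what the paper later proves from this corollary. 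If you patched it by citing dominated convergence for $j\ge 2$ and the Gosper evaluation for the exponent-$0$ sum, the derivation would become valid, but it would be a genuinely different (and weaker, in context) route: the paper's point is that \ref{formula1} and \ref{formula2} are obtained by two independent computations, and corollary \ref{etacor} is the payoff of comparing them.
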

\begin{proof}
From corollary
\begin{align}
f(x)&=\frac{1}{7}\pi^2\lim_{n\to\infty}\sum_{k=0}^{n-1}(-1)^{n+k+1}\frac{(n-k)(n!)^2}{k!(2n-k)!}\left( (n-k-1)\ln((n-k)\pi-x)+ (n-k+1)\ln((n-k)\pi+x)  \right)|_{x=0}^{x}\\\notag
&=\frac{1}{7}\pi^2\lim_{n\to\infty}\sum_{k=0}^{n-1}(-1)^{n+k+1}\frac{(n-k)(n!)^2}{k!(2n-k)!}\left( (n-k-1)\ln(\frac{(n-k)\pi-x)}{(n-k)\pi}+ (n-k+1)\ln(\frac{(n-k)\pi+x) }{(n-k)\pi} \right)\\\notag
&=\frac{1}{7}\pi^2\lim_{n\to\infty}\sum_{k=0}^{n-1}(-1)^{n+k+1}\frac{(n-k)(n!)^2}{k!(2n-k)!}\Bigl( (n-k-1)\ln\left(1-\frac{x}{(n-k)\pi}\right)+ (n-k+1)\ln\left(1+\frac{x}{(n-k)\pi}\right) \Bigr)\\\notag
&=\frac{1}{7}\pi^2\lim_{n\to\infty}\sum_{k=0}^{n-1}(-1)^{n+k+1}\frac{(n-k)(n!)^2}{k!(2n-k)!}\Bigl(- (n-k-1)\sum_{j=1}^{\infty}\frac{1}{j}\left(\frac{x}{(n-k)\pi}\right)^j+ (n-k+1)\sum_{j=1}^{\infty}(-1)^{j-1}\frac{1}{j}\left(\frac{x}{(n-k)\pi}\right)^j \Bigr)\\\notag
&=\frac{1}{7}\pi^2\lim_{n\to\infty}\sum_{i=1}^{n}(-1)^{i-1}\frac{i(n!)^2}{(n-i)!(n+i)!}\Bigl(- (i-1)\sum_{j=1}^{\infty}\frac{1}{j}\left(\frac{x}{i\pi}\right)^j+( i+1)\sum_{j=1}^{\infty}(-1)^{j-1}\frac{1}{j}\left(\frac{x}{i\pi}\right)^j \Bigr)\\\notag
&=\frac{1}{7}\pi^2\lim_{n\to\infty}\sum_{i=1}^{n}(-1)^{i-1}\frac{i(n!)^2}{(n-i)!(n+i)!}\Bigl(- 2i\sum_{j=1}^{\infty}\frac{1}{2j}\left(\frac{x}{i\pi}\right)^{2j}+2\sum_{j=1}^{\infty}\frac{1}{2j-1}\left(\frac{x}{i\pi}\right)^{2 j-1} \Bigr)\\\notag
&=\frac{1}{7}\pi^2 \sum_{j=1}^{\infty}\lim_{n\to\infty}2\sum_{i=1}^{n}(-1)^{i-1}\frac{i(n!)^2}{(n-i)!(n+i)!}\Bigl(- i\frac{1}{2j}\left(\frac{1}{i\pi}\right)^{2j}x^{2j}+\frac{1}{2j-1}\left(\frac{1}{i\pi}\right)^{2 j-1}x^{2j-1} \Bigr)\\\notag
&=\frac{1}{7}\pi^2 \sum_{j=1}^{\infty}\lim_{n\to\infty}2\sum_{i=1}^{n}(-1)^{i-1}\frac{i(n!)^2}{(n-i)!(n+i)!}\Bigl(- \frac{1}{2j i^{2j-1}\pi^{2j}}x^{2j}+\frac{1}{(2j-1)i^{2j-1}\pi^{2j-1}}x^{2j-1} \Bigr)\\\notag
&=\frac{1}{7}\pi^2 \sum_{j=1}^{\infty}\lim_{n\to\infty}2\sum_{i=1}^{n}(-1)^{i-1}\frac{(n!)^2}{i^{2(j-1)}(n-i)!(n+i)!\pi^{2j}}\Bigl(- \frac{1}{2j }x^{2j}+\frac{\pi}{(2j-1)}x^{2j-1} \Bigr)\\\notag
\end{align}
\end{proof}
\begin{corollary}\label{etacor}
\begin{align}
\lim_{n\to\infty}\sum_{i=1}^{n}(-1)^{i-1}\frac{(n!)^2}{(n-i)!(n+i)!}=\frac{1}{2}\\\notag
\eta(2j)=\lim_{n\to\infty}\sum_{i=1}^{n}(-1)^{i-1}\frac{(n!)^2}{i^{2j}(n-i)!(n+i)!}\\\notag
\end{align}
\end{corollary}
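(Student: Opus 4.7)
The plan is to show that the first identity holds \emph{exactly} for every $n$, not merely in the limit, and that the second identity then follows by a dominated–convergence–style argument together with the pointwise asymptotic $(n!)^2/((n-i)!(n+i)!)\to 1$.

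First I would rewrite the ratio of factorials in binomial form:
$$
\frac{(n!)^2}{(n-i)!(n+i)!}=\frac{\binom{2n}{n-i}}{\binom{2n}{n}}.
$$
Using the standard identity $\sum_{k=0}^{2n}(-1)^k\binom{2n}{k}=0$ and substituting $k=n-i$ so that $i$ runs from $-n$ to $n$, the symmetry $\binom{2n}{n-i}=\binom{2n}{n+i}$ collapses the sum to
$$
\binom{2n}{n}+2\sum_{i=1}^{n}(-1)^{i}\binom{2n}{n-i}=0,
$$
i.e.\ $\sum_{i=1}^{n}(-1)^{i-1}\binom{2n}{n-i}=\tfrac12\binom{2n}{n}$. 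Dividing by $\binom{2n}{n}$ yields the first formula exactly for each $n\ge1$, so the limit statement is immediate.

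For the second identity, the key pointwise fact is that, for fixed $i$,
$$
\frac{(n!)^2}{(n-i)!(n+i)!}=\prod_{k=1}^{i}\frac{n-k+1}{n+k}\longrightarrow 1 \quad\text{as } n\to\infty,
$$
so each term $(-1)^{i-1}(n!)^2/\bigl(i^{2j}(n-i)!(n+i)!\bigr)$ converges to $(-1)^{i-1}/i^{2j}$, whose sum is $\eta(2j)$ by definition. The remaining step is to justify swapping the limit with the summation over $i$. I would do this by noting that every factor in the product above is $<1$, so
$$
\left|(-1)^{i-1}\frac{(n!)^2}{i^{2j}(n-i)!(n+i)!}\right|\le \frac{1}{i^{2j}},
$$
which is summable for $j\ge1$. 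Extending the finite sum to an infinite one by zero-padding and applying dominated convergence (or, equivalently, splitting into a fixed initial segment plus a uniformly small tail controlled by $\sum_{i>N}i^{-2j}$) completes the interchange.

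The only delicate point, and therefore the main obstacle, is this uniform control of the tail: one has to know not merely that each summand converges but that the binomial ratio never exceeds $1$, so that the Weierstrass-type majorant $1/i^{2j}$ applies uniformly in $n$. Once that bound is in hand, both formulas drop out — the first as an exact Pascal-triangle identity and the second as a legitimate termwise limit producing the Dirichlet eta series.
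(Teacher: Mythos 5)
Your proposal is correct, but it proves the corollary by a genuinely different route than the paper does here. The paper obtains both identities by comparing two power-series representations of the same function $f(x)=\frac{1}{7}\int_{0}^{x}t(\pi-t)\csc(t)\,dt$: the expansion (\ref{formula1}) coming from Wesolowski's cosecant series, whose coefficients involve $\eta(2j)$, and the expansion (\ref{formula2}) coming from the Euler-product/partial-fraction decomposition, whose coefficients involve the alternating sums $\sum_{i}(-1)^{i-1}\frac{(n!)^2}{i^{2j}(n-i)!(n+i)!}$; equating coefficients of like powers of $x$ yields the value $\frac{1}{2}$ and the $\eta(2j)$ formula simultaneously. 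You instead prove the first identity as an exact finite statement for every $n$ (writing $\frac{(n!)^2}{(n-i)!(n+i)!}=\binom{2n}{n-i}/\binom{2n}{n}$ and using the alternating row-sum of Pascal's triangle with the symmetry $\binom{2n}{n-i}=\binom{2n}{n+i}$), and the second by Tannery/dominated convergence, using that $\frac{(n!)^2}{(n-i)!(n+i)!}=\prod_{k=1}^{i}\frac{n-k+1}{n+k}\in[0,1)$ so that $1/i^{2j}$ is a summable majorant (valid since $2j\ge 2$; correctly, you do not attempt this for the $j=0$ case, where the majorant fails and the exact binomial identity is needed). Your route is more elementary and self-contained: it does not depend on the analytic chain leading to (\ref{formula1}) and (\ref{formula2}), whose own derivations involve unjustified interchanges, and it strengthens the first statement from a limit to an identity valid for each $n$ — a fact the paper records separately in Corollary \ref{sumfactorials} (via Gosper's algorithm) and, for the $\eta$ formula, in the later corollary proved by "dominated convergence together with Lemma \ref{domcon}". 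What the paper's coefficient-comparison buys in exchange is a check of consistency between the two expansions of $f$ and a derivation that stays inside its $\zeta(3)$ framework; as a verification of this particular corollary, your argument is the tighter one.
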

\begin{proof}
Comparing \ref{formula1} and \ref{formula2},

\begin{align}
\pi x-\frac{x^2}{2}=\lim_{n\to\infty}2\pi^2\sum_{i=1}^{n}(-1)^{i-1}\frac{(n!)^2}{(n-i)!(n+i)!\pi^{2}}\Bigl(- \frac{1}{2 }x^{2}+\pi x \Bigr)\\\notag
\end{align}
$\implies$
\begin{align}
\lim_{n\to\infty}\sum_{i=1}^{n}(-1)^{i-1}\frac{(n!)^2}{(n-i)!(n+i)!}=\frac{1}{2}\\\notag
\end{align}
and
\begin{align}
\lim_{n\to\infty}\sum_{i=1}^{n}(-1)^{i-1}\frac{(n!)^2}{i^{2j}(n-i)!(n+i)!\pi^{2j}}\left(\frac{\pi x^{2j+1}}{2j+1} -\frac{x^{2j+2}}{2j+2}\right)\\\notag
=\frac{1}{\pi^{2j}}\eta(2j)\left(\frac{\pi x^{2j+1}}{2j+1} -\frac{x^{2j+2}}{2j+2}\right)\\\notag
\end{align}
$\implies$
\begin{align}
\eta(2j)=\lim_{n\to\infty}\sum_{i=1}^{n}(-1)^{i-1}\frac{(n!)^2}{i^{2j}(n-i)!(n+i)!}\\\notag
\end{align}
\end{proof}
\begin{corollary}\label{etacor}
\begin{align}
\eta(k)=\lim_{n\to\infty}\sum_{i=1}^{n}(-1)^{i-1}\frac{(n!)^2}{i^{k}(n-i)!(n+i)!}\\\notag
\end{align}
\end{corollary}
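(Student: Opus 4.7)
The plan is to prove this by extracting the pointwise limit of each summand and controlling the tail uniformly via an alternating-series bound. First I would rewrite
$$c_{n,i}:=\frac{(n!)^2}{(n-i)!(n+i)!}=\prod_{j=1}^{i}\frac{n+1-j}{n+j}$$
for $1\le i\le n$. Each factor lies in $(0,1]$ and tends to $1$ as $n\to\infty$, so $c_{n,i}\in(0,1]$ with $\lim_{n\to\infty} c_{n,i}=1$ for every fixed $i$. A direct computation gives $c_{n,i+1}/c_{n,i}=(n-i)/(n+i+1)\in(0,1)$, so $c_{n,i}$ is strictly decreasing in $i$; combined with the decrease of $i\mapsto 1/i^k$ for $k\ge 1$, the terms $b_i:=c_{n,i}/i^k$ form a positive, strictly decreasing sequence in $i$.

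Next I would write $T_n:=\sum_{i=1}^n(-1)^{i-1}b_i$ and, for an arbitrary cutoff $N\le n$, split it into head $H_n(N):=\sum_{i=1}^{N}(-1)^{i-1}c_{n,i}/i^k$ and tail $R_n(N):=\sum_{i=N+1}^{n}(-1)^{i-1}c_{n,i}/i^k$. The Leibniz estimate applied to the finite alternating sum with positive decreasing entries yields $|R_n(N)|\le c_{n,N+1}/(N+1)^k\le 1/(N+1)^k$, and the same bound controls the tail of $\eta(k)=\sum_{i=1}^{\infty}(-1)^{i-1}/i^k$. For each fixed $N$, the head $H_n(N)$ converges as $n\to\infty$ to $\sum_{i=1}^{N}(-1)^{i-1}/i^k$ because only finitely many terms are in play and each $c_{n,i}\to 1$. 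Consequently $\limsup_{n\to\infty}|T_n-\eta(k)|\le 2/(N+1)^k$; sending $N\to\infty$ gives $T_n\to\eta(k)$, as claimed.

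The main obstacle is establishing the monotonicity of $b_i$ in $i$ that activates the Leibniz bound, because this is exactly what rescues the case $k=1$, where the limit series $\sum(-1)^{i-1}/i=\ln 2$ converges only conditionally and a naive dominated-convergence argument is unavailable. For $k\ge 2$ one could alternatively invoke dominated convergence directly (since $|b_i|\le 1/i^k$ is summable against counting measure), but the monotonicity-plus-Leibniz route has the virtue of treating all $k\ge 1$ uniformly and recovering the previously established even-$k$ cases of Corollary \ref{etacor} as a special instance, at essentially no additional cost.
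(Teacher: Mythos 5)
Your proof is correct, and it is genuinely different from what the paper does: the paper in fact leaves the proof of this general-$k$ corollary empty, and only the even case $\eta(2j)$ is established earlier, indirectly, by expanding $\frac{1}{7}\int_0^x t(\pi-t)\csc t\,dt$ in two ways (once via the cosecant series in formula (\ref{formula1}), once via Euler's product and partial fractions in formula (\ref{formula2})) and matching coefficients. Your route is self-contained and elementary: the product representation $c_{n,i}=\prod_{j=1}^{i}\frac{n+1-j}{n+j}$, the ratio $c_{n,i+1}/c_{n,i}=\frac{n-i}{n+i+1}$ giving strict monotonicity in $i$, the Leibniz bound $\bigl|\sum_{i=N+1}^{n}(-1)^{i-1}c_{n,i}/i^{k}\bigr|\le 1/(N+1)^{k}$ uniformly in $n$, and the head/tail splitting with $N\to\infty$ after $n\to\infty$. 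What this buys is substantial: it proves the identity for every $k\ge 1$, including odd $k$ and the conditionally convergent case $k=1$, with explicit error control, whereas the paper's coefficient-matching argument only reaches even arguments and moreover rests on an unjustified interchange of limits when comparing the two expansions. Your observation that dominated convergence suffices for $k\ge 2$ but not for $k=1$ is exactly the right point, and the alternating-series argument handles it cleanly. One small caveat: the paper also uses the boundary value $\eta(0)=\tfrac12$ (the statement $\lim_{n\to\infty}\sum_{i=1}^{n}(-1)^{i-1}c_{n,i}=\tfrac12$ in the preceding corollary); your comparison with the convergent series $\sum(-1)^{i-1}/i^{k}$ does not cover $k=0$, since that series diverges, so if the corollary is meant to include $k=0$ in the Abel-summed sense a separate short argument (e.g.\ from the explicit partial sum $\sum_{i=1}^{n}(-1)^{i-1}c_{n,i}=\tfrac12-\tfrac{(-1)^{n}}{2}c_{n,n}\to\tfrac12$, or the closed forms in Corollary \ref{sumfactorials}) is still needed.
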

\begin{proof}
\end{proof}
\begin{corollary}
$$B_{2j}=\frac{(2j)!}{(2^{2j-1}-1)\pi^{2j}}\lim_{n\to\infty}\sum_{i=1}^{n}(-1)^{i+j}\frac{(n!)^2}{i^{2j}(n-i)!(n+i)!}$$\notag
\end{corollary}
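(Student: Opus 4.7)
The plan is to recognize the inner sum as a signed multiple of $\eta(2j)$, then convert $\eta(2j)$ to $B_{2j}$ via two classical identities. First, I apply Corollary \ref{etacor} with $k=2j$ to write $\sum_{i=1}^{n}(-1)^{i-1}\frac{(n!)^2}{i^{2j}(n-i)!(n+i)!} \to \eta(2j)$. Since $(-1)^{i+j} = (-1)^{j+1}(-1)^{i-1}$, pulling the constant sign outside gives
$$\lim_{n\to\infty}\sum_{i=1}^{n}(-1)^{i+j}\frac{(n!)^2}{i^{2j}(n-i)!(n+i)!} = (-1)^{j+1}\eta(2j).$$
So the target identity is equivalent to showing $B_{2j} = \frac{(-1)^{j+1}(2j)!}{(2^{2j-1}-1)\pi^{2j}}\,\eta(2j)$.

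Next I invoke the two classical facts: the eta-zeta relation $\eta(s)=(1-2^{1-s})\zeta(s)$ specialized at $s=2j$, namely $\eta(2j)=\frac{2^{2j-1}-1}{2^{2j-1}}\zeta(2j)$; and Euler's formula $\zeta(2j)=\frac{(-1)^{j+1}(2\pi)^{2j}B_{2j}}{2(2j)!}$. Multiplying these together, the factor $2^{2j-1}$ in the eta-zeta denominator cancels with $(2\pi)^{2j}/2 = 2^{2j-1}\pi^{2j}$, leaving
$$\eta(2j) = \frac{(-1)^{j+1}(2^{2j-1}-1)\pi^{2j}B_{2j}}{(2j)!}.$$
Solving this for $B_{2j}$ and using $(-1)^{j+1}\cdot(-1)^{j+1}=1$ reproduces exactly $B_{2j}=\frac{(2j)!}{(2^{2j-1}-1)\pi^{2j}}\cdot(-1)^{j+1}\eta(2j)$, which by the first paragraph is the right-hand side of the claimed formula.

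There is no real obstacle here; the proof is a one-line consequence of Corollary \ref{etacor} combined with two textbook identities, and the only bookkeeping required is the tracking of the signs $(-1)^{i-1}$, $(-1)^{i+j}$, and $(-1)^{j+1}$, together with the cancellation of the powers of $2$ between $\eta(2j)/\zeta(2j)$ and Euler's $(2\pi)^{2j}$. If anything requires comment, it is merely that Euler's formula for $\zeta(2j)$ and the eta-zeta relation are being cited without proof, as both are standard.
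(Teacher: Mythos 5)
Your argument is correct and matches the paper's own proof essentially line for line: both recognize the alternating factorial sum as $(-1)^{j+1}\eta(2j)$ via Corollary \ref{etacor}, then combine the relation $\eta(2j)=\frac{2^{2j-1}-1}{2^{2j-1}}\zeta(2j)$ with Euler's formula for $\zeta(2j)$ and solve for $B_{2j}$. Nothing further is needed.
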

\begin{proof}
\begin{align}
\eta(2j)&=(\frac{2^{2j-1}-1}{2^{2j-1}})\zeta(2j)\\\notag
&=(-1)^{j+1}(\frac{(2^{2j-1}-1)(\pi)^{2j}}{(2j)!})B_{2j}\\\notag
\end{align}
$\implies$
\begin{align}
B_{2j}&=(-1)^{j+1}\frac{(2j)!}{(2^{2j-1}-1)\pi^{2j}}\eta(2j)\\\notag
&=\frac{(2j)!}{(2^{2j-1}-1)\pi^{2j}}\lim_{n\to\infty}\sum_{i=1}^{n}(-1)^{i+j}\frac{(n!)^2}{i^{2j}(n-i)!(n+i)!}\\\notag
\end{align}
\end{proof}
\begin{lemma}\label{domcon}
$$\lim_{n\to\infty}\frac{(n!)^2}{(n-j)!(n+j)!}=1\notag$$
\end{lemma}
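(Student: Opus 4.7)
The plan is to rewrite the ratio as an explicit finite product of $j$ factors, each of which is a rational function of $n$ tending to $1$, and then use the fact that a finite product of sequences with limits equals the product of the limits.

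First I would cancel the common factor $(n-j)!$ in the top against part of $n!$, and cancel $n!$ in the top against part of $(n+j)!$, to obtain
\begin{equation}\notag
\frac{(n!)^2}{(n-j)!(n+j)!}=\frac{n(n-1)(n-2)\cdots(n-j+1)}{(n+1)(n+2)\cdots(n+j)}=\prod_{i=1}^{j}\frac{n+1-i}{n+i}.
\end{equation}
This product has exactly $j$ factors, and $j$ is fixed while $n\to\infty$.

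Next I would observe that for each fixed $i\in\{1,\ldots,j\}$,
\begin{equation}\notag
\lim_{n\to\infty}\frac{n+1-i}{n+i}=\lim_{n\to\infty}\frac{1+\frac{1-i}{n}}{1+\frac{i}{n}}=1.
\end{equation}
Since the product is over a fixed finite index set, the limit of the product equals the product of the limits, giving $\prod_{i=1}^{j}1=1$, as required.

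There is no real obstacle here: the only subtle point is that one must keep $j$ fixed (as in the surrounding corollaries, where $j$ indexes a term before the limit in $n$ is taken), so that the number of factors does not grow with $n$. The lemma is then a routine consequence of the algebraic identity above.
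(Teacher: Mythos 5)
Your proof is correct and is essentially the paper's argument in unrolled form: the paper proves the same fact by induction on $j$ using the one-step ratio $\frac{n+1-j}{n+j}\to 1$, which is exactly your finite product $\prod_{i=1}^{j}\frac{n+1-i}{n+i}$ taken one factor at a time. No gap; your explicit-product version is if anything slightly cleaner, since it makes the fixed finite number of factors (and the base case $\frac{n}{n+1}$, which the paper misstates as $a_1=1$) completely transparent.
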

\begin{proof}
If we let
$$a_j=\frac{(n!)^2}{(n-j)!(n+j)!}$$
then
$$a_{j+1}=\frac{n+1-j}{n+j}a_j$$
and
$$a_1=1$$
So, by induction, Assuming $\lim_{n\to\infty}a_{j-1}=1$ then $\lim_{n\to\infty}a_j=1$.
\end{proof}
\begin{corollary}\label{sumfactorials}
For $n \ge 1$
\begin{align}\notag
&\sum_{j=1}^{n}(-1)^{j-1}\frac{(n!)^2}{(n-j)!(n+j)!}=\frac{1}{2}\\\notag
&\sum_{j=1}^{n}(-1)^{j-1}\frac{(n!)^2}{(n-j)!(n+j)!}j=\frac{n}{2(2n-1)}\\\label{gos1}
&\sum_{j=1}^{n}(-1)^{j-1}\frac{(n!)^2}{(n-j)!(n+j)!}j^3=\frac{n^2}{2(2n-1)(2n-3)}\\\label{gos3}
&\sum_{j=1}^{n}(-1)^{j-1}\frac{(n!)^2}{(n-j)!(n+j)!}j^5=-\frac{n^2(4n-1)}{2(2n-1)(2n-3)(2n-5)}\\\label{gos4}
&\sum_{j=1}^{n}(-1)^{j-1}\frac{(n!)^2}{(n-j)!(n+j)!}j^7=\frac{n^2(34n^2-24n+5)}{2(2n-1)(2n-3)(2n-5)(2n-7)}\\\label{gos4}
\end{align}
For $n>m, m\ge1$
\begin{align}
\sum_{j=1}^{n}(-1)^j\frac{(n!)^2}{(n-j)!(n+j)!}j^{2m}=0\\\notag
\end{align}
\end{corollary}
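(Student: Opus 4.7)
The plan is to split the list into two types of identities: the first identity and the final family of vanishing sums are straightforward consequences of the classical finite-difference identity applied to $(1-x)^{2n}$, while the four intermediate identities (the odd powers $j,j^3,j^5,j^7$) are of Gosper type and can be certified by explicit hypergeometric telescoping---matching the labels \texttt{gos1}, \texttt{gos3}, \texttt{gos4} the paper attaches to these lines. Throughout, abbreviate $c_j := (n!)^2/((n-j)!(n+j)!)$.

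For the even-power cases, I would start from the generating function
\[
F(z) \;=\; \sum_{j=-n}^{n}(-1)^{j}\binom{2n}{n-j}\,z^{j} \;=\; (-1)^{n}\,\frac{(1-z)^{2n}}{z^{n}},
\]
obtained by the substitution $k=n-j$ in the binomial expansion of $(1-z)^{2n}$. Setting $z=e^{t}$, the Taylor series of $F(e^{t})$ at $t=0$ starts at order $t^{2n}$ since $1-e^{t}=-t+O(t^{2})$, so $(z\,d/dz)^{r}F(z)\big|_{z=1}=0$ for every $0\le r<2n$. This yields $\sum_{j=-n}^{n}(-1)^{j}\binom{2n}{n-j}\,j^{r}=0$ in that range. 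Using the symmetry $\binom{2n}{n-j}=\binom{2n}{n+j}$, the sum collapses to $\binom{2n}{n}\cdot[r{=}0]+2\sum_{j=1}^{n}(-1)^{j}\binom{2n}{n-j}j^{r}$ whenever $r$ is even; dividing by $\binom{2n}{n}$ gives both $\sum_{j=1}^{n}(-1)^{j-1}c_{j}=1/2$ (case $r=0$) and the final vanishing identity $\sum_{j=1}^{n}(-1)^{j}c_{j}j^{2m}=0$ for $1\le m<n$.

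For the four odd-power identities, I would run Gosper's algorithm on each summand. Setting $T_{r}(j):=(-1)^{j-1}c_{j}\,s_{r}(j)$ and using $c_{j+1}/c_{j}=(n-j)/(n+j+1)$, the telescoping condition $T_{r}(j+1)-T_{r}(j)=(-1)^{j-1}c_{j}j^{r}$ reduces to the polynomial recurrence
\[
(n-j)\,s_{r}(j+1)\;+\;(n+j+1)\,s_{r}(j)\;=\;-\,j^{r}(n+j+1).
\]
Comparing top coefficients forces $s_{r}$ to be a polynomial of degree $r+1$ with leading coefficient $-1/(2n-r)$, and the remaining coefficients are determined uniquely by a triangular linear system. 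For $r=1$ one obtains $s_{1}(j)=\tfrac{n}{2(2n-1)}-\tfrac{j}{2}-\tfrac{j^{2}}{2n-1}$, so $s_{1}(1)=-\tfrac{n+1}{2(2n-1)}$ and $c_{1}=\tfrac{n}{n+1}$; the boundary term $T_{1}(n+1)=(-1)^{n}c_{n+1}s_{1}(n+1)$ vanishes since $c_{n+1}=(n!)^{2}/((-1)!\,(2n+1)!)=0$, so the sum telescopes to $-T_{1}(1)=-c_{1}s_{1}(1)=\tfrac{n}{2(2n-1)}$, as claimed. Running the same solve for $r=3,5,7$ produces $s_{3},s_{5},s_{7}$, and evaluating $-c_{1}s_{r}(1)$ yields the other three closed forms.

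The main obstacle is computational bookkeeping rather than a conceptual one: the recurrence for $s_{7}$ has eight unknowns, and $-c_{1}s_{7}(1)$ must then be reduced over the common denominator $(2n-1)(2n-3)(2n-5)(2n-7)$ that appears in the stated formula. A useful shortcut is the boundary identity $s_{r}(n)=-n^{r}$, obtained by setting $j=n$ in the recurrence, which provides an independent check on each solve. One should also note that each odd-power identity for $j^{2m+1}$ requires $n>m$, so that the leading coefficient $-1/(2n-r)$ of $s_{r}$ is defined, which matches the natural range already appearing in the even-power family.
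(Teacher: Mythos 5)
Your proposal is correct, and for the four odd-power identities it is essentially the paper's own route made explicit: the paper's entire proof is the citation ``Proof by Gosper algorithm. See references [8], [9], [10]'', and you actually carry Gosper out --- the certificate recurrence $(n-j)s_r(j+1)+(n+j+1)s_r(j)=-j^r(n+j+1)$, the degree/leading-coefficient analysis, the vanishing boundary term $c_{n+1}=0$, and the evaluation $-c_1 s_r(1)$ are all right (I checked that your $s_1(j)=\frac{n}{2(2n-1)}-\frac{j}{2}-\frac{j^2}{2n-1}$ satisfies the recurrence and reproduces $\frac{n}{2(2n-1)}$), and your check $s_r(n)=-n^r$ is a nice safeguard. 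Where you genuinely depart from the paper is the $r=0$ identity and the even-power vanishing family: instead of invoking the algorithmic references you use the elementary finite-difference fact that $F(z)=\sum_{j=-n}^{n}(-1)^j\binom{2n}{n-j}z^j=(-1)^n(1-z)^{2n}z^{-n}$ has $(z\,d/dz)^rF(z)|_{z=1}=0$ for $r<2n$; this is self-contained, and it makes the hypothesis $n>m$ transparent, which the paper's one-line proof does not. Two caveats. First, when you actually evaluate $-c_1 s_r(1)$ for $r=3,5,7$ you will get the \emph{negatives} of the right-hand sides printed in the corollary: e.g.\ for $n=2$ the $j^3$ sum is $\frac{2}{3}-\frac{8}{6}=-\frac{2}{3}$, not $+\frac{n^2}{2(2n-1)(2n-3)}=+\frac{2}{3}$, and likewise the $j^5$ and $j^7$ lines are sign-flipped (the $j^0$ and $j^1$ lines are fine); so your method is sound but your claim that it ``yields the other three closed forms'' holds only up to sign errors that are in the paper's statement, not in your argument. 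Second, your justification of the range is slightly misplaced: for odd $r$ the leading coefficient $-1/(2n-r)$ is always defined; what $n>m$ actually guarantees is that the triangular system for the coefficients of $s_r$ is nonsingular, since its diagonal entries are $2n+1-d$ for $0\le d\le r+1$, and these are nonzero exactly when $n>m$.
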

\begin{proof}
Proof by Gosper algorithm. See references [8], [9], [10].
\end{proof}
Note the lemma makes it clear why formulas such as
\begin{align}
\zeta(j)&=\lim_{n\to\infty}\sum_{i=1}^{n}\frac{(n!)^2}{(n-i)!(n+i)!}\frac{1}{i^j}\\\notag
\end{align}
are trivially true by the dominated conjvergence theorem. However, similar sums with alternating sign are not trivial in cases where the dominated convergence theorem does not apply. E.g., from corollary \ref{etacor}
$$\lim_{n\to\infty}\sum_{j=1}^{n}(-1)^{j-1}\frac{(n!)^2}{(n-j)!(n+j)!}=\frac{1}{2}$$
As expected,
\begin{corollary}With the usual restrictions on $x$
\begin{align}
\lim_{n\to\infty}\sum_{i=1}^{n}\frac{(n!)^2}{(n-i)!(n+i)!}\frac{1}{i^j}&=\zeta(j)\\\notag
\lim_{n\to\infty}\sum_{i=1}^{n}(-1)^{i-1}\frac{(n!)^2}{(n-i)!(n+i)!}\frac{1}{i^j}&=\eta(j)\\\notag
\lim_{n\to\infty}\sum_{j=0}^{n}\frac{(n!)^2}{(n-j)!(n+j)!}\left(\frac{x}{2}\right)^j&\notag=\frac{1}{1-\frac{x}{2}}\\\notag
\lim_{n\to\infty}\sum_{j=1}^{n}(-1)^{j-1}\frac{(n!)^2}{(n-j)!(n+j)!}\frac{1}{x^j}&=\frac{1}{x+1}\notag\\\notag
\lim_{n\to\infty}\sum_{j=1}^{n}(-1)^{j-1}\frac{(n!)^2}{(n-j)!(n+j)!}\frac{1}{j}x^j&=\log\left(x+1\right)\notag\\\notag
\lim_{n\to\infty}\sum_{j=1}^{n}(-1)^{j-1}\frac{(n!)^2}{(n-j)!(n+j)!}\frac{1}{jx^j}&=\log\left(\frac{x+1}{x}\right)\notag\\\notag
\lim_{n\to\infty}\sum_{j=1}^{n}(-1)^{j-1}\frac{(n!)^2}{(2j-1)!(n-j)!(n+j)!}x^{2j-1}&=\sin(x)\notag\\\notag
\end{align}
\end{corollary}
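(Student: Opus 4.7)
\medskip
\noindent\emph{Proof proposal.} Set $c_{n,i} := (n!)^{2}/\bigl[(n-i)!(n+i)!\bigr]$ for $1\le i\le n$ and $c_{n,i}:=0$ for $i>n$, so each sum in the statement may be read as a sum over all $i\ge 1$. Telescoping yields $c_{n,i} = \prod_{k=1}^{i}(n-k+1)/(n+k)$, from which $0\le c_{n,i}\le 1$, $c_{n,i}$ is non-decreasing in $n$ for each fixed $i$, non-increasing in $i$ for each fixed $n$, and (Lemma \ref{domcon}) $c_{n,i}\to 1$ as $n\to\infty$ for each fixed $i$.

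Each identity in the statement has the form $\lim_{n\to\infty}\sum_{i\ge 1}c_{n,i}\,a_{i}(x) = f(x)$ where $\sum_{i\ge 1}a_{i}(x) = f(x)$ is the ordinary series expansion of the right-hand side. My first step is to dispose of every identity whose defining series converges \emph{absolutely}---$\zeta(j)$ for $j\ge 2$, $\sum(x/2)^{j}$ on $|x|<2$, the Laurent-type series for $1/(x+1)$ on $|x|>1$, the Mercator series for $\log(1+x)$ on $|x|<1$, its shifted cousin for $\log((x+1)/x)$ on $|x|>1$, and the entire series for $\sin x$---by invoking dominated convergence on counting measure with dominating majorant $|a_{i}(x)|$: since $|c_{n,i}a_{i}(x)|\le |a_{i}(x)|$ and $c_{n,i}a_{i}(x)\to a_{i}(x)$ pointwise, the limit equals $f(x)$.

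What remains is conditional convergence, for example $\eta(1)=\ln 2$ and the boundary evaluations at $|x|=1$. My plan there is Abel summation: with $A_{N} := \sum_{i=1}^{N}(-1)^{i-1}a_{i}(x)$ uniformly bounded in $N$ by Leibniz,
\[
\sum_{i=1}^{n}(-1)^{i-1}c_{n,i}a_{i}(x) = c_{n,n}A_{n} + \sum_{i=1}^{n-1}(c_{n,i}-c_{n,i+1})A_{i},
\]
where $c_{n,n}=(n!)^{2}/(2n)!\to 0$ and the nonnegative differences $c_{n,i}-c_{n,i+1} = c_{n,i}(2i+1)/(n+i+1)$ form a positive measure of total mass $c_{n,1}-c_{n,n}\to 1$ whose individual entries tend to $0$ in $n$ for each fixed $i$. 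Splitting into a fixed prefix $i\le N$ (whose total mass $c_{n,1}-c_{n,N+1}$ vanishes as $n\to\infty$) and a tail (on which $A_{i}$ is already within $\varepsilon$ of its limit) then recovers $f(x)$. The main obstacle will be exactly this Abel step: the pointwise convergence $c_{n,i}\to 1$ is fast only in the regime $i = o(\sqrt n)$ (a short computation gives $1-c_{n,i}\asymp i^{2}/n$), but since the difference measure concentrates its mass at large $i$ as $n\to\infty$, the bounded partial sums $A_{i}$ take over and deliver the correct conditional limit.
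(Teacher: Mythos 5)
Your proposal is correct, and its first half is exactly the paper's argument: the paper proves this corollary in one line by citing the dominated convergence theorem together with Lemma \ref{domcon}, which is precisely your observation that $0\le c_{n,i}\le 1$, $c_{n,i}\to 1$ pointwise, and $|a_i(x)|$ is a summable majorant under ``the usual restrictions on $x$'' (i.e.\ inside the region of absolute convergence of each series). Where you genuinely go beyond the paper is the second half: the summation-by-parts argument with the nonnegative difference weights $c_{n,i}-c_{n,i+1}=c_{n,i}(2i+1)/(n+i+1)$, total mass $c_{n,1}-c_{n,n}\to 1$ and columnwise decay, is a standard Toeplitz-regularity argument that the paper does not attempt, and it buys the conditionally convergent boundary cases ($\eta(j)$ for $0<j\le 1$, the Mercator series at $x=1$, etc.), which the paper's blanket phrase ``usual restrictions'' quietly excludes --- indeed the paper itself remarks just before this corollary that alternating sums where dominated convergence fails are ``not trivial.'' Two small caveats: your Abel step needs the partial sums $A_i$ to actually converge (Leibniz), so it does not reach the genuinely divergent-but-summable case $\sum_{j}(-1)^{j-1}c_{n,j}=\tfrac12$, which the paper derives by entirely different means (Corollary \ref{sumfactorials}); and your closing heuristic that the difference measure ``concentrates at large $i$'' is not actually needed once regularity of the weight matrix is established, so you could safely delete it. With those understood, your write-up is a strictly stronger and still correct proof of the stated corollary.
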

\begin{proof}
Dominated convergence theorem together with Lemma \ref{domcon}.
\end{proof}
\begin{corollary}
$$\zeta(3)=\frac{4}{7}\pi^2\lim_{n\to\infty}\sum_{j=1}^{n-1}(-1)^{k}\frac{(n!)^2j^2}{(n-j)!(n+j)!}\ln(j)\notag$$
\end{corollary}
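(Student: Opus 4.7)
The plan is to derive this formula by splitting the logarithm in the intermediate identity already obtained inside the proof of Corollary~\ref{corafterpartial}. That proof ended with
$$\zeta(3)=\frac{4}{7}\pi^2\lim_{n\to\infty}\sum_{j=1}^{n-1}(-1)^{j}\frac{(n!)^2 j^2}{(n-j)!(n+j)!}\ln(j\pi).$$
Writing $\ln(j\pi)=\ln(j)+\ln(\pi)$ and distributing the limit (assuming each of the two resulting limits exists) gives the target identity together with a correction term
$$\frac{4}{7}\pi^2\ln(\pi)\lim_{n\to\infty}\sum_{j=1}^{n-1}(-1)^{j}\frac{(n!)^2 j^2}{(n-j)!(n+j)!}.$$
So the whole proof reduces to showing that this correction term vanishes, i.e.\ that the alternating sum of $j^2 (n!)^2/((n-j)!(n+j)!)$ over $1\le j\le n-1$ tends to $0$.

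Here is where I would invoke the Gosper identities from Corollary~\ref{sumfactorials}. The statement there for $m=1$ and any $n\ge 2$ reads
$$\sum_{j=1}^{n}(-1)^{j}\frac{(n!)^2}{(n-j)!(n+j)!}\,j^{2}=0.$$
Peeling off the $j=n$ term rearranges this into
$$\sum_{j=1}^{n-1}(-1)^{j}\frac{(n!)^2 j^2}{(n-j)!(n+j)!}=-(-1)^{n}\frac{(n!)^2 n^2}{(2n)!}.$$
Stirling's approximation gives $(n!)^2/(2n)! \sim \sqrt{\pi n}/4^{n}$, so the right-hand side is $O(n^{5/2}/4^n)$ and therefore vanishes in the limit. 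This justifies both the existence of the split limits and the disappearance of the $\ln(\pi)$ term, completing the argument.

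The only potentially delicate step is the algebraic identity $\sum_{j=1}^{n}(-1)^{j}(n!)^2 j^2/((n-j)!(n+j)!)=0$, which is not at all obvious term-by-term; but it is exactly the content already cited in Corollary~\ref{sumfactorials} (Gosper's algorithm, references [8]--[10]). Everything else here is a one-line application of Stirling plus a trivial rearrangement, so I do not expect any other obstacle.
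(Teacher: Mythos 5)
Your proposal is correct and follows the paper's own route: start from the $\ln(j\pi)$ identity in Corollary \ref{corafterpartial}, split $\ln(j\pi)=\ln(j)+\ln(\pi)$, and kill the $\ln(\pi)$ correction via the even-power Gosper identity of Corollary \ref{sumfactorials}. In fact you are slightly more careful than the paper, since you explicitly peel off the $j=n$ term (the cited identity sums to $n$, the target only to $n-1$) and check via Stirling that $(-1)^n (n!)^2 n^2/(2n)!\to 0$, which also legitimizes splitting the limit; the paper leaves this implicit.
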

\begin{proof}
From corollary \ref{corafterpartial}
\begin{align}
\zeta(3)&=\frac{4}{7}\pi^2\lim_{n\to\infty}\sum_{j=1}^{n-1}(-1)^{j}\frac{(n!)^2j^2}{(n-j)!(n+j)!}\ln(j\pi)\\\notag
&=\frac{4}{7}\pi^2\lim_{n\to\infty}\sum_{j=1}^{n-1}(-1)^{j}\frac{(n!)^2j^2}{(n-j)!(n+j)!}(\ln(j)+\ln(\pi))\\\notag
&=\frac{4}{7}\pi^2\lim_{n\to\infty}\sum_{j=1}^{n-1}(-1)^{j}\frac{(n!)^2j^2}{(n-j)!(n+j)!}\ln(j)\\\notag
\end{align}
from corollary \ref{sumfactorials} 
\end{proof}
\begin{lemma}
\begin{align}\notag
\lim_{n\to\infty}\sum_{j=1}^{n-1}(-1)^{j}\frac{(n!)^2}{(n-j)!(n+j)!}\left(1-\frac{1}{j}\right)&=\frac{1}{4}\\\notag
\lim_{n\to\infty}\frac{1}{2}\sum_{j=1}^{n-1}(-1)^{j}\frac{(n!)^2}{(n-j)!(n+j)!}\left(1-\frac{1}{j}\right)^2&=0\\\notag
\lim_{n\to\infty}\frac{1}{3}\sum_{j=1}^{n-1}(-1)^{j}\frac{(n!)^2}{(n-j)!(n+j)!}\left(1-\frac{1}{j}\right)^3&=\frac{1}{3}\ln(2)-\frac{1}{4}\\\notag
\end{align}
\end{lemma}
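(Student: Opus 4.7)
The plan is to expand each power $\left(1-\tfrac{1}{j}\right)^k$ by the binomial theorem, turning every sum in the lemma into a finite linear combination of sums of the form
\[
T_l(n) \;:=\; \sum_{j=1}^{n}(-1)^{j-1}\frac{(n!)^2}{j^{l}(n-j)!(n+j)!},
\]
whose limits are already in hand: $T_0(n)\to \eta(0)=\tfrac12$ by Corollary \ref{sumfactorials}, and $T_l(n)\to\eta(l)$ for $l\ge 1$ by Corollary \ref{etacor}. In particular $\eta(1)=\ln 2$, $\eta(2)=\pi^{2}/12$, and $\eta(3)=3\zeta(3)/4$ are the only transcendental constants that can appear on the right.

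Writing $c_{n,j}=(n!)^2/[(n-j)!(n+j)!]$ and using $(1-\tfrac1j)^k=\sum_{l=0}^{k}\binom{k}{l}(-1)^l j^{-l}$, the expansion yields
\[
\sum_{j=1}^{n-1}(-1)^{j}c_{n,j}\Bigl(1-\tfrac1j\Bigr)^{k}
\;=\;
-\sum_{l=0}^{k}\binom{k}{l}(-1)^{l}\,T_{l}(n)\;-\;(-1)^{n}c_{n,n}\Bigl(1-\tfrac1n\Bigr)^{k},
\]
where the last term corrects for extending the inner index from $n-1$ up to $n$. Since $c_{n,n}=1/\binom{2n}{n}\to 0$, this boundary correction vanishes in the limit. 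I would then specialise to $k=1,2,3$, collect the coefficients of $\eta(0),\eta(1),\eta(2),\eta(3)$, and simplify; the $k=2$ identity is the claim that the resulting combination of $\tfrac12$, $\ln 2$ and $\pi^{2}/12$ cancels, while the $k=3$ identity records the residual combination in closed form.

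Because the expansion is a \emph{finite} linear combination of already-convergent sequences $T_{l}(n)$, no interchange of limit and infinite sum is required, and the usual worries about Abel- versus Cauchy-summability of $\sum(-1)^{j-1}$ do not arise: the only analytic ingredient beyond linearity of limits is the elementary bound $c_{n,n}(1-1/n)^{k}\to 0$. The main obstacle is therefore not an analytic one but a bookkeeping one --- namely, verifying that the arithmetic combinations of $\tfrac12,\ln 2,\pi^{2}/12$ and $3\zeta(3)/4$ that emerge for $k=1,2,3$ really collapse to the stated right-hand sides. If any of the stated values were miscomputed in the source, this bookkeeping step would be where the discrepancy surfaces, and it would be where I would double-check before pushing the identities forward into the corollary that follows.
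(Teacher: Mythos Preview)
Your approach is exactly what the paper has in mind---its entire proof is the two words ``Previous corollaries,'' meaning precisely the binomial expansion of $(1-1/j)^k$ followed by an appeal to Corollary~\ref{sumfactorials} and Corollary~\ref{etacor}. There is no difference in method.

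The gap is the one you flagged but did not close: the bookkeeping does \emph{not} work out for the lemma as printed. Carrying your computation through for $k=1$ gives
\[
-\bigl[T_0-T_1\bigr]\;\longrightarrow\;-\tfrac12+\ln 2\;\approx\;0.193,
\]
not $\tfrac14$; for $k=2$ one gets $\tfrac12\bigl(-\tfrac12+2\ln 2-\pi^2/12\bigr)\neq 0$; and for $k=3$ an unwanted $\pi^2$ and $\eta(3)$ survive. The lemma as written is therefore false.

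What has happened is a dropped factor of $j^2$. The corollary immediately following this lemma manipulates
\[
\sum_{j}(-1)^{j}\frac{(n!)^2\,j^{2}}{(n-j)!(n+j)!}\Bigl(1-\tfrac1j\Bigr)^{k},
\]
and if you rerun your expansion with that extra $j^2$---so that $j^{2}(1-1/j)^{k}=(j-1)^{k}j^{2-k}$ produces, via Corollary~\ref{sumfactorials}, the limits $\sum(-1)^j c_{n,j}j^{2}\to 0$, $\sum(-1)^j c_{n,j}j\to -\tfrac14$, $\sum(-1)^j c_{n,j}\to -\tfrac12$, $\sum(-1)^j c_{n,j}/j\to -\ln 2$---then all three claimed values ($\tfrac14$, $0$, $\tfrac13\ln 2-\tfrac14$) drop out exactly. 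So your plan is right; the statement you were asked to prove is missing a $j^{2}$ in the summand.
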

\begin{proof}
Previous corollaries.
\end{proof}
\begin{lemma}
$$\frac{(n!)^2}{(n-j)!(n+j)!}=\frac{(n-j+1)\binom{n}{j-1}}{(n+1)\binom{n+j}{j-1}}$$
\end{lemma}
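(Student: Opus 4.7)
The plan is to verify this identity by direct expansion of both binomial coefficients on the right-hand side using the standard definition $\binom{a}{b}=a!/(b!(a-b)!)$, and then simplify. No combinatorial or analytic machinery is needed; the statement is a purely formal identity among factorials, valid whenever $1\le j\le n$ so that all factorials are defined.

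First I would write
$$\binom{n}{j-1}=\frac{n!}{(j-1)!\,(n-j+1)!},\qquad \binom{n+j}{j-1}=\frac{(n+j)!}{(j-1)!\,(n+1)!},$$
and substitute into the right-hand side. The factors $(j-1)!$ cancel between numerator and denominator, leaving
$$\frac{(n-j+1)\,n!\,(n+1)!}{(n+1)\,(n-j+1)!\,(n+j)!}.$$
Then I would use the reductions $(n-j+1)! = (n-j+1)(n-j)!$ and $(n+1)!=(n+1)\,n!$ to cancel the lone factors of $(n-j+1)$ and $(n+1)$ out front, obtaining
$$\frac{n!\cdot n!}{(n-j)!\,(n+j)!},$$
which is exactly the left-hand side.

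There is no genuine obstacle: the entire argument is a one-line algebraic cancellation once the binomial coefficients are written out, and the only thing to be careful about is keeping track of which factors of $(n-j+1)$ and $(n+1)$ are to be cancelled against the corresponding factorials. The constraint $1\le j\le n$ ensures that $(n-j)!$, $(n-j+1)!$, $(j-1)!$, and $(n+j)!$ are all well defined, so the manipulation is valid throughout the range in which the lemma is applied.
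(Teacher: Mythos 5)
Your proof is correct: expanding $\binom{n}{j-1}=\frac{n!}{(j-1)!\,(n-j+1)!}$ and $\binom{n+j}{j-1}=\frac{(n+j)!}{(j-1)!\,(n+1)!}$, cancelling $(j-1)!$, and absorbing the prefactors $(n-j+1)$ and $(n+1)$ into $(n-j+1)!$ and $(n+1)!$ does yield $\frac{(n!)^2}{(n-j)!(n+j)!}$ exactly, with the stated range $1\le j\le n$ keeping every factorial defined. The paper proves the same identity by a different, equally elementary device: it sets $a_j$ and $b_j$ equal to the two sides, checks the common initial value $a_1=b_1=\frac{n}{n+1}$, and verifies that both sequences satisfy the same first-order recurrence $\frac{a_{j+1}}{a_j}=\frac{b_{j+1}}{b_j}=\frac{n-j}{n+j+1}$, so they agree for all $j$ by induction. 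Your direct expansion is arguably the more immediate and self-contained argument, since it requires no induction and exhibits the cancellation explicitly; the paper's ratio-plus-initial-condition check is in the spirit of the hypergeometric (Gosper/Zeilberger) verifications used elsewhere in the paper, and generalizes naturally when a closed-form expansion would be messier. Either way the lemma stands.
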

\begin{proof}
Let 
\begin{align}\notag
a_j&=\frac{(n!)^2}{(n-j)!(n+j)!}\\\notag
b_j&=\frac{(n-j+1)\binom{n}{j-1}}{(n+1)\binom{n+j}{j-1}}\\\notag
\end{align}
then
$$a_1=b_1=\frac{n}{n+1}$$
and
$$\frac{a_{j+1}}{a_j}=\frac{b_{j+1}}{b_j}=\frac{n-j}{n+j+1}$$
\end{proof}
\begin{corollary}
\begin{align}\notag
\zeta(3)&=\frac{4}{7}\pi^2\Bigl(\frac{\ln(2)}{3}-\sum_{k=4}^{\infty}\left(\frac{k-2}{4}+\frac{1}{k}\sum_{j=1}^{k}(-1)^j \binom{k}{j+2}\eta(j)\right)\Bigr)
\end{align}
\end{corollary}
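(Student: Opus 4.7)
The plan is to start from the preceding corollary,
\[
\zeta(3) \;=\; \frac{4}{7}\pi^{2}\lim_{n\to\infty}\sum_{j=1}^{n-1}(-1)^{j}\frac{(n!)^{2}j^{2}}{(n-j)!(n+j)!}\ln(j),
\]
and substitute a power-series expansion of $\ln(j)$ whose terms can be collapsed using Corollaries~\ref{sumfactorials} and~\ref{etacor}. The key identity is
\[
\ln(j) \;=\; -\ln\!\Bigl(1-\bigl(1-\tfrac{1}{j}\bigr)\Bigr) \;=\; \sum_{k=1}^{\infty}\frac{1}{k}\Bigl(1-\tfrac{1}{j}\Bigr)^{k},
\]
valid for every $j\ge 1$ (trivially at $j=1$). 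I will split this series into the first three terms and the tail $k\ge 4$, evaluate each separately, and reassemble.

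For the first three terms, multiplication by $j^{2}$ yields
\[
j(j-1)+\tfrac{1}{2}(j-1)^{2}+\tfrac{1}{3j}(j-1)^{3} \;=\; \tfrac{11}{6}j^{2}-3j+\tfrac{3}{2}-\tfrac{1}{3j}.
\]
Applying the limit-sum against $(-1)^{j}(n!)^{2}/((n-j)!(n+j)!)$ and using the Gosper identities of Corollary~\ref{sumfactorials} (the $j^{2m}$ sum vanishes for $m\ge 1$, the $j$ sum tends to $\tfrac{1}{4}$, the constant sum tends to $\tfrac{1}{2}$) together with $\eta(1)=\ln 2$ from Corollary~\ref{etacor}, the $j^{2}$, $j$, and constant pieces cancel and only $-\tfrac{1}{3j}$ survives, contributing $-\tfrac{1}{3}(-\eta(1))=\tfrac{\ln 2}{3}$.

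For the tail I interchange $\sum_{k\ge 4}$ with the outer limit-sum; the $k$th term becomes
\[
\frac{1}{k}\lim_{n\to\infty}\sum_{j=1}^{n-1}(-1)^{j}\frac{(n!)^{2}(j-1)^{k}}{j^{k-2}(n-j)!(n+j)!}.
\]
Expanding $j^{2}(1-1/j)^{k}=\sum_{l=0}^{k}\binom{k}{l}(-1)^{l}j^{2-l}$, the $l=0$ piece vanishes by Gosper, the $l=1$ and $l=2$ pieces contribute $k/4$ and $-k(k-1)/4$, and for $l\ge 3$ the substitution $m=l-2$ together with Corollary~\ref{etacor} produces $-\sum_{m=1}^{k-2}(-1)^{m}\binom{k}{m+2}\eta(m)$. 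The upper limit may be extended to $m=k$ since $\binom{k}{m+2}=0$ for $m\in\{k-1,k\}$. Collecting gives exactly $-\tfrac{k-2}{4}-\tfrac{1}{k}\sum_{m=1}^{k}(-1)^{m}\binom{k}{m+2}\eta(m)$, and summing over $k\ge 4$, adding the $\tfrac{\ln 2}{3}$ from the first block, and multiplying by $\tfrac{4}{7}\pi^{2}$ yields the stated identity.

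The hard part will be justifying the interchange of $\sum_{k\ge 4}$ with $\lim_{n\to\infty}\sum_{j=1}^{n-1}$: since $(1-1/j)^{k}\to 1$ as $j\to\infty$ at fixed $k$, the tail converges only conditionally in the large-$j$ regime, so absolute interchange is not immediate. The rigorous argument must combine the rapid factorial decay of $(n!)^{2}/((n-j)!(n+j)!)$ for $j$ near $n$ with the sign cancellations already implicit in the Gosper identities, in the same spirit as the proof of Corollary~\ref{etacor}. Once that uniform estimate is in place, the bookkeeping above delivers the formula.
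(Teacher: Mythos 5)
Your proposal is correct and follows essentially the same route as the paper: start from $\zeta(3)=\frac{4}{7}\pi^2\lim_n\sum_j(-1)^j\frac{(n!)^2j^2}{(n-j)!(n+j)!}\ln(j)$, expand $\ln(j)=\sum_{k\ge1}\frac{1}{k}\bigl(\frac{j-1}{j}\bigr)^k$, peel off $k=1,2,3$ to obtain $\frac{\ln 2}{3}$, then binomially expand the $k\ge4$ tail and evaluate the monomial pieces by the Gosper identities of Corollary~\ref{sumfactorials} and the negative powers by Corollary~\ref{etacor}. Your bookkeeping (including the sign giving $-\frac{k-2}{4}$) is in fact cleaner than the paper's intermediate lines, and your remark that the interchange of $\sum_{k\ge4}$ with the limit-sum needs justification identifies the one point the paper passes over silently.
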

\begin{proof}
From previous corollary,
\begin{align}\notag
\zeta(3)&=\frac{4}{7}\pi^2\lim_{n\to\infty}\sum_{j=1}^{n-1}(-1)^{j}\frac{(n!)^2j^2}{(n-j)!(n+j)!}\ln(j)\\\notag
&=\frac{4}{7}\pi^2\lim_{n\to\infty}\sum_{j=1}^{n-1}(-1)^{j}\frac{(n!)^2j^2}{(n-j)!(n+j)!}\sum_{k=1}^{n}\frac{1}{k}\left(\frac{j-1}{j}\right)^k\\\notag
&=\frac{4}{7}\pi^2\Bigl(\frac{\ln(2)}{3}+\lim_{n\to\infty}\sum_{j=1}^{n-1}(-1)^{j}\frac{(n!)^2j^2}{(n-j)!(n+j)!}\sum_{k=4}^{n}\frac{1}{k}\left(\frac{j-1}{j}\right)^k\Bigr)\\\notag
&=\frac{4}{7}\pi^2\Bigl(\frac{\ln(2)}{3}+\lim_{n\to\infty}\sum_{j=1}^{n-1}(-1)^{j}\frac{(n!)^2j^2}{(n-j)!(n+j)!}\sum_{k=4}^{n}\frac{1}{k}\left(\frac{\sum_{i=0}^{k}\binom{k}{i}j^i(-1)^{k-i}}{j^k}\right)\Bigr)\\\notag
&=\frac{4}{7}\pi^2\Bigl(\frac{\ln(2)}{3}+\lim_{n\to\infty}\sum_{j=1}^{n-1}(-1)^{j}\frac{(n!)^2}{(n-j)!(n+j)!}\sum_{k=4}^{n}\frac{1}{k}\sum_{i=0}^{k}\binom{k}{i}(-1)^{k-i}j^{i+2-k}\Bigr)\\\notag
&=\frac{4}{7}\pi^2\Bigl(\frac{\ln(2)}{3}+\lim_{n\to\infty}\sum_{k=4}^{n}\sum_{i=0}^{k}\sum_{j=1}^{n-1}(-1)^{j}\frac{(n!)^2}{(n-j)!(n+j)!}\frac{1}{k}\binom{k}{i}(-1)^{k-i}j^{i+2-k}\Bigr)\\\notag
&=\frac{4}{7}\pi^2\Bigl(\frac{\ln(2)}{3}+\lim_{n\to\infty}\sum_{k=4}^{n}\sum_{r=0}^{k}\sum_{j=1}^{n-1}(-1)^{j}\frac{(n!)^2}{(n-j)!(n+j)!}\frac{1}{k}\binom{k}{r}(-1)^{r}\frac{1}{j^{r-2}}\Bigr)\\\notag
&=\frac{4}{7}\pi^2\Bigl(\frac{\ln(2)}{3}+\lim_{n\to\infty}\sum_{k=4}^{n}\Bigl(\frac{1}{k}\binom{k}{1}\frac{n}{2(2n-1)}-\frac{1}{k}\binom{k}{2}\frac{1}{2}+\sum_{r=3}^{k-3}\sum_{j=1}^{n-1}(-1)^{j}\frac{(n!)^2}{(n-j)!(n+j)!}\frac{1}{k}\binom{k}{r}(-1)^{r}\frac{1}{j^{r-2}}\Bigr)\Bigr)\\\notag
&=\frac{4}{7}\pi^2\Bigl(\frac{\ln(2)}{3}+\lim_{n\to\infty}\sum_{k=4}^{n}\Bigl(\frac{k-2}{4}+\sum_{r=3}^{k-3}\sum_{j=1}^{n-1}(-1)^{j}\frac{(n!)^2}{(n-j)!(n+j)!}\frac{1}{k}\binom{k}{r}(-1)^{r}\frac{1}{j^{r-2}}\Bigr)\Bigr)\\\notag
&=\frac{4}{7}\pi^2\Bigl(\frac{\ln(2)}{3}-\sum_{k=4}^{\infty}\left(\frac{k-2}{4}+\frac{1}{k}\sum_{j=1}^{k}(-1)^j \binom{k}{j+2}\eta(j)\right)\Bigr)
\end{align}
From applying corollary \ref{sumfactorials} to the terms of the sum.
\end{proof}
\begin{corollary}
\begin{align}\notag
\zeta(3)&=\frac{35\pi^2}{7(35-3\pi^2)}\Bigl(-5-\frac{5\pi^2}{12}+\frac{40\ln(2)}{3}-\sum_{k=6}^{\infty}\Bigl((k-2)+\frac{4}{k}\sum_{j=1}^{k-2}(-1)^j \binom{k}{j+2}\eta(j)\Bigr)\Bigr)\\\notag
\end{align}
\end{corollary}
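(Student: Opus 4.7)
The plan is to build on the preceding corollary, which expresses $\zeta(3)$ as
\[
\zeta(3)=\tfrac{4}{7}\pi^2\Bigl(\tfrac{\ln(2)}{3}-\sum_{k=4}^{\infty}T_k\Bigr),\qquad T_k=\tfrac{k-2}{4}+\tfrac{1}{k}\sum_{j=1}^{k}(-1)^j\binom{k}{j+2}\eta(j),
\]
and to peel off the first two summands $T_4$ and $T_5$ from the outer sum. The key observation is that the inner sum in $T_5$ reaches $j=3$, and $\eta(3)=(1-2^{-2})\zeta(3)=\tfrac{3}{4}\zeta(3)$, so $\zeta(3)$ re-enters the right-hand side and the identity becomes linear in $\zeta(3)$ rather than an explicit formula.

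First I would evaluate $T_4$. Since $\binom{4}{j+2}=0$ for $j\ge 3$, only $j=1,2$ contribute; using $\eta(1)=\ln 2$ and $\eta(2)=\pi^2/12$, this gives an explicit rational combination of $1$, $\ln 2$, and $\pi^2$. Next I would evaluate $T_5$ in the same way, now with $j=1,2,3$ contributing; the $j=3$ term yields $-\tfrac{1}{5}\eta(3)=-\tfrac{3}{20}\zeta(3)$, so $T_5$ takes the form (constants in $1,\ln 2,\pi^2$) $-\tfrac{3}{20}\zeta(3)$.

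Substituting $T_4$ and $T_5$ back in and collecting the $\zeta(3)$ contributions on the left gives
\[
\zeta(3)\Bigl(1-\tfrac{3\pi^2}{35}\Bigr)=\tfrac{4\pi^2}{7}\Bigl(\text{explicit constant}-\sum_{k=6}^{\infty}T_k\Bigr).
\]
Multiplying through by $\tfrac{35}{35-3\pi^2}$ produces the prefactor $\tfrac{35\pi^2}{7(35-3\pi^2)}$ after absorbing $\tfrac{4}{7}\pi^2$ and pulling a factor of $4$ inside the residual sum; this pull-in is exactly what turns $\tfrac{k-2}{4}$ into $k-2$ and $\tfrac{1}{k}$ into $\tfrac{4}{k}$, matching the shape of the stated identity. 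The inner summation limit may be written as $k-2$ instead of $k$ with no change, since $\binom{k}{j+2}$ vanishes for $j>k-2$.

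The main obstacle is purely bookkeeping: confirming that the constants $-5$, $-\tfrac{5\pi^2}{12}$, and $\tfrac{40\ln(2)}{3}$ emerge correctly after combining the $\tfrac{\ln(2)}{3}$ from the previous corollary with the extracted $T_4$ and $T_5$, and verifying the coefficient algebra around the $(35-3\pi^2)/35$ factor. No new analytic input is required beyond the preceding corollary, the identity $\eta(3)=\tfrac{3}{4}\zeta(3)$, and the standard values $\eta(1)=\ln 2$, $\eta(2)=\pi^2/12$.
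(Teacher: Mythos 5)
Your proposal is correct and follows essentially the same route as the paper: peel off the $k=4$ and $k=5$ terms of the preceding corollary, use $\eta(1)=\ln 2$, $\eta(2)=\pi^2/12$, and $\eta(3)=\tfrac{3}{4}\zeta(3)$ so that the $j=3$ piece contributes $-\tfrac{3}{20}\zeta(3)$, then collect the $\zeta(3)$ terms and rescale by $\tfrac{35}{35-3\pi^2}$ while pulling the factor $4$ into the residual sum. The bookkeeping you defer does check out ($\tfrac{\ln 2}{3}+3\ln 2=\tfrac{10\ln 2}{3}$, giving $-5$, $-\tfrac{5\pi^2}{12}$, $\tfrac{40\ln 2}{3}$ after multiplying by $4$), which in fact corrects a small typo in the paper's intermediate line where $\tfrac{10\ln 2}{3}$ appears as $\tfrac{10\ln 2}{20}$.
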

\begin{proof}
\begin{align}\notag
\zeta(3)&=\frac{4}{7}\pi^2\Bigl(\frac{\ln(2)}{3}-\sum_{k=4}^{\infty}\left(\frac{k-2}{4}+\frac{1}{k}\sum_{j=1}^{k}(-1)^j \binom{k}{j+2}\eta(j)\right)\Bigr)\\\notag
&=\frac{4}{7}\pi^2\Bigl(\frac{\ln(2)}{3}-\Bigl(\frac{1}{2}+\frac{3}{4}+\frac{1}{4}\sum_{j=1}^{2}(-1)^j\binom{4}{j+2}\eta[j]\\\notag
&+\frac{1}{5}\sum_{j=1}^{3}(-1)^j\binom{5}{j+2}\eta[j]\Bigr)-\sum_{k=6}^{\infty}\left(\frac{k-2}{4}+\frac{1}{k}\sum_{j=1}^{k}(-1)^j \binom{k}{j+2}\eta(j)\right)\Bigr)\\\notag
\left(1-\frac{4}{7}\pi^2\frac{3}{20}\right)\zeta(3)&=\frac{4}{7}\pi^2\Bigl(-\frac{5}{4}-\frac{5\pi^2}{48}+\frac{10\ln(2)}{20}-\sum_{k=6}^{\infty}\left(\frac{k-2}{4}+\frac{1}{k}\sum_{j=1}^{k}(-1)^j \binom{k}{j+2}\eta(j)\right)\Bigr)\\\notag
\end{align}
\end{proof}
\begin{corollary}\label{bigenergy}
\begin{align}\notag
\zeta(3)&=\lim_{n\to\infty}800\sum_{j=1}^{n-5}(-1)^{j+1}\frac{n!}{(j+5)(j+5)!(n-j)!}\eta(3+j)\\\notag
\end{align}
\end{corollary}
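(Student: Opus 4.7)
The plan is to derive this pure $\eta$-series by eliminating the $\pi^2$ prefactor from the immediately preceding corollary and by relocating the index of the $\eta$-values from $\eta(j)$ to $\eta(3+j)$, using the combinatorial and analytic machinery already built up in this section.

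First, I would take the preceding corollary, which expresses $\zeta(3)$ as a rational-times-$\pi^2$ multiple of a double sum involving $\binom{k}{j+2}\eta(j)$, and interchange the order of summation so that $\eta(j)$ is collected with a coefficient $\sum_{k \ge j+2}\binom{k}{j+2}/k$. This tail sum does not converge in the ordinary sense, but its partial sum up to $k=n$ yields a falling-factorial expression of the form $\frac{n!}{(n-j)!(j+5)(j+5)!}$ (up to rational scaling) after repeatedly applying Pascal-style index shifts $\binom{k}{j+2}\mapsto \binom{k+1}{j+3}$ to push the binomial index up by three. The extra $(j+5)$ denominator factor comes from the $1/k$ weighting already present in the inner sum.

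Second, the shift $\eta(j)\mapsto \eta(3+j)$ requires separating out the three lowest-index $\eta$-values. The value $\eta(1)=\ln 2$ is an explicit constant that gets absorbed into the normalising coefficient. The value $\eta(2)=\pi^2/12$ combines with the $\pi^2$ in the overall prefactor so that it cancels entirely, producing a purely rational constant — this is precisely the mechanism by which the final formula becomes $\pi$-free. The value $\eta(3)=\tfrac{3}{4}\zeta(3)$ combines with the $\zeta(3)$ on the left-hand side, effectively rescaling the prefactor; tracking all three absorptions simultaneously should produce the numerical constant $800$.

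The main obstacle will be rigorously justifying the interchange of the limit $n\to\infty$ with the several summations. Because $\frac{n!}{(n-j)!}$ grows polynomially in $n$ for each fixed $j$, the convergence of the $\eta(3+j)$-series is conditional and relies on subtle alternating-sign cancellation between consecutive terms — the very mechanism encoded in the vanishing identities $\sum_{j=1}^{n} (-1)^{j-1}\frac{(n!)^2}{(n-j)!(n+j)!}j^{2m}=0$ (for $n>m\ge 1$) from Corollary \ref{sumfactorials}. A rigorous proof would therefore require an explicit finite-$n$ error estimate in the style of the alternating case of Corollary \ref{etacor}, rather than any naive termwise interchange of summation and limit.
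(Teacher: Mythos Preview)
Your plan matches the paper's route almost step for step: start from the preceding double-sum corollary, sum over $k$ first (the hockey-stick identity turns $\sum_{k\le n}\tfrac{1}{k}\binom{k}{j+2}$ into $\tfrac{1}{j+2}\binom{n}{j+2}$), peel off $\eta(1),\eta(2),\eta(3)$, move the $\zeta(3)$ piece to the left, then reindex $j\mapsto j+3$ and finally shift $n\mapsto n+5$.

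One point in your outline is misattributed and would stall you if taken literally: the disappearance of $\pi^2$ does \emph{not} come from $\eta(2)$. After the $k$-sum, the coefficients of $\eta(1),\eta(2),\eta(3)$ in the bracket are polynomials in $n$ of degrees $3,4,5$ respectively. It is the degree-$5$ contribution from $\eta(3)=\tfrac34\zeta(3)$ that, once transposed to the left, produces
\[
\zeta(3)\Bigl(1-\tfrac{4}{7}\pi^2\cdot\tfrac{3}{20}\tbinom{n}{5}\Bigr)=\tfrac{4}{7}\pi^2\bigl[\text{rest}\bigr],
\quad\text{i.e.}\quad
\zeta(3)=\frac{800\,\pi^2}{1400-n(n-1)(n-2)(n-3)(n-4)\,\pi^2}\bigl[\text{rest}\bigr].
\]
As $n\to\infty$ this prefactor behaves like $-800/P(n)$, and \emph{that} is what removes $\pi^2$; the $\eta(1)$ and $\eta(2)$ contributions (orders $n^3$ and $n^4$) simply die against the $n^5$ denominator rather than being ``absorbed'' or ``cancelled'' in any structured way. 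So the constant $800$ and the $\pi$-freeness both issue from the same $\eta(3)$ transposition, not from two separate mechanisms as you describe.
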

\begin{proof}
From previous corollary,
\begin{align}\notag
\zeta(3)&=\frac{4}{7}\pi^2\Bigl(\frac{\ln(2)}{3}-\sum_{k=4}^{\infty}\left(\frac{k-2}{4}+\frac{1}{k}\sum_{j=1}^{k}(-1)^j \binom{k}{j+2}\eta(j)\right)\Bigr)\\\notag
&=\frac{4}{7}\pi^2\Bigl(\frac{\ln(2)}{3}-\sum_{k=4}^{\infty}\Bigl(\frac{k-2}{4}+\frac{1}{k}\Bigl(- \binom{k}{3}\eta(1)\\\notag
&+ \binom{k}{4}\eta(2)- \binom{k}{5}\eta(3)+\sum_{j=4}^{k}(-1)^j \binom{k}{j+2}\eta(j)\Bigr)\Bigr)\Bigr)\\\notag
&=\frac{4}{7}\pi^2\Bigl(\frac{\ln(2)}{3}-\lim_{n\to\infty}\sum_{k=4}^{n}\Bigl(\frac{k-2}{4}+\frac{1}{k}\Bigl(- \binom{k}{3}\eta(1)\\\notag
&+ \binom{k}{4}\eta(2)- \binom{k}{5}\eta(3)+\sum_{j=4}^{k}(-1)^j \binom{k}{j+2}\eta(j)\Bigr)\Bigr)\Bigr)\\\notag
\end{align}

Compute the sums over $k$ individually, and group $\zeta(3)$ terms on the left side of the equation.
\begin{align}\notag
\zeta(3)&=\lim_{n\to\infty}\frac{800\pi^2}{1400-n(n-1)(n-2)(n-3)(n-4)\pi^2}\Bigl(-\frac{1}{8}n(n-3)+\frac{1}{18}n(n-1)(n-2)\ln(2)\\\notag
&-\frac{1}{1152}n(n-1)(n-2)(n-3)\pi^2-\sum_{j=4}^{n-2}(-1)^j\frac{n-j-1}{(n+1)(j+2)}\binom{n+1}{j+2}\eta(j)\Bigr)\\\notag
&=\lim_{n\to\infty}\frac{800\pi^2}{1400-n(n-1)(n-2)(n-3)(n-4)\pi^2}\Bigl(-\frac{1}{8}n(n-3)+\frac{1}{18}n(n-1)(n-2)\ln(2)\\\notag
&-\frac{1}{1152}n(n-1)(n-2)(n-3)\pi^2-\sum_{j=4}^{n-2}(-1)^j\frac{n-j-1}{(n+1)(j+2)}\binom{n+1}{j+2}\eta(j)\Bigr)\\\notag
&=\lim_{n\to\infty}\frac{800\pi^2}{1400-n(n-1)(n-2)(n-3)(n-4)\pi^2}\sum_{j=4}^{n-2}(-1)^{j+1}\frac{n-j-1}{(n+1)(j+2)}\binom{n+1}{j+2}\eta(j)\\\notag
&=\lim_{n\to\infty}\frac{800\pi^2}{1400-n(n-1)(n-2)(n-3)(n-4)\pi^2}\sum_{j=1}^{n-2}(-1)^{j+1}\frac{n-j-4}{(n+1)(j+5)}\binom{n+1}{j+5}\eta(3+j)\\\notag
&=\lim_{n\to\infty}\frac{800\pi^2}{1400-n(n-1)(n-2)(n-3)(n-4)\pi^2}\sum_{j=1}^{n-2}(-1)^{j+1}\frac{n-j-4}{(n+1)(j+5)}\binom{n+1}{j+5}\eta(3+j)\\\notag
&=\lim_{n\to\infty}\frac{800\pi^2}{1400-n(n-1)(n-2)(n-3)(n-4)\pi^2}\sum_{j=1}^{n-2}(-1)^{j+1}\frac{1}{j+5}\binom{n}{j+5}\eta(3+j)\\\notag
&=\lim_{n\to\infty}\frac{800n(n-1)(n-2)(n-3)(n-4)\pi^2}{1400-n(n-1)(n-2)(n-3)(n-4)\pi^2}\sum_{j=1}^{n-10}(-1)^{j+1}\frac{(n-5)!}{(j+5)(j+5)!(n-5-j)!}\eta(3+j)\\\notag
&=\lim_{n\to\infty}800\sum_{j=1}^{n-10}(-1)^{j+1}\frac{(n-5)!}{(j+5)(j+5)!(n-5-j)!}\eta(3+j)\\\notag
&=\lim_{n\to\infty}800\sum_{j=1}^{n-5}(-1)^{j+1}\frac{n!}{(j+5)(j+5)!(n-j)!}\eta(3+j)\\\notag
&=\lim_{n\to\infty}800\sum_{j=1}^{n-5}(-1)^{j+1}\frac{(n-j)^j}{(j+5)(j+5)!}\eta(3+j)\\\notag
\end{align}
\end{proof}
\section{Dynamic Binomial Sums And The Difference Operator}
Sums of the type given in corollary \ref{bigenergy} are very interesting objects. They involve individual terms of immense magnitude. For a sum of $2n$ terms, call the $j$-th term $a_j$, the terms reach peak size at around the $n$-th term, and tend to zero at the last terms. For a given $n$, the sum is a complete object as expressed. The value will not be improved by, for example, summing to $n$ instead of to $n-5$.  A little thought shows that the important information is carried in the fractional part: $a_j-\floor{a_j}$.
\begin{corollary}
Let
\begin{align}\notag
g(n)&=\sum_{j=1}^{n-5}(-1)^{j+1}\left(800\frac{n!}{(j+5)(j+5)!(n-j)!}\eta(3+j)-\floor{800\frac{n!}{(j+5)(j+5)!(n-j)!}\eta(3+j)}\right)\\\notag
\end{align}
then
\begin{align}\notag
\zeta(3)&=1+\lim_{n\to\infty}\left(g(n)-\floor{g(n)}\right)\\\notag
\end{align}
\end{corollary}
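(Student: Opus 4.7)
The plan is to decompose each summand in Corollary \ref{bigenergy} into its integer and fractional parts, and then observe that the integer pieces collapse harmlessly so that the fractional information alone recovers $\zeta(3)$ modulo $1$. Set
$$a_j(n) \;:=\; 800\,\frac{n!}{(j+5)(j+5)!\,(n-j)!}\,\eta(3+j),$$
so that Corollary \ref{bigenergy} reads $\sum_{j=1}^{n-5}(-1)^{j+1} a_j(n) \to \zeta(3)$. Writing $a_j(n) = \lfloor a_j(n)\rfloor + \bigl(a_j(n)-\lfloor a_j(n)\rfloor\bigr)$ and summing with alternating signs gives the clean splitting
$$\sum_{j=1}^{n-5}(-1)^{j+1} a_j(n) \;=\; I_n + g(n), \qquad I_n \;:=\; \sum_{j=1}^{n-5}(-1)^{j+1}\lfloor a_j(n)\rfloor \;\in\; \mathbb{Z}.$$

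Combining this identity with Corollary \ref{bigenergy} yields $g(n) + I_n \to \zeta(3)$, or equivalently $g(n) = (\zeta(3) - I_n) + \varepsilon_n$ with $\varepsilon_n \to 0$. Let $\alpha := \zeta(3)-1$, so that $\alpha \in (0,1)$ (numerically $\alpha \approx 0.2020569$) and $\zeta(3)-I_n = (1-I_n) + \alpha$. Since the integer $1-I_n$ contributes nothing to the fractional part, we have $g(n) \equiv \alpha + \varepsilon_n \pmod 1$. Because $\alpha$ is strictly inside $(0,1)$, there is a positive safety buffer $\delta := \min(\alpha,\,1-\alpha) > 0$, and for all $n$ large enough that $|\varepsilon_n| < \delta$ the floor of $g(n)$ coincides with $(1-I_n) + \lfloor \alpha + \varepsilon_n\rfloor = 1-I_n$. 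Therefore $g(n) - \lfloor g(n)\rfloor = \alpha + \varepsilon_n$, and sending $n \to \infty$ produces $\lim_{n\to\infty}(g(n)-\lfloor g(n)\rfloor) = \alpha = \zeta(3)-1$, which rearranges to the claimed formula.

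The only real obstacle is the last "stability of the floor" step: the map $x \mapsto \{x\}$ is discontinuous at integers, so convergence of $g(n)$ modulo $\mathbb{Z}$ does not automatically give convergence of $\{g(n)\}$ in $\mathbb{R}$. The rescue is the elementary but essential numerical fact that $\zeta(3)-1$ is a specific non-integer bounded away from $0$ and $1$, which provides the uniform buffer $\delta$ above. No further algebraic identity, rearrangement, or convergence rate for $\varepsilon_n$ is needed beyond what Corollary \ref{bigenergy} already furnishes; the present corollary is essentially a reading of that corollary modulo $\mathbb{Z}$, together with the reinterpretation of the alternating integer-part sum $I_n$ as the irrelevant "noise" discarded by the outer floor.
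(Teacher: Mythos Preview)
Your proof is correct and follows the same underlying idea as the paper: both arguments reduce the sum from Corollary~\ref{bigenergy} modulo $1$, discarding the alternating integer-part sum $I_n$ and reading off $\zeta(3)-1$ as the surviving fractional part. The paper's proof is only a one-sentence sketch (``adding fractional parts is just addition mod $1$; one has to keep track of the sign of the final sum''), whereas you make the argument rigorous by explicitly handling the discontinuity of $x\mapsto\{x\}$ via the buffer $\delta=\min(\alpha,1-\alpha)>0$; this is a genuine improvement in precision rather than a different approach.
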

\begin{proof}
The function $floor(b)=\floor{b}$ is defined to be the greatest integer less than $b$. So if we define the fractional part of $b$ to be $b-\floor{b}$, then the fractional part of $b$ is the positive representative of $b\mod{1}$. So, adding fractional parts is just addition mod 1. One has to keep track of the sign of the final sum to interpret the result.
\end {proof}
This also points out that in any expression for $\zeta(3)$, we can add arbitrary integer values, e.g. expressions like $j!/(j-3)!$, and still preserve the result by taking the fractional part. 
\begin{proposition}

\begin{align}\notag
\forall x,y,s\in R&\\\notag
\eta(k)&=\lim_{n\to\infty}\sum_{j=1}^{n}(-1)^{j+1}\binom{n}{j}\Bigl(\frac{x^j+x^{j-1}+\cdots +x+1}{y^{j}}\Bigr)^s\eta(k+j)\\\notag
\end{align}
\end{proposition}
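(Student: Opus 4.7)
The plan is to recast the right-hand side using the forward-difference operator $\Delta f(j) := f(j+1) - f(j)$. Set $g(j) := \left(\frac{P_j(x)}{y^j}\right)^{s} \eta(k+j)$ where $P_j(x) := 1 + x + x^2 + \cdots + x^j$, and observe that $P_0(x) = 1$ gives $g(0) = \eta(k)$. The classical identity $\sum_{j=0}^n (-1)^j \binom{n}{j} g(j) = (-1)^n \Delta^n g(0)$ (arising from $(I-E)^n = (-\Delta)^n$, where $E$ is the shift) lets me rewrite
$$\sum_{j=1}^n (-1)^{j+1}\binom{n}{j} g(j) \;=\; g(0) \;-\; \sum_{j=0}^n (-1)^j \binom{n}{j} g(j) \;=\; \eta(k) - (-1)^n\Delta^n g(0).$$
The proposition is therefore equivalent to showing $\lim_{n\to\infty}\Delta^n g(0) = 0$.

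To prove this vanishing, I expand $\eta(k+j)$ as its alternating Dirichlet series $\sum_{m\ge 1}(-1)^{m-1}/m^{k+j}$ and swap the order of summation to obtain
$$\Delta^n g(0) \;=\; \sum_{m=1}^\infty \frac{(-1)^{m-1}}{m^k}\sum_{j=0}^n (-1)^{n-j}\binom{n}{j}\frac{P_j(x)^s}{(y^s m)^j}.$$
Using the closed form $P_j(x) = (1-x^{j+1})/(1-x)$ together with the generalized binomial series $(1-x^{j+1})^s = \sum_{\ell\ge 0}\binom{s}{\ell}(-1)^\ell x^{(j+1)\ell}$ (valid for $|x| < 1$), the inner $j$-sum collapses via $\sum_{j=0}^n (-1)^{n-j}\binom{n}{j} r^j = (r-1)^n$ applied with $r = x^\ell/(y^s m)$, yielding
$$\Delta^n g(0) \;=\; \frac{1}{(1-x)^s}\sum_{m=1}^\infty\frac{(-1)^{m-1}}{m^k}\sum_{\ell=0}^\infty \binom{s}{\ell}(-1)^\ell x^\ell \left(\frac{x^\ell}{y^s m} - 1\right)^{n}.$$
On the natural convergence region every base $|x^\ell/(y^s m)-1|$ is strictly less than $1$, so each summand tends to zero geometrically in $n$.

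The main obstacle will be justifying the interchange of $\lim_{n\to\infty}$ with the double sum over $m$ and $\ell$. I would supply a summable majorant: on a region such as $|x|<1$ with $y^s$ bounded away from $\tfrac{1}{2}$ from above, one has $|x^\ell/(y^s m) - 1| \le 1-\delta$ uniformly in $\ell,m$ for some $\delta>0$, while $|\binom{s}{\ell} x^\ell|$ is $\ell$-summable for $|x|<1$ and $1/m^k$ is $m$-summable for $k>1$ (the case $k\le 1$ handled by the alternating structure, as in the earlier $\eta$-corollaries). Dominated convergence then closes the argument. A subtlety I would flag rather than hide is that the proposition is quantified over all real $(x,y,s)$, while the expression $(P_j(x)/y^j)^s$ and the convergence of the binomial-series manipulation both require hypotheses; I would state the precise domain in which the identity holds, and note that when $s$ is a positive integer the binomial step becomes a finite algebraic identity, removing the $|x|<1$ restriction and leaving only the mild geometric-decay requirement on $x^\ell/(y^s m)-1$.
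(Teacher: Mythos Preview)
Your reduction to showing $\lim_{n\to\infty}\Delta^n g(0)=0$ is correct and matches the framework the paper uses elsewhere (Theorem~\ref{analytic}). Your subsequent strategy---expand $\eta(k+j)$ as its Dirichlet series, expand $(1-x^{j+1})^s$ by the generalized binomial theorem, then collapse the finite $j$-sum to $(x^\ell/(y^s m)-1)^n$---is sound and is genuinely more explicit than what the paper does. The paper's own argument for this proposition is an admittedly heuristic sketch (it is labeled ``Not rigorous yet''): it simply observes the base case $\eta(k)=\lim_n\sum_j(-1)^{j+1}\binom{n}{j}\eta(k+j)$, postulates a family of multipliers $f(j)=\bigl((u^{m+j}-1)/v^{r+j}\bigr)^s$ that appear to preserve the identity, and specializes $u=x$, $m=1$. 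Your approach actually computes the obstruction $\Delta^n g(0)$ in closed form, which is a real improvement.

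One point to correct: the claim that the bases satisfy $\lvert x^\ell/(y^s m)-1\rvert\le 1-\delta$ \emph{uniformly} in $(\ell,m)$ is false. As $\ell\to\infty$ (with $|x|<1$) or as $m\to\infty$, the quantity $x^\ell/(y^s m)\to 0$ and hence the base tends to $-1$, so no uniform gap exists. Fortunately you do not need one: the trivial bound $\lvert x^\ell/(y^s m)-1\rvert^n\le 1$ together with the $n$-independent majorant $\sum_{m\ge 1} m^{-k}\sum_{\ell\ge 0}\lvert\binom{s}{\ell}x^\ell\rvert<\infty$ (for $|x|<1$, $k>1$) already licenses dominated convergence, since each individual term tends to zero under the hypotheses $0<x<1$ and $y^s>\tfrac12$ that you identify. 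Your closing caveat---that the ``$\forall x,y,s\in\mathbb{R}$'' quantifier in the statement is too strong and the identity only holds on a restricted domain---is well taken and should be retained; the paper does not address this.
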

\begin{proof}
(Not rigorous yet.)
Corollary \ref{bigenergy} suggests we look at sums of the type
$$\sum_{j=1}^{n}(-1)^{j+1}\binom{n}{j}\eta(k+j)$$\notag

Indeed,
$$\eta(k)=\lim_{n\to\infty}\sum_{j=1}^{n}(-1)^{j+1}\binom{n}{j}\eta(k+j)$$\notag
Further investigation shows
$$\zeta(k)=\lim_{n\to\infty}\sum_{j=1}^{n}(-1)^{j+1}\binom{n}{j}\zeta(k+j)$$\notag
These equations can be written as
\begin{align}\notag
0&=\lim_{n\to\infty}\sum_{j=0}^{n}(-1)^{j+1}\binom{n}{j}\eta(k+j)\\\notag
0&=\lim_{n\to\infty}\sum_{j=0}^{n}(-1)^{j+1}\binom{n}{j}\zeta(k+j)\\\notag
\end{align}
Since 
$$\eta(k)=\frac{2^{k+j-1}-1}{2^{k+j-1}}\zeta(k)$$\notag
If we let
$$f(k,j)=\frac{2^{k+j-1}-1}{2^{k+j-1}}$$
then
\begin{align}\notag
0&=\lim_{n\to\infty}\sum_{j=0}^{n}(-1)^{j+1}\binom{n}{j}f(k,j)\zeta(k+j)\\\notag
\end{align}
What is the class of functions $f$ that satisfy the preceding equation?
Note that corollary \ref{bigenergy} is an instance of this with $f(j)=\frac{1}{(j+5)^2(j+4)(j+3)(j+2)(j+1)}$.
That class includes functions
$$f(u,v,s,m,r,j)=\Bigl(\frac{u^{m+j}-1}{v^{r+j}}\Bigr)^s$$\notag
One finds that $\forall u,v,s\in R, u\ne 1, v\ne 0,m,r,j\in Z$
\begin{align}\notag
0&=\sum_{j=0}^{n}(-1)^{j+1}(-1)^{j+1}\binom{n}{j}\Bigl(\frac{u^{m+j}-1}{v^{r+j}}\Bigr)^s\eta(k)\\\notag
&\implies\\\notag
\eta(k)&=\lim_{n\to\infty}\Bigl(\frac{v^r}{u^m-1}\Biggr)^s\sum_{j=1}^{n}(-1)^{j+1}(-1)^{j+1}\binom{n}{j}\Bigl(\frac{u^{m+j}-1}{v^{r+j}}\Bigr)^s\eta(k)\\\notag
\end{align}
Since $u, m$ are arbitrary, we can set $u=x, m=1$. 
\begin{align}\notag
\eta(k)&=\lim_{n\to\infty}\sum_{j=1}^{n}(-1)^{j+1}(-1)^{j+1}\binom{n}{j}\Bigl(\frac{x^{j+1}-1}{(x-1)y^{j}}\Bigr)^s\eta(k)\\\notag
&=\lim_{n\to\infty}\sum_{j=1}^{n}(-1)^{j+1}\binom{n}{j}\Bigl(\frac{x^j+x^{j-1}+\cdots +x+1}{y^{j}}\Bigr)^s\eta(k+j)\Bigr)\\\notag
\end{align}
As a function of $x$ or $y$, the above expression is constant so it's derivative is zero.
\begin{align}
0&=\lim_{n\to\infty}\sum_{j=1}^{n}(-1)^{j+1}\binom{n}{j}\Bigl(\frac{x^{j+1}-1}{(x-1)y^{j}}\Bigr)^{s-1}\Bigl(\frac{j x^{j-1}+(j-1)x^{j-2}+\cdots +2 x+1}{y^{j}}\Bigr)\eta(k+j)\\\notag
&=\lim_{n\to\infty}\sum_{j=1}^{n}(-1)^{j+1}\binom{n}{j}\Bigl(\Bigl(\frac{x^{j+1}-1}{(x-1)y^{j}}\Bigr)^{s-2}\Bigl(\frac{j x^{j-1}+(j-1)x^{j-2}+\cdots +2 x+1}{y^{j}}\Bigr)^2\\\notag
&+\Bigl(\frac{x^{j+1}-1}{(x-1)y^{j}}\Bigr)^{s-1}\Bigl(\frac{j(j-1) x^{j-2}+(j-1)(j-2)x^{j-2}+\cdots +3*2 x+2}{y^{j}}\Bigr)\Bigr)\eta(k+j)\\\notag
&=\rm{etc.}\\\notag
\end{align}
\end{proof}
Notation:
$S(n,m)$ denotes the Stirling number of the second kind, the number of ways of partitioning $n$ elements into $m$ non-empty subsets. H. W. Gould uses the notation, $B_{k,k}^n=n!S(n,k)$. See ref. [14].
\begin{lemma}\label{stirlingsum1}
Assume $k\ge n\ge 0$ then
$$\sum_{j=k-n+1}^{k-1}\binom{k}{j}S(k-j,n)x^j=0$$
\end{lemma}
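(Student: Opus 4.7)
The plan is to proceed directly from the definition of the Stirling numbers of the second kind. Recall that $S(a,b)$ counts partitions of an $a$-element set into $b$ nonempty blocks, so $S(a,b)=0$ whenever $a<b$ (there are too few elements to fill $b$ nonempty blocks). In the sum under consideration, the index $j$ runs over $k-n+1,k-n+2,\ldots,k-1$, which means $k-j$ runs over $n-1,n-2,\ldots,1$. In every case $k-j\le n-1<n$, hence $S(k-j,n)=0$ and each summand $\binom{k}{j}S(k-j,n)x^j$ is identically zero.

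First I would state the vanishing criterion for $S(a,b)$ as a one-line fact, then observe that the range of $j$ given in the lemma is exactly the range on which this criterion applies. The identity then reduces to $0=0$ and holds for every real $x$ and every pair $k\ge n\ge 0$. There is no real obstacle to overcome; the lemma is essentially a bookkeeping statement that will be useful elsewhere.

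The only interesting question is \emph{why} one would write this down as a named lemma at all. My expectation is that it is being staged to split a fuller identity of the form $\sum_{j=0}^{k}\binom{k}{j}S(k-j,n)x^j$ into its ``head'' $0\le j\le k-n$ and its ``tail'' $j=k$ (the single surviving boundary term $\binom{k}{k}S(0,n)x^k$ together with $S(0,0)=1$ versus $S(0,n)=0$ for $n\ge 1$). In that case the content of Lemma \ref{stirlingsum1} is precisely the assertion that the middle band of indices contributes nothing, so that a subsequent manipulation can freely extend or truncate the range of summation without changing the value. I would therefore flag the lemma as trivial in isolation but important as a preparatory step, and keep its proof to a single sentence invoking the convention $S(m,n)=0$ for $m<n$.
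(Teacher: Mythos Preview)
Your proof is correct and matches the paper's own argument essentially word for word: the paper simply notes that as $j$ runs from $k-n+1$ to $k-1$, the quantity $k-j$ runs from $n-1$ down to $1$, and since $S(a,b)=0$ for $a<b$ every term vanishes. Your additional remarks about why such a lemma is worth stating are reasonable context but not part of the proof itself.
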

\begin{proof}
As $j$ goes from $k-n+1$ to $k-1$, $k-j$ goes from $n-1$ to $1$. But if $a<b$ then $S(a,b)=0$.
\end{proof}
\begin{lemma}
For $0\le k\le n-1$
\begin{align}
\sum_{j=1}^n (-1)^{j+1}\binom{n}{j}(x+j)^k=x^k\\\notag
\sum_{j=1}^n (-1)^{j+1}\binom{n}{j}(x-j)^k=x^k\\\notag
\end{align}
For $k= n$
\begin{align}
&\sum_{j=1}^n (-1)^{j+1}\binom{n}{j}(x+j)^k=x^k+(-1)^{n+1}n!\\\notag
&\sum_{j=1}^n (-1)^{j+1}\binom{n}{j}(x-j)^k=x^k+(-1)^{n}n!\\\notag
\end{align}
For $k\ge n$
\begin{align}
&\sum_{j=1}^n (-1)^{j+1}\binom{n}{j}(x+j)^k=x^k+(-1)^{n+1}n!\sum_{i=0}^{k-n}\binom{k}{i}S(k-i,n)x^i\\\notag
&\sum_{j=1}^n (-1)^{j+1}\binom{n}{j}(x-j)^k=x^k+(-1)^{n}n!\sum_{i=0}^{k-n}(-1)^{i}\binom{k}{i}S(k-i,n)x^i\\\notag
\end{align}
\end{lemma}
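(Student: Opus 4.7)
The plan is to prove all six identities in a single unified calculation by combining the binomial expansion with the classical Stirling-number identity $\sum_{j=0}^n (-1)^j\binom{n}{j} j^m = (-1)^n n!\, S(m,n)$, which vanishes for $m<n$, equals $(-1)^n n!$ when $m=n$, and in general is the mechanism that produces the $S(k-i,n)$ factors appearing in the statement. Since the three cases $k\le n-1$, $k=n$, $k\ge n$ differ only in which Stirling numbers survive, a single derivation of the $k\ge n$ formula should deliver them all.

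First I would handle the $(x+j)^k$ identities. Expanding by the binomial theorem and swapping the order of summation gives
$$\sum_{j=1}^n (-1)^{j+1}\binom{n}{j}(x+j)^k \;=\; \sum_{i=0}^k \binom{k}{i} x^i \sum_{j=1}^n (-1)^{j+1}\binom{n}{j} j^{k-i}.$$
For $i=k$ the inner sum is $\sum_{j=1}^n (-1)^{j+1}\binom{n}{j} = 1$ (using $\sum_{j=0}^n(-1)^j\binom{n}{j}=0$ for $n\ge 1$), and this produces the leading $x^k$. For $i<k$, the $j=0$ term contributes nothing because $0^{k-i}=0$, so the inner sum equals $(-1)^{n+1} n!\, S(k-i,n)$ by the identity quoted above. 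Lemma \ref{stirlingsum1} (equivalently, $S(a,b)=0$ for $a<b$) then forces $k-i\ge n$, restricting the range to $0\le i\le k-n$, and yields
$$x^k + (-1)^{n+1} n! \sum_{i=0}^{k-n}\binom{k}{i}S(k-i,n)x^i.$$
This is the $k\ge n$ formula; for $k<n$ the Stirling sum is empty and only $x^k$ survives, while for $k=n$ the single surviving term $i=0$ contributes $S(n,n)=1$, giving the $(-1)^{n+1}n!$ remainder.

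The $(x-j)^k$ identities follow by exactly the same argument, with the binomial expansion $(x-j)^k = \sum_i \binom{k}{i}(-1)^{k-i}x^i j^{k-i}$ inserting an extra factor of $(-1)^{k-i}$ into each term; equivalently, one can substitute $x\mapsto -x$ in the $(x+j)$ result and multiply through by $(-1)^k$. The main obstacle I expect is not conceptual but a careful tracking of signs: factors of $(-1)^{\bullet}$ arrive from the outer sum, the binomial expansion, and the Stirling identity simultaneously, and the $j=0$ boundary must be treated separately (via the $0^0=1$ convention) to peel off the leading $x^k$ cleanly before applying the Stirling identity to the remaining inner sums. Once those signs are pinned down, the identities collapse out of the calculation mechanically.
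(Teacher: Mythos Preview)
Your proposal is correct and follows essentially the same route as the paper: expand $(x\pm j)^k$ by the binomial theorem, interchange the sums, peel off the $i=k$ term as $x^k$, and evaluate the remaining inner sums via the Stirling identity $\sum_{j=0}^n(-1)^j\binom{n}{j}j^m=(-1)^n n!\,S(m,n)$, using $S(a,b)=0$ for $a<b$ (the paper's Lemma~\ref{stirlingsum1}) to truncate the range. Your observation that the $(x-j)^k$ formulas follow by the substitution $x\mapsto -x$ and multiplication by $(-1)^k$ is a slight streamlining over the paper, which only writes out the $(x+j)^k$ computation explicitly.
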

\begin{proof}
From the theory of finite differences, for any polynomial $P(j)$, with degree $<n$,
\begin{align}
0&=\sum_{j=0}^n (-1)^{j}\binom{n}{j}P(j)\\\notag
\implies&\\\notag
P(0)&=\sum_{j=1}^n (-1)^{j+1}\binom{n}{j}P(j)\\\notag
\end{align}
Assume $k\ge n$
\begin{align}\notag
&\sum_{j=1}^n (-1)^{j+1}\binom{n}{j}(x+j)^k\\\notag
&=\sum_{j=1}^{n}(-1)^{j+1}\binom{n}{j}\sum_{i=0}^k \binom{k}{i}j^{k-i} x^{i}\\\notag
&=\sum_{i=0}^k\binom{k}{i}\Bigl( \sum_{j=1}^{n}(-1)^{j+1}\binom{n}{j}j^{k-i}\Bigr)x^{i} \\\notag
&=\Bigl(x^k+\sum_{i=0}^{k-1}\binom{k}{i}\Bigl( \sum_{j=1}^{n}(-1)^{j+1}\binom{n}{j}j^{k-i}\Bigr)x^{i}\Bigr) \\\notag
&=\Bigl(x^k+\sum_{i=0}^{k-1}\binom{k}{i}\Bigl( \sum_{j=0}^{n}(-1)^{j+1}\binom{n}{j}j^{k-i}\Bigr)x^{i} \Bigr) \\\notag
&=\Bigl(x^k+(-1)^{n+1}\sum_{i=0}^{k-1}\binom{k}{i}\Bigl(n! S(k-i,n)\Bigr)x^{i}\Bigr) \\\notag
&=x^k+(-1)^{n+1}n!\sum_{i=0}^{k-1}\binom{k}{i}S(k-i,n)x^i\\\notag
&=x^k+(-1)^{n+1}n!\sum_{i=0}^{k-n}\binom{k}{i}S(k-i,n)x^i\\\notag
\end{align}
\end{proof}
\begin{definition}
The difference operator $\Delta_h^1$ is defined by
$$\Delta_h^1 f(x)= f(x+h)-f(x)$$
The backward difference operator $\triangledown_h^1$ is defined by
$$\triangledown_h^1 f(x)= f(x)-f(x-h)$$
$$\Delta_h^2 f(x)=\Delta_h^1(\Delta_h^1 f(x))$$, etc.
\end{definition}
\begin{proposition}\label{basicdiffprop}
\begin{align}\notag
\Delta_h^n f(x)&=(-1)^n\sum_{k=0}^n(-1)^k\binom{n}{k}f(x+k h)\\\notag
\triangledown_h^n f(x)&=\sum_{k=0}^n(-1)^k\binom{n}{k}f(x-k h)\\\notag
\end{align}
\end{proposition}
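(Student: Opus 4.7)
The plan is to prove both identities by induction on $n$, with the inductive step reducing to Pascal's rule $\binom{n}{k-1}+\binom{n}{k}=\binom{n+1}{k}$. The two formulas are entirely parallel, so I would first carry out the argument in full for $\Delta_h^n$ and then remark that the backward case $\triangledown_h^n$ is obtained either by repeating the same computation or, more cleanly, by observing that $\triangledown_h^n f(x) = (-1)^n \Delta_{-h}^n f(x)\cdot(-1)^n$ type identity, i.e.\ by substituting $h \mapsto -h$ and adjusting signs. (A quick sanity check against $n=1,2$ fixes the sign convention.)

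For the forward difference, the base case $n=1$ is exactly the definition $\Delta_h^1 f(x)=f(x+h)-f(x) = -\bigl[f(x)-f(x+h)\bigr] = (-1)^1\sum_{k=0}^{1}(-1)^k\binom{1}{k}f(x+kh)$. For the inductive step, I would assume the formula for some $n\ge 1$ and expand
\[
\Delta_h^{n+1} f(x) = \Delta_h^1\!\bigl(\Delta_h^n f(x)\bigr) = \Delta_h^n f(x+h) - \Delta_h^n f(x).
\]
Applying the inductive hypothesis to each term and reindexing the first sum by $j=k+1$, the combined expression becomes
\[
(-1)^{n+1}\Bigl[\,\sum_{j=1}^{n+1}(-1)^{j}\binom{n}{j-1}f(x+jh) + \sum_{k=0}^{n}(-1)^{k}\binom{n}{k}f(x+kh)\Bigr].
\]
Isolating the boundary contributions at $k=0$ (which gives $f(x)$) and at $j=n+1$ (which gives $(-1)^{n+1}f(x+(n+1)h)$), the remaining middle indices $1\le k\le n$ pair up as $(-1)^k\bigl[\binom{n}{k-1}+\binom{n}{k}\bigr]f(x+kh)$, and Pascal's rule collapses this into $(-1)^k\binom{n+1}{k}f(x+kh)$, completing the induction.

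The main obstacle, such as it is, is purely bookkeeping: one must track the global sign $(-1)^n$ through the reindexing and the subtraction, and it is easy to be off by a sign. I would mitigate this by verifying $n=1$ and $n=2$ explicitly before committing to the inductive scheme. For the backward operator, since $\triangledown_h^1 f(x)=f(x)-f(x-h)$ carries no external $(-1)^n$ factor, the induction is slightly simpler: the same Pascal-collapse argument goes through with $x+kh$ replaced by $x-kh$ throughout, and no outer sign needs to be carried.
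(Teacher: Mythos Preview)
Your induction-plus-Pascal argument is correct and is the standard proof of these identities; the paper itself does not give a proof but simply cites references [12], [13], [16], so your write-up actually supplies more than the paper does. The one cosmetic blemish is the garbled remark about ``$\triangledown_h^n f(x) = (-1)^n \Delta_{-h}^n f(x)\cdot(-1)^n$ type identity''---the clean relation is $\triangledown_h f(x)=\Delta_h f(x-h)$, hence $\triangledown_h^n f(x)=\Delta_h^n f(x-nh)$, from which the backward formula follows immediately by substituting into the forward one---but since you go on to say the direct induction works just as well (and it does), this does not affect correctness.
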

\begin{proof}
See refs. [12], [13], [16].
\end{proof}
\begin{proposition}\label{contourprop}
Let $f(x)$ be analytic in a domain that contains $[n_0,+\infty)$. Then the differences of the sequence $\{f(x+h k)\},k=0,\ldots,n,$ admit the integral representation 
\begin{align}
\sum_{k=n_0}^n \binom{n}{k}(-1)^k f(x_0+h k)&=\frac{(-1)^n}{2i\pi}\int_C f(h x+x_0)\frac{n!}{x(x-1)\cdots(x-n)}dx\\\notag
\end{align}
where $C$ is a positively oriented closed curve that lies in the domain of analyticity of $f(x)$, encircles $[n_0,n]$, and does not include any of the integers $0,1,\ldots,n_0-1$ (see ref. [15]).
\end{proposition}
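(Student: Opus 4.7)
The plan is to prove this via the residue theorem, since the right-hand side is a contour integral of a meromorphic function whose poles are exactly at the integers $0, 1, \ldots, n$. First I would identify the kernel: the function $K(x) = \frac{n!}{x(x-1)(x-2)\cdots(x-n)}$ has simple poles at $x = 0, 1, \ldots, n$. Since $C$ encircles only the poles at $n_0, n_0+1, \ldots, n$ (and by hypothesis $f(hx+x_0)$ is analytic in a region containing these points), the residue theorem reduces the contour integral to a sum of residues at $k = n_0, n_0+1, \ldots, n$.

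Next I would compute the residue at $x = k$ by canceling the $(x-k)$ factor from the denominator:
\begin{align*}
\mathrm{Res}_{x=k} K(x) &= \frac{n!}{\prod_{j=0,\, j \ne k}^{n}(k-j)}\\
&= \frac{n!}{k(k-1)\cdots 1 \cdot (-1)(-2)\cdots(-(n-k))}\\
&= \frac{n!}{k!\,(-1)^{n-k}(n-k)!} = (-1)^{n-k}\binom{n}{k}.
\end{align*}
Multiplying by $f(hk+x_0)$, which is the value of the analytic factor at the pole, and summing over $k = n_0, \ldots, n$, the residue theorem yields
\begin{equation*}
\frac{1}{2\pi i}\int_C f(hx+x_0)K(x)\,dx = \sum_{k=n_0}^{n}(-1)^{n-k}\binom{n}{k}f(hk+x_0).
\end{equation*}

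Finally I would multiply both sides by $(-1)^n$ to absorb the sign, using $(-1)^{n+n-k} = (-1)^{-k} = (-1)^k$, obtaining exactly the stated identity. The only subtle point is that the contour must avoid the integers $0, 1, \ldots, n_0 - 1$; this is needed to prevent those residues from contributing spurious terms to the sum, which is precisely what the hypothesis on $C$ guarantees. The main obstacle, if any, is verifying that such a contour exists within the domain of analyticity of $f$, but this is immediate from the assumption that $f$ is analytic on a domain containing $[n_0, +\infty)$: one may take $C$ as, for example, a thin rectangle around $[n_0 - \tfrac{1}{2}, n + \tfrac{1}{2}]$ lying inside that domain. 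Since the entire argument is an application of the residue theorem to an explicitly computed set of simple poles, no delicate estimate is required; the proof is essentially a bookkeeping exercise in signs, referenced more fully in [15].
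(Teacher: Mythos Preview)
Your proposal is correct and follows exactly the same approach as the paper: compute the residue of the kernel $\frac{n!}{x(x-1)\cdots(x-n)}$ at each integer $k$ and apply the residue theorem over the contour $C$. The paper's proof is in fact just the single displayed residue computation
\[
{\rm Res}_{x=k}\, f(h x+x_0)\,\frac{n!}{x(x-1)\cdots(x-n)} \;=\; (-1)^{n-k}\binom{n}{k}\,f(x_0+h k),
\]
which is precisely your intermediate step; your write-up simply spells out the bookkeeping (the sign $(-1)^{n-k}$, the summation over $n_0\le k\le n$, and the final multiplication by $(-1)^n$) more carefully than the paper does.
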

\begin{proof}
\begin{align}
{\rm Res}_{x=k} f(h x+x_0) \frac{n!}{x(x-1)\cdots(x-n)}&=\frac{(-1)^n n!}{k!(n-k)!}f(x_0+h k)\\\notag
\end{align}
\end{proof}
\begin{proposition}\label{contourprop2}
Let $f(x)$ be analytic in a domain that contains $[n_0,+\infty)$. Assume $f(x)$ is of polynomial growth (finite degree) in the half plane ${\rm Re}(x)\ge c$ for some $c<n_0$. Then the differences of the sequence $\{f(x+h k)\},k=0,\ldots,n,$ admit the integral representation 
\begin{align}
\sum_{k=n_0}^n \binom{n}{k}(-1)^k f(x_0+h k)&=-\frac{(-1)^n}{2i\pi}\int_{c-i\infty}^{c+i\infty} f(h x+x_0)\frac{n!}{x(x-1)\cdots(x-n)}dx\\\notag
\end{align}

\end{proposition}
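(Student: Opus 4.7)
The plan is to deduce Proposition \ref{contourprop2} from Proposition \ref{contourprop} by deforming the closed contour $C$ to a doubly infinite vertical line at $\mathrm{Re}(x)=c$, and controlling the contribution from the ``other'' part of the contour using the polynomial growth hypothesis.

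First, I would choose a concrete contour in Proposition \ref{contourprop}: take $C=C_R$ to be the boundary of the rectangle with corners $c\pm iR$ and $R\pm iR$, oriented counterclockwise, with $R$ large enough that $C_R$ encircles the integers $[n_0,n]$ and lies entirely in the domain of analyticity of $f(hx+x_0)$. Proposition \ref{contourprop} then gives
\begin{equation}\notag
\sum_{k=n_0}^n \binom{n}{k}(-1)^k f(x_0+h k)=\frac{(-1)^n}{2i\pi}\int_{C_R} f(h x+x_0)\frac{n!}{x(x-1)\cdots(x-n)}\,dx
\end{equation}
for every sufficiently large $R$. Splitting $C_R$ into its four sides, the left side contributes $-\int_{c-iR}^{c+iR}$ (the minus sign appearing because the counterclockwise traversal runs downward on the left edge, while the right-hand side of the statement is written in the upward direction), and I would aim to show that the top, right and bottom edges contribute $0$ in the limit $R\to\infty$.

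The key estimate is on the kernel: for $|x|$ large,
\begin{equation}\notag
\left|\frac{n!}{x(x-1)\cdots(x-n)}\right|=O\!\left(\frac{n!}{|x|^{n+1}}\right).
\end{equation}
Combined with the assumption that $|f(hx+x_0)|=O(|x|^d)$ for some fixed $d$ throughout $\mathrm{Re}(x)\ge c$, the integrand is $O(|x|^{d-n-1})$. On the right edge (length $2R$ at $\mathrm{Re}(x)=R$) the integral is then $O(R\cdot R^{d-n-1})=O(R^{d-n})$, and on the top and bottom edges a similar box estimate gives $O(R^{d-n})$; these all tend to $0$ as $R\to\infty$ provided $n>d$, which is the regime in which the formula is nontrivial. (For smaller $n$ the vertical integral need not converge absolutely, so the statement should be read with $n$ large enough that the right-hand side makes sense; this is the standard Nörlund--Rice proviso.)

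The main obstacle I anticipate is precisely this convergence issue at $\pm i\infty$ along the vertical line: a crude sup-norm bound on $f$ of the form $|f|\le M|x|^d$ only gives an integrand of size $|x|^{d-n-1}$, which is integrable along $\mathrm{Re}(x)=c$ iff $n-d\ge 1$. I would therefore phrase the conclusion with the implicit understanding that $n$ exceeds the degree of polynomial growth, and note that the top/bottom edge estimate can, if desired, be sharpened using the fact that $1/(x(x-1)\cdots(x-n))$ actually decays like $1/|\mathrm{Im}(x)|^{n+1}$ uniformly in $\mathrm{Re}(x)\in[c,R]$, which handles the box limit cleanly. Putting the surviving left-edge contribution together with the orientation sign yields exactly
\begin{equation}\notag
\sum_{k=n_0}^n \binom{n}{k}(-1)^k f(x_0+hk)=-\frac{(-1)^n}{2i\pi}\int_{c-i\infty}^{c+i\infty} f(hx+x_0)\frac{n!}{x(x-1)\cdots(x-n)}\,dx,
\end{equation}
as claimed.
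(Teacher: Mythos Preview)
Your proposal is correct and follows the standard Nörlund--Rice contour-deformation argument: start from the closed-contour formula of Proposition \ref{contourprop}, push the contour out to a large rectangle, and use the $O(|x|^{-n-1})$ decay of the kernel against the polynomial growth of $f$ to kill the top, bottom, and right edges, leaving only the vertical line at $\mathrm{Re}(x)=c$ with the appropriate sign. The paper itself does not supply a proof at all---it simply refers the reader to Flajolet--Sedgewick [15]---so your argument is in fact more detailed than what the paper provides, and it matches the approach in that reference.
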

\begin{proof}
 (see ref. [15]).
\end{proof}
\begin{lemma}
\begin{align}
\sum_{j=1}^n (-1)^{j+1}\binom{n}{j}f(x+j h)&=f(x)+(-1)^{n+1} \Delta_h^n f(x)\\\notag
\sum_{j=1}^n (-1)^{j+1}\binom{n}{j}f(x-j h)&=f(x)+ \triangledown_h^n f(x)\\\notag
\end{align}
\end{lemma}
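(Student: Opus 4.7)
The plan is to derive both identities as immediate corollaries of Proposition \ref{basicdiffprop}, by peeling off the $k=0$ term from the explicit binomial expansions of $\Delta_h^n f(x)$ and $\triangledown_h^n f(x)$ already provided there.

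First I would start from
$$\Delta_h^n f(x) = (-1)^n \sum_{k=0}^n (-1)^k \binom{n}{k} f(x+kh),$$
multiply through by $(-1)^n$ to clear the external sign, and split the right-hand side as $f(x) + \sum_{k=1}^n (-1)^k \binom{n}{k} f(x+kh)$. Rewriting $(-1)^k$ as $-(-1)^{k+1}$ inside the tail sum and solving for $\sum_{k=1}^n (-1)^{k+1} \binom{n}{k} f(x+kh)$ then produces exactly $f(x) - (-1)^n \Delta_h^n f(x) = f(x) + (-1)^{n+1} \Delta_h^n f(x)$, which is the first identity.

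For the backward-difference identity the same trick applies to
$$\triangledown_h^n f(x) = \sum_{k=0}^n (-1)^k \binom{n}{k} f(x-kh),$$
now with no outer $(-1)^n$ factor to clear. Separating the $k=0$ contribution and rearranging expresses $\sum_{k=1}^n (-1)^{k+1} \binom{n}{k} f(x-kh)$ as a combination of $f(x)$ and $\triangledown_h^n f(x)$ in exactly the shape of the stated right-hand side.

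The whole argument is sign bookkeeping on top of Proposition \ref{basicdiffprop}, and I do not anticipate any genuine obstacle. The only delicate point is the placement of the $(-1)^n$ factor in the forward-difference expansion; that is why I would clear it at the very first step before isolating the $f(x)$ term, so that the final $(-1)^{n+1}$ in the conclusion emerges from the single substitution $-(-1)^n = (-1)^{n+1}$ rather than from any further rearrangement.
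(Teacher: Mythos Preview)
Your approach is exactly the paper's: the paper's entire proof is the single line ``Prop.~\ref{basicdiffprop},'' and you have spelled out precisely the sign bookkeeping that justifies it. The first identity is derived correctly.

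One caution on the second identity: you assert that the rearrangement produces ``exactly the shape of the stated right-hand side,'' but if you actually carry it out you get
\[
\sum_{k=1}^n (-1)^{k+1}\binom{n}{k}f(x-kh)=f(x)-\triangledown_h^n f(x),
\]
with a minus sign rather than the plus sign printed in the lemma. This is a typo in the lemma statement itself, not an error in your method: the corrected sign is what the paper actually uses downstream in Theorem~\ref{analytic}, where it writes $f(x)=\sum_{j=1}^{n}(-1)^{j-1}\binom{n}{j}f(x-jh)+\triangledown_h^n f(x)$. So your approach is right, but you should do the computation explicitly rather than asserting the match, since doing so would have caught the discrepancy.
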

\begin{proof}
Prop. \ref{basicdiffprop}.
\end{proof}
\begin{theorem}\label{analytic}
If $f(x)$ is analytic on the interval $(x,+\infty)$
then
$$f(x)=\sum_{j=1}^{n}(-1)^{j-1}\binom{n}{j}f(x+jh)+(-1)^{n} \Delta_h^n f(x)$$
and
\begin{align}
f(x)&=\lim_{n\to\infty}\sum_{j=1}^{n}(-1)^{j-1}\binom{n}{j}f(x+j h)\\\notag
&\iff\\\notag
&\lim_{n\to\infty} \Delta_h^n f(x)=0\\\notag
\end{align}
Also, if $f(x)$ is analytic on the interval $(-\infty,x)$
$$f(x)=\sum_{j=1}^{n}(-1)^{j-1}\binom{n}{j}f(x-jh)+ \triangledown_h^n f(x)$$
and
\begin{align}
f(x)&=\lim_{n\to\infty}\sum_{j=1}^{n}(-1)^{j-1}\binom{n}{j}f(x-jh)\\\notag
&\iff\\\notag
&\lim_{n\to\infty} \triangledown_h^n f(x)=0\\\notag
\end{align}
\end{theorem}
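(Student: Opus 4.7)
The plan is to observe that both identities are essentially rearrangements of the lemma immediately preceding the theorem, and the two equivalences then follow by taking $n \to \infty$ of the finite identities. For the first statement, I would start from the lemma's equation \[\sum_{j=1}^{n}(-1)^{j+1}\binom{n}{j} f(x+jh) = f(x) + (-1)^{n+1}\Delta_h^n f(x)\] and simply solve for $f(x)$. Since $(-1)^{j+1}=(-1)^{j-1}$ and $-(-1)^{n+1}=(-1)^n$, this yields exactly \[f(x) = \sum_{j=1}^{n}(-1)^{j-1}\binom{n}{j} f(x+jh) + (-1)^n \Delta_h^n f(x),\] which is the first displayed equation of the theorem. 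Analyticity on $(x,+\infty)$ enters only to guarantee that each translated point $x+jh$ (with $h>0$ implicit) lies in the domain of $f$, so that every term is well-defined; no deeper analytic property is used for the finite identity.

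For the equivalence, both sides of the finite identity are defined for each $n$, so taking $n \to \infty$ gives \[\lim_{n\to\infty}\sum_{j=1}^{n}(-1)^{j-1}\binom{n}{j} f(x+jh) = f(x) \iff \lim_{n\to\infty}(-1)^n\Delta_h^n f(x) = 0,\] and the latter is equivalent to $\lim_{n\to\infty}\Delta_h^n f(x)=0$ because $|(-1)^n|=1$. The backward-difference half is entirely symmetric: start from the lemma's second identity $\sum_{j=1}^n(-1)^{j+1}\binom{n}{j}f(x-jh)=f(x)+\triangledown_h^n f(x)$, rearrange to isolate $f(x)$, and apply the identical limit argument (with the simpler observation that the coefficient on the $\triangledown_h^n f(x)$ term is always $+1$, so no sign issue arises at all).

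The main point worth flagging — it is not really a mathematical obstacle — is that the analyticity hypothesis is not exploited algebraically in this theorem; it is a natural regularity condition placed here so that the derived criterion $\lim_n \Delta_h^n f(x)=0$ can later be tested using the contour-integral representations of Propositions \ref{contourprop} and \ref{contourprop2}. I would add a short remark noting this, since the identities themselves are elementary consequences of the preceding lemma, but their usefulness in the remainder of the paper depends precisely on pairing them with those analytic tools (and on a convention such as $h>0$, which should probably be stated explicitly so that the translated points $x\pm jh$ manifestly lie in the assumed interval of analyticity).
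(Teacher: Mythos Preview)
Your proposal is correct and takes essentially the same approach as the paper, whose entire proof is the two words ``Previous lemma.'' Your write-up is in fact more careful than the paper's, spelling out the rearrangement and the role (or non-role) of the analyticity hypothesis; the only quibble is that rearranging the lemma's backward-difference identity actually gives $f(x)=\sum_{j=1}^n(-1)^{j-1}\binom{n}{j}f(x-jh)-\triangledown_h^n f(x)$, so the $+\triangledown_h^n f(x)$ in the theorem statement appears to be a sign typo in the paper rather than anything you did wrong.
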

\begin{proof}
Previous lemma.
\end{proof}
The following lemma shows we can associate a series to our dynamic series. The dynamic series is the sequence of partial sums.
\begin{lemma}
If
\begin{align}\notag
f(x)&=\lim_{n\to\infty}\sum_{j=1}^{n}(-1)^{j-1}\binom{n}{j}f(x+jh)\\\notag
\end{align}
then
\begin{align}\notag
f(x)&=\sum_{n=0}^{\infty}\left(\sum_{j=0}^{n}(-1)^{j}\binom{n}{j}f(x+(j+1)h)\right)\\\notag
&=\sum_{n=0}^{\infty}(-1)^n \Delta_h^n f(x+h)\\\notag
\end{align}
\end{lemma}
\begin{proof}
\begin{align}\notag
\sum_{j=1}^{n}(-1)^{j-1}\binom{n}{j}f(x+jh)=\sum_{k=0}^{n-1}\left(\sum_{j=0}^{k}(-1)^{j}\binom{k}{j}f(x+(j+1)h)\right)\\\notag
\end{align}
\end{proof}

\begin{lemma}
$$S(m,n)\simeq \frac{n^m}{n!}$$
\end{lemma}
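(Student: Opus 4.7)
The plan is to derive this asymptotic from the explicit closed form of the Stirling numbers of the second kind, namely
$$n!\,S(m,n)=\sum_{j=0}^{n}(-1)^j\binom{n}{j}(n-j)^m,$$
which is essentially the identity already used in the preceding lemma (the statement $\sum_{j=0}^n(-1)^{j+1}\binom{n}{j}j^k=(-1)^{n+1}n!\,S(k,n)$ rearranges to the above after the substitution $j\mapsto n-j$). Because this identity is the main engine of the proof, I would cite it directly rather than re-derive it from inclusion-exclusion on surjections.

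Next, I would isolate the dominant $j=0$ term:
$$n!\,S(m,n)=n^m+\sum_{j=1}^{n}(-1)^j\binom{n}{j}(n-j)^m=n^m\Biggl(1+\sum_{j=1}^{n}(-1)^j\binom{n}{j}\Bigl(1-\tfrac{j}{n}\Bigr)^m\Biggr).$$
With $n$ held fixed and $m\to\infty$, each ratio $1-j/n$ with $1\le j\le n$ lies in $[0,\,1-1/n]$, so every term $(1-j/n)^m$ decays at least as fast as $(1-1/n)^m$. Since there are only $n$ such terms and the binomial coefficients are bounded by $2^n$ (a constant in $m$), the correction satisfies
$$\Biggl|\sum_{j=1}^{n}(-1)^j\binom{n}{j}\Bigl(1-\tfrac{j}{n}\Bigr)^m\Biggr|\le 2^n\Bigl(1-\tfrac{1}{n}\Bigr)^m\longrightarrow 0$$
geometrically.

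Dividing through by $n^m/n!$ and taking the limit then gives $\lim_{m\to\infty} S(m,n)/(n^m/n!)=1$, which is the asymptotic equivalence in the intended sense. There is no real obstacle; the only point worth flagging is the regime in which the approximation is meant to hold, namely $m\to\infty$ with $n$ fixed, since the statement fails when $n$ is allowed to grow with $m$ (where the correct asymptotic is the Temme/Moser--Wyman expansion). For the downstream use here, however, only the fixed-$n$ version is needed to control the Stirling weights appearing in the finite-difference expansions of the previous section.
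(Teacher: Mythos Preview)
Your argument is correct: the explicit formula $n!\,S(m,n)=\sum_{j=0}^{n}(-1)^j\binom{n}{j}(n-j)^m$ immediately yields the asymptotic once you factor out $n^m$ and observe that the remaining terms are bounded by $2^n(1-1/n)^m\to 0$ for fixed $n$ and $m\to\infty$. Your identification of the regime ($n$ fixed, $m\to\infty$) is also apt and matches how the lemma is subsequently invoked.

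By contrast, the paper does not give a proof at all: it simply writes ``See ref.\ [13]'' (Jordan, \emph{On Stirling's numbers}). So your proposal is strictly more informative than what appears in the paper. The route you take---extracting the dominant term from the alternating-sum closed form---is the standard one and is in fact implicit in the surrounding material (the identity $\sum_{j=0}^n(-1)^j\binom{n}{j}j^k=(-1)^n n!\,S(k,n)$ is used in the lemma just before). There is nothing to correct; if anything, you might tighten the bound on the tail to $\sum_{j\ge 1}\binom{n}{j}(1-j/n)^m\le (2^n-1)(1-1/n)^m$ purely for cosmetic reasons, but the conclusion is unchanged.
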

\begin{proof}
See ref. [13].
\end{proof}

\begin{proposition}\label{zetadelta}
\begin{align}
\lim_{n\to\infty}\Delta_h^n \zeta(x)&=0, \  x>1\\\notag
\end{align}
\end{proposition}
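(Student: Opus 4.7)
The plan is to reduce the statement to a single absolutely convergent Dirichlet series and then apply dominated convergence. First I would insert the explicit formula from Proposition \ref{basicdiffprop},
$$\Delta_h^n \zeta(x) = (-1)^n \sum_{k=0}^{n} (-1)^k \binom{n}{k} \zeta(x+kh),$$
and expand each $\zeta(x+kh)$ as its defining series $\sum_{m=1}^{\infty} m^{-x-kh}$ (which is valid because the proposition is stated for $x>1$, so in particular $x+kh>1$ for every $k\ge 0$, assuming $h>0$, which I take as the intended convention).

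The key algebraic step is to interchange the finite sum over $k$ with the infinite sum over $m$. Absolute convergence of the double sum for $x>1$ (and $h>0$) makes this legal, and after collapsing the inner sum by the binomial theorem one obtains
$$\Delta_h^n \zeta(x) = (-1)^n \sum_{m=1}^{\infty} \frac{1}{m^x}\left(1 - \frac{1}{m^h}\right)^{n}.$$
The $m=1$ term vanishes identically for $n\ge 1$. For every $m\ge 2$ one has $0 < 1-m^{-h} < 1$, so the $m$-th term tends to $0$ as $n\to\infty$, and it is uniformly bounded in $n$ by the integrable majorant $m^{-x}$, which is summable precisely because $x>1$.

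At that point dominated convergence (applied to counting measure on $\{m\ge 2\}$) gives $\lim_{n\to\infty}\Delta_h^n\zeta(x)=0$, as required. The main obstacle I anticipate is a bookkeeping one rather than a conceptual one: one must check that $h>0$ is genuinely the intended hypothesis (for $h<0$ the argument breaks down because $\zeta(x+kh)$ eventually leaves the region of convergence, and in fact $1-m^{-h}$ is unbounded), and one must justify the interchange of summation order carefully by verifying $\sum_{k,m} \binom{n}{k} m^{-x-kh} < \infty$, which follows from the same binomial collapse but applied to absolute values and then using $x>1$. Once that is in place the dominated convergence step is immediate.
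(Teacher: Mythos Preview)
Your argument is correct and is in fact cleaner than the paper's. The paper invokes the N\"orlund--Rice contour integral (Proposition~\ref{contourprop2}), observes that $\zeta(x_0+\cdot)$ has polynomial growth in the right half-plane so that the representation
\[
\Delta_h^n\zeta(x_0)=-\frac{(-1)^n}{2\pi i}\int_{c-i\infty}^{c+i\infty}\zeta(hx+x_0)\,\frac{n!}{x(x-1)\cdots(x-n)}\,dx
\]
is valid, and then defers the vanishing of this integral as $n\to\infty$ to reference~[15]. Your route avoids complex analysis entirely: by expanding $\zeta(x+kh)$ termwise and collapsing via the binomial theorem you obtain the closed form $\Delta_h^n\zeta(x)=(-1)^n\sum_{m\ge 2}m^{-x}(1-m^{-h})^n$, from which dominated convergence is immediate. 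This is more elementary and fully self-contained; the Rice-integral method, by contrast, is a template that works for a broader class of analytic functions without an explicit Dirichlet expansion, but for $\zeta$ on $x>1$ your direct computation is the sharper tool. Your caveat about $h>0$ is well taken and matches the implicit convention in the paper's subsequent corollaries.
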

\begin{proof}
Note that if $x_0>1$, the function $\zeta(x_0+x)$ is of polynomial growth for $x>0$. Hence proposition \ref{contourprop2} applies. In particular, if $x_0\ge 2$, then in proposition \ref{contourprop2} we can take $c=-\frac{1}{2}$. It remains to show 
\begin{align}
\lim_{n\to\infty}-\frac{(-1)^n}{2i\pi}\int_{-\frac{1}{2}-i\infty}^{-\frac{1}{2}+i\infty} \zeta(h x+x_0)\frac{n!}{x(x-1)\cdots(x-n)}dx&=0\\\notag
\end{align}
Let us take, for example, $x_0=3$, $h=1$. We are interested in showing
\begin{align}
&\lim_{n\to\infty}\int_{-\frac{1}{2}-i\infty}^{-\frac{1}{2}+i\infty} \zeta( x+3)\frac{n!}{x(x-1)\cdots(x-n)}dx\\\notag
&=\lim_{n\to\infty}(-1)^{n+1}\int_{-\infty}^{+\infty} \zeta(\frac{5}{2}+it )\frac{n!}{(\frac{1}{2}+it)(\frac{3}{2}+it)\cdots(\frac{2n+1}{2}+it)}dt\\\notag
&=\lim_{n\to\infty}(-1)^{n+1}2{\rm Re}\int_{0}^{+\infty} \zeta(\frac{5}{2}+it )\frac{n!}{(\frac{1}{2}+it)(\frac{3}{2}+it)\cdots(\frac{2n+1}{2}+it)}dt\\\notag
&=0\\\notag
\end{align}
Note:
\begin{align}
&\lim_{n\to\infty}\int_0^\infty \zeta(\sigma+i t)\frac{n!}{(\frac{1}{2}+it)(\frac{3}{2}+it)\cdots(\frac{2n+1}{2}+it)}dt\\\notag
&=\lim_{n\to\infty}\lim_{m\to\infty}\int_0^m \sum_{j=1}^\infty j^{-(\sigma+i t)}\frac{n!}{(\frac{1}{2}+it)(\frac{3}{2}+it)\cdots(\frac{2n+1}{2}+it)}dt\\\notag
&=\lim_{n\to\infty}\lim_{m\to\infty}\int_0^m \sum_{j=1}^\infty e^{-(\sigma+i t)\ln(j)}\frac{n!}{(\frac{1}{2}+it)(\frac{3}{2}+it)\cdots(\frac{2n+1}{2}+it)}dt\\\notag
&=\lim_{n\to\infty}\lim_{m\to\infty} \sum_{j=1}^\infty \frac{1}{j^\sigma}\int_0^m(\cos(t \ln(j))-i \sin(t\ln(j)))\frac{n!}{(\frac{1}{2}+it)(\frac{3}{2}+it)\cdots(\frac{2n+1}{2}+it)}dt\\\notag
&=\lim_{m\to\infty} \sum_{j=1}^\infty \frac{1}{j^\sigma}\int_0^m\lim_{n\to\infty}(\cos(t \ln(j))-i \sin(t\ln(j)))\frac{n!}{(\frac{1}{2}+it)(\frac{3}{2}+it)\cdots(\frac{2n+1}{2}+it)}dt\\\notag
&=0
\end{align}
since
$$\lim_{n\to\infty}\frac{n!}{(\frac{1}{2}+it)(\frac{3}{2}+it)\cdots(\frac{2n+1}{2}+it)}=0$$

\end{proof}

\begin{corollary}
$$\zeta(k)=\lim_{n\to\infty}\sum_{j=1}^{n}(-1)^{j-1}\binom{n}{j}\zeta(k+jh)$$
E.g.,
\begin{align}
\zeta(3)&=\lim_{n\to\infty}\sum_{j=1}^{n}(-1)^{j-1}\binom{n}{j}\zeta(3+j)\\\notag
&=\lim_{n\to\infty}\sum_{j=1}^{n}(-1)^{j-1}\binom{n}{j}\zeta(3+2j)\\\notag
&=\lim_{n\to\infty}\sum_{j=1}^{n}(-1)^{j-1}\binom{n}{j}\zeta(3+j/2)\\\notag
&=\lim_{n\to\infty}\sum_{j=1}^{n}(-1)^{j-1}\binom{n}{j}\zeta(3+3j)\\\notag
\vdots
\end{align}
\end{corollary}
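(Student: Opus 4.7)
The plan is straightforward: the statement is a direct application of Theorem \ref{analytic} combined with Proposition \ref{zetadelta}. I would proceed by first observing that for any real $k>1$ (and any real step $h>0$ for which $k+jh>1$ for all $j\ge 0$, which is automatic when $k>1$ and $h>0$), the function $f(x)=\zeta(x)$ is analytic on $(1,+\infty)$, so in particular it is analytic on $(k,+\infty)$. Thus Theorem \ref{analytic} in its forward-difference form gives the identity
\begin{equation}\notag
\zeta(k)=\sum_{j=1}^{n}(-1)^{j-1}\binom{n}{j}\zeta(k+jh)+(-1)^{n}\Delta_h^n \zeta(k)
\end{equation}
for every finite $n$, and converts the desired limit statement into the assertion that $\lim_{n\to\infty}(-1)^n\Delta_h^n\zeta(k)=0$.

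Next I would invoke Proposition \ref{zetadelta}, which asserts exactly that $\lim_{n\to\infty}\Delta_h^n \zeta(x)=0$ for $x>1$. Specializing to $x=k$ (which satisfies $k>1$ for the intended values $k\ge 2$, and in particular $k=3$ in the listed examples), the remainder term vanishes in the limit, and we obtain
\begin{equation}\notag
\zeta(k)=\lim_{n\to\infty}\sum_{j=1}^{n}(-1)^{j-1}\binom{n}{j}\zeta(k+jh).
\end{equation}
Each of the four displayed specializations then follows by picking $h=1,\;2,\;\tfrac12,\;3$ respectively, since each choice leaves all evaluation points $k+jh=3+jh$ strictly greater than $1$, so Proposition \ref{zetadelta} applies verbatim in each case.

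The only subtle step, and the one I would flag as the main obstacle, is verifying that Proposition \ref{zetadelta} has actually been established for an arbitrary positive real step size $h$, since the examples use non-integer $h=\tfrac12$ as well as integer $h$. The proposition's proof sketch invokes the contour representation of Proposition \ref{contourprop2} with $\zeta(hx+x_0)$, and the polynomial-growth bound on $\zeta$ in a right half-plane holds uniformly in $h>0$, so the same Nörlund–Rice contour argument carries through without change; one just needs to observe that the shifted integrand $\zeta(hx+x_0)$ remains analytic and of polynomial growth in the chosen half-plane $\mathrm{Re}(x)\ge c$ for $c<n_0$ whenever $x_0+hc>1$, which is arranged by choosing $c$ close enough to $n_0$. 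Having noted this, no further calculation is required — the corollary is immediate.
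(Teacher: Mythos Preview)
Your proof is correct and follows exactly the same approach as the paper: invoke Theorem \ref{analytic} to reduce to showing the forward difference vanishes, then cite Proposition \ref{zetadelta}. The paper's entire proof is the one-line ``Theorem \ref{analytic} plus proposition \ref{zetadelta},'' so your additional remarks about the $h$-dependence are more careful than what the paper itself provides.
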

\begin{proof}
Theorem \ref{analytic} plus proposition \ref{zetadelta}.
\end{proof}
\begin{corollary}
Assume $f(x)$ is analytic on $(a,k]$ and $f$ has a simple pole at $x=a$ with $Res_a=m$ then
$$f(k)=\Bigl(\lim_{n\to\infty}\sum_{j=1}^{n-1}(-1)^{j-1}\binom{n-1}{j}f(k-j\frac{k-a}{n})\Bigr)-\frac{m}{k-a}$$
\end{corollary}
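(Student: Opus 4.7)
The plan is to reduce to Theorem \ref{analytic} by subtracting the pole, and to compute the residual contribution of the pole in closed form. Write $f(x) = g(x) + m/(x-a)$, where $g(x) := f(x) - m/(x-a)$ is analytic on a neighborhood of $[a,k]$ (the simple pole at $a$ is cancelled exactly by the residue term). The purely algebraic identity
\[
f(k) = \sum_{j=1}^{n-1}(-1)^{j-1}\binom{n-1}{j} f(k - jh) + \triangledown_h^{n-1} f(k)
\]
from the proof of Theorem \ref{analytic} holds for every $h>0$ and every $n$; I would apply it with $h_n = (k-a)/n$, so that the sample points $k - jh_n$ for $0 \le j \le n-1$ all lie in $[a + h_n, k]$ and never meet the singularity.

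By linearity, $\triangledown_{h_n}^{n-1} f(k) = \triangledown_{h_n}^{n-1} g(k) + \triangledown_{h_n}^{n-1}(m/(x-a))|_{x=k}$, and the pole piece is explicit: since $(m/(x-a))|_{x = k-jh_n} = mn/((k-a)(n-j))$, the identity $\sum_{j=0}^{n-1}(-1)^j \binom{n-1}{j} x^{n-j-1} = (x-1)^{n-1}$ followed by term-by-term integration gives
\[
\triangledown_{h_n}^{n-1}\!\left(\frac{m}{x-a}\right)\bigg|_{x=k}
= \frac{mn}{k-a}\sum_{j=0}^{n-1}\frac{(-1)^j\binom{n-1}{j}}{n-j}
= \frac{mn}{k-a}\int_0^1 (x-1)^{n-1}\,dx
= \frac{(-1)^{n-1} m}{k-a}.
\]

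For the analytic piece I would use the standard integral representation $\triangledown_{h_n}^{n-1} g(k) = \int_0^{h_n}\!\!\cdots\!\!\int_0^{h_n} g^{(n-1)}(k - s_1 - \cdots - s_{n-1})\,ds_1\cdots ds_{n-1}$ together with Cauchy estimates $|g^{(n-1)}(\xi)| \le (n-1)!\,M/R^{n-1}$ on a disk of radius $R$ about $[a,k]$. This yields $|\triangledown_{h_n}^{n-1} g(k)| \le M(n-1)!\big((k-a)/(nR)\big)^{n-1}$, which by Stirling tends to $0$ once $R > (k-a)/e$; alternatively, Proposition \ref{contourprop2} furnishes a contour-integral bound that accommodates both varying parameters simultaneously. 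Passing to the limit along even $n$ then gives $\triangledown_{h_n}^{n-1} f(k) \to -m/(k-a)$, and the algebraic identity rearranges to the displayed formula.

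The main obstacle is the decay estimate for $\triangledown_{h_n}^{n-1} g(k)$: both the step $h_n$ and the differencing order $n-1$ vary with $n$, so Theorem \ref{analytic} (which fixes $h$) does not apply off the shelf, and one must work with Cauchy estimates or the contour integral in a genuine neighborhood of $[a,k]$. A secondary subtlety is the oscillating factor $(-1)^{n-1}$ in the pole contribution, which forces the limit to be taken along even $n$; the constant $-m/(k-a)$ subtracted in the stated formula is precisely the correction that makes the even-$n$ limit agree with $f(k)$.
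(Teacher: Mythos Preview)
The paper's own proof of this corollary is the single word ``Pending'', so there is nothing to compare against; your proposal is already far more complete than what the paper provides.

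Your approach is sound and the key computation is correct. The decomposition $f=g+m/(x-a)$ with $g$ analytic near $[a,k]$ is the natural move, and your evaluation of the pole piece via $\sum_{j=0}^{n-1}(-1)^j\binom{n-1}{j}/(n-j)=\int_0^1(x-1)^{n-1}\,dx=(-1)^{n-1}/n$ is clean and correct. You also correctly identify the two genuine obstacles:
\begin{itemize}
\item The parity issue is real and not an artifact: the pole contributes $(-1)^{n-1}m/(k-a)$, so the partial sums oscillate between $f(k)+m/(k-a)$ (even $n$) and $f(k)-m/(k-a)$ (odd $n$), and the unrestricted limit does not exist. A quick numerical check with $f=\zeta$, $k=3$, $a=1$ confirms this. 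The statement therefore only makes sense along even $n$, as you note.
\item Your Cauchy-estimate bound $|\triangledown_{h_n}^{n-1}g(k)|\le M(n-1)!\big((k-a)/(nR)\big)^{n-1}$ does give decay once $R>(k-a)/e$, but the stated hypotheses on $f$ do not guarantee that $g$ extends to a disk of such radius about every point of $[a,k]$; likewise Proposition~\ref{contourprop2} needs polynomial growth on a half-plane. So some additional analyticity hypothesis on $f$ beyond ``analytic on $(a,k]$'' is really required, and you are right to flag this as the main gap.
\end{itemize}
In short: your argument is the right one, it exposes that the corollary as stated needs both a parity restriction on $n$ and a mild strengthening of the analyticity hypothesis, and it goes well beyond the paper's placeholder proof.
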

\begin{proof}
Pending.
\end{proof}
\begin{corollary}
$$\zeta(k)=\Bigl(\lim_{n\to\infty}\sum_{j=1}^{n-1}(-1)^{j-1}\binom{n-1}{j}\zeta(k-j\frac{k-1}{n})\Bigr)-\frac{1}{k-1}$$
\end{corollary}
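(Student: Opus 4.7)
The plan is to apply the preceding corollary to $f(x) = \zeta(x)$, with $a = 1$ and residue $m = 1$. The Riemann zeta function is holomorphic on $\mathbb{C}\setminus\{1\}$, and its Laurent expansion at $x = 1$ has the form $\zeta(x) = \frac{1}{x-1} + \gamma + O(x-1)$, so the pole is simple with residue $1$. For any real $k > 1$, the function $\zeta$ is analytic on the interval $(1, k]$, which matches the hypothesis $(a, k]$ with $a = 1$.

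Substituting $a = 1$ and $m = 1$ into the template
$$f(k)=\Bigl(\lim_{n\to\infty}\sum_{j=1}^{n-1}(-1)^{j-1}\binom{n-1}{j}f\Bigl(k-j\tfrac{k-a}{n}\Bigr)\Bigr)-\frac{m}{k-a}$$
yields the stated formula directly: the step $(k-a)/n$ becomes $(k-1)/n$, the evaluation points $k-j(k-1)/n$ for $j=1,\ldots,n-1$ lie in the open interval $(1,k)$ safely away from the pole, and the correction term $m/(k-a)$ becomes $1/(k-1)$.

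Most of the real content is packaged inside the preceding corollary; here the only thing specific to this instance is the polar data of $\zeta$ at $x=1$, which is classical. Conceptually, the earlier identity $\zeta(k)=\lim_{n\to\infty}\sum_{j=1}^{n}(-1)^{j-1}\binom{n}{j}\zeta(k+j)$ pushes the evaluation arguments off to $+\infty$, where $\zeta \to 1$ and the iterated forward differences vanish by Proposition \ref{zetadelta}. The present formula, by contrast, compresses the arguments toward the pole at $1$, so the divergent polar contribution must be cancelled by the explicit subtraction $-1/(k-1)$. Verifying that this cancellation produces a finite limit is the non-trivial input supplied by the preceding corollary, and it is the step I would most want to nail down carefully; the natural approach is to write $\zeta(x) = \frac{1}{x-1} + g(x)$ with $g$ analytic on $[1,k]$, apply Theorem \ref{analytic} (backward version, with $n$-dependent step $h = (k-1)/n$) to $g$ to recover $g(k)$, and then check by an explicit partial-fraction or contour-integral argument, in the spirit of Proposition \ref{contourprop2}, that the backward-difference sum applied to $\frac{1}{x-1}$ contributes exactly $-\frac{1}{k-1}$ in the limit.
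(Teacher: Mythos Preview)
Your proposal is correct and follows exactly the paper's approach: the paper's proof is simply ``Previous proposition,'' i.e.\ apply the preceding corollary with $f=\zeta$, $a=1$, $m=1$. You actually go further than the paper by sketching how to justify that preceding corollary (whose proof in the paper is left as ``Pending''), via the decomposition $\zeta(x)=\frac{1}{x-1}+g(x)$ and a separate analysis of the polar part.
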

\begin{proof}
Previous proposition.
\end{proof}

\begin{proposition}\label{zetalogdelta}
\begin{align}
\lim_{n\to\infty}&\Delta_h^n \ln(\zeta(x))=0\\\notag
\lim_{n\to\infty}&\Delta_h^n \zeta(\ln(x))=0\\\notag
\end{align}
\end{proposition}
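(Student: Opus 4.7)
The plan is to reduce both statements to the framework of Proposition \ref{zetadelta}, which in turn relied on the contour integral representation of Proposition \ref{contourprop2}. For either function $f$ among $\ln(\zeta(x))$ and $\zeta(\ln(x))$, the goal is to verify the three hypotheses: (i) $f$ is analytic on some half-line $[n_0,+\infty)$, (ii) $f$ is of polynomial growth on some right half-plane $\mathrm{Re}(x)\ge c$, and (iii) the resulting integral
$$-\frac{(-1)^n}{2i\pi}\int_{c-i\infty}^{c+i\infty}f(hx+x_0)\frac{n!}{x(x-1)\cdots(x-n)}\,dx$$
vanishes as $n\to\infty$.

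For $f_1(x)=\ln(\zeta(x))$, the Euler product shows $\zeta$ is analytic and nonvanishing on $\mathrm{Re}(x)>1$, so the principal branch of $\ln(\zeta(x))$ is well-defined and analytic there. Moreover $|\zeta(x)-1|\le\sum_{k\ge 2}k^{-\mathrm{Re}(x)}\to 0$ as $\mathrm{Re}(x)\to\infty$, so one may choose $c$ large enough that $|\zeta(x)-1|<1/2$ on $\mathrm{Re}(x)\ge c$, whence $\ln(\zeta(x))$ is bounded on that half-plane --- polynomial growth of degree zero. For $f_2(x)=\zeta(\ln(x))$, the principal branch of $\ln(x)$ on $\{|x|>e,\ \mathrm{Re}(x)>0\}$ has real part $\ln|x|>1$, so $\zeta(\ln(x))$ is analytic, and since $\zeta(\ln(x))\to 1$ as $|x|\to\infty$, it is bounded on any half-plane $\mathrm{Re}(x)\ge c$ with $c>e$.

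With the hypotheses of Proposition \ref{contourprop2} verified, both differences $\Delta_h^n f_i(x_0)$ admit the stated contour integral representation. The main step is then the vanishing of this integral as $n\to\infty$. Shifting the contour to a vertical line $\mathrm{Re}(x)=c$ with $-1<c<0$, standard asymptotics for the Nörlund--Rice kernel give
$$\frac{n!}{x(x-1)\cdots(x-n)}=\Gamma(-x)\,\frac{n!}{\Gamma(n+1-x)}\sim\Gamma(-x)\,n^{x},$$
so on that contour the kernel is $O(n^{c})$ pointwise, with $c<0$. Because $\Gamma(-c-it)$ decays exponentially as $|t|\to\infty$ while $f_i(hx+x_0)$ is bounded, the integrand is dominated by an integrable function uniformly in $n$, and dominated convergence (or direct majorization by $n^c$ times a fixed $L^1$ kernel) yields the vanishing of the integral.

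The hard part will be justifying the contour shift from the line prescribed by Proposition \ref{contourprop2} to one with $-1<c<0$: one must check that no poles of $f_i(hx+x_0)$ obstruct the shift, and that the large-$|\mathrm{Im}(x)|$ connecting arcs contribute negligibly. Both are clear here because $f_i$ is analytic and bounded on the relevant strip, and the kernel decays exponentially in $|\mathrm{Im}(x)|$; in fact, since $f_1$ and $f_2$ are bounded (growth degree zero), the required estimate is a strict simplification of the one already invoked in the proof of Proposition \ref{zetadelta} for $\zeta$ itself, so no essentially new analytic input is required.
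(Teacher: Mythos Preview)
The paper's own proof of this proposition is empty --- the \texttt{proof} environment contains nothing between \verb|\begin{proof}| and \verb|\end{proof}|. So there is no argument to compare against; you have supplied strictly more than the paper does.

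Your approach is the natural one given the surrounding material: it mirrors the paper's (also sketchy) proof of Proposition~\ref{zetadelta}, invoking the N\"orlund--Rice integral of Proposition~\ref{contourprop2} and then arguing that the kernel is $O(n^{c})$ with $c<0$ against a bounded integrand with exponential decay from the $\Gamma$-factor. The verification that $\ln\zeta$ is bounded on a half-plane $\mathrm{Re}(s)>\sigma_0>1$ (via $|\ln\zeta(s)|\le\ln\zeta(\sigma)$ from the Euler product) and that $\zeta(\ln w)\to 1$ on right half-planes is correct, and indeed makes the growth hypothesis easier than for $\zeta$ itself. Two small points of care: (i) when you write ``choose $c$ large enough'' for boundedness and then ``shift to $-1<c<0$'', you are using the same letter for the real part of the argument of $f$ and for the real part of the integration variable; it would be cleaner to distinguish them, noting that $\mathrm{Re}(x)=c\in(-1,0)$ forces $\mathrm{Re}(hx+x_0)=hc+x_0$, so one needs $x_0>1+h|c|$ (resp.\ $x_0>e+h|c|$) for analyticity; (ii) for $\zeta(\ln x)$ your domain $\{|x|>e,\ \mathrm{Re}(x)>0\}$ is not itself a half-plane, but the half-plane $\mathrm{Re}(x)>e$ sits inside it, which is what Proposition~\ref{contourprop2} actually requires. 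With those clarifications your argument is sound and in fact more detailed than the paper's treatment of the analogous Proposition~\ref{zetadelta}.
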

\begin{proof}
\end{proof}
\begin{corollary}
\begin{align}
\zeta(k)&=\lim_{n\to\infty}\prod_{j=1}^{n}\zeta(k+jh)^{(-1)^{j-1}\binom{n}{j}}\\\notag
&=\lim_{n\to\infty}\frac{\prod_{j\ {\rm odd}}\zeta(k+jh)^{\binom{n}{j}}}{\prod_{j\ {\rm even}}\zeta(k+jh)^{\binom{n}{j}}}\\\notag
\end{align}
\end{corollary}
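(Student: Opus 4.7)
The plan is to reduce the statement to the previously established analytic-interpolation theorem (Theorem \ref{analytic}) applied to the function $f(x)=\ln(\zeta(x))$. Recall that theorem says: if $f$ is analytic on $(x,+\infty)$, then
$$f(x)=\lim_{n\to\infty}\sum_{j=1}^{n}(-1)^{j-1}\binom{n}{j}f(x+jh)$$
holds exactly when $\lim_{n\to\infty}\Delta_h^n f(x)=0$. So the whole corollary is really a single substitution followed by exponentiation.

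First I would verify the analyticity hypothesis: for $k>1$, $\zeta(x)$ is positive and analytic on the ray $(1,+\infty)$, so $\ln(\zeta(x))$ is well-defined and analytic on $(k,+\infty)$ (and in fact on any horizontal shift $(k,+\infty)+jh$ with $h>0$). Next I would invoke Proposition \ref{zetalogdelta}, which asserts precisely that $\lim_{n\to\infty}\Delta_h^n \ln(\zeta(x))=0$. With both ingredients in hand, Theorem \ref{analytic} applied at $x=k$ gives
$$\ln(\zeta(k))=\lim_{n\to\infty}\sum_{j=1}^{n}(-1)^{j-1}\binom{n}{j}\ln(\zeta(k+jh)).$$

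Now I would exponentiate both sides. Since $\exp$ is continuous and the partial sums form a convergent sequence, the limit commutes with exponentiation, giving
$$\zeta(k)=\lim_{n\to\infty}\exp\Bigl(\sum_{j=1}^{n}(-1)^{j-1}\binom{n}{j}\ln(\zeta(k+jh))\Bigr)=\lim_{n\to\infty}\prod_{j=1}^{n}\zeta(k+jh)^{(-1)^{j-1}\binom{n}{j}},$$
which is the first displayed equation. The second displayed equation is purely cosmetic: split the product according to the parity of $j$, so that odd $j$ contribute a positive exponent $\binom{n}{j}$ (in the numerator) and even $j$ contribute a negative exponent $-\binom{n}{j}$ (in the denominator).

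The main obstacle is entirely upstream of this corollary: it lives in Proposition \ref{zetalogdelta}, whose proof is not filled in here. One must control the iterated forward differences of $\ln(\zeta(x))$, which, unlike those of $\zeta(x)$ itself, are not immediately covered by the contour-integral bound from Proposition \ref{contourprop2} because $\ln(\zeta(x))$ is not of polynomial growth in the relevant half-plane in the naive sense. A cleaner route is to note that $\ln(\zeta(x))=-\sum_p \ln(1-p^{-x})=\sum_p\sum_{m\ge 1}p^{-mx}/m$ for $x>1$, apply $\Delta_h^n$ termwise, and use $\Delta_h^n p^{-mx}=p^{-mx}(p^{-mh}-1)^n$, whose absolute value is bounded by $p^{-mx}\cdot|p^{-mh}-1|^n\to 0$ geometrically, with uniform summability on compacta ensuring the limit passes inside the double sum. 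Once Proposition \ref{zetalogdelta} is secured, the present corollary is a one-line consequence as sketched.
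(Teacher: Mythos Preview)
Your proof is correct and follows exactly the paper's approach: apply Theorem \ref{analytic} to $f(x)=\ln(\zeta(x))$ using Proposition \ref{zetalogdelta}, then exponentiate and split the product by parity of $j$. Your additional sketch for Proposition \ref{zetalogdelta} via the Euler product $\ln\zeta(x)=\sum_{p}\sum_{m\ge 1}p^{-mx}/m$ and the termwise identity $\Delta_h^n p^{-mx}=p^{-mx}(p^{-mh}-1)^n$ is a genuine improvement, since the paper leaves that proposition's proof empty.
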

\begin{proof}
\begin{align}
\ln(\zeta(k))&=\lim_{n\to\infty}\sum_{j=1}^{n}(-1)^{j-1}\binom{n}{j}\ln(\zeta(k+jh))\\\notag
&=\lim_{n\to\infty}\ln(\prod_{j=1}^{n}\zeta(k+jh)^{(-1)^{j-1}\binom{n}{j}})\\\notag
\end{align}
\end{proof}
\begin{corollary}
\begin{align}
\zeta(k)&=\lim_{n\to\infty}\sum_{j=1}^{n}(-1)^{j-1}\binom{n}{j}\zeta(\ln(\exp(k)+jh))\\\notag
\end{align}
(for $\zeta(3)$: 100 decimal place accuracy with 100 terms, for $h=\frac{1}{16}$)
\end{corollary}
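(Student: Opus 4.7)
The plan is to reduce this directly to the two ingredients already in hand: Theorem \ref{analytic}, which converts an identity of the form $f(x)=\lim_{n\to\infty}\sum_{j=1}^{n}(-1)^{j-1}\binom{n}{j}f(x+jh)$ into a decay statement $\lim_{n\to\infty}\Delta_h^n f(x)=0$, and Proposition \ref{zetalogdelta}, which supplies exactly such a decay for $\zeta\circ\ln$. The bridge between the two is the translation invariance of the forward difference operator.

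First I would set $F(x)=\zeta(\ln(x))$ and $f(x)=F(\exp(k)+x)=\zeta(\ln(\exp(k)+x))$. Note that $f(0)=\zeta(k)$, so the desired identity is $f(0)=\lim_{n\to\infty}\sum_{j=1}^{n}(-1)^{j-1}\binom{n}{j}f(jh)$. Next I would verify that $f$ is analytic on $(0,+\infty)$: since $\exp(k)\ge e$ for $k\ge 1$, the argument $\ln(\exp(k)+x)$ stays $>1$ for all $x\ge 0$, so $\zeta$ is evaluated safely away from its pole, and the composition of analytic maps is analytic. This places $f$ in the hypothesis class of Theorem \ref{analytic}.

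Then, by definition of $\Delta_h^n$ and a direct shift of variables,
\begin{align*}
\Delta_h^n f(0) \;=\; \Delta_h^n\bigl[F(\exp(k)+\,\cdot\,)\bigr](0)\;=\;\bigl[\Delta_h^n F\bigr](\exp(k)),
\end{align*}
because the forward difference operator commutes with translation of the argument. Applying Proposition \ref{zetalogdelta} to $F=\zeta\circ\ln$ at the point $\exp(k)>0$ yields $\lim_{n\to\infty}[\Delta_h^n F](\exp(k))=0$, hence $\lim_{n\to\infty}\Delta_h^n f(0)=0$. Theorem \ref{analytic} now delivers $f(0)=\lim_{n\to\infty}\sum_{j=1}^n(-1)^{j-1}\binom{n}{j}f(jh)$, which is exactly the claimed formula for $\zeta(k)$.

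The main obstacle is not in the present corollary at all but in Proposition \ref{zetalogdelta}, whose proof is left blank in the excerpt; any hidden subtlety lives there (typically one would bound the contour integral of Proposition \ref{contourprop2} using polynomial growth of $\zeta(\ln(x))$ in a right half-plane and the rapid decay of $n!/[x(x-1)\cdots(x-n)]$). Once that decay is granted, the only genuine content here is the translation step and the verification that $\exp(k)$ lies in the region where the decay applies, both of which are essentially bookkeeping.
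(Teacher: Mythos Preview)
Your proposal is correct and follows essentially the same route as the paper: the paper's proof is the single line ``Proposition \ref{zetalogdelta},'' i.e.\ apply Theorem \ref{analytic} to $F=\zeta\circ\ln$ at the point $x=\exp(k)$ and invoke $\lim_{n\to\infty}\Delta_h^n(\zeta\circ\ln)=0$. Your introduction of the shifted auxiliary $f(x)=F(\exp(k)+x)$ and the explicit translation-invariance step is a harmless cosmetic detour; applying Theorem \ref{analytic} directly to $F$ at $\exp(k)$ avoids it, but the content is identical, and your remark that the real work sits in the unproved Proposition \ref{zetalogdelta} is well taken.
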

\begin{proof}
Proposition \ref{zetalogdelta}.
\end{proof}
\begin{corollary}
$$\zeta(k)=1+\lim_{n\to\infty}\sum_{j=1}^{n}(-1)^{j-1}\binom{n}{j}(\zeta(k+jh)-1)$$
\end{corollary}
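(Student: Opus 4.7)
The plan is to reduce this to the previous corollary by splitting the sum $\sum_{j=1}^{n}(-1)^{j-1}\binom{n}{j}(\zeta(k+jh)-1)$ into two pieces and observing that the ``$-1$'' pieces sum to a known constant.

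First I would expand:
\begin{align*}
\sum_{j=1}^{n}(-1)^{j-1}\binom{n}{j}(\zeta(k+jh)-1)
&=\sum_{j=1}^{n}(-1)^{j-1}\binom{n}{j}\zeta(k+jh)-\sum_{j=1}^{n}(-1)^{j-1}\binom{n}{j}.
\end{align*}
The second sum is a pure binomial sum that can be evaluated in closed form. Starting from the binomial identity $(1-1)^n=\sum_{j=0}^{n}(-1)^{j}\binom{n}{j}=0$ for $n\ge 1$, isolating the $j=0$ term yields $\sum_{j=1}^{n}(-1)^{j}\binom{n}{j}=-1$, hence $\sum_{j=1}^{n}(-1)^{j-1}\binom{n}{j}=1$. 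This holds for every $n\ge 1$, so the second term contributes exactly $1$ independent of $n$ and passes through the limit unchanged.

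Next I would invoke the preceding corollary, which (via Theorem \ref{analytic} together with Proposition \ref{zetadelta}) asserts
\[
\zeta(k)=\lim_{n\to\infty}\sum_{j=1}^{n}(-1)^{j-1}\binom{n}{j}\zeta(k+jh).
\]
Taking limits of the split identity term by term is legitimate because both resulting limits exist separately: the first by the cited corollary, the second trivially (it is a constant sequence equal to $1$). Combining the two limits gives
\[
\lim_{n\to\infty}\sum_{j=1}^{n}(-1)^{j-1}\binom{n}{j}(\zeta(k+jh)-1)=\zeta(k)-1,
\]
and adding $1$ to both sides yields the claimed identity.

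There is no real obstacle here; the nontrivial analytic content is entirely in Proposition \ref{zetadelta} (vanishing of the iterated forward differences of $\zeta$), which is assumed. The only subtlety worth noting is that the reformulation is numerically preferable: since $\zeta(k+jh)-1=\sum_{m\ge 2}m^{-(k+jh)}$ decays geometrically in $j$ (dominated by $2^{-jh}$), the terms $(\zeta(k+jh)-1)$ shrink much faster than $\zeta(k+jh)\to 1$, making the displayed sum converge substantially more rapidly than the form in the previous corollary despite being algebraically equivalent.
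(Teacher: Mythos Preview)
Your proof is correct and is essentially a fully spelled-out version of the paper's argument: the paper's entire proof is the single remark ``This is essentially doing the computation mod 1,'' which encodes exactly your observation that $\sum_{j=1}^{n}(-1)^{j-1}\binom{n}{j}=1$ so that subtracting $1$ from each $\zeta(k+jh)$ shifts the limit by exactly $1$. Your added comment about the faster numerical convergence (since $\zeta(k+jh)-1\sim 2^{-(k+jh)}$) is a nice bonus that the paper does not make explicit here.
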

\begin{proof}
This is essentially doing the computation mod 1.
\end{proof}
\begin{proposition}
$$1=\sum_{j=1}^{\infty}\binom{j+k-1}{j}(\zeta(k+j)-1)$$
\end{proposition}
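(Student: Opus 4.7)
The plan is to expand each $\zeta(k+j)-1$ as $\sum_{n=2}^{\infty} n^{-(k+j)}$, swap the order of summation, and recognize the inner sum as a binomial series that collapses to a telescoping tail.

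First I would write
\begin{align*}
\sum_{j=1}^{\infty}\binom{j+k-1}{j}(\zeta(k+j)-1)
&=\sum_{j=1}^{\infty}\binom{j+k-1}{j}\sum_{n=2}^{\infty}\frac{1}{n^{k+j}}\\
&=\sum_{n=2}^{\infty}\frac{1}{n^{k}}\sum_{j=1}^{\infty}\binom{j+k-1}{j}\frac{1}{n^{j}},
\end{align*}
where the interchange is legitimate because all terms are non-negative (Tonelli), provided $k\geq 2$ so that $\zeta(k+j)-1$ is finite for all $j\geq 1$.

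Next I would apply the standard negative-binomial identity $\sum_{j=0}^{\infty}\binom{j+k-1}{j}x^{j}=(1-x)^{-k}$, valid for $|x|<1$, with $x=1/n$ (so $n\geq 2$ is fine). Subtracting the $j=0$ term gives
\[
\sum_{j=1}^{\infty}\binom{j+k-1}{j}\frac{1}{n^{j}}=\frac{n^{k}}{(n-1)^{k}}-1.
\]
Substituting back yields
\[
\sum_{n=2}^{\infty}\frac{1}{n^{k}}\left(\frac{n^{k}}{(n-1)^{k}}-1\right)=\sum_{n=2}^{\infty}\left(\frac{1}{(n-1)^{k}}-\frac{1}{n^{k}}\right),
\]
which telescopes to $1-\lim_{N\to\infty}N^{-k}=1$, as desired.

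The main obstacle is really just bookkeeping: justifying the swap of summations and confirming the hypothesis $k\geq 2$ (which the statement presumably intends, since $\zeta(1+j)-1$ would otherwise be replaced with $\zeta(1+j)-1$ for $j\geq 1$, still finite, so actually $k\geq 1$ suffices as long as we start the outer sum at $j=1$). Everything is non-negative, so Tonelli handles the exchange cleanly, and the negative-binomial expansion converges since $1/n\leq 1/2<1$ for $n\geq 2$. No deeper analytic input is required.
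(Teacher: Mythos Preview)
Your proof is correct and complete. The paper itself leaves this proposition with proof ``Pending,'' so there is nothing to compare against; your argument --- expand $\zeta(k+j)-1$ as a Dirichlet tail, interchange by Tonelli, close the inner sum via the negative-binomial series $(1-1/n)^{-k}$, and telescope --- is the standard and cleanest route, and it fills the gap the paper left open. One small wording glitch: in your parenthetical about $k=1$ you wrote ``$\zeta(1+j)-1$ would otherwise be replaced with $\zeta(1+j)-1$,'' which is circular; but your conclusion that $k\ge1$ suffices (indeed any real $k>0$, since the telescoping sum still converges) is right.
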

\begin{proof}
Pending.
\end{proof}
\begin{corollary}
\begin{align}
\zeta(k)&=\frac{1}{k-1}\Bigl(k-\sum_{j=1}^{\infty}\binom{j+k-1}{k-2}(\zeta(k+j)-1)\Bigr)\\\notag
&=\frac{1}{k-1}\lim_{n\to\infty}\Bigl(\binom{k+n}{n+1}-\sum_{j=1}^{n}\binom{j+k-1}{k-2}\zeta(k+j)\Bigr)\\\notag
\end{align}
\end{corollary}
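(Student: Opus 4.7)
The plan is to prove the stronger statement $\sum_{j=0}^\infty \binom{j+k-1}{k-2}(\zeta(k+j)-1) = 1$, from which both claims of the corollary follow algebraically. The $j=0$ term of that shifted sum is precisely $(k-1)(\zeta(k)-1)$, so isolating it and dividing by $k-1$ recovers the first displayed formula of the corollary; the limit form then drops out by partial-sum bookkeeping with the hockey stick identity.

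First I would interchange the order of summation, which is justified by absolute convergence for $k\geq 2$:
$$\sum_{j=0}^\infty \binom{j+k-1}{k-2}(\zeta(k+j)-1) = \sum_{m=2}^\infty \frac{1}{m^k} \sum_{j=0}^\infty \binom{j+k-1}{k-2} \frac{1}{m^j}.$$
The inner series has a closed form: since $\binom{j+k-1}{k-2} = \binom{j+k-1}{j+1}$, the substitution $i = j+1$ together with the classical generating function $\sum_{i=0}^\infty \binom{i+k-2}{i} x^i = (1-x)^{-(k-1)}$ gives
$$\sum_{j=0}^\infty \binom{j+k-1}{k-2} x^j = \frac{1}{x}\left(\frac{1}{(1-x)^{k-1}} - 1\right).$$
Setting $x=1/m$ yields $\frac{m^k}{(m-1)^{k-1}} - m$, and after multiplying by $m^{-k}$ the outer sum becomes the telescoping series
$$\sum_{m=2}^\infty \left(\frac{1}{(m-1)^{k-1}} - \frac{1}{m^{k-1}}\right) = 1,$$
valid because $k-1 \geq 1$ makes the tail vanish.

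Second, to deduce the limit form, I would rewrite the tail $\sum_{j=1}^\infty \binom{j+k-1}{k-2}(\zeta(k+j)-1)$ as $\lim_{n\to\infty}\bigl[\sum_{j=1}^n \binom{j+k-1}{k-2}\zeta(k+j) - \sum_{j=1}^n \binom{j+k-1}{k-2}\bigr]$ and apply the hockey stick identity
$$\sum_{j=1}^n \binom{j+k-1}{k-2} = \binom{n+k}{k-1} - k = \binom{n+k}{n+1} - k.$$
Substituting this into the first identity and cancelling the two occurrences of $k$ produces exactly the claimed limit formula.

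The main obstacle is justifying the interchange of summation rigorously, which reduces to a Tonelli argument since the summands are positive, and handling the edge case $k=2$ where $\binom{j+1}{0}=1$ and the claim collapses to the classical identity $\sum_{j=2}^\infty(\zeta(j)-1)=1$ (itself a telescoping computation of the same flavor). Alternatively, once the preceding proposition is accepted, the shifted identity follows in one line by substituting $k \mapsto k-1$ there and reindexing $j \mapsto j+1$, which is probably the intended derivation; but the telescoping route above has the advantage of being self-contained and not depending on the ``pending'' proof.
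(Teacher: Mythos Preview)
Your proof is correct and complete. The paper's own proof is simply ``Previous proposition,'' referring to the identity $1=\sum_{j=1}^{\infty}\binom{j+k-1}{j}(\zeta(k+j)-1)$ whose proof is left ``Pending.'' You correctly identify this reduction at the end (shift $k\mapsto k-1$ and reindex), but you also supply a self-contained telescoping argument that the paper omits: interchanging sums, summing the negative-binomial generating function, and collapsing $\sum_{m\ge 2}\bigl((m-1)^{-(k-1)}-m^{-(k-1)}\bigr)=1$. In effect you have proved both the corollary and the pending proposition at once, so your route is strictly more informative than the paper's, while the hockey-stick bookkeeping for the limit form matches what the paper presumably intends but does not write out.
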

\begin{proof}
Previous proposition.
\end{proof}
\begin{proposition}
\begin{align}\notag
\lim_{n\to\infty}\Delta_1^n \sin(x)&=0\\\notag
\end{align}
\end{proposition}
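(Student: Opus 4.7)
The plan is to reduce the problem to a closed-form expression via the binomial theorem applied in the complex exponential representation of sine. By Proposition \ref{basicdiffprop},
\[
\Delta_1^n \sin(x) = (-1)^n \sum_{k=0}^n (-1)^k \binom{n}{k} \sin(x+k),
\]
so it suffices to show the sum on the right tends to $0$.

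First, I would write $\sin(x+k) = \mathrm{Im}(e^{i(x+k)}) = \mathrm{Im}(e^{ix} e^{ik})$, pull $e^{ix}$ out of the sum, and recognize the remaining sum as a binomial expansion:
\[
\sum_{k=0}^n (-1)^k \binom{n}{k} e^{ik} = (1 - e^{i})^n.
\]
This yields the compact formula
\[
\Delta_1^n \sin(x) = (-1)^n \, \mathrm{Im}\!\left( e^{ix} (1 - e^{i})^n \right).
\]

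Next, I would bound the modulus. Using $|1 - e^i|^2 = (1-\cos 1)^2 + \sin^2 1 = 2 - 2\cos 1 = 4\sin^2(1/2)$, we have $|1 - e^i| = 2\sin(1/2)$. Since $\sin(1/2) < 1/2$, this gives $|1 - e^i| < 1$. Therefore
\[
\left| \Delta_1^n \sin(x) \right| \le |1 - e^i|^n = (2\sin(1/2))^n \longrightarrow 0
\]
as $n \to \infty$, uniformly in $x$.

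I do not expect any real obstacle here; the only thing to be mildly careful about is the sign conventions in Proposition \ref{basicdiffprop} and the verification that $2\sin(1/2) < 1$, both of which are elementary. The same technique extends immediately to $\cos(x)$ and, more generally, to $\sin(\alpha x)$ or $\cos(\alpha x)$ whenever $|1 - e^{i\alpha h}| < 1$, i.e., whenever $2|\sin(\alpha h/2)| < 1$, which relates this proposition to the earlier results on $\Delta_h^n$ applied to analytic functions of controlled growth.
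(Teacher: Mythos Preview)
Your proof is correct and follows essentially the same approach as the paper: both reduce $\Delta_1^n\sin(x)$ to an expression whose modulus is bounded by $(2\sin(\tfrac{1}{2}))^n$ and then observe that $2\sin(\tfrac{1}{2})<1$. The only cosmetic difference is that the paper writes out the real closed form $(-1)^{\lfloor n/2\rfloor}(2\sin\tfrac{1}{2})^n\sin\bigl(\tfrac{n}{2}(1+\pi)+x\bigr)$ explicitly, whereas you reach the same bound more directly via the complex exponential and the binomial theorem.
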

\begin{proof}
\begin{align}
\Delta_1^n \sin(x)&=(-1)^n\sum_{k=0}^n(-1)^k\binom{n}{k}\sin(x+k )\\\notag
&=(-1)^{\floor{\frac{n}{2}}}(2\sin(\frac{1}{2}))^n \sin(\frac{n}{2}(1+\pi)+x)\\\notag
\end{align}
while
$$2\sin(\frac{1}{2})\approx 0.959$$
and 
$$|\sin(\frac{n}{2}(1+\pi)+x)|\le 1$$
$$\implies \lim_{n\to\infty}\Delta_1^n \sin(x)=0$$
\end{proof}
\begin{proposition}
\begin{align}\notag
\lim_{n\to\infty}\Delta_1^n \ln(x)&=0\\\notag
\end{align}
\end{proposition}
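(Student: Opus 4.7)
The plan is to realise $\Delta_1^n\ln(x)$ as a single integral to which dominated convergence applies. The starting point is the Frullani representation
$$\ln(x)=\int_0^\infty\frac{e^{-t}-e^{-xt}}{t}\,dt\qquad(x>0),$$
which is absolutely convergent. Because $\Delta_1$ acts only on the $x$-variable and satisfies $\Delta_1 e^{-xt}=-e^{-xt}(1-e^{-t})$, a straightforward induction on $n$, together with a Fubini interchange that is legitimate because each stage preserves absolute integrability, yields the compact formula
$$\Delta_1^n\ln(x)=(-1)^{n-1}\int_0^\infty e^{-xt}\,\frac{(1-e^{-t})^n}{t}\,dt.$$

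With this in hand the limit is transparent. The integrand is non-negative, and for every fixed $t>0$ one has $0<1-e^{-t}<1$, so $(1-e^{-t})^n\to 0$ pointwise. For dominated convergence I bound the integrand above by the $n=1$ integrand $e^{-xt}(1-e^{-t})/t$, which is valid since $(1-e^{-t})^{n-1}\le 1$; this dominating function is integrable on $(0,\infty)$ for $x>0$, because it has the finite limit $1$ as $t\to 0^+$ and decays like $e^{-xt}/t$ at infinity. Dominated convergence then gives $\lim_{n\to\infty}\Delta_1^n\ln(x)=0$ for every $x>0$.

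The only point requiring care is the justification of the Fubini-type interchange used in producing the integral formula; this is not a genuine obstacle, since the $n=1$ integrand is absolutely integrable on $(0,\infty)$ and the outer object $\sum_{k=0}^{n}(-1)^{k}\binom{n}{k}$ is a finite sum, so the swap can also be performed in one stroke rather than inductively. All of the analytic content sits in the elementary inequality $1-e^{-t}<1$ for $t>0$, together with the integrability of the $n=1$ kernel; as a byproduct one even sees that the convergence is geometric for each fixed $x>0$, since the bulk of the integrand concentrates near $t\sim 1/x$ where $1-e^{-t}$ is bounded away from $1$.
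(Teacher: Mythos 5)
Your proof is correct, and it takes a genuinely different route from the paper. The paper keeps the alternating sum of logarithms as a logarithm of a ratio of products, writing $\Delta_1^n\ln(x)=(-1)^n\ln\bigl(\prod_{k\ \mathrm{even}}(x+k)^{\binom{n}{k}}/\prod_{k\ \mathrm{odd}}(x+k)^{\binom{n}{k}}\bigr)$ and then invoking $\sum_{k\ \mathrm{even}}\binom{n}{k}=\sum_{k\ \mathrm{odd}}\binom{n}{k}$ (from $(1-1)^n=0$) to argue that numerator and denominator have the same degree and the ratio tends to $1$. That argument is really only a heuristic: for fixed $x$ and $n\to\infty$ the ``lower order terms'' are not negligible, and equality of total degrees alone does not force the ratio of the two products to converge to $1$, so the paper's proof has a gap that your argument closes. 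Your route — Frullani's representation $\ln(x)=\int_0^\infty\frac{e^{-t}-e^{-xt}}{t}\,dt$, the exact kernel identity $\Delta_1^n\ln(x)=(-1)^{n-1}\int_0^\infty e^{-xt}\frac{(1-e^{-t})^n}{t}\,dt$ (legitimate since the difference is a finite sum of absolutely convergent integrals), and dominated convergence with the $n=1$ integrand as dominating function — is fully rigorous for $x>0$, and it buys more: it exhibits the sign and monotone decay of $|\Delta_1^n\ln(x)|$, gives a rate of convergence, and is the same Laplace/Mellin-kernel mechanism the paper later appeals to (via reference [15]) for $\Delta_h^n\zeta$, so it fits naturally into Section 4. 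The paper's approach, if it were completed (e.g. by a genuine estimate of the log of the product ratio), would have the virtue of staying entirely within elementary algebraic identities, but as written it proves less than your argument does.
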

\begin{proof}
\begin{align}
\Delta_1^n\ln(x)&=(-1)^n\sum_{k=0}^n(-1)^k\binom{n}{k}\ln(x-k)\\\notag
&=(-1)^n\ln\Bigl(\frac{ \prod_{k\rm{\  even}}(x+k)^{\binom{n}{k}}}{\prod_{k\rm{\  odd}}(x+k)^{\binom{n}{k}}}\Bigr)\\\notag
&=(-1)^n\ln\Bigl(\frac{x^{\sum_{k\rm{\  even}}\binom{n}{k}}+\rm{lower\  order\  terms}}{x^{\sum_{k\rm{\  odd}}\binom{n}{k}}+\rm{lower\   order\   terms}}\Bigr)\\\notag
\end{align}
But
\begin{align}
(1-1)^n&=\sum_{k=0}^n(-1)^k\binom{n}{k}=0\\\notag
&\implies \sum_{k\rm{\  even}}\binom{n}{k}=\sum_{k\rm{\  odd}}\binom{n}{k}\\\notag
&\implies \lim_{n\to\infty}\Delta_1^n \ln(x)=\ln(1)=0\\\notag
\end{align}
\end{proof}
\begin{proposition}
\begin{align}\notag
\lim_{n\to\infty}\triangledown_1^n \exp(x)&=0\\\notag
\end{align}
\end{proposition}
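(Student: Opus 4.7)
The plan is to mimic the pattern used in the preceding two propositions: apply the explicit finite-difference formula from Proposition \ref{basicdiffprop}, recognize the resulting sum as a binomial expansion, and then bound the closed form by a geometric factor whose absolute value is strictly less than one.

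First I would write, directly from Proposition \ref{basicdiffprop},
\begin{align*}
\triangledown_1^n \exp(x) &= \sum_{k=0}^n (-1)^k \binom{n}{k} \exp(x-k).
\end{align*}
Next I would pull the $\exp(x)$ factor outside the sum, since it does not depend on $k$, obtaining
\begin{align*}
\triangledown_1^n \exp(x) &= \exp(x) \sum_{k=0}^n \binom{n}{k} (-\exp(-1))^k.
\end{align*}
The inner sum is exactly the binomial expansion of $(1 + (-e^{-1}))^n$, so
\begin{align*}
\triangledown_1^n \exp(x) &= \exp(x)\,(1 - e^{-1})^n.
\end{align*}

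The conclusion then follows by observing that $0 < 1 - e^{-1} < 1$ (numerically about $0.632$), so $(1-e^{-1})^n \to 0$ as $n\to\infty$, and the prefactor $\exp(x)$ is a fixed finite constant. This gives $\lim_{n\to\infty}\triangledown_1^n \exp(x) = 0$, as required.

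There is essentially no obstacle here; the hardest step is just recognizing that the closed form collapses under the binomial theorem. Unlike the $\sin$ and $\ln$ cases (which required a trig identity and a degree-comparison in a rational function, respectively), the exponential interacts cleanly with the shift operator because it is an eigenfunction of translation. As a sanity check, one can note that the same argument shows $\lim_{n\to\infty}\Delta_1^n \exp(x) = 0$ fails (one would get $(e-1)^n$, which diverges), which is consistent with the asymmetry between $\triangledown$ and $\Delta$ on a rapidly growing function analytic on $(-\infty,x)$ as required by Theorem \ref{analytic}.
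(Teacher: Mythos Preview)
Your proof is correct and follows essentially the same route as the paper: apply the explicit formula from Proposition~\ref{basicdiffprop}, factor out $\exp(x)$, collapse the sum via the binomial theorem, and note that the base has absolute value less than $1$. In fact your closed form $\exp(x)(1-e^{-1})^n$ has the correct sign, whereas the paper writes $(e^{-1}-1)^n\exp(x)$, which differs by $(-1)^n$; either way the limit is $0$.
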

\begin{proof}
\begin{align}
\triangledown_1^n \exp(x)&=\sum_{k=0}^n(-1)^k\binom{n}{k}\exp(x-k )\\\notag
&=\left(\frac{1}{\rm e}-1 \right)^n\exp(x)\\\notag
\end{align}
But,
\begin{align}
\frac{1}{\rm e}-1&\approx -0.632\\\notag
&\implies \lim_{n\to\infty}\triangledown_1^n \exp(x)=0\\\notag
\end{align}
\end{proof}
\begin{corollary}
\begin{align}\notag
x&=\lim_{n\to\infty}\sum_{j=1}^n(-1)^{j+1}\binom{n}{j}x^{1-j},1<x\\\notag
x&=\lim_{n\to\infty}\sum_{j=1}^n(-1)^{j+1}\binom{n}{j}x^{1+j},0<x<1\\\notag
\end{align}
\end{corollary}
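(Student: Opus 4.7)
The plan is to evaluate each sum in closed form via the binomial theorem and then let $n\to\infty$; the stated hypotheses on $x$ will be exactly what forces a geometric remainder to vanish. For the first identity, factor the leading power of $x$ out of every summand and recognize a binomial expansion:
\begin{align*}
\sum_{j=1}^n(-1)^{j+1}\binom{n}{j}x^{1-j}
&= x\sum_{j=1}^n(-1)^{j+1}\binom{n}{j}x^{-j}\\
&= x\Bigl(1-\sum_{j=0}^n\binom{n}{j}(-1/x)^j\Bigr)\\
&= x\bigl(1-(1-1/x)^n\bigr).
\end{align*}
Since $x>1$ places $1-1/x$ in $(0,1)$, the correction $(1-1/x)^n$ tends to zero and the limit is $x$.

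The second identity is handled by the identical rearrangement. Factoring $x$ out and applying the binomial expansion of $(1-x)^n$ gives
$$\sum_{j=1}^n(-1)^{j+1}\binom{n}{j}x^{1+j}=x\bigl(1-(1-x)^n\bigr),$$
and the hypothesis $0<x<1$ places $1-x$ in $(0,1)$, so once again the correction tends to zero and the limit is $x$.

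No substantive obstacle is expected: each sum admits a one-line closed form, and the stated ranges are exactly what make the geometric ratio lie in the open unit interval. The only point worth flagging is the sharpness of the hypotheses, since outside the stated ranges the closed forms either diverge or oscillate. As an alternative I could deduce the result from Theorem~\ref{analytic} applied to $f(t)=x^t$ (treating $x$ as a parameter and $t$ as the variable), observing that $\Delta_1^n f(1)=x(x-1)^n$ and $\nabla_1^n f(1)=x(1-1/x)^n$ tend to zero on the respective domains; however, the direct binomial computation above is shorter and self-contained, so that is the version I would present.
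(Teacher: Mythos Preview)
Your direct binomial computation is correct and complete: each finite sum collapses to $x\bigl(1-(1-1/x)^n\bigr)$ and $x\bigl(1-(1-x)^n\bigr)$ respectively, and the stated ranges on $x$ are precisely what place the geometric base in $(0,1)$.

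The paper takes the alternative route you mention at the end: it sets $f(t)=k^t$ (with $k$ playing the role of your $x$) and invokes Theorem~\ref{analytic}, so that the two identities become instances of the general $\Delta^n$/$\nabla^n$ framework developed in this section. Your primary argument is shorter, fully self-contained, and in fact supplies what the paper's proof leaves implicit---namely the explicit verification that $\nabla_1^n f(1)=x(1-1/x)^n\to 0$ for $x>1$ and $\Delta_1^n f(1)=x(x-1)^n\to 0$ for $0<x<1$, which is the hypothesis Theorem~\ref{analytic} requires. The paper's approach, on the other hand, situates the corollary as one more example in a family of difference-operator identities, which is its organizing purpose here. Either version is fine; yours is the cleaner standalone proof.
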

\begin{proof}
In theorem \ref{analytic} let $f(x)=k^x$, where $x>0$, then 
\begin{align}
f(x)&=\exp(x\ln(k))\\\notag
&=\sum_{j=0}^{\infty}\frac{1}{j!} (x\ln(k))^j\\\notag
\end{align}
The theorem says,
$$k=f(1)=\lim_{n\to\infty}\sum_{j=1}^{n}(-1)^{j+1}\binom{n}{j}f(x-j)=\lim_{n\to\infty}\sum_{j=1}^{n}(-1)^{j+1}\binom{n}{j}k^{1-j}\notag$$
if $k>1$, etc.
\end{proof}
\begin{lemma}
\begin{align}\notag
0&=\lim_{n\to\infty}\sum_{j=1}^n(-1)^{j+1}\binom{n}{j}2^{1+j},n\rm{\  even}\\\notag
4&=\lim_{n\to\infty}\sum_{j=1}^n(-1)^{j+1}\binom{n}{j}2^{1+j},n\rm{\  odd}\\\notag
\end{align}
\end{lemma}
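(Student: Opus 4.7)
The plan is to evaluate the inner sum exactly for each fixed $n$ using the binomial theorem and then take the limits along the even and odd subsequences separately. In particular, the value of the sum is already a constant (depending only on the parity of $n$), so the ``limit'' is really just that constant.

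First I would factor out the $2$ to rewrite
$$\sum_{j=1}^n (-1)^{j+1}\binom{n}{j}2^{1+j}=2\sum_{j=1}^n(-1)^{j+1}\binom{n}{j}2^j=-2\sum_{j=1}^n\binom{n}{j}(-2)^j.$$
Next I would append the $j=0$ term (which is $1$) to get a complete binomial expansion:
$$\sum_{j=0}^n\binom{n}{j}(-2)^j=(1-2)^n=(-1)^n,$$
by the binomial theorem. Hence $\sum_{j=1}^n\binom{n}{j}(-2)^j=(-1)^n-1$, and therefore
$$\sum_{j=1}^n(-1)^{j+1}\binom{n}{j}2^{1+j}=-2\bigl((-1)^n-1\bigr)=2\bigl(1-(-1)^n\bigr).$$

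Finally I would split on parity: if $n$ is even, $1-(-1)^n=0$, so the sum is identically $0$ along the even subsequence, and the limit is $0$; if $n$ is odd, $1-(-1)^n=2$, so the sum is identically $4$ along the odd subsequence, and the limit is $4$. This matches both claims.

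There is no serious obstacle here; the content of the lemma is essentially the observation that the previous corollary's formula $x=\lim_{n\to\infty}\sum_{j=1}^n(-1)^{j+1}\binom{n}{j}x^{1+j}$ (valid for $0<x<1$) fails to converge at $x=2$, oscillating between the two accumulation points $0$ and $4$. Stylistically, the only thing to be careful about is the sign bookkeeping in passing between $(-1)^{j+1}2^j$ and $(-2)^j$, and making sure the $j=0$ term is added and subtracted correctly so that the binomial theorem can be applied cleanly.
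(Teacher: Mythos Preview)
Your proof is correct: the binomial-theorem evaluation gives the exact closed form $2(1-(-1)^n)$ for each fixed $n$, from which both parity cases follow immediately. The paper actually leaves this lemma with an empty proof environment, so there is nothing to compare against; your argument cleanly fills that gap.
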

\begin{proof}
\end{proof}

\begin{proposition}
There is a class of functions $f(j)$ such that
\begin{align}
0&=\lim_{n\to\infty}\sum_{j=0}^{n}(-1)^{j+1}\binom{n}{j}f(x+j)\\\notag
\end{align}
which implies
\begin{align}
f(x)&=\lim_{n\to\infty}\sum_{j=1}^{n}(-1)^{j+1}\binom{n}{j}f(x+j)\\\notag
\end{align}
Those functions include
\begin{align}
&x^m\\\notag
&(\zeta(x))^m\\\notag
&(\eta(x))^m\\\notag
&(\sin(x))^m\\\notag
&(\ln(x))^m\\\notag
&(\exp(-x))^m\\\notag
&(\eta(k))^{x m}\\\notag
&(k-x)^m\\\notag
\end{align}
For $x,m\in R, x,m>0$.\\
There is also class of functions $f(j)$ such that
\begin{align}
0&=\lim_{n\to\infty}\sum_{j=0}^{n}(-1)^{j+1}\binom{n}{j}f(x-j)\\\notag
\end{align}
which implies
\begin{align}
f(x)&=\lim_{n\to\infty}\sum_{j=1}^{n}(-1)^{j+1}\binom{n}{j}f(x-j)\\\notag
\end{align}
Those functions include
\begin{align}
&(\exp(x))^m\\\notag
&(k-x)^m\\\notag
&\ln(1-x)\\\notag
\end{align}
For $x,m\in R, x,m>0$.
\end{proposition}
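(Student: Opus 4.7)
The plan is to reduce every claim to Theorem \ref{analytic}: once we establish that $\lim_{n\to\infty}\Delta_h^n f(x)=0$ (or $\lim_{n\to\infty}\triangledown_h^n f(x)=0$ in the backward case), the desired binomial representation of $f(x)$ follows immediately. Most of the base functions have already been handled in the excerpt: Propositions \ref{zetadelta} and \ref{zetalogdelta} dispose of $\zeta(x)$, $\ln(\zeta(x))$, and $\zeta(\ln(x))$, while the subsequent propositions close out $\sin(x)$, $\ln(x)$, and $\exp(x)$ via closed-form expressions for their iterated differences.

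For the exponential-type entries $(\exp(-x))^m = e^{-mx}$, $(\exp(x))^m = e^{mx}$, and $(\eta(k))^{xm} = a^x$ with $a=\eta(k)^m\in(0,1)$, the $n$-th difference factors cleanly as a geometric power times the original function, e.g.
\begin{equation}
\Delta_h^n a^x = (a^h-1)^n a^x, \qquad \triangledown_h^n e^{mx} = (1-e^{-mh})^n e^{mx}. \notag
\end{equation}
In every case the bracketed base has modulus strictly less than one (for $h>0$, $m>0$, or $0<a<1$), so the geometric factor drives the expression to zero. For the power-type entries $x^m$, $(\zeta(x))^m$, $(\eta(x))^m$, $(\sin(x))^m$, $(\ln(x))^m$, and $(k-x)^m$ with integer $m\ge 1$, I would induct on $m$ via the finite-difference Leibniz rule
\begin{equation}
\Delta_h^n(fg)(x) = \sum_{k=0}^{n}\binom{n}{k}\,\Delta_h^{k}f(x+(n-k)h)\,\Delta_h^{n-k}g(x), \notag
\end{equation}
combined with the base-case vanishing $\Delta_h^n f\to 0$ and polynomial-growth control on $f$ so that each summand decays. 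For $(k-x)^m$ with integer $m$ the function is already a polynomial of degree $m$ and $\Delta_h^n$ annihilates it once $n>m$, giving the conclusion trivially.

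The hard part will be the non-integer exponent case (for example $x^m$ or $(\zeta(x))^m$ with $m$ irrational). For these I would appeal to the contour-integral representation of Proposition \ref{contourprop2}: since $\zeta^m$, $\sin^m$, $\ln^m$, and the remaining base pieces stay analytic and of polynomial growth in a suitable right half-plane, the iterated difference equals
\begin{equation}
-\frac{(-1)^n}{2i\pi}\int_{c-i\infty}^{c+i\infty} f(hx+x_0)\frac{n!}{x(x-1)\cdots(x-n)}\,dx \notag
\end{equation}
and one must show the decay of $1/|x(x-1)\cdots(x-n)|$ on the vertical line dominates the polynomial growth of $f$. Establishing this bound uniformly across the listed families is the main technical obstacle, and the argument mirrors the one cited in reference [15]. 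The backward-difference entry $\ln(1-x)$ is treated analogously, using analyticity on $(-\infty,1)$ together with the logarithmic growth of $\ln$ away from the branch point.
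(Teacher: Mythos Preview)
Your approach is essentially the paper's: reduce everything to Theorem \ref{analytic} by checking that the relevant iterated differences tend to zero. In fact the paper's entire proof consists of the three words ``Theorem \ref{analytic}.'' --- it simply cites that theorem and leaves the verification of $\lim_{n\to\infty}\Delta_h^n f(x)=0$ for each listed $f$ implicit, presumably relying on the preceding propositions for $\zeta$, $\sin$, $\ln$, and $\exp$ as base cases. Your write-up is therefore considerably more detailed than the paper's own argument: your explicit geometric factorizations for the exponential-type entries and the Leibniz-rule induction for integer powers go well beyond what the paper supplies.

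One caution worth flagging: your plan to handle non-integer exponents via Proposition \ref{contourprop2} requires analyticity and polynomial growth in a right half-plane, but $(\sin x)^m$ with $m\notin\mathbb{Z}$ has branch points at every zero of $\sin$, so the contour representation does not apply directly there. The paper does not address this either --- its proof is too terse to confront the issue --- so this is a gap shared by both treatments rather than a defect peculiar to yours.
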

\begin{proof}
 Theorem \ref{analytic}.
\end{proof}
Note that any dynamic sum $S_n$ with the property that $\lim_{n\to\infty}S_n=0$ and having $f(x)$ as a summand will produce a formula for $f(x)$.

\begin{proposition}The same class of functions listed in the previous proposition also satisfy
\begin{align}
f(x)&=\lim_{n\to\infty}\sum_{j=1}^{n}(-1)^{j+1}\binom{n}{j} f(x+h\sum_{k=1}^{j}r^k),\ 0\le r\le 1\\\notag
f(x)&=\lim_{n\to\infty}\sum_{j=1}^{n}(-1)^{j+1}\binom{n}{j} f(x+h\sum_{k=1}^{j}\frac{1}{1-\exp(-r x)}),\ 0\le r\\\notag
\end{align}
\end{proposition}
\begin{proof}
Conjecture, proof pending.
\end{proof}
\begin{corollary}
\begin{align}
\zeta(x)&=\lim_{n\to\infty}\sum_{j=1}^{n}(-1)^{j+1}\binom{n}{j} \zeta(x+h\sum_{k=1}^{j}r^k),\ 0\le r\le 1\\
\notag
\zeta(x)&=\lim_{n\to\infty}\sum_{j=1}^{n}(-1)^{j+1}\binom{n}{j} \zeta(x+h\sum_{k=1}^{j}\frac{1}{1-\exp(-r x)}),\ 0\le r\\\notag
\end{align}
\end{corollary}
\begin{proof}
Conjecture, proof pending.
\end{proof}

\section{More Zeta Series And Trig Integrals}

\begin{corollary}
\begin{align}\notag
\zeta(3)&=\frac{1}{7} \pi^2\left(1 +2\sum_{j=1}^{\infty} \left(        1+4j\left(     -\coth^{-1}(2j)+j\left(\ln(4)-\ln\left(4-\frac{1}{j^2}\right)\right)      \right)       \right)        \right)\\\notag
&=\frac{\pi^2}{7}\left(  1+2\sum_{j=1}^{\infty} \left(        1+2j\left(   \ln\left(\left(\frac{2j-1}{2j+1}\right)\left(\frac{4j^2}{4j^2-1}\right)^{2j}\right) \right)       \right)        \right)\notag
\end{align}

\end{corollary}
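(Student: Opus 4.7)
The plan is to start from Corollary \ref{lnsincor}, which gives
$$\zeta(3) = \frac{4}{7}\int_0^{\pi/2}(4x-\pi)\ln\sin(x)\,dx,$$
and to expand the logarithm using Euler's product (Lemma \ref{eulersinlemma}), so that $\ln\sin(x) = \ln x + \sum_{j=1}^\infty[\ln(j\pi - x) + \ln(j\pi + x) - 2\ln(j\pi)]$. On $[0,\pi/2]$ the $j$-th summand is bounded by $C/j^2$, so the Weierstrass M-test lets me swap the sum with the integral.

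The core calculation is to evaluate, for each $j \ge 1$,
$$I_j := \int_0^{\pi/2}(4x-\pi)\bigl[\ln(j\pi - x) + \ln(j\pi + x) - 2\ln(j\pi)\bigr]\,dx.$$
Because $\int_0^{\pi/2}(4x-\pi)\,dx = 0$, the constant $-2\ln(j\pi)$ contributes nothing. For the remaining two pieces I integrate by parts with $dv = (4x - \pi)\,dx$ and antiderivative $v = 2x^2 - \pi x$, exploiting the key fact that $v$ vanishes at both endpoints so the boundary terms disappear. The resulting rational integrals $\int_0^{\pi/2}\frac{2x^2 - \pi x}{j\pi \mp x}\,dx$ are handled by the substitutions $u = j\pi \mp x$ followed by polynomial division, and after combining I expect
$$I_j = \frac{\pi^2}{2} + j(2j-1)\pi^2\ln\tfrac{2j}{2j-1} - j(2j+1)\pi^2\ln\tfrac{2j+1}{2j}.$$
A parallel integration by parts gives $\int_0^{\pi/2}(4x - \pi)\ln x\,dx = \pi^2/4$. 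Adding everything and pulling out $\pi^2/7$ produces
$$\zeta(3) = \frac{\pi^2}{7}\left(1 + 2\sum_{j=1}^\infty\left(1 + 2j(2j-1)\ln\tfrac{2j}{2j-1} - 2j(2j+1)\ln\tfrac{2j+1}{2j}\right)\right).$$

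The last step is an algebraic simplification: combining the two logarithms into one gives
$$2j(2j-1)\ln\tfrac{2j}{2j-1} - 2j(2j+1)\ln\tfrac{2j+1}{2j} = 2j\ln\left(\tfrac{2j-1}{2j+1}\left(\tfrac{4j^2}{4j^2-1}\right)^{2j}\right),$$
which is checked by expanding both sides in $\ln(2j-1)$, $\ln(2j)$, $\ln(2j+1)$ and matching coefficients; this yields the second displayed form of the corollary. The first form follows from the identities $-\coth^{-1}(2j) = \tfrac{1}{2}\ln\tfrac{2j-1}{2j+1}$ and $\ln 4 - \ln(4 - 1/j^2) = \ln\tfrac{4j^2}{4j^2-1}$, after which $4j\bigl(-\coth^{-1}(2j) + j(\ln 4 - \ln(4 - 1/j^2))\bigr) = 2j\ln\tfrac{2j-1}{2j+1} + 4j^2\ln\tfrac{4j^2}{4j^2-1}$ matches the inner bracket of the second form.

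The main obstacle is bookkeeping: the individual antiderivatives carry terms like $\pi(4j\pm 1)(j\pi \pm x)$ that grow linearly in $j$ and only cancel once the $(+)$ and $(-)$ pieces are combined. To be sure the result is meaningful, one should also Taylor expand each $\ln\tfrac{2j\pm 1}{2j}$ in powers of $1/(2j)$ and confirm that the net summand of the final series is $O(1/j^2)$, which establishes absolute convergence and vindicates the earlier interchange of sum and integral.
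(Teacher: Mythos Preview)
Your proof is correct and follows essentially the same route as the paper: both start from Corollary \ref{lnsincor} and Lemma \ref{eulersinlemma}, expand $\ln\sin x$ via the Euler product, and integrate term by term against $(4x-\pi)$ over $[0,\pi/2]$. The only differences are presentational: you articulate the integration by parts with $v=2x^2-\pi x$ explicitly (the paper simply writes down the antiderivative and evaluates at the endpoints), and you add the convergence justification via the M-test and the $O(1/j^2)$ check that the paper omits.
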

\begin{proof}
From corollary \ref{lnsincor} and lemma \ref{eulersinlemma},
\begin{align}\notag
\zeta(3)&=\frac{4}{7}\int_{0}^{\pi/2}(4x-\pi)\ln(\sin(x))dx\\\notag
&=\frac{4}{7}\int_{0}^{\pi/2}(4x-\pi)\ln\left(x\prod_{j=1}^{\infty}\left(1-\frac{x^2}{j^2\pi^2}\right)\right)dx\\\notag
&=\frac{4}{7}\int_{0}^{\pi/2}(4x-\pi)\left(\ln(x)+\sum_{j=1}^{\infty}\ln\left(1-\frac{x^2}{j^2\pi^2}\right)\right)dx\\\notag
&=\frac{4}{7}\left(  \int_{0}^{\pi/2}(4x-\pi)\ln(x)dx+\sum_{j=1}^{\infty}\int_{0}^{\pi/2}(4x-\pi)\ln  \left(1-\frac{x^2}{j^2\pi^2}  \right)dx   \right)\\\notag
&=\frac{4}{7}\left(  x(2x-\pi)\ln(x)+x(\pi-x)+\sum_{j=1}^{\infty}\left(2x(\pi-x)-\pi^2j\ln  \left(\frac{j\pi+x}{j\pi-x}  \right)-(2\pi^2j^2+x(\pi-2x))\ln\left(1-\frac{x^2}{j^2\pi^2}\right) \right) \right)|_{x=0}^{x=\pi/2}\\\notag
&=\frac{1}{7}\left(  \pi^2+2\pi^2\sum_{j=1}^{\infty} \left(        1+2j\left(   \ln\left(\frac{2j-1}{2j+1}\right)+2j\left(\ln(4)-\ln\left(4-\frac{1}{j^2}\right)\right)      \right)       \right)        \right)\\\notag
&=\frac{\pi^2}{7}\left(  1+2\sum_{j=1}^{\infty} \left(        1+2j\left(   \ln\left(\left(\frac{2j-1}{2j+1}\right)\left(\frac{4j^2}{4j^2-1}\right)^{2j}\right) \right)       \right)        \right)\notag
\end{align}
\end{proof}
\begin{corollary}
\begin{align}\notag
\zeta(3)&=\frac{4}{7}\pi^2\Bigl(\frac{1}{4}-\sum_{k=1}^{\infty}\sum_{j=1}^{\infty}\frac{1}{2^{2k+1}j^{2k}(k+1)(2k+1)}\Bigr)\\\notag
&=\frac{4}{7}\pi^2\Bigl(\frac{1}{4}-\sum_{k=1}^{\infty}\frac{1}{2^{2k+1}(k+1)(2k+1)}\zeta(2k)\Bigr)\\\notag
&=\frac{1}{7}\pi^2\Bigl(1-\sum_{k=1}^{\infty}(-1)^{k+1}\Bigl(\frac{\pi^{2k}}{(k+1)(2k+1)(2k)!}\Bigr)B_{2k}\Bigr)\\\notag
\end{align}
Note: This formula converges quickly.
\end{corollary}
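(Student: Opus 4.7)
The plan is to start from the second formula in the previous corollary, namely
$$\zeta(3)=\frac{\pi^2}{7}\left(1+2\sum_{j=1}^{\infty}\left(1+2j\ln\left(\left(\frac{2j-1}{2j+1}\right)\left(\frac{4j^2}{4j^2-1}\right)^{2j}\right)\right)\right),$$
and expand the inner logarithm as a power series in $1/(2j)$. Writing $\ln\left(\frac{2j-1}{2j+1}\right)=\ln\left(\frac{1-1/(2j)}{1+1/(2j)}\right)=-2\sum_{m=0}^{\infty}\frac{1}{(2m+1)(2j)^{2m+1}}$ and $\ln\left(\frac{4j^2}{4j^2-1}\right)=-\ln\left(1-\frac{1}{(2j)^2}\right)=\sum_{m=1}^{\infty}\frac{1}{m(2j)^{2m}}$, I would multiply by the appropriate factors of $j$ and combine, obtaining
$$1+2j\ln\left(\left(\frac{2j-1}{2j+1}\right)\left(\frac{4j^2}{4j^2-1}\right)^{2j}\right)=1-2\sum_{m=0}^{\infty}\frac{1}{(2m+1)(2j)^{2m}}+\sum_{m=0}^{\infty}\frac{1}{(m+1)(2j)^{2m}}.$$

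Next I would check that the $m=0$ contribution cancels the leading~$1$: indeed $1-2+1=0$. For $m\ge 1$ the combined coefficient simplifies via $\frac{1}{m+1}-\frac{2}{2m+1}=\frac{-1}{(m+1)(2m+1)}$, so the inner expression collapses to $-\sum_{k=1}^{\infty}\frac{1}{(k+1)(2k+1)2^{2k}j^{2k}}$. Substituting back and interchanging the order of the resulting double sum (justified by absolute convergence since $(k+1)(2k+1)2^{2k}j^{2k}$ grows doubly fast in $j$ and $k$, so Tonelli applies to the nonnegative series $\sum_{j,k} \frac{1}{(k+1)(2k+1)2^{2k}j^{2k}}$), I would obtain
$$\zeta(3)=\frac{\pi^2}{7}\left(1-\sum_{k=1}^{\infty}\sum_{j=1}^{\infty}\frac{1}{(k+1)(2k+1)2^{2k-1}j^{2k}}\right)=\frac{4\pi^2}{7}\left(\frac14-\sum_{k=1}^{\infty}\frac{\zeta(2k)}{2^{2k+1}(k+1)(2k+1)}\right),$$
which is the first two displayed forms.

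For the third form I would apply Euler's classical identity $\zeta(2k)=\frac{(-1)^{k+1}(2\pi)^{2k}B_{2k}}{2(2k)!}$. A short calculation gives $\frac{\zeta(2k)}{2^{2k-1}}=\frac{(-1)^{k+1}\pi^{2k}B_{2k}}{(2k)!}$, so substitution converts the $\zeta(2k)$-series directly into the claimed $B_{2k}$-series. The only genuinely nontrivial step is the Fubini swap, which as noted is immediate once the terms are written in absolute value; everything else is mechanical series algebra, so I expect no real obstacle beyond bookkeeping of the $m=0$ boundary term.
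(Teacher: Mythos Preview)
Your proof is correct and follows essentially the same idea as the paper: Taylor-expand the logarithmic terms, swap the double sum, identify $\zeta(2k)$, and then apply Euler's formula for $\zeta(2k)$ in terms of $B_{2k}$. The only cosmetic difference is that the paper expands the $x$-dependent antiderivative $S(n,x)$ from the previous proof as a power series in $x$ and then sets $x=\pi/2$, whereas you work directly with the already-evaluated formula and expand in powers of $1/(2j)$; since $x/(j\pi)=1/(2j)$ at $x=\pi/2$, the two computations are literally the same.
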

\begin{proof}
From the proof of the above corollary, if we let
$$S(n,x)=\frac{4}{7}\left(  x(2x-\pi)\ln(x)+x(\pi-x)+\sum_{j=1}^{n}\left(2x(\pi-x)-\pi^2j\ln  \left(\frac{j\pi+x}{j\pi-x}  \right)-(2\pi^2j^2+x(\pi-2x))\ln\left(1-\frac{x^2}{j^2\pi^2}\right) \right) \right)\notag$$ Then doing a series expansion on each summand,
\begin{align}\notag
S(n,x)&=\frac{4}{7}\Bigl(  x(2x-\pi)\ln(x)+x(\pi-x)+\sum_{j=1}^{n}\sum_{k=1}^{n}\Bigl(\frac{1}{k(2k+1)j^{2k}\pi^{2k-1}}x^{2k+1}-\frac{2}{k(k+1)j^{2k}\pi^{2k}}x^{2k+2}\Bigr)\Bigr)\\\notag
\end{align}
Letting $x=\frac{\pi}{2}$ gives the result.
\end{proof}
Referring to $S(n,x)$ in the previous proof,  it can be shown that
\begin{align}\notag
S(n,\pi/2)&=\zeta(3)\\\notag
&+\frac{1}{7}\pi^2\Bigl(2n+1+4\Bigl(-4\zeta'\left(-2,n+1\right)+2\zeta'\left(-2,n+\frac{1}{2}\right)+2\zeta'\left(-2,n+\frac{3}{2}\right)\\\notag
&+\zeta'\left(-1,n+\frac{1}{2}\right)-\zeta'\left(-1,n+\frac{3}{2}\right)\Bigr)\Bigr)\\\notag
\end{align}
where $\zeta(a,b)$ is the Hurwitz zeta function. Note this gives us a way to express $\zeta(3)$ in terms of the derivative of the Hurwitz zeta function. E.g.,
let 
\begin{align}
s(n)&=\frac{1}{7}\pi^2\Bigl(2n+1+4\Bigl(-4\zeta'\left(-2,n+1\right)+2\zeta'\left(-2,n+\frac{1}{2}\right)+2\zeta'\left(-2,n+\frac{3}{2}\right)\\\notag
&+\zeta'\left(-1,n+\frac{1}{2}\right)-\zeta'\left(-1,n+\frac{3}{2}\right)\Bigr)\Bigr)\\\notag
\end{align}
 then
$$\zeta(3)=S(n,\pi/2)-s(n)$$
For $n=1$ we get
$$S(1,\pi/2)=\frac{1}{7}\pi^2\left(3+4\ln\left(\frac{16}{27}\right)\right)$$
which implies
$$\zeta(3)=\frac{4}{7}\pi^2\left(\ln\left(\frac{16}{27}\right)+4\zeta'\left(-2,2\right)-2\zeta'\left(-2,\frac{3}{2}\right)-2\zeta'\left(-2,\frac{5}{2}\right)-\zeta'\left(-1,\frac{3}{2}\right)+\zeta'\left(-1,\frac{5}{2}\right)\right)$$
For $n=2$
\begin{align}\notag
S(2,\pi/2)&=\frac{4}{7}\pi^2\left(\frac{5}{4}+8\ln\left(\frac{16}{15}\right)+2\ln\left(\frac{4}{5}\right)-\ln(3)\right)\\\notag
&=\frac{4}{7}\pi^2\left(\frac{5}{4}+\ln\left(\frac{68719476736}{192216796875}\right)\right)\\\notag
\end{align}
and
$$\zeta(3)=\frac{4}{7}\pi^2\left(\ln\left(\frac{68719476736}{192216796875}\right)+4\zeta'\left(-2,3\right)-2\zeta'\left(-2,\frac{5}{2}\right)-2\zeta'\left(-2,\frac{7}{2}\right)-\zeta'\left(-1,\frac{5}{2}\right)+\zeta'\left(-1,\frac{7}{2}\right)\right)$$
Note thse equations can also be used to derive relationships between values of $\zeta'$.
\begin{corollary}
\begin{align}
\zeta(3)&=\frac{2}{7}\int_{0}^{\pi/2}(\pi-2x)\ln(\csc( x)+\cot(x))dx\notag
\end{align}
\end{corollary}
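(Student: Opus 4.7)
The plan is to prove the identity by integration by parts, reducing the integral to the $\csc$-integral of Theorem \ref{cscthm}. The key observation is the elementary antiderivative
$$\frac{d}{dx}\ln(\csc(x)+\cot(x)) = -\csc(x),$$
which may be verified directly (or by noting $\csc(x)+\cot(x)=\cot(x/2)$ and differentiating). This makes $\ln(\csc(x)+\cot(x))$ a natural $u$ in an integration by parts.

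First I would set $u=\ln(\csc(x)+\cot(x))$ and $dv=(\pi-2x)\,dx$, so that $du=-\csc(x)\,dx$ and $v=x(\pi-x)$. This choice is designed so that the product $uv\,dx$ in the IBP formula produces exactly the integrand $x(\pi-x)\csc(x)$ appearing in Theorem~\ref{cscthm}. Concretely,
$$\int_{0}^{\pi/2}(\pi-2x)\ln(\csc(x)+\cot(x))\,dx = \Bigl[x(\pi-x)\ln(\csc(x)+\cot(x))\Bigr]_{0}^{\pi/2} + \int_{0}^{\pi/2}x(\pi-x)\csc(x)\,dx.$$

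Next I would verify that the boundary term vanishes. At $x=\pi/2$, $\csc(\pi/2)+\cot(\pi/2)=1$, so $\ln(\cdot)=0$. At $x=0$, one has $\csc(x)+\cot(x)=\cot(x/2)\sim 2/x$, so $\ln(\csc(x)+\cot(x))\sim -\ln(x)+\ln(2)$; meanwhile $x(\pi-x)\sim \pi x$, and $x\ln(x)\to 0$, so the product tends to $0$. Hence the boundary contribution is zero.

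Therefore
$$\frac{2}{7}\int_{0}^{\pi/2}(\pi-2x)\ln(\csc(x)+\cot(x))\,dx = \frac{2}{7}\int_{0}^{\pi/2}x(\pi-x)\csc(x)\,dx = \zeta(3),$$
the last equality being Theorem \ref{cscthm}. The only even mildly delicate point is the boundary analysis at $x=0$, but the logarithmic singularity is killed by the linear factor $x$, so no obstacle is expected; the remainder of the argument is a one-line integration by parts.
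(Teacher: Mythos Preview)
Your proof is correct and is essentially the same as the paper's: both arguments connect the two integrals via the single integration by parts with $x(\pi-x)$ and $\ln(\csc x+\cot x)$ as the paired functions, invoking Theorem~\ref{cscthm} for the $\csc$-integral. The only cosmetic difference is direction---the paper starts from the $\csc$-integral and takes $U=x(\pi-x)$, $dV=\csc(x)\,dx$, whereas you start from the logarithmic integral; your explicit verification of the boundary term at $x=0$ is in fact more careful than the paper's.
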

\begin{proof}
From Theorem \ref{cscthm},
$$\zeta(3)=\frac{2}{7}\int_{0}^{\pi/2}(x(\pi-x)) \csc( x)dx$$
Using integration by parts, letting $U=x(\pi-x)$, $dV=\csc( x)$
\begin{align}\notag
\zeta(3)&=\frac{2}{7}\left(-x(\pi-x)\ln(\csc( x)+\cot(x))|_{x=0}^{x=\pi/2}+\int_{0}^{\pi/2}(\pi-2x)\ln(\csc( x)+\cot(x))dx\right)\\\notag
&=\frac{2}{7}\int_{0}^{\pi/2}(\pi-2x)\ln(\csc( x)+\cot(x))dx\notag
\end{align}
\end{proof}
\begin{lemma}
\begin{align}\notag
\frac{2}{7}\int_{0}^{\pi/2}\pi\ln(\csc( x)+\cot(x)) dx=\frac{4\pi}{7}G\\\notag
\end{align}
where $G$ is Catalan's constant.
\end{lemma}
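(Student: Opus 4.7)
The plan is to reduce the integrand to a form that is recognizably tied to the standard integral representation of Catalan's constant, at which point the identity falls out cleanly.

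First I would simplify $\csc(x)+\cot(x)$ with the half-angle identity
\[
\csc(x)+\cot(x)=\frac{1+\cos(x)}{\sin(x)}=\cot\!\left(\tfrac{x}{2}\right),
\]
so that the integral becomes
\[
\frac{2\pi}{7}\int_{0}^{\pi/2}\ln\!\left(\cot\!\left(\tfrac{x}{2}\right)\right)dx.
\]
Next I would apply the substitution $u=x/2$, $du=dx/2$, which transforms the bounds $[0,\pi/2]$ into $[0,\pi/4]$ and yields
\[
\frac{2\pi}{7}\cdot 2\int_{0}^{\pi/4}\ln(\cot(u))\,du=\frac{4\pi}{7}\int_{0}^{\pi/4}\ln(\cot(u))\,du.
\]

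At that point I would invoke the classical integral representation of Catalan's constant,
\[
\int_{0}^{\pi/4}\ln(\cot(u))\,du=G,
\]
to finish. If a self-contained derivation of this last identity is wanted, I would write $\ln(\cot(u))=-\ln(\tan(u))$, expand $\tan(u)$ via its standard log--series representation, or more directly use the Fourier expansion $-\ln(\tan(u))=2\sum_{k=0}^{\infty}\frac{\cos((4k+2)u)}{2k+1}$ valid on $(0,\pi/2)$. Integrating termwise from $0$ to $\pi/4$ gives
\[
\int_{0}^{\pi/4}\ln(\cot(u))\,du=2\sum_{k=0}^{\infty}\frac{1}{2k+1}\cdot\frac{\sin((4k+2)\pi/4)}{4k+2}=\sum_{k=0}^{\infty}\frac{(-1)^{k}}{(2k+1)^{2}}=G,
\]
using $\sin((4k+2)\pi/4)=\sin(k\pi+\pi/2)=(-1)^{k}$.

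The only real obstacle is justifying the termwise integration of the Fourier series, which is standard since the series converges uniformly on any closed subinterval of $(0,\pi/2)$ and the boundary behavior at $u=0$ is controlled by the integrability of $\ln(u)$. Substituting back gives $\frac{2\pi}{7}\int_{0}^{\pi/2}\ln(\csc(x)+\cot(x))\,dx=\frac{4\pi}{7}G$, as claimed.
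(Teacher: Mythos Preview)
Your proof is correct. The paper states this lemma without proof, so there is nothing to compare against; your argument via the half-angle identity $\csc(x)+\cot(x)=\cot(x/2)$, the substitution $u=x/2$, and the classical representation $\int_{0}^{\pi/4}\ln(\cot u)\,du=G$ (which you also derive cleanly from the Fourier expansion) fills that gap completely.
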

\begin{corollary}
\begin{align}\notag
\zeta(3)&=\frac{4}{7}\left(\pi G-\int_{0}^{\pi/2}x\ln(\csc(x)+\cot(x))dx\right)\notag
\end{align}
\end{corollary}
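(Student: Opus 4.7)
The plan is straightforward: this is a direct two-line consequence of the previous corollary combined with the preceding lemma. First I would start from the previous corollary, which expresses $\zeta(3)$ as
\[
\zeta(3)=\frac{2}{7}\int_{0}^{\pi/2}(\pi-2x)\ln(\csc(x)+\cot(x))\,dx.
\]
Then I would split the integrand by linearity of the integral into the $\pi$-part and the $-2x$-part:
\[
\zeta(3)=\frac{2}{7}\int_{0}^{\pi/2}\pi\ln(\csc(x)+\cot(x))\,dx-\frac{2}{7}\int_{0}^{\pi/2}2x\ln(\csc(x)+\cot(x))\,dx.
\]

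Next, I would apply the lemma immediately preceding the statement, which identifies the first of these two integrals as $\tfrac{4\pi}{7}G$, where $G$ is Catalan's constant. Substituting gives
\[
\zeta(3)=\frac{4\pi G}{7}-\frac{4}{7}\int_{0}^{\pi/2}x\ln(\csc(x)+\cot(x))\,dx,
\]
and factoring out $\tfrac{4}{7}$ yields the claimed formula.

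There is essentially no obstacle here: all the analytic content lives in the previous corollary (which arose from integration by parts applied to Theorem \ref{cscthm}) and in the Catalan-constant lemma (which the paper states without proof). So the proof is purely a regrouping of already-established identities, and I would present it in at most three displayed lines with no need for additional estimates, convergence arguments, or manipulations of series.
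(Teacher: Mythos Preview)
Your proposal is correct and matches the paper's intended derivation: the paper states this corollary immediately after the lemma evaluating $\frac{2}{7}\int_{0}^{\pi/2}\pi\ln(\csc x+\cot x)\,dx=\frac{4\pi}{7}G$, which itself follows the corollary $\zeta(3)=\frac{2}{7}\int_{0}^{\pi/2}(\pi-2x)\ln(\csc x+\cot x)\,dx$, and provides no separate proof---so splitting by linearity and substituting the lemma is exactly what is intended.
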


\begin{corollary}
\begin{align}\notag
\zeta(3)&=\frac{16}{21}\left(\pi G-\frac{1}{8}\pi^2\ln(2)-\int_{0}^{\pi/2}x \ln(1+\cos(x)))dx\right)\notag
\end{align}
\end{corollary}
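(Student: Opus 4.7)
The plan is to combine the previous corollary with Proposition \ref{extprop} via the elementary trig identity
$$\csc(x) + \cot(x) = \frac{1 + \cos(x)}{\sin(x)},$$
which converts the integrand of interest into a difference of two logarithms, one of which is already controlled by the results above.

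First I would apply the identity to get
$$\ln(\csc(x)+\cot(x)) = \ln(1+\cos(x)) - \ln(\sin(x)),$$
and then split the integral in the previous corollary as
$$\int_{0}^{\pi/2} x \ln(\csc(x)+\cot(x))\,dx = \int_{0}^{\pi/2} x \ln(1+\cos(x))\,dx \;-\; \int_{0}^{\pi/2} x \ln(\sin(x))\,dx.$$
Second, I would invoke Proposition \ref{extprop}, which rearranges to
$$\int_{0}^{\pi/2} x \ln(\sin(x))\,dx = \frac{7}{16}\zeta(3) - \frac{1}{8}\pi^{2}\ln(2),$$
and substitute this expression into the split above.

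Third, I would plug the resulting expression for the $\csc + \cot$ integral back into the previous corollary, yielding an equation of the form
$$\zeta(3) = \frac{4}{7}\pi G - \frac{4}{7}\int_{0}^{\pi/2} x \ln(1+\cos(x))\,dx + \frac{1}{4}\zeta(3) - \frac{1}{14}\pi^{2}\ln(2),$$
and then collect the $\zeta(3)$ terms on the left. Solving gives $\tfrac{3}{4}\zeta(3) = \cdots$, and multiplying through by $\tfrac{4}{3}$ produces exactly the claimed factor $\tfrac{16}{21}$ in front, together with the asserted $-\tfrac{1}{8}\pi^{2}\ln(2)$ correction inside the parentheses (since $\tfrac{4}{3}\cdot\tfrac{1}{14} = \tfrac{2}{21} = \tfrac{16}{21}\cdot\tfrac{1}{8}$).

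There is no real obstacle here: every ingredient has already been proved, and the derivation is essentially algebraic bookkeeping once the identity $\csc + \cot = (1+\cos)/\sin$ is noticed. The only place one might slip is in the sign and coefficient arithmetic when isolating $\zeta(3)$ on the left-hand side, so I would double-check that the $\tfrac{4}{7}\cdot\tfrac{7}{16} = \tfrac{1}{4}$ contribution is moved over correctly and that the $\tfrac{4}{3}$ scaling distributes through all three terms inside the parentheses.
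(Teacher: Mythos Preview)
Your proposal is correct and is essentially the same argument as the paper's: both combine the previous corollary with Proposition~\ref{extprop} via the identity $\csc(x)+\cot(x)=(1+\cos(x))/\sin(x)$, then isolate $\zeta(3)$ to pick up the factor $\tfrac{16}{21}$. The only cosmetic difference is that the paper first writes $\zeta(3)=\tfrac{4}{3}\bigl(\zeta(3)-\tfrac14\zeta(3)\bigr)$ and substitutes the two expressions before combining the logarithms, whereas you split the integral first and then collect the $\zeta(3)$ terms.
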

\begin{proof}
From the above proposition and corollary,
\begin{align}\notag
\zeta(3)&=\frac{4}{3}\left(\zeta(3)-\frac{1}{4}\zeta(3)\right)\\\notag
&=\frac{4}{3}\left(  \frac{4}{7}\left(\pi G-\int_{0}^{\pi/2}x\ln(\csc(x)+\cot(x))dx\right)-\frac{4}{7}\left(\frac{1}{8}\pi^2\ln(2)+\int_{0}^{\pi/2}x \ln(\sin(x))dx\right)  \right)\\\notag
&=\frac{16}{21}\left(\pi G-\frac{1}{8}\pi^2\ln(2)-\int_{0}^{\pi/2}x\ln(1+\cos(x)))dx\right)\notag
\end{align}
\end{proof}
\begin{lemma}
\begin{align}\notag
\int_{0}^{\pi/2}\ln(1+\cos(x)) dx=2G-\frac{1}{2}\pi\ln(2)\\\notag
\end{align}
\begin{corollary}
\begin{align}\notag
\zeta(3)&=\frac{8}{21}\left(\frac{1}{4}\pi^2\ln(2)-\int_{0}^{\pi/2}(2x-\pi) \ln(1+\cos(x)))dx\right)\notag
\end{align}
\end{corollary}
\begin{lemma}
\begin{align}\notag
\zeta(3)&=\frac{4}{63}\left(   \frac{1}{2}\pi^2(\pi+3\ln(2))+\int_{0}^{\pi/2}\left(2 x^3-3\pi x^2\right)\left(\frac{1}{1+\cos(x)}\right)dx   \right)\notag
\end{align}
\end{lemma}
\end{lemma}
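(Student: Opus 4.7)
The plan is to reduce the claimed identity to the previous corollary,
$$\zeta(3) = \frac{8}{21}\left(\frac{1}{4}\pi^2\ln(2) - \int_{0}^{\pi/2}(2x-\pi)\ln(1+\cos(x))\,dx\right),$$
by performing integration by parts on the target integral twice. The key pair of identities is $\frac{d}{dx}\tan(x/2) = \frac{1}{1+\cos x}$ and $\tan(x/2) = \frac{\sin x}{1+\cos x} = -\frac{d}{dx}\ln(1+\cos x)$, which lets us shuttle between the three expressions $\frac{1}{1+\cos x}$, $\tan(x/2)$, and $\ln(1+\cos x)$ at the cost of one derivative or antiderivative each.

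First I take $U = 2x^3 - 3\pi x^2$ and $dV = \frac{1}{1+\cos x}dx$, so $V = \tan(x/2)$ and $dU = 6x(x-\pi)dx$. The boundary term at $x = \pi/2$ evaluates to $(\pi^3/4 - 3\pi^3/4)\tan(\pi/4) = -\pi^3/2$, and the boundary at $0$ vanishes, so
$$\int_{0}^{\pi/2}(2x^3-3\pi x^2)\frac{1}{1+\cos x}\,dx = -\frac{\pi^3}{2} - 6\int_{0}^{\pi/2}x(x-\pi)\tan(x/2)\,dx.$$
Next I rewrite $\tan(x/2) = \frac{\sin x}{1+\cos x}$ and integrate by parts again with $U = x^2 - \pi x$ and $dV = \frac{\sin x}{1+\cos x}dx$, giving $V = -\ln(1+\cos x)$ and $dU = (2x-\pi)dx$. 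The boundary contributions vanish at both endpoints (the factor $x^2-\pi x$ kills $x=0$ and $\ln(1+\cos(\pi/2)) = 0$ kills the other), so
$$\int_{0}^{\pi/2}x(x-\pi)\tan(x/2)\,dx = \int_{0}^{\pi/2}(2x-\pi)\ln(1+\cos x)\,dx.$$

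Substituting back and then invoking the previous corollary to replace this last integral by $\frac{1}{4}\pi^2\ln(2) - \frac{21}{8}\zeta(3)$ yields
$$\int_{0}^{\pi/2}(2x^3-3\pi x^2)\frac{1}{1+\cos x}\,dx = -\frac{\pi^3}{2} - \frac{3\pi^2\ln 2}{2} + \frac{63}{4}\zeta(3),$$
which rearranges to the stated formula. There is no real obstacle here: both integration-by-parts steps have benign boundary terms, and the arithmetic of matching the constants $\frac{8}{21} \cdot 6 = \frac{48}{21}$ against $\frac{4}{63} \cdot \frac{63}{4}$ is straightforward; the only point worth checking carefully is the sign and the $-\pi^3/2$ boundary contribution, which produces the $\pi^3$ term inside $\frac{\pi^2}{2}(\pi+3\ln 2)$.
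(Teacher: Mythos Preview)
Your proof is correct. The two integrations by parts are set up properly: the antiderivative $\tan(x/2)$ for $\tfrac{1}{1+\cos x}$ and then $-\ln(1+\cos x)$ for $\tan(x/2)$ are exactly the right choices, and you have checked the boundary terms carefully (the vanishing at $x=\pi/2$ in the second step because $\ln(1+\cos(\pi/2))=0$ is the one people most often forget). The arithmetic in the final substitution is clean.

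The paper itself states this lemma without proof, so there is nothing to compare against; your derivation via two integrations by parts reducing to the preceding corollary is the natural route and is precisely in the spirit of how the paper handles the surrounding identities (compare the proof of the corollary just before, which combines two earlier formulas for $\zeta(3)$ in exactly this style).
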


\begin{lemma}\label{lnlemma}
\begin{align}\notag
-\int_{0}^{\pi/2}4 x \ln(\sin(x)\cos(x)) dx=\pi^2\ln(2)\\\notag
\end{align}
\end{lemma}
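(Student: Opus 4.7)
The plan is to reduce the integral to the known identity $\int_{0}^{\pi/2}\ln(\sin(x))dx=-\frac{\pi}{2}\ln(2)$ stated earlier in the paper. The main tool is the double-angle identity together with a symmetry substitution.

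First I would use $\sin(x)\cos(x)=\tfrac{1}{2}\sin(2x)$ to rewrite the integrand as
\begin{align*}
-4x\ln(\sin(x)\cos(x)) = -4x\ln(\sin(2x)) + 4x\ln(2).
\end{align*}
The $\ln(2)$ piece is immediate: $\int_{0}^{\pi/2} 4x\ln(2)\,dx = 4\ln(2)\cdot\tfrac{\pi^2}{8}=\tfrac{\pi^2\ln(2)}{2}$. So it suffices to show that $-4\int_{0}^{\pi/2} x\ln(\sin(2x))\,dx=\tfrac{\pi^2\ln(2)}{2}$.

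Next I would substitute $u=2x$, which converts the remaining integral to $-\int_{0}^{\pi} u\ln(\sin(u))\,du$. To evaluate this, I would apply the reflection $u\mapsto \pi-u$:
\begin{align*}
\int_{0}^{\pi} u\ln(\sin(u))\,du = \int_{0}^{\pi}(\pi-u)\ln(\sin(u))\,du,
\end{align*}
so that $2\int_{0}^{\pi} u\ln(\sin(u))\,du = \pi\int_{0}^{\pi}\ln(\sin(u))\,du$. By symmetry of $\sin$ about $\pi/2$, this last integral equals $2\int_{0}^{\pi/2}\ln(\sin(u))\,du = -\pi\ln(2)$ by the lemma cited just above this one. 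Hence $\int_{0}^{\pi} u\ln(\sin(u))\,du = -\tfrac{\pi^2\ln(2)}{2}$, and negating gives $\tfrac{\pi^2\ln(2)}{2}$.

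Adding the two contributions yields $\tfrac{\pi^2\ln(2)}{2}+\tfrac{\pi^2\ln(2)}{2}=\pi^2\ln(2)$, as desired. There is no real obstacle here: the only subtlety is verifying that all the integrals converge (the logarithmic singularities of $\ln(\sin u)$ at $0$ and $\pi$ are integrable, even against the factor $u$), which is standard. Everything else is algebraic manipulation and a single well-known integral.
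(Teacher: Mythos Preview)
Your proof is correct. The paper states this lemma without proof, so there is nothing to compare against; your argument supplies exactly the kind of elementary justification one would want here. The double-angle identity plus the reflection $u\mapsto\pi-u$ reduces everything to the earlier lemma $\int_{0}^{\pi/2}\ln(\sin x)\,dx=-\tfrac{\pi}{2}\ln 2$, and your arithmetic checks out at each step.
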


\begin{proposition}
\begin{align}\notag
\zeta(3)&=\frac{8}{7}\int_{0}^{\pi/2}x \ln(\tan(x)) dx\\\notag
\end{align}
\end{proposition}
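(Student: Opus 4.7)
The plan is to combine Proposition \ref{extprop} with Lemma \ref{lnlemma}, exploiting the identity $\ln(\tan(x)) = \ln(\sin(x)) - \ln(\cos(x))$. The key observation is that Proposition \ref{extprop} supplies the integral of $x\ln(\sin(x))$ in closed form (in terms of $\zeta(3)$ and $\pi^2\ln(2)$), while Lemma \ref{lnlemma} gives the sum of the $x\ln(\sin(x))$ and $x\ln(\cos(x))$ integrals. Together these determine the individual $x\ln(\cos(x))$ integral, and the difference of the two is exactly what we want.

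First I would rewrite Proposition \ref{extprop} as
\begin{equation*}
4\int_0^{\pi/2} x\ln(\sin(x))\,dx = \frac{7}{4}\zeta(3) - \frac{1}{2}\pi^2\ln(2).
\end{equation*}
Next, from Lemma \ref{lnlemma} I have
\begin{equation*}
4\int_0^{\pi/2} x\ln(\sin(x))\,dx + 4\int_0^{\pi/2} x\ln(\cos(x))\,dx = -\pi^2\ln(2),
\end{equation*}
so subtracting the previous display gives
\begin{equation*}
4\int_0^{\pi/2} x\ln(\cos(x))\,dx = -\frac{1}{2}\pi^2\ln(2) - \frac{7}{4}\zeta(3).
\end{equation*}

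Finally, using $\ln(\tan(x)) = \ln(\sin(x)) - \ln(\cos(x))$ and subtracting the two displays eliminates both occurrences of $\pi^2\ln(2)$ and produces
\begin{equation*}
4\int_0^{\pi/2} x\ln(\tan(x))\,dx = \left(\tfrac{7}{4}\zeta(3) - \tfrac{1}{2}\pi^2\ln(2)\right) - \left(-\tfrac{1}{2}\pi^2\ln(2) - \tfrac{7}{4}\zeta(3)\right) = \tfrac{7}{2}\zeta(3),
\end{equation*}
which rearranges to the claimed identity.

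There is essentially no obstacle here provided Proposition \ref{extprop} and Lemma \ref{lnlemma} are accepted; the only subtlety worth noting is that $\ln(\tan(x))$ has an integrable singularity at $x=0$ (where $x\ln(\tan(x)) \sim x\ln(x) \to 0$) and at $x=\pi/2$ (where the $x$-weight keeps things bounded logarithmically), so splitting the integral as $\int x\ln(\sin(x))\,dx - \int x\ln(\cos(x))\,dx$ is justified because each piece converges absolutely. No delicate cancellation between divergent integrals is involved.
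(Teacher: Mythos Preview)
Your proof is correct and follows essentially the same approach as the paper: both combine Proposition~\ref{extprop} with Lemma~\ref{lnlemma} via the identity $\ln(\tan x)=\ln(\sin x)-\ln(\cos x)$. The only difference is organizational---the paper substitutes the lemma directly into the proposition and simplifies the integrand, whereas you first isolate $\int_0^{\pi/2} x\ln(\cos x)\,dx$ and then subtract; the content is identical.
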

\begin{proof}
From Prop. \ref{extprop} and Lemma \ref{lnlemma}
\begin{align}\notag
\zeta(3)&=\frac{8}{7}\left(\frac{1}{4}\pi^2\ln(2)+2\int_{0}^{\pi/2}(x \ln(\sin(x)) dx\right)\\\notag
&=\frac{8}{7}\left(-\int_{0}^{\pi/2} x \ln(\sin(x)\cos(x)) dx+2\int_{0}^{\pi/2}(x \ln(\sin(x)) dx\right)\\\notag
&=\frac{8}{7}\int_{0}^{\pi/2}x \ln(\tan(x)) dx\\\notag
\end{align}
\end{proof}
\begin{corollary}
Let 
$$f(x)=\frac{8}{7}\int_{0}^{x}x \ln(\tan(t)) dt\notag$$
then
$$f(x)=\frac{8}{7}\Bigl(\frac{1}{4}\left(2\ln(x)-1\right)x^2+\sum_{j=1}^{\infty}\frac{2^{2j-1}}{\pi^{2j}j(j+1)}\eta(2j)x^{2j+2}\Bigr)$$
\end{corollary}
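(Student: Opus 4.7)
The plan is to obtain a convergent power series for $\ln(\tan(t))$ on $(0,\pi/2)$, multiply by $t$, and then integrate term by term from $0$ to $x$ (interpreting the stated $\int_0^x x\ln(\tan t)\,dt$ as $\int_0^x t\ln(\tan t)\,dt$, in line with the integrand $x\ln(\tan x)$ appearing in the preceding proposition). The key observation is that
\[
\frac{d}{dx}\ln(\tan x)=\frac{\sec^2 x}{\tan x}=\frac{1}{\sin x\cos x}=2\csc(2x),
\]
so the series for $\ln(\tan x)$ can be built directly from the Arkadiusz Wesolowski $\csc$ expansion recorded earlier in Lemma~(the $\csc$-series lemma).

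Carrying this out, I would first rewrite that lemma in the $\eta$-normalisation used elsewhere in this section: since $(2-2^{-2(j-1)})\zeta(2j)=2\eta(2j)$, it reads
\[
\csc(x)=\frac{1}{x}+2\sum_{j=1}^{\infty}\frac{\eta(2j)}{\pi^{2j}}\,x^{2j-1}.
\]
Substituting $2x$ for $x$ and multiplying by $2$ gives $2\csc(2x)=\dfrac{1}{x}+\sum_{j=1}^{\infty}\dfrac{2^{2j+1}}{\pi^{2j}}\eta(2j)\,x^{2j-1}$. Antidifferentiating term by term on $(0,\pi/2)$ yields
\[
\ln(\tan x)=\ln(x)+\sum_{j=1}^{\infty}\frac{2^{2j}}{j\,\pi^{2j}}\eta(2j)\,x^{2j}+C,
\]
and the constant is pinned down by the asymptotic $\ln(\tan x)-\ln(x)\to 0$ as $x\to 0^+$, forcing $C=0$.

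Now I would multiply the series by $t$ and integrate from $0$ to $x$. The logarithmic term gives the boundary contribution
\[
\int_0^x t\ln(t)\,dt=\frac{x^2}{2}\ln(x)-\frac{x^2}{4}=\frac{x^2}{4}\bigl(2\ln(x)-1\bigr),
\]
where the lower endpoint is unproblematic because $t^2\ln t\to 0$. Each series term contributes
\[
\frac{2^{2j}}{j\,\pi^{2j}}\eta(2j)\int_0^x t^{2j+1}\,dt=\frac{2^{2j}}{j(2j+2)\,\pi^{2j}}\eta(2j)\,x^{2j+2}=\frac{2^{2j-1}}{j(j+1)\,\pi^{2j}}\eta(2j)\,x^{2j+2}.
\]
Multiplying the aggregate by $8/7$ recovers the claimed formula for $f(x)$.

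The one subtlety, and the only real obstacle, is justifying the term-by-term integration. This follows because the series for $2\csc(2x)-1/x$ converges absolutely and uniformly on compact subsets of $(-\pi/2,\pi/2)$ (its radius of convergence is $\pi/2$, dictated by the nearest poles of $\csc(2x)$ at $\pm\pi/2$), so the antiderivative series converges uniformly on any $[0,x]\subset[0,\pi/2)$ and the same holds after multiplication by $t$. Dominated convergence then legitimises both integrations.
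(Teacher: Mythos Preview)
Your proof is correct and in fact more complete than the paper's. The paper's argument is a one-line hint: ``write out the series expansion and compare the coefficients with those of the even power terms (starting with the 4th power)'' in the earlier $\csc$-integral formula~(\ref{formula1}), observing they differ by the factor $2^{2j-1}/j$. It does not say how the expansion is to be obtained, and the comparison step is essentially a pattern-recognition verification that presupposes the answer.

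Your route is genuinely different in emphasis: the identity $(\ln\tan x)'=2\csc(2x)$ lets you pull the Wesolowski $\csc$ series (the same lemma underlying~(\ref{formula1})) through a rescaling $x\mapsto 2x$, antidifferentiate, and then integrate $t\ln(\tan t)$ termwise. This produces the coefficients directly, with no appeal to~(\ref{formula1}), and you also address the convergence/termwise-integration justification that the paper omits. Both arguments ultimately rest on the same $\csc$ expansion, but yours makes the derivation self-contained rather than comparative.
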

\begin{proof}
Write out the series expansion and compare the coefficients with those of the even power terms (starting with the 4th power) in (\ref{formula1}) or (\ref{formula2}). One finds they differ by the factor $\frac{2^{2j-1}}{j}$.
\end{proof}
\begin{corollary}\label{latecor}
$$\zeta(3)=\frac{1}{7}\pi^2\Bigl(\ln(\frac{\pi}{2})-\frac{1}{2}+\sum_{j=1}^{\infty}\frac{1}{j(j+1)}\eta(2j)\Bigr)$$
\end{corollary}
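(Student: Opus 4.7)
The plan is to evaluate the formula from the previous corollary at the endpoint $x = \pi/2$, using the identification $\zeta(3) = f(\pi/2)$ from the proposition just above, where
$$f(x) = \frac{8}{7}\int_{0}^{x} t \ln(\tan(t))\,dt.$$
The previous corollary furnishes the explicit expansion
$$f(x) = \frac{8}{7}\Bigl(\tfrac{1}{4}(2\ln(x)-1)x^{2} + \sum_{j=1}^{\infty}\frac{2^{2j-1}}{\pi^{2j}j(j+1)}\eta(2j)\,x^{2j+2}\Bigr),$$
so the entire proof reduces to a direct substitution and an algebraic simplification.

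First I would substitute $x = \pi/2$ into the polynomial factor, noting $(\pi/2)^{2} = \pi^{2}/4$, which yields the contribution $\tfrac{\pi^{2}}{16}(2\ln(\pi/2) - 1)$. Next I would handle the generic term of the series: using $(\pi/2)^{2j+2} = \pi^{2j+2}/2^{2j+2}$, the coefficient $2^{2j-1}/\pi^{2j}$ collapses against $\pi^{2j+2}/2^{2j+2}$ to produce the clean factor $\pi^{2}/(8 j(j+1))$. Pulling the common $\pi^{2}/8$ outside, the overall prefactor $\tfrac{8}{7}\cdot\tfrac{\pi^{2}}{8} = \pi^{2}/7$ emerges, and the bracketed expression becomes $\ln(\pi/2) - \tfrac{1}{2} + \sum_{j\ge 1} \eta(2j)/(j(j+1))$, which is exactly the claimed identity.

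The one substantive point that should be checked is the legitimacy of evaluating the series representation at $x = \pi/2$, where the integrand $t\ln(\tan t)$ has an integrable logarithmic singularity. Since $\eta(2j) \to 1$ and $\sum 1/(j(j+1))$ converges absolutely, the series on the right converges, and continuity of $f$ at $\pi/2$ (as an improper integral of a function that is integrable on $[0,\pi/2]$) ensures $f(\pi/2) = \lim_{x \uparrow \pi/2} f(x)$ equals the value of the series at $x = \pi/2$ by Abel's theorem applied to the power series portion together with the explicit $\tfrac{1}{4}(2\ln(x)-1)x^{2}$ term. This boundary-convergence issue is the only potential obstacle, and it is mild: everything else is bookkeeping with powers of $2$ and $\pi$.
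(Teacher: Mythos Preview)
Your proof is correct and follows exactly the same route as the paper: evaluate the series representation of $f(x)$ from the preceding corollary at $x=\pi/2$, using $\zeta(3)=f(\pi/2)$ from the proposition before that. Your explicit arithmetic and your remark on boundary convergence via Abel's theorem are in fact more detailed than the paper's one-line proof.
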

\begin{proof}
In previous corollary, $\zeta(3)=f\left(\frac{\pi}{2}\right)$.
\end{proof}
\begin{proposition}
\begin{align}\notag
\zeta(3)&=\frac{2}{7}\left(\pi^2\ln(2)-4\int_{0}^{\pi/2}x^2 \cot( x)dx\right)\\\notag
\end{align}
\end{proposition}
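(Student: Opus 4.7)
The plan is to derive the stated formula from Proposition~\ref{extprop} by a single integration by parts that converts the $x\ln(\sin x)$ integral into an $x^2\cot(x)$ integral. Concretely, Proposition~\ref{extprop} gives
$$\zeta(3)=\frac{4}{7}\left(\tfrac{1}{2}\pi^2\ln(2)+4\int_{0}^{\pi/2}x\ln(\sin(x))\,dx\right)=\frac{2}{7}\pi^2\ln(2)+\frac{16}{7}\int_{0}^{\pi/2}x\ln(\sin(x))\,dx,$$
so it suffices to show that $2\int_{0}^{\pi/2}x\ln(\sin(x))\,dx=-\int_{0}^{\pi/2}x^2\cot(x)\,dx$.

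To prove this identity, I would apply integration by parts to $\int_{0}^{\pi/2}x^2\cot(x)\,dx$ with $u=x^2$ and $dv=\cot(x)\,dx$, so that $du=2x\,dx$ and $v=\ln(\sin(x))$. This yields
$$\int_{0}^{\pi/2}x^2\cot(x)\,dx=\Bigl[x^2\ln(\sin(x))\Bigr]_{0}^{\pi/2}-2\int_{0}^{\pi/2}x\ln(\sin(x))\,dx.$$
The boundary term vanishes at both ends: at $x=\pi/2$ we have $\ln(\sin(\pi/2))=\ln(1)=0$, and at $x=0$ we have $x^2\ln(\sin(x))\sim x^2\ln(x)\to 0$. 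Hence $\int_{0}^{\pi/2}x^2\cot(x)\,dx=-2\int_{0}^{\pi/2}x\ln(\sin(x))\,dx$, i.e.\ $\int_{0}^{\pi/2}x\ln(\sin(x))\,dx=-\tfrac{1}{2}\int_{0}^{\pi/2}x^2\cot(x)\,dx$.

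Substituting this back into the expression from Proposition~\ref{extprop} gives
$$\zeta(3)=\frac{2}{7}\pi^2\ln(2)+\frac{16}{7}\left(-\tfrac{1}{2}\int_{0}^{\pi/2}x^2\cot(x)\,dx\right)=\frac{2}{7}\left(\pi^2\ln(2)-4\int_{0}^{\pi/2}x^2\cot(x)\,dx\right),$$
as required.

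There is no real obstacle here; the only subtle point is verifying that the boundary term at $x=0$ vanishes, which follows from the standard limit $\lim_{x\to 0^+}x^2\ln(x)=0$ together with $\ln(\sin(x))-\ln(x)\to 0$ as $x\to 0^+$. Both integrals converge absolutely since $\ln(\sin(x))$ is integrable on $[0,\pi/2]$ and $x^2\cot(x)\sim x$ near $0$ and is bounded by $x^2/\sin(x)\le (\pi/2)^2/\sin(x)$, which is integrable against the weight $x^2$ near $x=\pi/2$ (where $\cot(x)$ is in fact bounded).
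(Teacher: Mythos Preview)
Your proof is correct. The paper itself does not give a proof of this proposition at all; it merely cites reference~[4] (Nash and O'Connor). Your argument is therefore strictly more informative than what the paper provides: you derive the identity internally from Proposition~\ref{extprop} via a single integration by parts, showing that the two integral representations $\int_0^{\pi/2} x\ln(\sin x)\,dx$ and $\int_0^{\pi/2} x^2\cot(x)\,dx$ are related by the factor $-\tfrac{1}{2}$, which makes the equivalence between the two formulas transparent. Of course, Proposition~\ref{extprop} is itself only established in the paper by citation to reference~[3], so your derivation ultimately rests on the same external input, just routed through a different cited formula.
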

\begin{proof}
Reference [4].
\end{proof}
\begin{lemma}
\begin{align}\notag
\int_{0}^{\pi/2}2\pi x \cot\left( x\right) dx=\pi^2\ln(2)\\\notag
\end{align}
\end{lemma}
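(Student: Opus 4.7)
The plan is to use integration by parts to reduce this to the classical log-sine integral, which has already appeared as a lemma in the preceding section.

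First I would set $u = x$ and $dv = \cot(x)\,dx$, so that $du = dx$ and $v = \ln(\sin(x))$. This gives
\begin{align*}
\int_{0}^{\pi/2} x \cot(x)\,dx &= \bigl[x \ln(\sin(x))\bigr]_{0}^{\pi/2} - \int_{0}^{\pi/2} \ln(\sin(x))\,dx.
\end{align*}
The boundary term vanishes at both endpoints: at $x = \pi/2$ because $\ln(\sin(\pi/2)) = \ln(1) = 0$, and at $x = 0$ because $\sin(x) \sim x$ as $x \to 0$, so $x \ln(\sin(x)) \sim x \ln(x) \to 0$.

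Next I would invoke the earlier lemma (stated just after Proposition \ref{extprop} and attributed there without proof, though it is a standard Euler computation) that
\[
\int_{0}^{\pi/2} \ln(\sin(x))\,dx = -\tfrac{1}{2}\pi\ln(2).
\]
Substituting this in yields $\int_{0}^{\pi/2} x \cot(x)\,dx = \tfrac{1}{2}\pi\ln(2)$, and multiplying through by $2\pi$ gives exactly $\pi^{2}\ln(2)$, as claimed.

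There is no real obstacle here; the only mild subtlety is the verification that $\lim_{x \to 0^{+}} x \ln(\sin(x)) = 0$, which follows from $\ln(\sin(x)) = \ln(x) + \ln(\sin(x)/x)$ together with $x\ln(x) \to 0$ and $\sin(x)/x \to 1$. Everything else is a mechanical application of integration by parts together with a result already available in the paper.
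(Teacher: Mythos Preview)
Your proof is correct. The paper actually states this lemma without proof, so there is nothing to compare against; your integration-by-parts reduction to the earlier log-sine lemma $\int_{0}^{\pi/2}\ln(\sin x)\,dx=-\tfrac{1}{2}\pi\ln 2$ is exactly the natural argument and fills the gap cleanly.
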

\begin{corollary}
\begin{align}\notag
\zeta(3)&=\frac{4}{7}\int_{0}^{\pi/2}x(\pi-2x) \cot\left( x\right) dx\\\notag
&=\frac{4}{\pi}\int_{0}^{\pi/2}x^2(\pi-2x) \cot\left( x\right) dx\\\notag
&=\frac{4}{5\pi}\int_{0}^{\pi/2}x(\pi-2x)^2\cot\left( x\right) dx\\\notag
&=\frac{2}{\pi}\int_{0}^{\pi/2}x(2x-\pi)(5x-\pi)\cot\left( x\right) dx\\\notag
&=\frac{4}{213\pi}\int_{0}^{\pi/2}x(\pi-2x)(185x+4\pi)\cot\left( x\right) dx\\\notag
&=\frac{4}{1255\pi}\int_{0}^{\pi/2}x(\pi-2x)(1311x-8\pi)\cot\left( x\right) dx\\\notag
&=\frac{4}{3815325\pi^3}\int_{0}^{\pi/2}x(\pi-2x)(8487800x^3-6023600\pi x^2+4650613\pi^2 x-1984\pi^3)\cot\left( x\right) dx\\\notag
\end{align}
\end{corollary}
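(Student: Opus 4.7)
The plan is to reduce every formula in the corollary to an algebraic identity in the moment integrals $I_n = \int_0^{\pi/2} x^n \cot x\,dx$, whose values in terms of $\pi^n\ln 2$, $\pi^{n-2}\zeta(3)$, and (for $n \geq 5$) $\pi^{n-4}\zeta(5)$ can be obtained by elementary means.

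The first equality is immediate: by the preceding lemma $\pi^2\ln 2 = 2\pi I_1$, so substituting into $\zeta(3) = \tfrac{2}{7}(\pi^2\ln 2 - 4 I_2)$ from the preceding proposition yields $\zeta(3) = \tfrac{4}{7}\int_0^{\pi/2} x(\pi - 2x)\cot x\,dx$. For the remaining formulas I would first tabulate $I_n$ for $n = 1, \ldots, 5$. Since $x^n \ln\sin x$ vanishes at both endpoints for $n \geq 1$, integration by parts gives $I_n = -n \int_0^{\pi/2} x^{n-1} \ln\sin x\,dx$, reducing everything to the moments $J_m = \int_0^{\pi/2} x^m \ln\sin x\,dx$. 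These $J_m$ are then evaluated via the Fourier expansion $\ln\sin x = -\ln 2 - \sum_{k \geq 1} \tfrac{\cos(2kx)}{k}$ by termwise integration, producing sums $\sum (-1)^k/k^p$ which are negatives of $\eta(p)$. Note that $J_1$ is already implicit in Prop.~\ref{extprop}, and a short calculation gives $J_2 = \tfrac{3\pi}{16}\zeta(3) - \tfrac{\pi^3\ln 2}{24}$, hence $I_3 = \tfrac{\pi^3\ln 2}{8} - \tfrac{9\pi}{16}\zeta(3)$, with analogous closed forms for $I_4$ and $I_5$.

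With the $I_n$ in hand, each remaining equality reduces to a routine verification: expand the polynomial $p(x)$ (every one of them vanishes at $0$ and at $\pi/2$, which is why the integration by parts is clean and why the integrals converge), write $\int_0^{\pi/2} p(x)\cot x\,dx = \sum a_k I_k$, and check that the $\pi^m \ln 2$ contributions cancel, that any $\zeta(5)$ contributions (present once $I_5$ enters) also cancel, and that the residual $\zeta(3)$-coefficient equals the reciprocal of the stated prefactor. The main obstacle I expect is not conceptual but bookkeeping: the last formula has polynomial coefficients running into the millions and requires $I_4$ and $I_5$, so I would track the simultaneous cancellation of $\ln 2$ and $\zeta(5)$ symbolically rather than by hand. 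No new analytic input is needed beyond the Fourier series for $\ln\sin x$ and integration by parts.
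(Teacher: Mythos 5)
Your proposal is correct, and for the first equality it coincides exactly with the paper's one-line argument: the lemma gives $\pi^2\ln 2=2\pi\int_0^{\pi/2}x\cot x\,dx$, and substituting this into the preceding proposition $\zeta(3)=\tfrac{2}{7}\bigl(\pi^2\ln 2-4\int_0^{\pi/2}x^2\cot x\,dx\bigr)$ produces the $x(\pi-2x)$ form. For the remaining six equalities the paper says only that they follow from that proposition and lemma, which literally supplies just the two moments $I_1$ and $I_2$; your plan furnishes the missing machinery by computing $I_n=\int_0^{\pi/2}x^n\cot x\,dx$ for $n\le 5$ via $I_n=-nJ_{n-1}$ with $J_m=\int_0^{\pi/2}x^m\ln\sin x\,dx$ and the expansion $\ln\sin x=-\ln 2-\sum_{k\ge1}\cos(2kx)/k$, and then verifying the linear-algebra cancellations. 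I checked that this carries through: $I_1=\tfrac{\pi\ln 2}{2}$, $I_2=\tfrac{\pi^2\ln 2}{4}-\tfrac{7}{8}\zeta(3)$, $I_3=\tfrac{\pi^3\ln 2}{8}-\tfrac{9\pi}{16}\zeta(3)$, $I_4=\tfrac{\pi^4\ln 2}{16}-\tfrac{9\pi^2}{16}\zeta(3)+\tfrac{93}{32}\zeta(5)$, $I_5=\tfrac{\pi^5\ln 2}{32}-\tfrac{15\pi^3}{32}\zeta(3)+\tfrac{225\pi}{64}\zeta(5)$, and with these every right-hand side in the corollary reduces to $\zeta(3)$; in particular, for the last formula the expanded polynomial is $-16975600x^5+20535000\pi x^4-15324826\pi^2x^3+4654581\pi^3x^2-1984\pi^4x$, the $\ln 2$ and $\zeta(5)$ contributions cancel identically, and the surviving coefficient of $\pi^3\zeta(3)$ is $\tfrac{3815325}{4}$, matching the stated prefactor $\tfrac{4}{3815325\pi^3}$. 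Two small corrections to your bookkeeping, neither affecting validity: $\zeta(5)$ already enters at $I_4$ (through the non-alternating $\sum_k 3/(8k^5)$ term contributed by the lower endpoint of $\int_0^{\pi/2}x^3\cos 2kx\,dx$), not only once $I_5$ appears, so the cancellation in the seventh formula is between the $I_4$ and $I_5$ contributions; and convergence at $x=\pi/2$ is automatic because $\cot(\pi/2)=0$ --- only the factor of $x$ at the origin (and the vanishing of $x^m\ln\sin x$ at both endpoints for the integration by parts) is actually needed, not the vanishing of the polynomial at $\pi/2$.
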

\begin{proof}
Follows from the above proposition and lemma.
\end{proof}
We could generate an infinite collection of these integrals of the form $c\int_{0}^{\pi/2}x(2x-\pi)p(x)\cot\left( x\right) dx$ where $c$ is a constant and $p(x)$ is a polynomial in $x$ with integer coefficients. This is equivalent to generating an infinite collection of integrals of that form that integrate to produce the value $0$, perhaps not so surprising.
\begin{corollary}
\begin{align}\notag
\zeta(5)&=\frac{4}{93}\int_{0}^{\pi/2}x^2(2x-\pi)(4x-9\pi) \cot\left( x\right) dx\\\notag
\end{align}
\end{corollary}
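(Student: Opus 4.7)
The plan is to expand the polynomial factor and reduce to the three standard integrals $I_k := \int_0^{\pi/2}x^k\cot(x)\,dx$ for $k=2,3,4$. Since $(2x-\pi)(4x-9\pi) = 8x^2 - 22\pi x + 9\pi^2$, the target integral equals $8I_4 - 22\pi I_3 + 9\pi^2 I_2$, and it suffices to show this combination equals $\tfrac{93}{4}\zeta(5)$.

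Each $I_k$ can be handled by a two-step procedure. First, integration by parts with $u = x^k$, $dv = \cot(x)\,dx$ gives $I_k = -k\int_0^{\pi/2}x^{k-1}\ln\sin(x)\,dx =: -k\,J_{k-1}$, with boundary terms vanishing. Second, I would substitute the classical Fourier expansion $\ln\sin(x) = -\ln 2 - \sum_{n\ge 1}\cos(2nx)/n$, interchange sum and integral, and evaluate $\int_0^{\pi/2}x^\ell\cos(2nx)\,dx$ by repeated IBP. The resulting sums over $n$ collapse into $\eta(j) = (1-2^{1-j})\zeta(j)$ (from $(-1)^n/n^j$ contributions at $x=\pi/2$) and $\sum_{n\text{ odd}}1/n^j = (1-2^{-j})\zeta(j)$ (from the $x=0$ endpoint when $j$ is even); in particular $J_3$ produces a $\zeta(5)$-term carrying the factor $1-2^{-5} = 31/32$, which is the eventual source of the $93$.

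Carrying this through yields closed forms $I_2 = \pi^2\ln 2/4 - 7\zeta(3)/8$ (already equivalent to the earlier proposition $\zeta(3) = \tfrac{2}{7}(\pi^2\ln 2 - 4I_2)$), $I_3 = \pi^3\ln 2/8 - 9\pi\zeta(3)/16$, and $I_4 = \pi^4\ln 2/16 - 9\pi^2\zeta(3)/16 + 93\zeta(5)/32$. Substituting into $8I_4 - 22\pi I_3 + 9\pi^2 I_2$, the coefficients of $\pi^4\ln 2$ sum to $\tfrac{8}{16} - \tfrac{22}{8} + \tfrac{9}{4} = 0$ and the coefficients of $\pi^2\zeta(3)$ sum to $-\tfrac{9}{2} + \tfrac{99}{8} - \tfrac{63}{8} = 0$, leaving only $8\cdot\tfrac{93}{32}\zeta(5) = \tfrac{93}{4}\zeta(5)$, from which the claim follows.

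The main obstacle is not analytic but combinatorial: verifying that the integer triple $(8,-22,9)$, equivalently the factorization $x^2(2x-\pi)(4x-9\pi)$, is exactly the polynomial whose coefficients force simultaneous cancellation of the $\pi^4\ln 2$ and $\pi^2\zeta(3)$ contributions. Those two linear conditions on three unknowns determine the polynomial uniquely up to scale, and the residual coefficient on $\zeta(5)$ pins down the prefactor $4/93$. Once that cancellation is verified, the rest of the argument is routine Fourier-series bookkeeping of exactly the sort used earlier in the paper for $\zeta(3)$.
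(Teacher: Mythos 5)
Your proposal is correct, and in fact it supplies an argument the paper omits: this corollary is stated bare, with no proof given (the neighboring $\zeta(3)$ cotangent identities are reduced to a proposition cited from reference [4] plus the lemma $\int_{0}^{\pi/2}2\pi x\cot(x)\,dx=\pi^2\ln(2)$, but nothing analogous is recorded for $\zeta(5)$). Your route, expanding $(2x-\pi)(4x-9\pi)=8x^2-22\pi x+9\pi^2$ and reducing to the moments $I_k=\int_0^{\pi/2}x^k\cot(x)\,dx$ via integration by parts and the expansion $\ln\sin(x)=-\ln 2-\sum_{n\ge1}\cos(2nx)/n$, is the natural self-contained extension of the paper's method, and your three closed forms check out:
\begin{align}
I_2&=\tfrac{1}{4}\pi^2\ln 2-\tfrac{7}{8}\zeta(3),\notag\\
I_3&=\tfrac{1}{8}\pi^3\ln 2-\tfrac{9}{16}\pi\,\zeta(3),\notag\\
I_4&=\tfrac{1}{16}\pi^4\ln 2-\tfrac{9}{16}\pi^2\zeta(3)+\tfrac{93}{32}\zeta(5).\notag
\end{align}
The combination $8I_4-22\pi I_3+9\pi^2 I_2$ does annihilate both the $\pi^4\ln 2$ and $\pi^2\zeta(3)$ contributions ($\tfrac12-\tfrac{11}{4}+\tfrac94=0$ and $-\tfrac92+\tfrac{99}{8}-\tfrac{63}{8}=0$), leaving exactly $\tfrac{93}{4}\zeta(5)$, which also agrees numerically ($\approx 24.109$). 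Your closing observation is likewise right: writing the integrand as $x^2(ax^2+b\pi x+c\pi^2)\cot(x)$, the two cancellation conditions are $a+2b+4c=0$ and $9a+9b+14c=0$, satisfied by $(8,-22,9)$ and determining the polynomial up to scale; the analogous bookkeeping with $I_5,I_6$ is what would be needed for the paper's subsequent $\zeta(7)$ integral. Two minor points to make the write-up airtight: note that the boundary term $x^k\ln\sin(x)$ vanishes at both endpoints for $k\ge 2$, and justify the interchange of the sum over $n$ with the integral (e.g. dominated convergence on $[\epsilon,\pi/2]$ together with the integrability of $x^k\ln\sin(x)$ near $0$); both are routine but should be said.
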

\begin{corollary}
\begin{align}\notag
\zeta(5)&=\zeta(3)+\int_{0}^{\pi/2}4x(2x-\pi)(\frac{1}{93}x(4x-9\pi)+\frac{1}{7}) \cot\left( x\right) dx\\\notag
\end{align}
\end{corollary}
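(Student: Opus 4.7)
The plan is to recognize that this identity is purely algebraic, obtained by distributing the inner factor and then invoking the two immediately preceding corollaries. Write the integrand on the right as
\begin{align*}
4x(2x-\pi)\Bigl(\tfrac{1}{93}x(4x-9\pi)+\tfrac{1}{7}\Bigr)\cot(x)
&=\tfrac{4}{93}x^{2}(2x-\pi)(4x-9\pi)\cot(x)+\tfrac{4}{7}x(2x-\pi)\cot(x),
\end{align*}
so linearity of the integral splits the right-hand side into two separate pieces over $[0,\pi/2]$.

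Next, I would identify each piece. The first piece is exactly $\zeta(5)$ by the previous corollary, which states
$$\zeta(5)=\tfrac{4}{93}\int_{0}^{\pi/2}x^{2}(2x-\pi)(4x-9\pi)\cot(x)\,dx.$$
The second piece is the negative of $\zeta(3)$: from the earlier corollary we have
$$\zeta(3)=\tfrac{4}{7}\int_{0}^{\pi/2}x(\pi-2x)\cot(x)\,dx,$$
and since $(2x-\pi)=-(\pi-2x)$, the factor $\tfrac{4}{7}\int_{0}^{\pi/2}x(2x-\pi)\cot(x)\,dx$ equals $-\zeta(3)$. Adding the two contributions gives $\zeta(5)-\zeta(3)$, and then adding the explicit leading $\zeta(3)$ outside the integral on the right-hand side recovers $\zeta(5)$, establishing the identity.

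There is really no obstacle here; the corollary is a trivial combination of the two formulas listed just above it, and my only concern would be bookkeeping the sign that arises from swapping $(\pi-2x)$ and $(2x-\pi)$. Since the author's note preceding the statement already remarks that one may generate arbitrarily many such integral identities by combining a representation of $\zeta(3)$ with a vanishing integral of the form $c\int_{0}^{\pi/2}x(2x-\pi)p(x)\cot(x)\,dx$, the present corollary fits the same template with $p(x)$ chosen so that the resulting integral evaluates to $\zeta(5)-\zeta(3)$ rather than to zero.
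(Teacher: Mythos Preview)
Your proof is correct and is exactly the intended argument: the paper states this corollary without proof immediately after the formulas $\zeta(3)=\frac{4}{7}\int_{0}^{\pi/2}x(\pi-2x)\cot(x)\,dx$ and $\zeta(5)=\frac{4}{93}\int_{0}^{\pi/2}x^{2}(2x-\pi)(4x-9\pi)\cot(x)\,dx$, and the result follows by distributing the integrand and combining these two identities just as you did.
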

\begin{corollary}
\begin{align}\notag
\zeta(7)&=-\frac{8}{5715}\int_{0}^{\pi/2}x^2(2x-\pi)(16 x^3-16\pi  x^2-8 \pi ^2 x+27 \pi ^3)\cot\left( x\right) dx\\\notag
\end{align}
\end{corollary}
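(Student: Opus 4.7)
The statement follows the template of the displayed $\zeta(3)$ and $\zeta(5)$ corollaries immediately preceding it, so the plan is to reduce everything to closed-form evaluation of $I_n := \int_{0}^{\pi/2} x^n \cot(x)\, dx$ and then check a linear identity. First I would expand the polynomial factor,
\[
x^2(2x-\pi)(16x^3 - 16\pi x^2 - 8\pi^2 x + 27\pi^3) = 32 x^6 - 48\pi x^5 + 62\pi^3 x^3 - 27\pi^4 x^2,
\]
so that the claim becomes
\[
\zeta(7) \;=\; -\frac{8}{5715}\bigl(32\, I_6 - 48\pi\, I_5 + 62\pi^3\, I_3 - 27\pi^4\, I_2\bigr).
\]
Notice that the $x^4$ term cancels identically; this is already a hint that the polynomial is engineered to kill unwanted contributions.

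Next I would obtain closed forms for $I_2, I_3, I_5, I_6$ in terms of $\pi^k \ln 2$ and the odd zeta values $\zeta(3), \zeta(5), \zeta(7)$. The uniform route is integration by parts using $\cot(x)\, dx = d\ln(\sin x)$: the boundary terms vanish (at $\pi/2$ because $\ln\sin(\pi/2)=0$, at $0$ because $x^n \ln\sin x \to 0$ for $n\ge 1$), giving
\[
I_n = -n \int_{0}^{\pi/2} x^{n-1} \ln(\sin x)\, dx.
\]
Substituting the Fourier series $\ln(\sin x) = -\ln 2 - \sum_{k\ge 1} \cos(2kx)/k$ and computing $\int_0^{\pi/2} x^{n-1} \cos(2kx)\, dx$ by repeated integration by parts (using $\cos(k\pi)=(-1)^k$ and $\sin(k\pi)=0$ at the upper limit) collapses the resulting $k$-sums into values $\eta(2j+1) = (1-2^{-2j})\zeta(2j+1)$, together with elementary rational multiples of $\pi^j \ln 2$. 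The formula for $I_2$ already appears in the proposition attributed to reference [4], and the lemma $\int_0^{\pi/2} 2\pi x \cot(x)\, dx = \pi^2 \ln 2$ gives $I_1$; the higher $I_n$ are computed analogously.

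Finally, substitute the closed forms into the right-hand side. By design, the coefficients of $\ln 2$, $\zeta(3)$, and $\zeta(5)$ cancel, leaving a single $\zeta(7)$ term whose overall coefficient must reduce to $-5715/8$; this step is pure linear algebra given the $I_n$. The main obstacle is the bookkeeping for $I_6$, since that single integral produces $\zeta(7)$, $\zeta(5)$, $\zeta(3)$, and $\pi^6\ln 2$ simultaneously, each with somewhat intricate rational prefactors involving powers of $2$ and the binomial coefficients arising from integrating $x^5 \cos(2kx)$ by parts five times; a sign error there propagates through the whole verification. Once $I_6$ is correct, the identity emerges as a single instance of a broader pattern in which $p(x) = 16x^3 - 16\pi x^2 - 8\pi^2 x + 27\pi^3$ is precisely the polynomial chosen so that $\int_{0}^{\pi/2} x^2(2x-\pi)p(x)\cot(x)\, dx$ isolates $\zeta(7)$, just as $p(x) = 4x-9\pi$ isolated $\zeta(5)$ in the previous corollary.
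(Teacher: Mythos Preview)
Your approach is correct and is exactly the natural way to establish the identity; the paper itself states this corollary without proof, relying on the pattern set by the proposition $\zeta(3)=\frac{2}{7}\bigl(\pi^2\ln 2-4\int_0^{\pi/2}x^2\cot x\,dx\bigr)$ and the lemma $\int_0^{\pi/2}2\pi x\cot x\,dx=\pi^2\ln 2$. Your reduction to the moments $I_n=\int_0^{\pi/2}x^n\cot x\,dx$ via integration by parts and the Fourier expansion of $\ln(\sin x)$ is the standard route and makes explicit what the paper leaves implicit.
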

\begin{proposition}
\begin{align}\notag
\zeta(3)=-\frac{4}{3\pi}\left(\frac{\pi}{2}(\ln(2))^3+\frac{\pi^3}{8}\ln(2)+\int_{0}^{\pi/2}(\ln(\cos(x)))^3dx\right)\\\notag
\end{align}
\end{proposition}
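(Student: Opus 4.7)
The plan is to reduce the proposition to the single closed-form evaluation
$$\int_0^{\pi/2}\ln^3(\cos x)\,dx \;=\; -\tfrac{3\pi}{4}\zeta(3)-\tfrac{\pi^3}{8}\ln 2-\tfrac{\pi}{2}\ln^3 2,$$
since substituting this into the bracketed expression on the right-hand side of the proposition causes the $\ln^3 2$ and $\pi^2\ln 2$ terms to cancel, leaving $-\tfrac{3\pi}{4}\zeta(3)$, and multiplication by $-4/(3\pi)$ returns exactly $\zeta(3)$. So the entire task is this one log-moment identity.

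The natural tool is the Beta-function representation
$$f(s) \;:=\; \int_0^{\pi/2}\cos^{2s-1}(x)\,dx \;=\; \tfrac12\, B\!\left(s,\tfrac12\right) \;=\; \frac{\sqrt{\pi}}{2}\,\frac{\Gamma(s)}{\Gamma\!\left(s+\tfrac12\right)},$$
valid for $\operatorname{Re}(s)>0$. Differentiating under the integral sign in $s$ produces a factor of $2\ln(\cos x)$ each time, so $f^{(n)}(\tfrac12)=2^n\int_0^{\pi/2}\ln^n(\cos x)\,dx$; in particular the target integral equals $f'''(\tfrac12)/8$, and everything reduces to computing $f'''(\tfrac12)$ from the Gamma-ratio on the right.

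To do this, I would pass to $g(s):=\ln f(s)$, for which $g^{(k)}(s)=\psi^{(k-1)}(s)-\psi^{(k-1)}(s+\tfrac12)$, and insert the classical special values $\psi(1)=-\gamma$, $\psi(\tfrac12)=-\gamma-2\ln 2$, $\psi'(1)=\pi^2/6$, $\psi'(\tfrac12)=\pi^2/2$, $\psi''(1)=-2\zeta(3)$, $\psi''(\tfrac12)=-14\zeta(3)$. These give $g'(\tfrac12)=-2\ln 2$, $g''(\tfrac12)=\pi^2/3$, $g'''(\tfrac12)=-12\zeta(3)$. Since $f=e^g$, a direct expansion produces $f'''=\bigl(g'''+3g'g''+(g')^3\bigr)f$, and evaluating at $s=\tfrac12$ with $f(\tfrac12)=\pi/2$ gives
$$f'''(\tfrac12) \;=\; \bigl(-12\zeta(3)-2\pi^2\ln 2-8\ln^3 2\bigr)\cdot\tfrac{\pi}{2} \;=\; -6\pi\zeta(3)-\pi^3\ln 2-4\pi\ln^3 2,$$
which dividing by $8$ yields the boxed identity and completes the proof.

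The only real subtlety is rigorously justifying three differentiations under the integral sign, since $\ln^k(\cos x)$ blows up as $x\to\pi/2$. This is routine: for $s$ restricted to an interval $\bigl(\tfrac12-\varepsilon,\tfrac12+\varepsilon\bigr)$ with $\varepsilon<\tfrac12$, the integrand $\cos^{2s-1}(x)\ln^k(\cos x)$ is dominated in absolute value by $C\,(\pi/2-x)^{-2\varepsilon}|\ln(\pi/2-x)|^k$ near the right endpoint, which is integrable on $(0,\pi/2)$, so dominated convergence legitimises each interchange of differentiation and integration. After that, the remaining polygamma arithmetic is bookkeeping well within what the earlier sections of the paper treat as standard.
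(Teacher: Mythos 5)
Your proposal is correct, and it is more complete than what the paper offers: the paper gives no derivation at all for this proposition, simply citing reference [6] (Michael Penn's video on $\int_0^{\pi/2}(\ln(\cos x))^3\,dx$). Your route — writing $f(s)=\int_0^{\pi/2}\cos^{2s-1}x\,dx=\tfrac{\sqrt{\pi}}{2}\,\Gamma(s)/\Gamma(s+\tfrac12)$, noting $f'''(\tfrac12)=8\int_0^{\pi/2}\ln^3(\cos x)\,dx$, and extracting $f'''(\tfrac12)$ from $f=e^{g}$ with $g^{(k)}(\tfrac12)=\psi^{(k-1)}(\tfrac12)-\psi^{(k-1)}(1)$ — is sound; I checked the polygamma values ($g'(\tfrac12)=-2\ln 2$, $g''(\tfrac12)=\pi^2/3$, $g'''(\tfrac12)=-12\zeta(3)$), the expansion $f'''=(g'''+3g'g''+(g')^3)f$, and the final arithmetic $f'''(\tfrac12)=-6\pi\zeta(3)-\pi^3\ln 2-4\pi\ln^3 2$, which upon division by $8$ gives exactly the evaluation needed to make the bracketed expression equal $-\tfrac{3\pi}{4}\zeta(3)$. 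Your justification of differentiation under the integral sign via the bound $C\,(\pi/2-x)^{-2\varepsilon}\,|\ln(\pi/2-x)|^{k}$ is also adequate. What your approach buys is a self-contained, checkable proof that moreover generalizes immediately to all moments $\int_0^{\pi/2}\ln^{n}(\cos x)\,dx$ (each is a polynomial in $\ln 2$, powers of $\pi$, and odd zeta values), whereas the paper's citation leaves the reader dependent on an external source whose method (typically a Fourier-series expansion of $\ln(\cos x)$ or an equivalent Beta-function argument) is not reproduced.
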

\begin{proof}
See reference [6].
\end{proof}
To summarize, here are a few of the representations of $\zeta(3)$ as  trigonometric integral.
\begin{align}\notag
\zeta(3)&=\frac{2}{7}\int_{0}^{\pi/2}x(\pi-x) \csc\left( x\right)dx\\\notag
&=\frac{4}{7}\int_{0}^{\pi/2}(4x-\pi)\ln(\sin(x))dx\\\notag
&=\frac{1}{7}\int_{0}^{\pi}x(\pi-x) \csc\left( x\right)\\\notag
\zeta(3)&=\frac{4}{7}\int_{0}^{\pi/2}x(\pi-2x) \cot\left( x\right) dx\\\notag
&=\frac{8}{7}\int_{0}^{\pi/2}x \ln(\tan(x)) dx\\\notag
\zeta(3)&=\frac{4}{7}\int_{0}^{\pi/2}x(\pi-2x)\tan(x) dx\\\notag
&=\frac{4}{7}\int_{0}^{\pi/2}(\pi-4x)\ln(\cos(x)) dx\\\notag
\zeta(3)&=\frac{4}{63}\left(   \frac{1}{2}\pi^2(\pi+3\ln(2))+\int_{0}^{\pi/2}x^2\left(2 x-3\pi \right)\left(\frac{1}{1+\cos(x)}\right)dx   \right)\\\notag
&=\frac{8}{21}\left(\frac{1}{4}\pi^2\ln(2)-\int_{0}^{\pi/2}(2x-\pi) \ln(1+\cos(x)))dx\right)\\\notag
&=\frac{4}{7}\int_{0}^{\pi/2}x\ln\left(\frac{1+\sin(x)}{\cos(x)}\right)dx\notag
\end{align}

\section{{\textbf{Related Functions}}}
In this section we return to the sums $S_n$ introduced in section 3.
Here, we introduce a new set of functions for study that correspond to our $S_n$. Here we present some analysis of these functions. They only agree with their $S_n$  counterpart at $x=\frac{\sqrt{2}}{2}$:
\begin{align*}\notag
F_3&=\frac{1}{2^9}\left(4\left(-1\sqrt{2-2x}+3\sqrt{2+2x}\right)\right)\\\notag
F_4&=\frac{1}{2^{12}}\left(8\left(3\sqrt{2-\sqrt{2+2x}}+11\sqrt{2-\sqrt{2-2x}}-2\sqrt{2+\sqrt{2-2x}}+4\sqrt{2+\sqrt{2+2x}}\right)\right)\\\notag
F_5&=\frac{1}{2^{15}}\Big[8 \Big[-34 \sqrt{2-\sqrt{2+\sqrt{2+2x}}}-8 \sqrt{2-\sqrt{2+\sqrt{2-2x}}}+25 \sqrt{2-\sqrt{2-\sqrt{2-2x}}}+15 \sqrt{2-\sqrt{2-\sqrt{2+2x}}}\\\notag
&-45 \sqrt{2+\sqrt{2-\sqrt{2+2x}}}+29 \sqrt{2+\sqrt{2-\sqrt{2-2x}}}+64 \sqrt{2+\sqrt{2+\sqrt{2+2x}}}\Big]\Big]\\\notag
F_6&=\frac{1}{2^{18}}\Big[8 \Big[244 \sqrt{2-\sqrt{2+\sqrt{2+\sqrt{2+2x}}}}-48 \sqrt{2-\sqrt{2+\sqrt{2+\sqrt{2-2x}}}}+85 \sqrt{2-\sqrt{2+\sqrt{2-\sqrt{2-2x}}}}\\\notag
&+401 \sqrt{2-\sqrt{2+\sqrt{2-\sqrt{2+2x}}}}-243 \sqrt{2-\sqrt{2-\sqrt{2-\sqrt{2x+2}}}}-137 \sqrt{2-\sqrt{2-\sqrt{2-\sqrt{2-2x}}}}\\\notag
&-16 \sqrt{2-\sqrt{2-\sqrt{2+\sqrt{2-2x}}}}+382 \sqrt{2-\sqrt{2-\sqrt{2+\sqrt{2+2x}}}}+74 \sqrt{2+\sqrt{2-\sqrt{2+\sqrt{2+2x}}}}\\\notag
&+214 \sqrt{2+\sqrt{2-\sqrt{2+\sqrt{2-2x}}}}+97 \sqrt{2+\sqrt{2-\sqrt{2-\sqrt{2-2x}}}}+135 \sqrt{2+\sqrt{2-\sqrt{2-\sqrt{2+2x}}}}\\\notag
&-15 \sqrt{2+\sqrt{2+\sqrt{2-\sqrt{2+2x}}}}+109 \sqrt{2+\sqrt{2+\sqrt{2-\sqrt{2-2x}}}}-16 \sqrt{2+\sqrt{2+\sqrt{2+\sqrt{2-2x}}}}\\\notag
&-184 \sqrt{2+\sqrt{2+\sqrt{2+\sqrt{2+2x}}}}\Big]\Big]\\\notag
\end{align*}

As a shorthand notation, we can refer to an elementary term by its sequence of signs. For example, $ e=\sqrt{2-\sqrt{2+\sqrt{2-\sqrt{2+\sqrt{2}}}}}$ would correspond to $(-+-+)$.\\ 
\begin{definition}
We define the $\textrm{sign}(e)$ to equal $(-1)^{k+1}$, where $k=$ number of ``-'' signs in the notation for $e$.
\end{definition}
 Each elementary term $e$ has a corresponding function $v(x)$. For example, $\sqrt{2+\sqrt{2-\sqrt{2+\sqrt{2}}}}$ corresponds to $\sqrt{2+\sqrt{2-\sqrt{2+2x}}}$.
When presenting a list of elementary terms, we order them from smallest to largest in magnitude. The following table gives the correspondence between the angle $\theta$ and the value of $2 \sin(\theta)$ for elementary terms of depth $\le 5$.\\

\begin{center}
\begin{tabular}{|c c c c c c c c c c |} 
 \hline
$\pi/64$ &$\pi/32$ &$3\pi/64$&$\pi/16$ &$5\pi/64$ &$3\pi/32$ &$7\pi/64$ &$\pi/8$ &$9\pi/64$&$5\pi/32$\\ 
 \hline
(- + + +)&(- + +) &(- + + -)&(- +)&(- + - -)&(- + -)&(- + - +)&(-) & (- - - +)&(- - -) \\
 \hline
\end{tabular}

\bigskip
\begin{tabular}{|c c c c c c c c c c |} 
 \hline
$11\pi/64$ &$3\pi/16$  &$13\pi/64$ &$7\pi/32$ &$15\pi/64$ &$\pi/4$ &$17\pi/64$&$9\pi/32) $&$19\pi/64$&$5\pi/16$\\ 
 \hline
(- - - -)&(- -) &(- - + -) & (- - +) &(- - + +)&  () & (+ - + +)& (+ - +)&(+ - + -)  &(+ -) \\
 \hline
\end{tabular}

\bigskip
\begin{tabular}{|c c c c c c c c c c c|} 
 \hline
$21\pi/64$ &$11\pi/32$ &$23\pi64$ &$3\pi/8$ &$25\pi/64$ &$13\pi/32$ &$27\pi/64$&$7\pi/16$ &$29\pi/64$ &$15\pi/32$ &$31\pi/64$\\
 \hline
(+ - - -)& (+ - -)&(+ - - +) & (+) & (+ + - +) &(+ + -)& (+ + - -)& (+ +)&  (+ + + -)& (+ + +) &(+ + + +)\\
 \hline
\end{tabular}
\end{center}
The elementary terms have a different form. Consider $f(x)=\sqrt{2-\sqrt{2+\sqrt{2+2 x}}}$. It satisfies the differential equation$$f'(x)=-\frac{\sqrt{4-f(x)^2}}{8 \sqrt{1-x^2}}$$
That equation has general solution $$2 \sin(1/8 (-\arcsin(x) + 8 c_1))$$ Setting $x=0$ and solving for $c_1$ we find $$\left(\frac{1}{2}\right)\sqrt{2-\sqrt{2+\sqrt{2+2 x}}}= \sin\left(\frac{1}{8} \left(-\arcsin(x) + \frac{\pi}{2}\right)\right)$$

\bigskip
\begin{tabular}{|c c c  |} 
 \hline
$f(x)$&Alternate Form &$f(\frac{\sqrt{2}}{2})$ \\ 
 \hline
(- +)&$2\sin\left(\frac{1}{4}\left(-\arcsin(x)+\frac{\pi}{2}\right)\right) $&$2\sin\left(\frac{\pi}{16}\right)$ \\
(- -)&$2\sin\left(\frac{1}{4}\left(\arcsin(x)+\frac{\pi}{2}\right)\right) $&$2\sin\left(\frac{3\pi}{16}\right)$ \\
(+ -)&$2\sin\left(\frac{1}{4}\left(-\arcsin(x)+\frac{3\pi}{2}\right)\right) $&$2\sin\left(\frac{5\pi}{16}\right)$ \\
(+ +)&$2\sin\left(\frac{1}{4}\left(\arcsin(x)+\frac{3\pi}{2}\right)\right) $&$2\sin\left(\frac{7\pi}{16}\right)$ \\
 \hline
\end{tabular}
\bigskip

\bigskip
\begin{tabular}{|c c c  |} 
 \hline
$f(x)$&Alternate Form &$f(\frac{\sqrt{2}}{2})$ \\ 
 \hline
(- + +)&$2\sin\left(\frac{1}{8}\left(-\arcsin(x)+\frac{\pi}{2}\right)\right) $&$2\sin\left(\frac{\pi}{32}\right)$ \\
(- + -)&$2\sin\left(\frac{1}{8}\left(\arcsin(x)+\frac{\pi}{2}\right)\right) $&$2\sin\left(\frac{3\pi}{32}\right)$ \\
(- - -)&$2\sin\left(\frac{1}{8}\left(-\arcsin(x)+\frac{3\pi}{2}\right)\right) $&$2\sin\left(\frac{5\pi}{32}\right)$ \\
(- - +)&$2\sin\left(\frac{1}{8}\left(\arcsin(x)+\frac{3\pi}{2}\right)\right) $&$2\sin\left(\frac{7\pi}{32}\right)$ \\
(+ - +)&$2\sin\left(\frac{1}{8}\left(-\arcsin(x)+\frac{5\pi}{2}\right)\right) $&$2\sin\left(\frac{9\pi}{32}\right)$ \\
(+ - -)&$2\sin\left(\frac{1}{8}\left(\arcsin(x)+\frac{5\pi}{2}\right)\right) $&$2\sin\left(\frac{11\pi}{32}\right)$ \\
(+ + -)&$2\sin\left(\frac{1}{8}\left(-\arcsin(x)+\frac{7\pi}{2}\right)\right) $&$2\sin\left(\frac{13\pi}{32}\right)$ \\
(+ + +)&$2\sin\left(\frac{1}{8}\left(\arcsin(x)+\frac{7\pi}{2}\right)\right) $&$2\sin\left(\frac{15\pi}{32}\right)$ \\
 \hline
\end{tabular}
\bigskip
\newline
The general correspondence is
$$2\sin\left(\frac{(2i-1)\pi}{2^n}\right)\to2\sin\left(\frac{1}{2^{n-2}}\left((-1)^{i}\arcsin(x)+\frac{\left(2\floor{\frac{(2i-1)}{4}}+1\right)\pi}{2}\right)\right)$$

In section 3 we introduced matricies $M_n$. We look at the functions corresponding to the $M_n$ of the type (for $n=4$)
\begin{equation}\notag
\left(
\begin{array}{c}
y_1(x)\\
y_2(x)\\
y_3(x)\\
y_4(x)\\
\end{array}
\right)
=\left(
\begin{array}{cccc}
 2 & 5 & 7 & 8 \\
 7 & 2 & -8 & 5 \\
 5 & 8 & 2 & -7 \\
 -8 & 7 & -5 & 2 \\
\end{array}
\right)\left(
\begin{array}{c}
\sqrt{2-\sqrt{2+2x}} \\
\sqrt{2-\sqrt{2-2x}}\\
\sqrt{2+\sqrt{2-2x}} \\
\sqrt{2+\sqrt{2+2x}}\\
\end{array}
\right)
\end{equation}
Call the vector on the right in equation the above, $v_4(x)$. 
Let 
\begin{equation}\notag
c_4=
\left(
\begin{array}{cccc}
1 & 1 & 1 & 1 \\
\end{array}
\right)
\end{equation}
Then
$$F_n(x)=\frac{1}{2^{3n-2}}c_n * M_n*v_n(x)$$
Let
\begin{equation}\notag
T_4=
\left(
\begin{array}{cccc}
0 &0& 0 & 1\\
0&0 &1 &0  \\
0 &1 &0 &0 \\
1 & 0 &0 &0 \\
\end{array}
\right)
\end{equation}
and
\begin{equation}\notag
Tm_4=
\left(
\begin{array}{cccc}
0 &0& 0 & -1\\
0&0 &1 &0  \\
0 &-1 &0 &0 \\
1 & 0 &0 &0 \\
\end{array}
\right)
\end{equation}
The matrices $T_m$ have $T_M(i,i)=(-1)^{2n-i}$, $T_M(i,j)=0$ for$i\ne j$.
Then
$$ M_n*v'_n(x)=\frac{1}{2^{n-2}\sqrt{1-x^2}}Tm_n*(M_n*v_n(x))$$
$$F'_n(x)=\frac{1}{2^{3n-2}}v1_n * M_n*v'_n(x)*v1_n^{t}=\frac{1}{2^{4(n-1)}\sqrt{1-x^2}}v1_n*Tm_n*(M_n*v_n(x))$$
When we differentiate an elementary function, through repeated application of the chain rule we pick up a factor of $1/2$ for each radical and a factor of 2 from the final $2x$. But to get it into our final form we multiply numerator and denominator by the conjugate and get an additional factor of $1/2$ when we simplify the denominator. If we let
\begin{equation}\notag
M_5(x)=\left(
\begin{array}{cccc}
 2 & 5 & 7 & 8 \\
 7 & 2 & -8 & 5 \\
 5 & 8 & 2 & -7 \\
 -8 & 7 & -5 & 2 \\
\end{array}
\right)\left(
\begin{array}{c c c c}
\sqrt{2-\sqrt{2+2x}} &0&0&0\\
0&\sqrt{2-\sqrt{2-2x}}&0&0\\
0&0&\sqrt{2+\sqrt{2-2x}}&0 \\
0&0&0&\sqrt{2+\sqrt{2+2x}}\\
\end{array}
\right)
\end{equation}
Then we have
$$F_n(x)=\frac{1}{2^{3n-2}}v1_n*M_n(x)*v1_n^{t}$$
$$ M_n'(x)=\frac{1}{2^{n-2}\sqrt{1-x^2}}Tm_n*(M_n(x))*T$$
$$F'_n(x)=\frac{1}{2^{3n-2}}v1_n*\frac{1}{2^{n-2}\sqrt{1-x^2}}Tm_n*(M_n(x))*T*v1_n^{t}$$

The equation
$$ Y'(x)=\frac{1}{2^{n-2}\sqrt{1-x^2}}Tm_n*Y(x)$$
for the $n=4$ case has general solution
\begin{equation}\notag
\left(
\begin{array}{c}
y_1(x)\\
y_2(x) \\
y_3(x) \\
y_4(x) \\
\end{array}
\right)=
\left(
\begin{array}{c }
c_1 \cos\left(\frac{\arcsin\left(x\right)}{4}\right) - c_2 \sin\left(\frac{\arcsin\left(x\right)}{4}\right)\\
c_3 \cos\left(\frac{\arcsin\left(x\right)}{4}\right) + c_4 \sin\left(\frac{\arcsin\left(x\right)}{4}\right)\\
c_4 \cos\left(\frac{\arcsin\left(x\right)}{4}\right) - c_3 \sin\left(\frac{\arcsin\left(x\right)}{4}\right)\\
c_2 \cos\left(\frac{\arcsin\left(x\right)}{4}\right) + c_1 \sin\left(\frac{\arcsin\left(x\right)}{4}\right)\\
\end{array}
\right)
\end{equation}
More generally,
$$y_k=c_k \cos\left(\frac{\arcsin\left(x\right)}{2^{n-2}}\right) - b_k \sin\left(\frac{\arcsin\left(x\right)}{2^{n-2}}\right)$$
Setting this equal to the $Y(x)$ above, evaluating at $x=0$ and $x=1$ we can resolve the constants.
\begin{equation}\notag
\left(
\begin{array}{c}
y_1(x)\\
y_2(x) \\
y_3(x) \\
y_4(x) \\
\end{array}
\right)=
\left(
\begin{array}{c }
\sqrt{548+386 \sqrt{2}}\cos\left(\frac{\arcsin\left(x\right)}{4}\right) -\left(2+\sqrt{2}\right)^{3/2} \sin\left(\frac{\arcsin\left(x\right)}{4}\right)\\
3 \sqrt{20-14 \sqrt{2}} \cos\left(\frac{\arcsin\left(x\right)}{4}\right) + \sqrt{2} \sqrt{194-137 \sqrt{2}} \sin\left(\frac{\arcsin\left(x\right)}{4}\right)\\
\sqrt{2} \sqrt{194-137 \sqrt{2}} \cos\left(\frac{\arcsin\left(x\right)}{4}\right) - 3 \sqrt{20-14 \sqrt{2}} \sin\left(\frac{\arcsin\left(x\right)}{4}\right)\\
\left(2+\sqrt{2}\right)^{3/2} \cos\left(\frac{\arcsin\left(x\right)}{4}\right) + \sqrt{548+386 \sqrt{2}} \sin\left(\frac{\arcsin\left(x\right)}{4}\right)\\
\end{array}
\right)
\end{equation}
\begin{align}\notag
\implies F_4(x)&=\frac{1}{2^{10}}\sum_{i=1}^{4}y_k(x)\\\notag
&=\frac{1}{2^{10}}\left(2 \left(\sqrt{50+31 \sqrt{2}} \sin \left(\frac{1}{4} \arcsin(x)\right)+\sqrt{100-34 \sqrt{2}} \cos \left(\frac{1}{4} \arcsin(x)\right)\right)\right)\\\notag
&=   \frac{1}{2^{8}} \left( \left(  4\cos\left(\frac{\pi}{8}\right)+3\sin\left(\frac{\pi}{8}\right)\right  ) \sin \left(\frac{1}{4} \arcsin(x)\right)+\left(  \cos\left(\frac{\pi}{8}\right)+7\sin\left(\frac{\pi}{8}\right)\right   )\cos \left(\frac{1}{4} \arcsin(x)\right)  \right) \\\notag
\end{align}
We repeat this process for $n=5$ and get
\begin{align}\notag
F_5(x)=&\frac{1}{2^{13}}\bigl[4 \Biggl(\sqrt{3858+1944 \sqrt{2}-\sqrt{3455234+2247233 \sqrt{2}}} \cos \left(\frac{1}{8} \arcsin(x)\right)\\\notag
&+\sqrt{5174-1856 \sqrt{2}+\sqrt{3238138+1993289 \sqrt{2}}} \sin \left(\frac{1}{8} \arcsin(x)\right)\Biggr)\bigr]\notag
\end{align}
Comparing $F_4(\sqrt{2}/2)$ with the formulas for $S_4$ given in section 3, we verify that
\begin{align}\notag
S_4&=\frac{1}{2^{9}}\sqrt{300-6 \sqrt{2}+\sqrt{302 \sqrt{2}+436}}\\
&= \frac{1}{2^{9}}\left(\sqrt{50+31 \sqrt{2}}\ \sqrt{2-\sqrt{2+2x}} +\sqrt{100-34 \sqrt{2}}\  \sqrt{2+\sqrt{2+2x}} \right)\\\notag
\end{align}
Similarily, comparing $F_5(\sqrt{2}/2)$ with the formulas for $S_5$ given in section 3,
\begin{align}\notag
S_5&=\frac{1}{2^{12}}\sqrt{2 \left(2 \left(4516+44 \sqrt{2}-\sqrt{3466-137 \sqrt{2}}\right)+\sqrt{428260-79990 \sqrt{2}+\sqrt{32962925198 \sqrt{2}+52633222484}}\right)}\\\notag
&=\frac{1}{2^{12}}\Biggl(\sqrt{3858+1944 \sqrt{2}-\sqrt{3455234+2247233 \sqrt{2}}}\quad  \sqrt{2+\sqrt{2+\sqrt{2+\sqrt{2}}}}\\\notag
&+\sqrt{5174-1856 \sqrt{2}+\sqrt{3238138+1993289 \sqrt{2}}}\quad  \sqrt{2-\sqrt{2+\sqrt{2+\sqrt{2}}}}\Biggr)\\\notag
\end{align}
The functions $F_n(x)$ can be written as

\begin{align}\notag
F_n(x)=&S1(n,x)=\frac{1}{2^{3n-2}} \sum _{j=1}^{2^{n-2}} \left(-j^2+2^{n-1}j+j-2^{n-2}\right) \sum _{i=1}^{2^{n-2}} (-1)^{\floor{ \frac{ (2 j i-i-j)}{ 2^{n-1}}}}\\\notag
&\sin \left(\frac{   (-1)^{i+j-1} \arcsin(x)+\left(2\floor{ \frac{1}{4} \left(2 ((2 j i-i-j) \pmod*{ 2^{n-1}})+1\right)} +1\right)\frac{\pi}{2} 
  }{2^{n-2}}\right)\\\notag
&=\frac{1}{2^{3n-2}} \sum _{j=1}^{2^{n-2}} \left(-j^2+2^{n-1}j+j-2^{n-2}\right) \sum _{i=1}^{2^{n-2}} (-1)^{\floor{ \frac{ (2 j i-i-j)}{ 2^{n-1}}}}\\\notag
&\Biggl[ \sin \left(     \frac{  (-1)^{i+j-1} }{2^{n-2}} \arcsin(x)   \right)    \cos\left(   \left(2\floor{      \frac{1}{4} \left(2 ((2 j i-i-j) \pmod*{ 2^{n-1}})+1    \right)       } +1   \right)\frac{\pi}{2^{n-1}}       \right)\\\notag
&+\cos\left(   \frac{   (-1)^{i+j-1} }{2^{n-2}}\arcsin(x)   \right)  \sin\left(  \left(   2\floor{ \frac{1}{4} \left(2 (  (2 j i-i-j) \pmod*{ 2^{n-1}}  )+1\right)      } +1   \right)\frac{\pi}{2^{n-1}} \right)  \Biggr]\\\notag
&=\frac{1}{2^{3n-2}} \sum _{j=1}^{2^{n-2}} \left(-j^2+2^{n-1}j+j-2^{n-2}\right) \sum _{i=1}^{2^{n-2}}  (-1)^{\floor{ \frac{ (2 j i-i-j)}{ 2^{n-1}}}}\\\notag
&\Biggl[ (-1)^{i+j-1} \sin \left(     \frac{ 1 }{2^{n-2}} \arcsin(x)   \right)    \cos\left(   \left(2\floor{      \frac{1}{4} \left(2 ((2 j i-i-j) \pmod*{ 2^{n-1}})+1    \right)       } +1   \right)\frac{\pi}{2^{n-1}}       \right)\\\notag
&+\cos\left(   \frac{  1 }{2^{n-2}}\arcsin(x)   \right)  \sin\left(  \left(   2\floor{ \frac{1}{4} \left(2 (  (2 j i-i-j) \pmod*{ 2^{n-1}}  )+1\right)      } +1   \right)\frac{\pi}{2^{n-1}} \right)  \Biggr]\\\notag
&=\frac{1}{2^{3n-2}} \sum _{j=1}^{2^{n-2}} \left(-j^2+2^{n-1}j+j-2^{n-2}\right) \sum _{i=1}^{2^{n-2}}  (-1)^{\floor{ \frac{ (2 j i-i-j)}{ 2^{n-1}}}}\\\notag
&\Biggl[ (-1)^{i+j-1} \sin \left(     \frac{ 1 }{2^{n-2}} \arcsin(x)   \right) \Biggl[   \cos\left(   \left(\floor{      \frac{1}{4} \left(2 ((2 j i-i-j) \pmod*{ 2^{n-1}})+1    \right)       }   \right)\frac{\pi}{2^{n-2}}\right) \cos\left(\frac{\pi}{2^{n-1}}\right)      \\\notag
&- \sin\left(   \left(\floor{      \frac{1}{4} \left(2 ((2 j i-i-j) \pmod*{ 2^{n-1}})+1    \right)       }   \right)\frac{\pi}{2^{n-2}}\right) \sin\left(\frac{\pi}{2^{n-1}}\right)   \Biggr]\\\notag
&+\cos\left(   \frac{  1 }{2^{n-2}}\arcsin(x)   \right) \Biggl[ \sin\left(  \left(   \floor{ \frac{1}{4} \left(2 (  (2 j i-i-j) \pmod*{ 2^{n-1}}  )+1\right)      }    \right)\frac{\pi}{2^{n-2}} \right)  \cos\left(\frac{\pi}{2^{n-1}}\right)\\\notag
&+\cos\left(  \left(   \floor{ \frac{1}{4} \left(2 (  (2 j i-i-j) \pmod*{ 2^{n-1}}  )+1\right)      }    \right)\frac{\pi}{2^{n-2}} \right)  \sin\left(\frac{\pi}{2^{n-1}}\right)\Biggr]\Biggr]\\\notag
\end{align}
Note we have the explicit formulas for the constants in (19) above:
\begin{align}\notag
F_n(x)&=\frac{1}{2^{3n-2}}\Biggl[\sin \left(     \frac{ 1 }{2^{n-2}} \arcsin(x)   \right)  \sum _{j=1}^{2^{n-2}} \left(-j^2+2^{n-1}j+j-2^{n-2}\right) \sum _{i=1}^{2^{n-2}}  (-1)^{\floor{ \frac{ (2 j i-i-j)}{ 2^{n-1}}}}\\\notag
&\Biggl[ (-1)^{i+j-1}    \cos\left(   \left(2\floor{      \frac{1}{4} \left(2 ((2 j i-i-j) \pmod*{ 2^{n-1}})+1    \right)       } +1   \right)\frac{\pi}{2^{n-1}}       \right)\Biggr]\\\notag
&+\cos\left(   \frac{  1 }{2^{n-2}}\arcsin(x)   \right)  \sum _{j=1}^{2^{n-2}} \left(-j^2+2^{n-1}j+j-2^{n-2}\right) \sum _{i=1}^{2^{n-2}}  (-1)^{\floor{ \frac{ (2 j i-i-j)}{ 2^{n-1}}}}\\\notag
&\Biggl[\sin\left(  \left(   2\floor{ \frac{1}{4} \left(2 (  (2 j i-i-j) \pmod*{ 2^{n-1}}  )+1\right)      } +1   \right)\frac{\pi}{2^{n-1}} \right)  \Biggr]\Biggr]\\\notag
\end{align}
Note that in  $\lim_{n\to\infty}\frac{8\pi^3}{7}F_n\left(\frac{\sqrt{2}}{2}\right)$, with $\sin \left(     \frac{ 1 }{2^{n-2}} \arcsin(x)   \right) \to 0$ and $\cos\left(   \frac{  1 }{2^{n-2}}\arcsin(x)   \right)\to 1$, only the 2nd half of the above formula survives, giving us the same  formula presented in Theorem 2.9.

The functions we denoted as $y_k(x)$ above associated with $M_n$ can be written as
\begin{align}\notag
y_k(n,x)=&\left(2^{3n-2}\right)F(n,k,x)= \sum _{j=1}^{2^{n-2}} \left(-j^2+2^{n-1}j+j-2^{n-2}\right)   (-1)^{\floor{ \frac{ (2 j i-i-j)}{ 2^{n-1}}}}\\\notag
&\sin \left(\frac{   (-1)^{i+j-1} \arcsin(x)+\left(2\floor{ \frac{1}{4} \left(2 ((2 j k-k-j) \pmod*{ 2^{n-1}})+1\right)} +1\right)\frac{\pi}{2}  }{2^{n-2}}\right) \\\notag
\end{align}
We know (see below) 
\begin{align}
\lim_{n\to\infty}\frac{1}{2^{3n-2}}y_k(n,x)|_{x=\frac{\sqrt{2}}{2}}=\frac{1}{((2k-1)\pi)^3}\notag
\end{align}
We also have
$$y_k(n,x)=c_k(n) \cos\left(\frac{\arcsin\left(x\right)}{2^{n-2}}\right) - b_k(n) \sin\left(\frac{\arcsin\left(x\right)}{2^{n-2}}\right)\notag$$
It turns out that
$$\lim_{n\to\infty}\frac{1}{2^{3n-2}}y_k(n,x)=\left(\frac{1}{8}\right)\left(\frac{(2k-1)\pi}{2}\right)^{-3}$$
And, as expected,
$$\implies F(x)=\sum_{i=1}^{\infty}\left(\frac{1}{8}\right)\left(\frac{(2k-1)\pi}{2}\right)^{-3}$$
We have an alternate form where the $i,j$ term corresponds to the value in the $(i,j)$ position in the matrix. In the previous formula,  the $i$ corresponds to the row but the $j$ does not correspond to the matrix column (with the exception of the first row, where "j" does correspond to the matrix column).
\begin{align}\notag
F_n(x)= &\sum _{j=1}^{2^{n-2}}  \sum _{i=1}^{2^{n-2}} (-1)^{  \floor{   \frac{((i+j-1)(2i-1)^{(2^{n-2}-1)} }{2^{n-1}}   }     }\Biggl[ \Bigl[  (2^{n-1}+1)\left((i+j-1)(2i-1)^{2^{n-2}-1}\pmod*{2^{n-1}}\right)\\\notag
&-    \left( (i+j-1)(2i-1)^{2^{n-2}-1}\pmod*{2^{n-1}}\right)^{2}-2^{n-2}  \Bigr]\sin\left(\frac{1}{2^{n-2}}\left( (-1)^j\arcsin(x)+\frac{\pi}{2}\left(2\floor{\frac{2j-1}{4}}+1\right)  \right)\right)\Biggr]\\\notag
= &\sum _{j=1}^{2^{n-2}}  \sum _{i=1}^{2^{n-2}} (-1)^{  \floor{   \frac{((i+j-1)(2i-1)^{(2^{n-2}-1)} }{2^{n-1}}   }     }\Biggl[        \Bigl[    (2^{n-1}+1)\left((i+j-1)(2i-1)^{2^{n-2}-1}\pmod*{2^{n-1}}\right)\\\notag
&-    \left( (i+j-1)(2i-1)^{2^{n-2}-1}\pmod*{2^{n-1}}\right)^{2}-2^{n-2}  \Bigr      ]\\\notag
&\Biggl(   (-1)^j    \sin\left(\frac{1}{2^{n-2}} \arcsin(x)\right)      \cos\left(\frac{\pi}{2^{n-1}}\left(2\floor{\frac{2j-1}{4}}+1\right)\right)  \\\notag
&+\cos\left(\frac{1}{2^{n-2}} \arcsin(x)\right)   \sin\left(\frac{\pi}{2^{n-1}}\left(2\floor{\frac{2j-1}{4}}+1\right) \right)      \Biggr)                 \Biggr]\\\notag
\end{align}
The alternate Mathematica code corresponds to
\begin{align}\notag
S_n=\frac{1}{2^{3n-2}}& \sum _{j=1}^{2^{n-2}}  \sum _{i=1}^{2^{n-2}} (-1)^{  \floor{   \frac{((i+j-1)(2i-1)^{(2^{n-2}-1)} }{2^{n-1}}   }     }\Biggl[ \Bigl[  (2^{n-1}+1)\left((i+j-1)(2i-1)^{2^{n-2}-1}\pmod*{2^{n-1}}\right)\\\notag
&-    \left( (i+j-1)(2i-1)^{2^{n-2}-1}\pmod*{2^{n-1}}\right)^{2}-2^{n-2}  \Bigr]\sin\left(\frac{(2j-1)\pi}{2^{n}}\right)\Biggr]\\\notag
=\frac{1}{2^{3n-2}} &\sum _{j=1}^{2^{n-2}}\sin\left(\frac{(2j-1)\pi}{2^{n}}\right)  \sum _{i=1}^{2^{n-2}} (-1)^{  \floor{   \frac{((i+j-1)(2i-1)^{(2^{n-2}-1)} }{2^{n-1}}   }     }\Biggl[ \Bigl[  (2^{n-1}+1)\left((i+j-1)(2i-1)^{2^{n-2}-1}\pmod*{2^{n-1}}\right)\\\notag
&-    \left( (i+j-1)(2i-1)^{2^{n-2}-1}\pmod*{2^{n-1}}\right)^{2}-2^{n-2}  \Bigr]\Biggr]\\\notag
\end{align}
We can average the values of the term $\sin\left(\frac{(2j-1)\pi}{2^{n}}\right)$ in a neighborhood of the point:
$$\frac{2j-1}{4}\int_{\frac{2j-3}{2j-1}}^{\frac{2j+1}{2j-1}}\sin\left(\frac{\pi(2j-1)x}{2^n}\right)=\frac{2^{n-2}}{\pi}\left(\cos\left(\frac{(2j-3)\pi}{2^n}\right)-\cos\left(\frac{(2j+1)\pi}{2^n}\right)\right)$$
to obtain
\begin{align}\notag
\hat{S}_n&
=\frac{1}{2^{2n}\pi} \sum _{j=1}^{2^{n-2}}\left(\cos\left(\frac{(2j-3)\pi}{2^n}\right)-\cos\left(\frac{(2j+1)\pi}{2^n}\right)\right)\\\notag
& \sum _{i=1}^{2^{n-2}} (-1)^{  \floor{   \frac{((i+j-1)(2i-1)^{(2^{n-2}-1)} }{2^{n-1}}   }     }\Biggl[ \Bigl[  (2^{n-1}+1)\left((i+j-1)(2i-1)^{2^{n-2}-1}\pmod*{2^{n-1}}\right)\\\notag
&-    \left( (i+j-1)(2i-1)^{2^{n-2}-1}\pmod*{2^{n-1}}\right)^{2}-2^{n-2}  \Bigr]\Biggr]\\\notag
&=\frac{1}{2^{2n-1}\pi}\sin\left(\frac{\pi}{2^{n-1}}\right) \sum _{j=1}^{2^{n-2}}\sin\left(\frac{(2j-1)\pi}{2^n}\right)\\\notag
& \sum _{i=1}^{2^{n-2}} (-1)^{  \floor{   \frac{((i+j-1)(2i-1)^{(2^{n-2}-1)} }{2^{n-1}}   }     }\Biggl[ \Bigl[  (2^{n-1}+1)\left((i+j-1)(2i-1)^{2^{n-2}-1}\pmod*{2^{n-1}}\right)\\\notag
&-    \left( (i+j-1)(2i-1)^{2^{n-2}-1}\pmod*{2^{n-1}}\right)^{2}-2^{n-2}  \Bigr]\Biggr]\\\notag
\end{align}
\section{{\textbf{Nested Root Function Identities}}}
As $n$ grows, the chain of nested roots in each term of $S_n$ grows. In the limit as $n$ goes to infinity, this becomes an infinite nesting of roots. It is analogous to a continued fraction. We shall call these continued roots. The values of the individual terms of the sum become dense in $[0,1]$ and we see that every real number has a representation as a continued root involving only the coefficient $2$.
Given a recognizable pattern in the distribution of + and - in a continued root, we can use tricks of continued fractions to determine the value. For example, if
\begin{equation}
x=\sqrt{2+\sqrt{2-\sqrt{2-\sqrt{2-\sqrt{2-\sqrt{2-\sqrt{2-\ldots}}}}}}}\notag
\end{equation}
then 
\begin{equation}
x^2-2=2-\left(x^2-2\right)^2\notag
\end{equation}
implying
$x=0$ or $\sqrt{3}$
To return this value to the context of our $sin$ functions, we should divide this by 2 yielding $0$ or $\frac{\sqrt{3}}{2}$, i.e., $\sin(0)$ or $\sin(\frac{\pi}{3})$.
\end{flushleft}

Using this reasoning, we determine
\begin{equation}
\sqrt{2+\sqrt{2+\sqrt{2+\sqrt{2+\sqrt{2+\sqrt{2+\sqrt{2+\ldots}}}}}}}=2\notag
\end{equation}
\begin{equation}
\sqrt{2-\sqrt{2-\sqrt{2-\sqrt{2-\sqrt{2-\sqrt{2-\sqrt{2-\ldots}}}}}}}=1\notag
\end{equation}

We see that
\begin{equation}
\sqrt{2-\sqrt{2+\sqrt{2+\sqrt{2+\sqrt{2+\sqrt{2+\sqrt{2+\ldots}}}}}}}=0\notag
\end{equation}
Hence we can turn any finite continued expansion into an infinte expansion. E.g.,
\begin{align*}\notag
\sqrt{2+\sqrt{2-\sqrt{2}}}&=\sqrt{2+\sqrt{2-\sqrt{2-\sqrt{2-\sqrt{2+\sqrt{2+\sqrt{2+\ldots}}}}}}}\\\notag
&=\sqrt{2+\sqrt{2-\sqrt{2+\sqrt{2-\sqrt{2+\sqrt{2+\sqrt{2+\ldots}}}}}}}\notag
\end{align*}
So, representations are not unique, much as $2.0=1.99999999\ldots$.

Going back to our original definition of $S_n$, it seems natural to consider functions of the form
\begin{align*}\notag
S_5(x)&=\frac{1}{2^{12}}\Big[\frac{1}{\left(2-\sqrt{2-\sqrt{2-2x}}\right)^{3/2}}+\frac{1}{\left(2+\sqrt{2-\sqrt{2-2x}}\right)^{3/2}}+\frac{1}{\left(2-\sqrt{2+\sqrt{2-2x}}\right)^{3/2}}\\&+\frac{1}{\left(2+\sqrt{2+\sqrt{2-2x}}\right)^{3/2}}+\frac{1}{\left(2-\sqrt{2-\sqrt{2+2x}}\right)^{3/2}}+\frac{1}{\left(2+\sqrt{2-\sqrt{2+2x}}\right)^{3/2}}\\&+\frac{1}{\left(2-\sqrt{2+\sqrt{2+2x}}\right)^{3/2}}+\frac{1}{\left(2+\sqrt{2+\sqrt{2+2x}}\right)^{3/2}}\Big]\\\notag
\end{align*}
and to let
\begin{equation}
S(x)=\lim_{n->\infty}S_n(x)\notag
\end{equation}
Note that
\begin{equation}
S(x)=(2^{-3})(S(\frac{\sqrt{2+2x}}{2})+S(\frac{\sqrt{2-2x}}{2}))\notag
\end{equation}
since
\begin{equation}
 S_{n+1}(x)=(2^{-3})(S_n(\frac{\sqrt{2+2x}}{2})+S_n(\frac{\sqrt{2-2x}}{2}))\notag
\end{equation}
\begin{equation}
\implies \lim_{n->\infty}(2^3)S_{n+1}=\lim_{n->\infty}(S_n(\frac{\sqrt{2+2x}}{2})+S_n(\frac{\sqrt{2-2x}}{2}))\notag
\end{equation}
Similarly, (11) implies
\begin{align*}
S(\frac{\sqrt{2+2x}}{2})=(2^{-3})(S(\frac{\sqrt{2+\sqrt{2+2x}}}{2})+S(\frac{\sqrt{2-\sqrt{2+2x}}}{2}))\notag
\end{align*}
and
\begin{equation}
S(x)=(2^{-6})(S(\frac{\sqrt{2+\sqrt{2+2x}}}{2})+S(\frac{\sqrt{2+\sqrt{2-2x}}}{2})+S(\frac{\sqrt{2-\sqrt{2+2x}}}{2})+S(\frac{\sqrt{2-\sqrt{2-2x}}}{2}))\notag
\end{equation}
also,
\begin{equation}
S(x)=8 S\left(1-2 x^2\right)-S\left(\sqrt{1-x^2}\right)\notag
\end{equation}
We see that 

\begin{align*}\notag
\zeta(3)&=\left(\frac{8\pi^3}{7}\right) S(\frac{\sqrt{2}}{2})\\\notag
&=\left(\frac{8\pi^3}{7}\right)\lim_{n\to\infty} S_n(\frac{\sqrt{2}}{2})\\\notag
&=\left(\frac{8\pi^3}{7}\right) \lim_{n\to\infty} \sum _{i=1}^{2^{n-2}}(2^n \sin\left(\frac{\left(2 i-1\right)\pi}{2^n}\right) )^{-3}\notag
\end{align*}
Replacing $x$ by $-x$ in (11) shows S is an even function.\\
Also,\\
\begin{align*}\notag
S(0)=\frac{1}{2^2}S(\frac{1}{\sqrt{2}})=\frac{1}{2^{-5}}(S(\frac{\sqrt{2+\sqrt{2}}}{2})+S(\frac{\sqrt{2-\sqrt{2}}}{2}))\\\notag
S_{n-1}(\frac{1}{\sqrt{2}})=4S_n(0)\implies \zeta(3)=\left(\frac{32\pi^3}{7}\right) S(0)\notag
\end{align*}
Each $S_n$ has singularities at $\pm 1$ corresponding to branch points. Using L'Hopital's rule,
\begin{align}\notag
&\lim_{x\rightarrow 1}\frac{2^{3(n-1)}S_n}{\frac{1}{(1-x)^2}}=0\\\notag
&\lim_{x\rightarrow -1}\frac{2^{3(n-1)}S_n}{\frac{1}{(1+x)^2}}=0\notag
\end{align}
Hence, $S$ has similar singularities at $\pm 1$ and is not globally well defined. There is a neighborhood of $0$ where $S$ is analytic.
We see
\small
\begin{equation}
\frac{8}{\left(2-\sqrt{2-\sqrt{2-\sqrt{2-2 x}}}\right)^{3/2}}=\frac{\left(2+\sqrt{2-\sqrt{2-\sqrt{2-2 x}}}\right)^{3/2} \left(2-\sqrt{2-\sqrt{2-2 x}}\right)^{3/2} \left(2-\sqrt{2-2 x}\right)^{3/2} (2-2 x)^{3/2}}{\left(1-x^2\right)^{3/2}}\notag
\end{equation}
Each term comprising the sum can be put into this form with denominator $(1-x^2)^{3/2}$. Note the function $(1-x^2)^{3/2}S_n$ has the same value at $0$ as $S_n$ but is locally bounded everywhere.. Hence $(1-x^2)^{3/2}S$ has no poles.\\
\begin{definition}
The individual terms of $S_n(x)$ will be refered to as elementary functions. As in Section 2, we will use a shorthand notation to refer to elementary funtions, using square brackets instead of round. E.g., [+---] would refer to $$\left(\frac{1}{2^{15}}\right)\frac{1}{\left(2+\sqrt{2-\sqrt{2-\sqrt{2-2 x}}}\right)^{3/2}}$$  Given an elementary function f, The nth conjugate of f has the same pattern of signs except that the sign in the nth position is reversed. The fist conjugate will be referred to as the conjugate and denoted $\bar f$. Note that the set of elementary functions for $S_n$ can be divided into two parts, those with positive sign in the nth position and those with negative sign in the nth position, nth conjugates. In formulas given below involving these functions $n$ refers to the term being a summand in $S_n(x)$. Note, The number of signs in the notation for an element in $S_n(x)$ is $n-2$, the depth is $n-2$ also. In $S_n$ (values, not functions) the number of signs is the same but the depth is $n-1$.
\end{definition}
\begin{proposition} 
$$S(x)=\frac{1}{8}\sum  _{i=1}^{\infty} \left(\frac{1}{\left(\pi  (2 i-1)-2 \arcsin(x)\right)^3}+\frac{1}{\left(\pi  (2 i-1)+2 \arcsin(x)\right)^3}\right)$$
\end{proposition}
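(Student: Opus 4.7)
The plan is to compute $S(x) = \lim_{n\to\infty} S_n(x)$ by passing to the limit term-by-term, using the explicit trigonometric form of each elementary function developed earlier in the section. Recall that the general correspondence established above identifies each elementary summand of $S_n(x)$ as $1/(2\sin\theta_i(n,x))^3$ where
\[
\theta_i(n,x) = \frac{1}{2^{n-2}}\left((-1)^i \arcsin(x) + \frac{(2\lfloor (2i-1)/4 \rfloor + 1)\pi}{2}\right),
\]
so that $S_n(x) = \sum_{i=1}^{2^{n-2}} 1/\bigl(2^n \sin\theta_i(n,x)\bigr)^3$ after absorbing the prefactor $1/2^{3(n-1)}$ into the inner factor $2^{n-1}$.

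Next I would reorganize the sum by pairing adjacent indices $(i = 2k-1,\, i = 2k)$ for $k = 1,\dots,2^{n-3}$. Both members of such a pair share the same floor value $\lfloor(2i-1)/4\rfloor = k-1$, so the pair consists of the two angles
\[
\theta_{2k-1}(n,x) = \frac{(2k-1)\pi/2 - \arcsin(x)}{2^{n-2}}, \qquad \theta_{2k}(n,x) = \frac{(2k-1)\pi/2 + \arcsin(x)}{2^{n-2}}.
\]
For every fixed $k$ these angles tend to $0$ as $n\to\infty$, so
\[
\lim_{n\to\infty} 2^n \sin\theta_{2k\mp 1}(n,x) = \lim_{n\to\infty} 2^n \theta_{2k\mp 1}(n,x) = 2\pi(2k-1) \mp 4\arcsin(x) = 2\bigl(\pi(2k-1) \mp 2\arcsin(x)\bigr).
\]
Cubing and pulling the resulting factor of $2^3 = 8$ into the coefficient recovers exactly the summand $1/\bigl(8(\pi(2k-1)\mp 2\arcsin(x))^3\bigr)$ of the proposed formula.

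The main obstacle is justifying the interchange of the limit in $n$ with the sum in $k$. The plan is to invoke dominated convergence on the counting measure in $k$, extending each finite sum by zero beyond $k = 2^{n-3}$. Jordan's inequality $\sin\theta \geq (2/\pi)\theta$ on $[0,\pi/2]$ yields
\[
2^n \sin\theta_{2k-1}(n,x) \;\ge\; \tfrac{2}{\pi}\cdot 4\!\left(\tfrac{(2k-1)\pi}{2} - \arcsin(x)\right) \;=\; 4(2k-1) - \tfrac{8\arcsin(x)}{\pi},
\]
and the analogous bound for $\theta_{2k}$ follows by flipping the sign of $\arcsin(x)$. For $x$ restricted to any compact subset of $(-1,1)$ the right-hand side grows linearly in $k$, so each summand is bounded by a constant multiple of $1/k^3$ uniformly in $n$; this is the required summable envelope. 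With the dominating sequence in hand the pointwise limits computed above may be summed term-by-term, and the proposition follows at once.
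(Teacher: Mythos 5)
Your proof is correct, but it takes a genuinely different route from the paper's. The paper argues through the derivatives of the elementary summands: it shows each term $f$ of $S_n(x)$ satisfies $f'=\mathrm{sign}(f)\,\tfrac{3}{2^{n-2}}f\sqrt{(4f^{2/3}-1)/(1-x^2)}$, passes (heuristically, "asymptotically as $n\to\infty$") to the limiting ODE $g'=\pm 12\,g^{4/3}/\sqrt{1-x^2}$, solves it to obtain candidate limits $\tfrac{1}{8\left(m\pi\pm 2\arcsin(x)\right)^3}$, and then fixes the odd integer $m$ for each infinite sign pattern by the tabulated correspondence; the interchange of the limit in $n$ with the (growing) sum is never addressed. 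You instead exploit the exact alternate form $2\sin\bigl(\theta_i(n,x)\bigr)$ of the elementary radicals from the preceding section, so that each summand is exactly $1/\bigl(2^n\sin\theta_i(n,x)\bigr)^3$; the per-term limit then follows from $\sin\theta\sim\theta$, and you supply the justification the paper omits, namely a Tannery/dominated-convergence argument with the envelope obtained from Jordan's inequality, which is $O(k^{-3})$ uniformly in $n$ on compact subsets of $(-1,1)$ (note $\theta_i(n,x)\in[0,\pi/2]$ for all $i\le 2^{n-2}$ and $|x|\le 1$, so Jordan's inequality does apply, and the pairing $\lfloor(4k-3)/4\rfloor=\lfloor(4k-1)/4\rfloor=k-1$ is checked correctly). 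What your approach buys is rigor and economy: no asymptotic ODE heuristic, an explicit domination step, and a clean statement that the identity holds pointwise on $(-1,1)$, consistent with the singularities of $S$ at $\pm1$. What the paper's approach buys is the extra information in its tables, i.e.\ the identification of the limiting function attached to an arbitrary infinite sign pattern, not just those occurring in $S_n(x)$. One caveat: your argument inherits the status of the sine closed form for the elementary functions, which the paper itself establishes only by example and table; it is, however, easily proved by induction from $\sqrt{2+2\sin t}=2\sin\bigl(\tfrac{\pi}{4}+\tfrac{t}{2}\bigr)$ and $\sqrt{2-2\sin t}=2\sin\bigl(\tfrac{\pi}{4}-\tfrac{t}{2}\bigr)$, so this is a presentational rather than a substantive gap.
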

\begin{proof}
We find that for terms of $S_n(x)$ (elementary terms without the leading factor of  $\frac{1}{2^{3n}}$)\\
$$f'= \textrm{sign(f)}\frac{3 f^{4/3}}{2^{n-2} \sqrt{1-x^2} {\bar{f}}^{1/3}}$$
But,
$$\bar{f}=\frac{f}{\left(4 f^{2/3}-1\right)^{3/2}}$$
$$\implies f'= \text{sign(f)}\frac{3}{2^{n-2}} f \sqrt{\frac{4 f^{2/3}-1}{1-x^2}}$$
If we distribute the factor $\frac{1}{2^{3n}}$ that ocurrs in $S_n$ to the individual terms, let $g=(1/2^{3n})f$, then
$$ g'= \textrm{sign(g)}\frac{3}{2^{n-2}} g \sqrt{\frac{2^{2n} g^{2/3}-1}{1-x^2}}$$
Asymptotically. as n goes to infinity, this looks like
$$g'=\textrm{sign(g)}12\frac{ g^{4/3}}{\sqrt{1-x^2}}$$
The next question is, what function results from letting the depth of a term like $e[x]=$ [-++++--] go to infinity. The previous result gives us the differential equation for such a limit, provided we extend the depth properly. That equation has solution
$$k(x)=-\textrm{sign}(e)\frac{27}{\left(12 \arcsin(x)+c_1\right){}^3}$$
We pause for a moment to point out how remarkable it is that all these limits of elementary functions are given here. For example,if we extend the depth of $e[x]=$ [-++++--] by adding + signs to the middle, and take that to the limit, the resulting function is 
$$k(x)=\frac{27}{\left(18\pi-12  \arcsin(x)\right){}^3}=\frac{1}{8\left(3\pi-2 \arcsin(x)\right){}^3}$$

Here is an extension of the earlier table to help extending the depth of these functions:

\begin{center}
\begin{tabular}{|c c c c c c |} 
 \hline
$\pi/512$ &$3\pi/512$ &$5\pi/512$ &$7\pi/512$ &$9\pi/512$ &$11\pi/512$ \\ 
 \hline
(-++++++)&(-+++++-)&(-++++--)&(-++++-+)&(-+++--+)&(-+++---) \\
 \hline
\end{tabular}
\bigskip
\begin{tabular}{|c c c c c c |} 
 \hline
$63\pi/512$ &$127\pi/512$ &$191\pi/512$&$251\pi/512$&$253\pi/512$ &$255\pi/512)$\\ 
 \hline
(-+-++++)& (--+++++)& (+--++++)&(+++++--)&(++++++-)&(+++++++) \\
 \hline
\end{tabular}\\

\end{center}
Note that $$\lim_{n\to\infty}\frac{1}{2^{n}\sin(\frac{2i-1)\pi}{2^n})}=\frac{1}{(2i-1)\pi}$$ Hence the function corresponding to that limit would be expected to take the value $(2i-1)\pi$ at $x=\sqrt{2}/2$.
In  the following table, the first column shows the pattern of infinite nesting, the second column gives the corresponding function, and the third column gives the function value at $x=\sqrt{2}/2$:\\
\begin{center}
\begin{tabular}{|c c c|} 
 \hline
[-+++++...+]&$\frac{1}{8 \left(\pi -2 \arcsin(x)\right)^3}$&$\frac{1}{\pi^3}$\\
 \hline
[-++++...+-]&$\frac{1}{8 \left(\pi +2 \arcsin(x)\right)^3}$&$\frac{1}{(3\pi)^3}$\\
 \hline
[-+++...+--]&$\frac{1}{8 \left(3\pi -2 \arcsin(x)\right)^3}$&$\frac{1}{(5\pi)^3}$\\
 \hline
[-+++...+-+]&$\frac{1}{8 \left(3\pi +2 \arcsin(x)\right)^3}$&$\frac{1}{(7\pi)^3}$\\
 \hline
[-+++...+--+]&$\frac{1}{8 \left(5\pi -2 \arcsin(x)\right)^3}$&$\frac{1}{(9\pi)^3}$\\
\hline
[-+++...+---]&$\frac{1}{8 \left(5\pi +2 \arcsin(x)\right)^3}$&$\frac{1}{(11\pi)^3}$\\
\hline
[-+++...+-+-]&$\frac{1}{8 \left(7\pi -2 \arcsin(x)\right)^3}$&$\frac{1}{(13\pi)^3}$\\
\hline
\end{tabular}
\end{center}
\end{proof}
Now we can write

$$S(x)=\frac{1}{8}\sum _{i=1}^{\infty} \left(\frac{1}{\left(\pi  (2 i-1)-2 \arcsin(x)\right)^3}+\frac{1}{\left(\pi  (2 i-1)+2 \arcsin(x)\right)^3}\right)$$
\begin{proposition}
There is a family of functions $A_n(x)$, $n\in\mathbb{Z}$, $n>0$
$$A_n(x)=\frac{\pi^n}{2^n-1}\sum  _{i=1}^{\infty} \left(\frac{1}{\left(\pi  (2 i-1)-2 \arcsin(x)\right)^n}+\frac{1}{\left(\pi  (2 i-1)+2 \arcsin(x)\right)^n}\right)$$
such that $$A_n(\frac{\sqrt{2}}{2})=\zeta(n)$$
\end{proposition}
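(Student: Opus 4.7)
The plan is a direct evaluation at $x = \sqrt{2}/2$, reducing the series to a familiar sum over odd reciprocal $n$th powers. The prefactor $\pi^n/(2^n-1)$ was clearly engineered to absorb precisely the factors that arise from this reduction, so no heavy machinery should be needed.

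First, I would use $\arcsin(\sqrt{2}/2) = \pi/4$, so that $2\arcsin(\sqrt{2}/2) = \pi/2$. The two denominators in each summand then become
$$\pi(2i-1) - \tfrac{\pi}{2} = \tfrac{\pi}{2}(4i-3), \qquad \pi(2i-1) + \tfrac{\pi}{2} = \tfrac{\pi}{2}(4i-1).$$
Factoring $(\pi/2)^n$ out of every term and combining it with the prefactor gives
$$A_n(\sqrt{2}/2) = \frac{\pi^n}{2^n-1}\cdot \frac{2^n}{\pi^n}\sum_{i=1}^\infty\left(\frac{1}{(4i-3)^n} + \frac{1}{(4i-1)^n}\right) = \frac{2^n}{2^n-1}\sum_{i=1}^\infty\left(\frac{1}{(4i-3)^n} + \frac{1}{(4i-1)^n}\right).$$

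Next, as $i$ ranges over the positive integers, $4i-3$ enumerates $1,5,9,13,\ldots$ while $4i-1$ enumerates $3,7,11,15,\ldots$; together these list every positive odd integer exactly once. Hence the combined inner sum equals $\sum_{k=1}^\infty (2k-1)^{-n}$. Applying the elementary identity
$$\sum_{k=1}^\infty \frac{1}{(2k-1)^n} \;=\; \left(1 - \frac{1}{2^n}\right)\zeta(n) \;=\; \frac{2^n - 1}{2^n}\,\zeta(n),$$
which follows from splitting the defining sum for $\zeta(n)$ into its odd and even parts, the prefactor $2^n/(2^n-1)$ cancels exactly, yielding $A_n(\sqrt{2}/2) = \zeta(n)$.

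The only subtlety worth noting is convergence: for $n \ge 2$ the general term is $O(i^{-n})$, so the series converges absolutely and the rearrangement above (collecting odd denominators into a single sum) is legitimate. The case $n = 1$ must be excluded, since $\zeta(1)$ diverges and the original series is not absolutely summable. There is no real obstacle in the argument; the work is entirely in recognizing that at $x = \sqrt{2}/2$ the denominators tile the odd integers times $\pi/2$, and then the normalization constant does all the remaining bookkeeping.
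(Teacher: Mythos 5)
Your proof is correct and follows essentially the same route as the paper: substitute $x=\sqrt{2}/2$ so the denominators become $\tfrac{\pi}{2}(4i-3)$ and $\tfrac{\pi}{2}(4i-1)$, recognize that these together enumerate all odd multiples of $\pi/2$, and cancel the prefactor against $\sum_{k\ge 1}(2k-1)^{-n}=\frac{2^n-1}{2^n}\zeta(n)$. Your added remark on absolute convergence for $n\ge 2$ is a small bonus the paper omits.
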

\begin{proof}

\begin{align}\notag
&\frac{\pi^n}{2^n-1}\sum  _{i=1}^{\infty} \left(\frac{1}{\left(\pi  (2 i-1)-2 \arcsin(x)\right)^n}+\frac{1}{\left(\pi  (2 i-1)+2 \arcsin(x)\right)^n}\right)|_{x=\sqrt{2}/2}\\\notag
&=\frac{\pi^n}{2^n-1}\sum  _{i=1}^{\infty} \left(\frac{1}{ \left(\pi  (2 i-1)-2(\pi/4)\right)^n }   +\frac{1}{\left(\pi  (2 i-1)+2 (\pi/4)\right)^n} \right)\\\notag
&=\frac{\pi^n}{2^n-1}\sum  _{i=1}^{\infty} \left(\frac{2^n}{\left(\pi (4 i-3)\right)^n}+\frac{2^n}{\left(\pi  (4 i-1)\right)^n} \right)\\\notag
&=\frac{\pi^n}{2^n-1}\sum  _{i=1}^{\infty} \frac{2^n}{\left(\pi (2 i-1)\right)^n}\\\notag
&=\frac{2^n}{2^n-1}\sum  _{i=1}^{\infty} \frac{1}{ (2 i-1)^n}\\\notag
\end{align}
\end{proof}
Of course, this proposition is trivial. The unexpected part is the fact that these functions are equivalent to their nested root counterparts. Is there more to be learned from Taylor series expansions?
\begin {definition}
For $Re(s)>1$
 $$ \Sigma_s(x)=\sum  _{i=1}^{\infty} \left(\frac{1}{\left(\pi  (2 i-1)-2 \arcsin(x)\right)^s}+\frac{1}{\left(\pi  (2 i-1)+2 \arcsin(x)\right)^s}\right)$$
 Note: $$S(x)=\frac{1}{8}\Sigma_3(x)$$
\end{definition}
\begin{proposition}For $Re(s)>1$
$$\zeta(s)=\frac{\pi^s}{2^s-1}\Sigma_s(\frac{\sqrt{2}}{2})$$\notag
\end{proposition}
\begin{proof}
The proof of the previous proposition actually proves this for $Re(s)>1$
\end{proof}
\begin{proposition}\label{sigmazeta}
For $Re(s)>1$
\begin{align}\notag
\Sigma_{s}(x)&=\frac{1}{2^{ s}\pi^{s}}(\zeta(s,\frac{1}{2}-\frac{1}{\pi}\arcsin(x))+\zeta(s,\frac{1}{2}+\frac{1}{\pi}\arcsin(x))\\\notag
\end{align}
\end{proposition}
\begin{proof}
\begin{align}\notag
 \Sigma_s(x)&=\sum  _{i=1}^{\infty} \left(\frac{1}{\left(\pi  (2 i-1)-2 \arcsin(x)\right)^s}+\frac{1}{\left(\pi  (2 i-1)+2 \arcsin(x)\right)^s}\right)\\\notag
 &=\frac{1}{2^s\pi^s}\sum  _{i=1}^{\infty} \left(\frac{1}{\left(i  +\frac{1}{2}- \frac{1}{\pi}\arcsin(x)\right)^s}+\frac{1}{\left(i   +\frac{1}{2}+ \frac{1}{\pi}\arcsin(x)\right)^s}\right)\\\notag
 \end{align}
\end{proof}
Note the previous proposition gives us the analytic continuation of $\Sigma_n(x)$.
\begin{proposition}
For $Re(s)<1)$
\begin{align}\notag
\Sigma_s(z)&=\frac{1}{2^s\pi^{s/2}\Gamma(\frac{s}{2})}\int_0^\infty (\Theta(\frac{1}{2}-\frac{1}{\pi}\arcsin(z),it)-1)t^{-(s+1)/2}dt\\\notag
\end{align}
where $\Theta$ is the Jacobi theta function.
\end{proposition}
\begin{proof}
We have the following well known result for $Re(s)>0$
\begin{align}\notag
\int_0^\infty (\Theta(z,it)-1)t^{s/2}\frac{dt}{t}&=\pi^{-(1-s)/2}\Gamma(\frac{1-s}{2})(\zeta(1-s,z)+\zeta(1-s,1-z))\\\notag
\end{align}
combined with the previous proposition gives the desired result.
\end{proof}
\begin{lemma}
For $n\in \mathbb{Z}^+$
\begin{align}\notag
\sin(n \theta)&=\sum_{r=0}^{\floor{(n-1)/2}}(-1)^r\binom{n}{2r+1}\cos^{n-2r-1}(\theta)\sin^{2r+1}(\theta)\\\notag
\cos(n \theta)&=\sum_{r=0}^{\floor{(n)/2}}(-1)^r\binom{n}{2r}\cos^{n-2r}(\theta)\sin^{2r}(\theta)\\\notag
\end{align}
\end{lemma}
\begin{proof}
Note that it is equivalent to say
\begin{align}\notag
\sin(n \theta)&=\sum_{r=0}^{n}(-1)^r\binom{n}{2r+1}\cos^{n-2r-1}(\theta)\sin^{2r+1}(\theta)\\\notag
\cos(n \theta)&=\sum_{r=0}^{n}(-1)^r\binom{n}{2r}\cos^{n-2r}(\theta)\sin^{2r}(\theta)\\\notag
\end{align}
Since, in the equation for $\sin(n \theta)$,  $r>\floor{(n-1)/2}\implies \binom{n}{2r+1}=0$ since then $2r+1>n$ and $r\in \mathbb{Z}$, similarly for $\cos(n \theta)$. The proof then procedes by induction. True for $n=1$. Assume true for $n$ and show for $n+1$.
\begin{align}\notag
\sin((n+1) \theta)&=\sin(n\theta+\theta)
&=\sin(n\theta)\cos(\theta)+\cos(n\theta)\sin(\theta)\\\notag
&=\sum_{r=0}^{n}(-1)^r\binom{n}{2r+1}\cos^{n-2r-1}(\theta)\sin^{2r+1}(\theta)\cos(\theta)+\sum_{r=0}^{n}(-1)^r\binom{n}{2r}\cos^{n-2r}(\theta)\sin^{2r}(\theta)\sin(\theta)\\\notag
&=\sum_{r=0}^{n}(-1)^r\binom{n}{2r+1}\cos^{n-2r}(\theta)\sin^{2r+1}(\theta)+\sum_{r=0}^{n}(-1)^r\binom{n}{2r}\cos^{n-2r}(\theta)\sin^{2r+1}(\theta)\\\notag
&=\sum_{r=0}^{n+1}(-1)^r\binom{n+1}{2r}\cos^{n-2r}(\theta)\sin^{2r+1}(\theta)\\\notag
\end{align}
by Pascal's identity.
\end{proof}
Just as Newton's generalized binomial; theorem follows from the binomial theorem, we have
\begin{corollary}
For $r\in \mathbb{C}$ with $|\cos(\theta)|>|\sin(\theta)|$
\begin{align}\notag
\sin(r \theta)&=\sum_{j=0}^{\infty}(-1)^j\binom{r}{2j+1}\cos^{r-2j-1}(\theta)\sin^{2j+1}(\theta)\\\notag
\cos(r \theta)&=\sum_{j=0}^{\infty}(-1)^j\binom{r}{2j}\cos^{r-2j}(\theta)\sin^{2j}(\theta)\\\notag
\end{align}
\begin{proof}
Note that if $f(\theta)=\sin(r \theta)$ and $g(\theta)=\cos(r \theta)$ then
\begin{align}\notag
f'&=r g\\\notag
g'&=-r f\\\notag
\end{align}
$f$ and $g$ both satisfy
$$f''=-r^2 f$$
The general solution to this differential equation is 
$$c_1 e^{i r \theta}+c_2 e^{-ir\theta}$$
with constants resolved by evaluation at specific points.
One shows that the series expansions also satisy this equation.
\begin{align}\notag
&\frac{d}{d\theta}\sum_{j=0}^{\infty}(-1)^j\binom{r}{2j+1}\cos^{r-2j-1}(\theta)\sin^{2j+1}(\theta)\\\notag
&\ =\sum_{j=0}^{\infty}(-1)^j(\binom{r}{2j+1}(-(r-2j-1)\cos^{r-2j-2}(\theta)\sin^{2j+2}(\theta)+(2j+1)\cos^{r-2j}(\theta)\sin^{2j}(\theta))\\\notag
&\ =\sum_{j=0}^{\infty}(-1)^j( r\binom{r-1}{2j+1}\cos^{r-2j-2}(\theta)\sin^{2j+2}(\theta)+r\binom{r-1}{2j}\cos^{r-2j}(\theta)\sin^{2j}(\theta))\\\notag
&\ =r\sum_{j=0}^{\infty}(-1)^j(\binom{r-1}{2j-1}\cos^{r-2j}(\theta)\sin^{2j}(\theta)+\binom{r-1}{2j}\cos^{r-2j}(\theta)\sin^{2j}(\theta))\\\notag
&\ =r\sum_{j=0}^{\infty}(-1)^j\left(\binom{r-1}{2j-1}+\binom{r-1}{2j}\right)\cos^{r-2j}(\theta)\sin^{2j}(\theta)\\\notag
&\ =r\sum_{j=0}^{\infty}(-1)^j\binom{r}{2j}\cos^{r-2j}(\theta)\sin^{2j}(\theta)\\\notag
\end{align}
The fourth line in the proof results from re-indexing $j-1\to j$ and the fact that $\binom{r-1}{-1}=0$. The last line results from Pascal's identity. The proof for the cosine expansion is similar. We require  $|\cos(\theta)|>|\sin(\theta)|$ to ensure convergence.
\end{proof}
\end{corollary}
\begin{corollary}
Assume  $|\cos(\theta)|>|\sin(\theta)|$, then
\begin{align}\notag
\frac{1}{\cos^r(\theta)}&=\frac{1}{\sin(r \theta)}\sum_{j=0}^{\infty}(-1)^j\binom{r}{2j+1}\cos^{-2j-1}(\theta)\sin^{2j+1}(\theta)\\\notag
&=\frac{1}{\cos(r \theta)}\sum_{j=0}^{\infty}(-1)^j\binom{r}{2j}\cos^{-2j}(\theta)\sin^{2j}(\theta)\\\notag
\end{align}
\end{corollary}
\begin{corollary}
Assume  $|\sin(\theta)|>|\cos(\theta)|$, then
\begin{align}\notag
\frac{1}{\sin^r(\theta)}&=\frac{1}{\sin(r(\frac{\pi}{2}- \theta))}\sum_{j=0}^{\infty}(-1)^j\binom{r}{2j+1}\sin^{-2j-1}(\theta)\cos^{2j+1}(\theta)\\\notag
&=\frac{1}{\cos(r(\frac{\pi}{2}- \theta))}\sum_{j=0}^{\infty}(-1)^j\binom{r}{2j}\sin^{-2j}(\theta)\cos^{2j}(\theta)\\\notag
\end{align}

\end{corollary}

\begin{proposition}
For  $r\in \mathbb{C}$, $0\le\theta\le\frac{\pi}{2}$
\begin{align}\notag
\frac{1}{\sin^r(\theta)}&=2^{r/2}\sum_{j=0}^\infty (-1)^j \binom{-r/2}{j}\cos^j(2\theta)\\\notag
\end{align}
\end{proposition}
\begin{proof}
\begin{align}\notag
\sin(\theta)&=\sqrt{\frac{1-\cos(2\theta)}{2}}\\\notag
\implies \\\notag
\frac{1}{\sin^r}(\theta)&=\left(\frac{1-\cos(2\theta)}{2}\right)^{-r/2}\\\notag
&=2^{r/2}\left(1-\cos(2\theta)\right)^{-r/2}\\\notag
\end{align}
Hence we can apply the generalized binomial theorem to the expression in the preceeding line.
\end{proof}

\begin{proposition}
For $Re(s)<1$
\begin{align}\notag
\Sigma_s(z)&=\frac{1}{\pi}\Gamma(1-s)\sin(\frac{\pi s}{2})\sum_{j=1}^\infty (-1)^j \frac{1}{j^{1-s}}\left( \left(1-2z^2-2\sqrt{z^2(z^2-1)}\right)^j+\left(1-2z^2+2\sqrt{z^2(z^2-1)}\right)^j   \right)\\\notag
&=\frac{1}{\pi}\Gamma(1-s)\sin(\frac{\pi s}{2})\left({\rm Li}_{1-s}\left(-1+2z^2-2\sqrt{z^2(z^2-1)}\right) +{\rm Li}_{1-s}\left(-1+2z^2+2\sqrt{z^2(z^2-1)}\right)     \right)\\\notag
\end{align}
where Li is the polylog function.
\end{proposition}
\begin{proof}
We begin with Hurwitz's formula.
\begin{align}\notag
\zeta(s,z)&=\frac{2\Gamma(1-s)}{(2\pi)^{1-s}}\left(  \sin(\frac{\pi s}{2}) \sum_{j=1}^\infty\frac{1}{j^{1-s}}\cos(2\pi j z)+\cos(\frac{\pi s}{2})\sum_{j=1}^\infty \frac{1}{j^{1-s}}\sin(2\pi j z)     \right)\\\notag
\end{align}
From Proposition \ref{sigmazeta}, substitute for the $\zeta$ functions and use the trig lemma.
\end{proof}
\begin{proposition}
\begin{align}\notag
\Sigma_{2 n-1}(x)&=\frac{2}{ \pi^{2n-1}}\alpha(2 n-1)+\sum_{i=1}^{\infty}f_{2n-1,2n-1+2i}(x)\alpha(2 n-1+2i)\\\notag
\end{align}
where
$$\alpha(n)=\frac{2^{n}-1}{2^n}\zeta(n)$$
$$f_{2n-1,2n-1+2i }(x)=\frac{2^{2 i+1}}{\pi^{2n-1+2 i}}\binom{2n-2+2i}{2n-2}\arcsin(x)^{2i}$$
\end{proposition}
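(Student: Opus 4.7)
The plan is to expand each of the two summands defining $\Sigma_{2n-1}(x)$ as a generalized binomial series in $2\arcsin(x)$, combine them so that odd powers cancel, and then swap the $i$ and $k$ summations to recognise the inner sum as a value of $\alpha$. I would set $u = 2\arcsin(x)$, which satisfies $|u|<\pi$ for $|x|<1$, so that $|u|/(\pi(2i-1))<1$ for every $i\ge 1$. The generalized binomial series
\[
(1-t)^{-(2n-1)} = \sum_{k=0}^{\infty}\binom{2n-2+k}{k}t^{k}
\]
then applies with $t = \pm u/(\pi(2i-1))$; summing the two resulting series cancels the odd-$k$ contributions and doubles the even-$k$ ones, producing
\[
\frac{1}{(\pi(2i-1)-u)^{2n-1}} + \frac{1}{(\pi(2i-1)+u)^{2n-1}} = \sum_{k=0}^{\infty}\binom{2n-2+2k}{2k}\frac{2\,u^{2k}}{(\pi(2i-1))^{2n-1+2k}}.
\]

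Next I would interchange the order of summation over $i$ and $k$. The double series is absolutely convergent on $|x|<1$: for each fixed $k$ the $i$-sum is bounded by $\zeta(2n-1)$, and the $k$-sum is majorised by the binomial expansion of $(1-|u|/\pi)^{-(2n-1)}$, so Fubini's theorem permits the swap. The inner sum then becomes $\sum_{i\ge 1}(2i-1)^{-(2n-1+2k)} = \bigl(1-2^{-(2n-1+2k)}\bigr)\zeta(2n-1+2k) = \alpha(2n-1+2k)$. Substituting $u^{2k} = 2^{2k}\arcsin(x)^{2k}$ and using the symmetry $\binom{2n-2+2k}{2k}=\binom{2n-2+2k}{2n-2}$, the coefficient of $\alpha(2n-1+2k)$ becomes
\[
\frac{2^{2k+1}}{\pi^{2n-1+2k}}\binom{2n-2+2k}{2n-2}\arcsin(x)^{2k},
\]
which is precisely $f_{2n-1,2n-1+2k}(x)$. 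Isolating the $k=0$ term gives the leading $\frac{2}{\pi^{2n-1}}\alpha(2n-1)$, and the remaining terms give the sum in the stated formula.

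The main obstacle is bookkeeping rather than anything conceptual: I will need to keep careful track of the factor of $2$ picked up from symmetrising in $u$, the factor of $2^{2k}$ picked up from converting $u^{2k}$ back to $\arcsin(x)^{2k}$, and the rewriting of the binomial coefficient, so that the final expression matches the definition of $f_{2n-1,2n-1+2i}$ literally. Convergence and the legitimacy of interchanging the summations are immediate once one observes that $\Sigma_{2n-1}(x)$ is naturally a function of $\arcsin(x)$ on the domain $|x|<1$, where all the radii-of-convergence hypotheses are satisfied.
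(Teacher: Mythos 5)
Your argument is correct, and it is tighter than the paper's own justification of this proposition. The paper's proof is essentially observational: it expands each term of $\Sigma_{2n-1}(x)$ as a power series in $x$, collects the coefficients multiplying each $\alpha(2n-1+2i)$, and then \emph{recognizes} those coefficient series as constants times $\arcsin(x)^{2i}$ (e.g.\ ``it turns out that $f_{3,5}(x)=\frac{48}{\pi^5}\arcsin(x)^2$''), with the general formula $k_{n,n+2i}=2^{2i+1}\binom{n+2i-1}{n-1}$ asserted from the observed pattern rather than derived. By working directly in the variable $u=2\arcsin(x)$, applying the negative binomial series $(1-t)^{-(2n-1)}=\sum_k\binom{2n-2+k}{k}t^k$ to each pair of terms so the odd powers cancel, and then interchanging the $i$ and $k$ sums (justified by nonnegativity/absolute convergence for $|x|<1$, $n\ge 2$), you obtain the coefficient $\frac{2^{2i+1}}{\pi^{2n-1+2i}}\binom{2n-2+2i}{2n-2}\arcsin(x)^{2i}$ in closed form, with the inner $i$-sum identified as $(1-2^{-(2n-1+2k)})\zeta(2n-1+2k)=\alpha(2n-1+2k)$. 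This is in spirit the functional version of the computation the paper defers to its later section ``A More Direct Proof Of Zeta Series Results'', where the same binomial expansion is carried out only at the special point $x=\tfrac{\sqrt2}{2}$ (i.e.\ $u=\pi/2$); your route proves the full statement about the function of $x$ in one pass, which is exactly what the paper's proof of this proposition leaves unproved.
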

\begin{proof}
When we write our the series expansion of the $\Sigma_n(x)$ we find we can form the sum into an expression of the form
$$\Sigma_{2 n-1}(x)=\frac{2}{ \pi^{2n-1}}\alpha(2 n-1)+\sum_{i=n+1}^{\infty}f_{2n-1,2 i-1}(x)\alpha(2 i-1)$$
where
$$\alpha(n)=\frac{2^{n}-1}{2^n}\zeta(n)$$
For $n$ odd, $i\ge 1$,
$$f_{n,n+2i }(x)=c_{n,n+2 i}\arcsin(x)^{2i}$$
$$c_{n,n+2 i}= \frac{1}{\pi^{n+2 i}}k_{n,n+2 i}$$
\begin{align*}\notag
k_{n,n+2 i}&=2^{2 i+1}\binom{n+2i-1}{n-1}\\\notag
\end{align*}
For example, 
$$f_{3,5}(x)=\frac{48}{\pi ^5}\arcsin(x)^{2}$$
$$c_{n,n+2}=\frac{8}{\pi^{n+2}}{{n+1}\choose2} $$
$$f_{n,n+2 }(x)=\frac{8}{\pi^{n+2}}{{n+1}\choose2} \arcsin(x)^2$$
\end{proof}
We summarize results relating to $S(x)$:
\begin{proposition}
\begin{align}\notag
S(x)&=\lim_{n\to\infty}S_n(x)\\\notag
&=\frac{1}{8}\sum  _{i=1}^{\infty} \left(\frac{1}{\left(\pi  (2 i-1)-2 \arcsin(x)\right)^3}+\frac{1}{\left(\pi  (2 i-1)+2 \arcsin(x)\right)^3}\right)\\\notag
&=-\frac{1}{128\pi^3}(\psi_2(\frac{\pi-2\arcsin(x)}{2\pi})+\psi_2(\frac{\pi+2\arcsin(x)}{2\pi}))\\\notag
&=\frac{1}{64\pi^3}(\zeta(3,\frac{\pi-2\arcsin(x)}{2\pi})+\zeta(3,\frac{\pi+2\arcsin(x)}{2\pi}))\\\notag
&=\frac{1}{64}\left(\frac{x}{(1-x^2)^{3/2}}+\frac{2}{\pi^3}\zeta(3,\frac{\pi+2\arcsin(x)}{2\pi})\right)\\\notag
&=\frac{1}{8}\Big(  \frac{7}{4\pi^3}\zeta(3)+\sum_{i=1}^{\infty}\frac{2^{2i+3}-1}{4\pi^{2i+3}}(i+1)(2i+1)\arcsin(x)^{2i}\zeta(2i+3)\Big)\\\notag
&=\frac{1}{8}\Big(\frac{7}{4\pi^3}\zeta(3)+\frac{93}{2\pi^5}\zeta(5)x^2+\left(\frac{31}{2\pi^5}\zeta(5)+\frac{1905}{4\pi^7}\zeta(7)\right)x^4+\frac{1}{8}\left(\frac{124}{15\pi^5}\zeta(5)+\frac{635}{2\pi^7}\zeta(7)+\frac{3577}{\pi^9}\zeta(9)\right)x^6\\\notag
&+\left(\frac{186}{35\pi^5}\zeta(5)+\frac{889}{4\pi^7}\zeta(7)+\frac{3577}{\pi^9}\zeta(9)+\frac{92115}{4\pi^{11}}\zeta(11)\right)x^8\\\notag
&+\left(\frac{1984}{525\pi^5}\zeta(5)+\frac{10414}{63\pi^7}\zeta(7)+\frac{46501}{15\pi^9}\zeta(9)+\frac{30705}{\pi^11}\zeta(11)+\frac{270303}{2\pi^{13}}\zeta(13)\right)x^{10}\\\notag
&+\left(\frac{121666}{945\pi^7}\zeta(7)+\frac{71029}{27\pi^9}\zeta(9)+\frac{63457}{2\pi^{11}}\zeta(11)+\frac{450505}{2\pi^{13}}\zeta(13)+\frac{2981797}{2\pi^{15}}\zeta(15)\right)x^{12}+\cdots\\\notag
\end{align}
where $\psi$ is the polygamma function, $\psi_2$ is the third derivative of the logarithm of the gamma function, and the last 2 series expansions are valid over $(-1,1)$. $\zeta(s,a)$ ir the Hurwitz zeta function.
\end{proposition}
\begin{proof}
See above.
\end{proof}

An alternate form of the sum (not equal to $S(x)$, except at $x=\sqrt{2}/2$), is 
$$\tilde{S}(x)=\frac{1}{8}\sum  _{i=1}^{\infty} \left(\frac{1}{\left(i\pi  -2 \arcsin(x)\right)^3}\right)$$
This results in
$$\tilde{S}(x)=\frac{1 }{8 \pi ^3}\zeta(3)+\frac{3}{4 \pi ^4} \zeta(4) x+\frac{3 }{ \pi^5} \zeta(5) x^2 +\frac{1}{8 \pi ^6}\left( \pi ^2 \zeta(4) + 80 \zeta(6) \right)  x^3$$
$$+\frac{1}{\pi ^7} \left(\pi ^2 \zeta(5) + 30 \zeta(7) \right)x^4+\frac{1}{\pi ^8} \left(5 \pi^2 \zeta(6) +\frac{9 \pi ^4}{160}\zeta(4) +84 \zeta(8) \right) x^5+\cdots$$
 $$\zeta(3)=\frac{8\pi^3}{7}S(\frac{\sqrt{2}}{2})=\frac{8\pi^3}{7}A(\frac{\sqrt{2}}{2})$$

\begin{proposition}
There is a family of functions $\tilde{A}_n(x)$
$$\tilde{A}_n(x)=\frac{\pi^n}{2^n-1}\sum _{i=1}^{\infty} \left(\frac{1}{\left(i\pi  -2 \arcsin(x)\right)^n}\right)$$
such that $$\tilde{A}_n(\frac{\sqrt{2}}{2})=\zeta(n)$$
\end{proposition}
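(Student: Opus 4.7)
The plan is to evaluate $\tilde{A}_n(\sqrt{2}/2)$ by direct substitution, mirroring the proof of the preceding proposition. First I would use $\arcsin(\sqrt{2}/2) = \pi/4$ to replace $2\arcsin(x)$ by $\pi/2$ inside the generic summand. Each term then becomes $1/(i\pi - \pi/2)^n = 1/((2i-1)\pi/2)^n = 2^n/((2i-1)\pi)^n$, so the series reduces to $(2^n/\pi^n)\sum_{i=1}^{\infty} 1/(2i-1)^n$.

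Next I would identify the inner series as the Dirichlet lambda function $\lambda(n)$, which satisfies $\lambda(n) = (1 - 2^{-n})\zeta(n) = (2^n - 1)\zeta(n)/2^n$; this follows from splitting $\zeta(n) = \sum_{k\ge 1} 1/k^n$ into its odd- and even-indexed subsums and pulling a factor of $2^{-n}$ from the even part. Plugging this back and combining with the prefactor $\pi^n/(2^n-1)$, the auxiliary factors cancel telescopically, yielding $\tilde{A}_n(\sqrt{2}/2) = \zeta(n)$.

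The only observation with any real content is that although $\tilde{A}_n$ sums over all positive integers $i$ whereas $A_n$ sums over $2i-1$ paired symmetrically with both signs of $2\arcsin(x)$, at $x = \sqrt{2}/2$ the arguments $i\pi - \pi/2 = (2i - 1)\pi/2$ happen to enumerate the same set of positive odd half-multiples of $\pi$ that are produced by the $A_n$ construction with $(2i-1)\pi \pm \pi/2$. So both reduce to $(2^n/\pi^n)\lambda(n)$ at this point and give the same value. I expect no obstacle here; the statement is a routine algebraic check, and as the text notes about the previous proposition, the genuinely interesting content is not the point value $\zeta(n)$ but rather the identification of $\tilde{A}_n(x)$ with its nested-root counterparts and the analytic relations (and possible Taylor-series consequences) that identification would imply.
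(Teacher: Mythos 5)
Your proposal is correct and matches the paper's intent exactly: the paper's proof is simply ``As above,'' i.e.\ the same direct substitution $2\arcsin(\tfrac{\sqrt{2}}{2})=\tfrac{\pi}{2}$ turning the summands into $2^n/((2i-1)\pi)^n$ and invoking $\sum_{i\ge 1}(2i-1)^{-n}=\frac{2^n-1}{2^n}\zeta(n)$, which is what you do. Your closing remark that the $A_n$ and $\tilde{A}_n$ constructions enumerate the same odd half-multiples of $\pi$ at this point is precisely the pairing $(4i-3)\pi/2,(4i-1)\pi/2$ used in the preceding proof, so nothing further is needed (beyond the tacit restriction $n>1$ for convergence, which the paper also leaves implicit).
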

\begin{proof}
As above.
\end{proof}

\begin{theorem}
\begin{align}\notag\zeta(n)&=\frac{1}{(2^{n-1}-1)(2^n-1)}\sum_{i=1}^{\infty}\left(\frac{2^{2i+n}-1}{2^{4i}}\right) \binom{n+2i-1}{n-1}\zeta(n+2i)\notag\\
&=\frac{1}{2^{n-1}-1}\sum_{i=1}^{\infty}\frac{1}{2^{i+1}}\binom{n+i-1}{i}\zeta(n+i)\notag
\end{align}

\end{theorem}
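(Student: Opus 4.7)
The plan is to extract both identities by Taylor-expanding the two closed-form representations $A_n$ and $\tilde A_n$ proved just above, then evaluating at $x=\tfrac{\sqrt{2}}{2}$, where $2\arcsin(x)=\tfrac{\pi}{2}$. The argument is a direct computation once the preceding propositions are in hand; no further analytic input is needed.

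For the second (simpler) identity I would start from $\tilde A_n(\tfrac{\sqrt{2}}{2})=\zeta(n)$ and expand each summand by the generalized binomial series
$$\frac{1}{(i\pi-2\arcsin(x))^n}=\frac{1}{(i\pi)^n}\sum_{k=0}^{\infty}\binom{n+k-1}{k}\Bigl(\frac{2\arcsin(x)}{i\pi}\Bigr)^{k},$$
which converges for $|x|<1$. Absolute convergence on compact subsets of $(-1,1)$ lets me swap the sums over $i$ and $k$, and the inner sum $\sum_i i^{-(n+k)}$ collapses to $\zeta(n+k)$. At $x=\tfrac{\sqrt{2}}{2}$ the ratio $2\arcsin(x)/\pi$ equals $\tfrac12$, giving
$$(2^n-1)\,\zeta(n)=\sum_{k=0}^{\infty}\binom{n+k-1}{k}\frac{\zeta(n+k)}{2^{k}}.$$
The $k=0$ term contributes $\zeta(n)$ on the right; subtracting it and dividing by $2^n-2=2(2^{n-1}-1)$ produces exactly the second displayed form.

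For the first identity I would carry out the analogous computation with $A_n$. Pair the two summands at index $i$ and apply the binomial series to $(1\mp 2\arcsin(x)/(\pi(2i-1)))^{-n}$; the odd powers of $\arcsin(x)$ cancel between the two terms, leaving only even powers:
$$\Sigma_n(x)=2\sum_{k=0}^{\infty}\binom{n+2k-1}{2k}\frac{(2\arcsin(x))^{2k}}{\pi^{n+2k}}\sum_{i=1}^{\infty}\frac{1}{(2i-1)^{n+2k}}.$$
The inner sum equals $(1-2^{-(n+2k)})\zeta(n+2k)$. Multiplying by $\pi^n/(2^n-1)$ and substituting $x=\tfrac{\sqrt{2}}{2}$ (so $(2\arcsin(x))^{2k}=\pi^{2k}/2^{2k}$) yields
$$(2^n-1)\,\zeta(n)=2\sum_{k=0}^{\infty}\binom{n+2k-1}{2k}\frac{2^{n+2k}-1}{2^{n+4k}}\,\zeta(n+2k).$$
Isolating the $k=0$ contribution, which equals $\tfrac{2(2^n-1)}{2^n}\zeta(n)$, and solving algebraically yields the factor $(2^{n-1}-1)(2^n-1)$ in the denominator; the identification $\binom{n+2i-1}{2i}=\binom{n+2i-1}{n-1}$ then matches the first displayed form.

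The only real obstacle is bookkeeping. The interchange of summation is routine by absolute convergence for $|x|<1$, and each binomial expansion is standard. What takes a little care is tracking the constants after pulling the $k=0$ term across the equation, so that the coefficient $\tfrac{2^n}{(2^n-1)(2^{n-1}-1)}$ emerges in the first identity and $\tfrac{1}{2(2^{n-1}-1)}$ in the second; both reductions follow from the simplification $1-\tfrac{2}{2^n}=\tfrac{2^{n-1}-1}{2^{n-1}}$.
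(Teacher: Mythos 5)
Your proposal is correct: both expansions, the interchange of sums, and the isolation of the $k=0$ term check out, and the bookkeeping you describe does land on the stated coefficients $\frac{1}{(2^{n-1}-1)(2^n-1)}$ and $\frac{1}{2^{k+1}(2^{n-1}-1)}$ (with $\binom{n+2i-1}{2i}=\binom{n+2i-1}{n-1}$). It is worth noting, though, how it sits relative to the paper. The proof the paper attaches to this theorem (in the section on nested root identities) works with the functions $\Sigma_{2n-1}(x)$, organizes their Taylor expansions through the quantities $\alpha(m)$ and the coefficient functions $f_{n,n+2i}(x)=\frac{2^{2i+1}}{\pi^{n+2i}}\binom{n+2i-1}{n-1}\arcsin(x)^{2i}$, and identifies those coefficients largely by pattern recognition ("it turns out that\ldots"); the paper then defers to "a more direct proof" in the next section, which expands $\frac{1}{(2(2i-1)\mp 1)^n}$ and $\frac{1}{(2i)^n(1-\frac{1}{2i})^n}$ by the binomial series, swaps the two sums, and peels off the $k=0$ term. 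Your argument is, in computational substance, that second (direct) proof: once you evaluate at $x=\frac{\sqrt2}{2}$ the expansion parameter $\frac{2\arcsin(x)}{\pi(2i-1)}$ becomes $\frac{1}{2(2i-1)}$ (respectively $\frac{1}{2i}$), and every step matches the paper's Section 7 computation term by term. What your framing adds is that it simultaneously makes the Section 6 argument rigorous: by expanding $A_n(x)$ and $\tilde A_n(x)$ with $x$ general and justifying the interchange by absolute convergence (all terms are positive at the evaluation point, so Tonelli suffices for $n\ge 2$), you derive the $\arcsin^{2k}$ coefficient structure rather than asserting it, at the cost of carrying the $\arcsin$ wrapper that ultimately cancels out.
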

\begin{proof}
A more direct proof is provided in the next section. 
For now, we focus on the functions $\Sigma_n(x)$, for odd ${n}$.
As mentioned in a previous theorem, when we write our the series expansion of the $\Sigma_n(x)$ we find we can form the sum into an expression of the form
$$\Sigma_{2 n-1}(x)=\frac{2}{ \pi^{2n-1}}\alpha(2 n-1)+\sum_{i=n+1}^{\infty}f_{2n-1,2 i-1}(x)\alpha(2 i-1)$$
where
$$\alpha(n)=\frac{2^{n}-1}{2^n}\zeta(n)$$
The sums of reciprocal powers that appear in the formula are only over odd integers, hence the above fraction of $\zeta$.
At $x=\frac{\sqrt{2}}{2}$
$$\zeta(3)=\frac{\pi^3}{7}\left(\frac{2}{ \pi^3}\alpha(3)+\sum_{i=3}^{\infty}f_{3,2 i-1}(x)\alpha(2 i-1)\right)$$
$$\implies \zeta(3)=\frac{4\pi^3}{21}\sum_{i=2}^{\infty}f_{3,2 i-1}(x)\alpha(2 i-1)$$
$$\zeta (3)=\frac{4 \pi ^3}{21}( f_{3,5}(x)\alpha (5)+ f_{3,7}(x)\alpha (7)+f_{3,9}(x)\alpha (9)+\cdots)$$
The constant term in the expansion of $\Sigma_{n}(x)$ will be $\frac{2}{\pi^n}\alpha(n)=\frac{2}{\pi^n}\frac{2^{n}-1}{2^n}\zeta(n)$. Hence at $x=\frac{\sqrt{2}}{2}$
$$\zeta(2 n-1)=\frac{2^{2n-2}\pi^{2n-1}}{(2^{2n-2}-1)(2^{2n-1}-1)}\sum_{i=n+1}^{\infty}f_{2n-1,2 i-1}(x)\alpha(2 i-1)$$
Here are the first few $f_{3,i}$:
$$f_{3,5}(x)=\frac{1}{\pi ^5}\left(48 x^2+16 x^4+\frac{128x^6}{15}+\frac{192 x^8}{35}+\frac{2048x^{10}}{525}+\frac{2048 x^{12}}{693}+\frac{16384 x^{14}}{7007}+\cdots\right)$$
$$f_{3,7}(x)=\frac{1}{\pi ^7}\left(480 x^4+320 x^6+224 x^8+\frac{10496 x^{10}}{63}+\frac{122624 x^{12}}{945}+\frac{120832 x^{14}}{1155}+\cdots\right)$$
$$f_{3,9}(x)=\frac{1}{\pi ^9}\left(3584 x^6+3584x^8+\frac{46592x^{10}}{15}+\frac{71168 x^{12}}{27}+\frac{303104 x^{14}}{135}+\cdots\right)$$
It turns out that
$$f_{3,5}(x)=\frac{48}{\pi ^5}\arcsin(x)^{2}$$
As previously mentioned, for $n$ odd, $i\ge 1$,
$$f_{n,n+2i }(x)=c_{n,n+2 i}\arcsin(x)^{2i}$$
$$c_{n,n+2}=\frac{8}{\pi^{n+2}}{{n+1}\choose2} $$
$$f_{n,n+2 }(x)=\frac{8}{\pi^{n+2}}{{n+1}\choose2} \arcsin(x)^2$$
Each constant $c_{n,n+2 i}$ is of the form$ \frac{1}{\pi^{n+2 i}}k_{n,n+2 i}$ where the $k$ are integers. 
\begin{align*}\notag
k_{n,n+2 i}&=2^{2 i+1}\binom{n+2i-1}{n-1}\\\notag
\end{align*}
So we may write

$$\alpha(n)=\frac{\pi^{n}}{2(2^{n-1}-1)}\sum_{i=1}^{\infty} \frac{1}{\pi^{n+2i}}k_{n,n+2i}\arcsin(x)^{2i}\alpha(n+2i)$$
when that sum is evaluated at $x=\frac{\sqrt{2}}{2}$.
\begin{align*}\notag
\alpha(n)&=\frac{1}{2(2^{n-1}-1)}\sum_{i=1}^{\infty} \frac{1}{\pi^{2i}}k_{n,n+2i}\left({\frac{\pi}{4}}\right)^{2i}\alpha(n+2i)\\\notag
&=\frac{1}{2(2^{n-1}-1)}\sum_{i=1}^{\infty} \frac{1}{2^{4 i}}k_{n,n+2i}\alpha(n+2i)\\\notag
&=\frac{1}{2(2^{n-1}-1)}\sum_{i=1}^{\infty} \frac{1}{2^{2 i-1}}\binom{n+2i-1}{n-1}\alpha(n+2i)\notag
\end{align*}
Applying the definition of $\alpha(n)$ gives the first formula for $\zeta(n)$. We go through the same steps with the $\tilde{A}_n(x)$ and arrive at the second formula.

\end{proof}

\section{ A More Direct Proof Of Zeta Series Results}
Once one sees the preceeding derivation, it becomes clear that we can produce $\zeta$ series relationships by expanding sums of the form $$\sum_{i}\frac{1}{((ai+b)-\frac{c}{4})^n}$$ for suitable constants $a,b,c$. For the first theorem below we have $a=4$, $b=2$, $c=4$.
\begin{lemma}
$$(1+x)^{-1}=\sum_{k=0}^{\infty}\binom{n+k-1}{k}(-1)^{k}x^{k}$$
\end{lemma}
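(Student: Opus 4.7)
I read the statement as the standard negative binomial series, i.e. I will treat the left-hand side as $(1+x)^{-n}$; the exponent $-1$ in the displayed formula appears to be a typo, since otherwise the summation index $n$ on the right would be free. The plan is to derive the identity from the generalized binomial theorem applied at exponent $-n$, valid for $|x|<1$.

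First I would recall the generalized binomial series $(1+y)^{\alpha}=\sum_{k=0}^{\infty}\binom{\alpha}{k}y^{k}$ and specialize to $\alpha=-n$ with $y=x$. The main computation is then evaluating the generalized binomial coefficient
\[
\binom{-n}{k}=\frac{(-n)(-n-1)\cdots(-n-k+1)}{k!},
\]
factoring $(-1)^{k}$ out of each of the $k$ factors in the numerator and rewriting the remaining product as $n(n+1)\cdots(n+k-1)=(n+k-1)!/(n-1)!$. This yields $\binom{-n}{k}=(-1)^{k}\binom{n+k-1}{k}$, and substituting back gives the claimed expansion.

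As a cleaner alternative I would instead verify the positive form $(1-x)^{-n}=\sum_{k=0}^{\infty}\binom{n+k-1}{k}x^{k}$ directly by induction on $n$: the base case $n=1$ is the geometric series, and the inductive step follows from differentiating the identity for exponent $n-1$ (or by multiplying two such series and using the Vandermonde convolution $\sum_{j=0}^{k}\binom{n-1+j}{j}=\binom{n+k}{k}$). Replacing $x$ by $-x$ then produces the stated formula.

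I expect no real obstacle here; the only point requiring care is the sign bookkeeping in converting $\binom{-n}{k}$ into $(-1)^{k}\binom{n+k-1}{k}$, and the convergence caveat $|x|<1$, which should be stated explicitly since the lemma will presumably be applied in the next section to expand factors of the form $(1+\text{something small})^{-n}$ inside the sums $\sum_{i}\bigl((ai+b)-c/4\bigr)^{-n}$ that motivate the section.
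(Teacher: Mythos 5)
Your reading of the statement (exponent $-n$, with the displayed $-1$ a typo) and your derivation via the generalized binomial theorem, computing $\binom{-n}{k}=(-1)^{k}\binom{n+k-1}{k}$, is exactly the route the paper takes — its proof consists of the single citation ``(Binomial Theorem)'' — so your proposal simply supplies the details that citation leaves implicit. The convergence remark $|x|<1$ is a sensible addition, since in the next section the lemma is applied to arguments of the form $\pm\frac{1}{2(2i-1)}$, which indeed lie in $(-1,1)$.
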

\begin{proof}
(Binomial Theorem)
\end{proof}
Notice, the next theorem gives $\zeta$ of odd integer values in terms of $\zeta$ of odd integer values, and $\zeta$ of even values in terms of zeta of even values. 
{\begin{theorem}\begin{align}\notag\zeta(n)&=\frac{1}{(2^{n-1}-1)(2^n-1)}\sum_{i=1}^{\infty}\left(\frac{2^{2i+n}-1}{2^{4i}}\right) \binom{n+2i-1}{n-1}\zeta(n+2i)\\\notag
\end{align}
\end{theorem}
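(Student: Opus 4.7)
The plan is to bypass the nested‐root machinery entirely and derive the identity by expanding a single double sum in two different ways, exactly as the author suggests at the start of the section. Set
\[
\Sigma_n(y)=\sum_{i=1}^{\infty}\left(\frac{1}{((2i-1)\pi-y)^n}+\frac{1}{((2i-1)\pi+y)^n}\right),
\]
and the strategy is to compute $\Sigma_n(\pi/2)$ in two ways.

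First I would compute $\Sigma_n(\pi/2)$ directly. Substituting $y=\pi/2$ folds the two families $(2i-1)\pi\pm\pi/2$ into the single family of all odd multiples of $\pi/2$, giving
\[
\Sigma_n(\pi/2)=\frac{2^n}{\pi^n}\sum_{m\text{ odd}}\frac{1}{m^n}=\frac{2^n}{\pi^n}\cdot\frac{2^n-1}{2^n}\zeta(n)=\frac{2^n-1}{\pi^n}\zeta(n).
\]
Second, I would expand each summand as a power series in $y$. Apply the binomial lemma to $((2i-1)\pi\pm y)^{-n}=((2i-1)\pi)^{-n}(1\pm y/((2i-1)\pi))^{-n}$; the odd powers of $y$ cancel when the two conjugate terms are added, leaving
\[
\Sigma_n(y)=2\sum_{k=0}^{\infty}\binom{n+2k-1}{n-1}\frac{y^{2k}}{\pi^{n+2k}}\sum_{i=1}^{\infty}\frac{1}{(2i-1)^{n+2k}}
=2\sum_{k=0}^{\infty}\binom{n+2k-1}{n-1}\frac{y^{2k}}{\pi^{n+2k}}\cdot\frac{2^{n+2k}-1}{2^{n+2k}}\zeta(n+2k).
\]

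Then I would set $y=\pi/2$, so that $y^{2k}/\pi^{n+2k}=1/(2^{2k}\pi^n)$, giving
\[
\Sigma_n(\pi/2)=\frac{2}{\pi^n}\sum_{k=0}^{\infty}\binom{n+2k-1}{n-1}\frac{2^{n+2k}-1}{2^{n+4k}}\zeta(n+2k).
\]
Equating this with the direct evaluation, multiplying by $\pi^n/2$, isolating the $k=0$ term (which contributes $\frac{2^n-1}{2^n}\zeta(n)$), and moving it to the left gives
\[
\left(\frac{2^n-1}{2}-\frac{2^n-1}{2^n}\right)\zeta(n)=\frac{(2^n-1)(2^{n-1}-1)}{2^n}\zeta(n)=\sum_{k=1}^{\infty}\binom{n+2k-1}{n-1}\frac{2^{n+2k}-1}{2^{4k}\cdot 2^n}\zeta(n+2k),
\]
and solving for $\zeta(n)$ yields exactly the stated identity.

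The only nontrivial obstacle is justifying the interchange of the two sums in the expansion step. The hard part will be convergence: for $n\ge 2$ and $|y|<\pi$ the series converges absolutely because $\binom{n+2k-1}{n-1}(|y|/\pi)^{2k}$ is dominated by a geometric series and $(2^{n+2k}-1)/2^{n+2k}<1$, so Fubini/Tonelli applies and the double sum may be reordered. One just needs to remark that $y=\pi/2$ lies safely inside the radius of convergence $\pi$, so the final substitution is legitimate and no boundary analysis is required.
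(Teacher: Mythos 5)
Your proposal is correct and is essentially the paper's own ``more direct'' argument: the paper likewise writes the odd-index zeta sum as $\sum_i\bigl[(2(2i-1)-1)^{-n}+(2(2i-1)+1)^{-n}\bigr]$, applies the same binomial lemma, interchanges the two sums, and isolates the $k=0$ term, and your parametrized $\Sigma_n(y)$ evaluated at $y=\pi/2$ reproduces exactly that computation (your absolute-convergence/Fubini remark supplies the justification the paper leaves implicit). It is worth noting that your bookkeeping lands on the stated prefactor $(2^{n-1}-1)(2^n-1)$, which is the correct one; the final displayed line of the paper's own derivation contains a slip, showing $(2^n-1)(2^{2n-1}-1)$ instead.
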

\begin{proof}

$$x$$
\begin{align}\notag
\zeta(n)&=\left(\sum_{i=0}^{\infty}\frac{1}{2^{ni}}\right)\left(\sum_{i=1}^{\infty}\frac{1}{(2i-1)^n}\right)\\\notag
&=\left(\frac{1}{1-\frac{1}{2^n}}\right)\left(\sum_{i=1}^{\infty}\frac{1}{(2i-1)^n}\right)\\\notag
&=\left(\frac{2^n}{2^n-1}\right)\left(\sum_{i=1}^{\infty}\frac{1}{(2i-1)^n}\right)\\\notag
&=\frac{2^n}{2^n-1}\sum_{i=1}^{\infty}\left(\frac{1}{((4i-2)-1)^n}+\frac{1}{((4i-2)+1)^n}\right)\\\notag
&=\frac{2^n}{2^n-1}\sum_{i=1}^{\infty}\left(\frac{1}{(2(2i-1)-1)^n}+\frac{1}{(2(2i-1)+1)^n}\right)\\\notag
&=\frac{1}{2^n-1}\sum_{i=1}^{\infty}   \frac{1}{(2(2i-1))^n}   \left (    \frac{1}{  (  1-\frac{1}{2(2i-1)}  )^n  }  +  \frac{1}{ (  1+\frac{1}{2(2i-1)}  )^n  }   \right )\\\notag
&=\frac{1}{2^n-1}\sum_{i=1}^{\infty}\frac{1}{(2(2i-1))^n}\sum_{k=0}^{\infty}\binom{n+k-1}{k}\left(\frac{1}{(2(2i-1))^k}+\frac{1}{(-1)^{k}(2(2i+1))^k}\right)\\\notag
&=\frac{1}{2^n-1}\sum_{i=1}^{\infty}\left(\frac{2}{(2(2i-1))^n}\sum_{k=0}^{\infty}\binom{n+2k-1}{2k}\frac{1}{(2(2i-1))^{2k}}\right)\\\notag
&=\frac{1}{2^n-1}\sum_{i=1}^{\infty}\left(2\sum_{k=0}^{\infty}\binom{n+2k-1}{2k}\frac{1}{(2(2i-1))^{n+2k}}\right)\\\notag
&=\frac{1}{2^n-1}\sum_{i=1}^{\infty}\left(2\sum_{k=0}^{\infty}\binom{n+2k-1}{2k}\frac{1}{2^{n+2k}}\frac{1}{(2i-1)^{n+2k}}\right)\\\notag
&=\frac{2}{2^n-1}\sum_{k=0}^{\infty}\binom{n+2k-1}{2k}\frac{2^{n+2k}-1}{2^{2n+4k}}\zeta(n+2k)\\\notag
&=\frac{1}{(2^n-1)2^{2n-1}}\sum_{k=0}^{\infty}\binom{n+2k-1}{2k}\frac{2^{n+2k}-1}{2^{4k}}\zeta(n+2k)\\\notag
&=\frac{1}{(2^n-1)2^{2n-1}}\left((2^n-1)\zeta(n)+\sum_{k=1}^{\infty}\frac{2^{n+2k}-1}{2^{4k}}\binom{n+2k-1}{2k}\zeta(n+2k)\right)\\\notag
&=\frac{1}{(2^n-1)(2^{2n-1}-1)}\left(\sum_{k=1}^{\infty}\frac{2^{n+2k}-1}{2^{4k}}\binom{n+2k-1}{n-1}\zeta(n+2k)\right)\\\notag
\end{align}

\end{proof}
\begin{corollary}\label{ocor}
\begin{align}
\zeta(3)&=\frac{1}{21}\sum_{j=2}^{\infty}\frac{2^{2j+1}-1}{2^{2j-3}\left(2^{2j}-1\right)}(2j)(2j-1)\eta(2j+1)\\\notag
\end{align}
\end{corollary}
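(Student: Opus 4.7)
The plan is to deduce this corollary as a direct specialization of the preceding theorem, namely
\[
\zeta(n)=\frac{1}{(2^{n-1}-1)(2^n-1)}\sum_{i=1}^{\infty}\left(\frac{2^{2i+n}-1}{2^{4i}}\right)\binom{n+2i-1}{n-1}\zeta(n+2i),
\]
specialized to $n=3$, followed by an index shift and a conversion from $\zeta$ values to $\eta$ values at odd arguments.

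First I would set $n=3$. The prefactor becomes $\frac{1}{(2^{2}-1)(2^{3}-1)}=\frac{1}{21}$, which matches the constant in the target formula. The binomial coefficient becomes $\binom{2i+2}{2}=\frac{(2i+2)(2i+1)}{2}$. Combining with the $2^{-4i}$ denominator and absorbing the $\frac{1}{2}$ yields
\[
\zeta(3)=\frac{1}{21}\sum_{i=1}^{\infty}\frac{2^{2i+3}-1}{2^{4i+1}}(2i+2)(2i+1)\,\zeta(2i+3).
\]

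Next I would reindex with $j=i+1$, so that $i=j-1$ and the sum starts at $j=2$. Under this substitution one has $2i+3=2j+1$, $(2i+2)(2i+1)=(2j)(2j-1)$, $2^{2i+3}-1=2^{2j+1}-1$, and $2^{4i+1}=2^{4j-3}$, giving
\[
\zeta(3)=\frac{1}{21}\sum_{j=2}^{\infty}\frac{2^{2j+1}-1}{2^{4j-3}}(2j)(2j-1)\,\zeta(2j+1).
\]

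Finally I would invoke the standard relation $\eta(s)=(1-2^{1-s})\zeta(s)$, equivalently $\zeta(s)=\frac{2^{s-1}}{2^{s-1}-1}\eta(s)$, applied at $s=2j+1$ to replace $\zeta(2j+1)=\frac{2^{2j}}{2^{2j}-1}\eta(2j+1)$. Multiplying the extra factor $2^{2j}$ into the numerator and $2^{2j}-1$ into the denominator cancels four powers of two against $2^{4j-3}$, leaving $2^{2j-3}$ in the denominator and producing exactly
\[
\zeta(3)=\frac{1}{21}\sum_{j=2}^{\infty}\frac{2^{2j+1}-1}{2^{2j-3}(2^{2j}-1)}(2j)(2j-1)\,\eta(2j+1).
\]
The entire argument is bookkeeping: no obstacle arises beyond making sure the index shift and the $\zeta\leftrightarrow\eta$ substitution are applied consistently, since the deep content has already been established in the previous theorem.
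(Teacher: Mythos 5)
Your proposal is correct and is essentially the paper's own (implicit) route: the corollary is stated immediately after the theorem precisely as the specialization $n=3$, with the reindexing $j=i+1$ and the conversion $\zeta(2j+1)=\frac{2^{2j}}{2^{2j}-1}\eta(2j+1)$, and all of your bookkeeping checks out. The only nitpick is verbal: the cancellation removes a factor $2^{2j}$ (not ``four powers of two'') from $2^{4j-3}$, which is exactly what your final expression $2^{2j-3}\left(2^{2j}-1\right)$ already reflects.
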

Next, $a=2$, $b=0$, $c=4$
\begin{theorem}
\begin{align}\notag
\zeta(n)&=\frac{1}{2^{n-1}-1}\sum_{i=1}^{\infty}\frac{1}{2^{i+1}}\binom{n+i-1}{i}\zeta(n+i)\\\notag
\end{align}
\end{theorem}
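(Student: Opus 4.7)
The plan is to mirror the proof of the previous theorem, but with the single expansion centre $2i$ instead of the pair of centres $2i\pm 1$. The previous theorem came from writing $\tfrac{1}{(2(2i-1)\pm 1)^n}$ as $(2(2i-1))^{-n}(1\pm \tfrac{1}{2(2i-1)})^{-n}$; here I want to do the same trick with $\tfrac{1}{(2i-1)^n}=(2i)^{-n}(1-\tfrac{1}{2i})^{-n}$, which corresponds to the parameters $a=2,b=0,c=4$ flagged by the author in the lead-in remark. So first I would invoke the standard splitting
\[
\sum_{i=1}^{\infty}\frac{1}{(2i-1)^n}\;=\;\Bigl(1-\tfrac{1}{2^n}\Bigr)\zeta(n)\;=\;\frac{2^n-1}{2^n}\,\zeta(n),
\]
which is immediate from separating the sum $\zeta(n)=\sum i^{-n}$ into odd and even indices.

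Second, I would apply the binomial-series lemma quoted just above the theorem. For $i\ge 1$ we have $|1/(2i)|\le 1/2<1$, so
\[
\frac{1}{(2i-1)^n}=\frac{1}{(2i)^n}\sum_{k=0}^{\infty}\binom{n+k-1}{k}\frac{1}{(2i)^{k}}=\sum_{k=0}^{\infty}\binom{n+k-1}{k}\frac{1}{(2i)^{n+k}}.
\]
All terms are non-negative, so Tonelli lets me swap the $i$- and $k$-sums after summing over $i\ge 1$, giving
\[
\frac{2^n-1}{2^n}\,\zeta(n)=\sum_{k=0}^{\infty}\binom{n+k-1}{k}\frac{1}{2^{n+k}}\,\zeta(n+k).
\]

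Third, I would multiply by $2^{n}$, isolate the $k=0$ contribution (which equals $\binom{n-1}{0}\zeta(n)=\zeta(n)$) on the right, move it to the left, and divide by the resulting factor $2(2^{n-1}-1)=2^n-2$ to land on
\[
\zeta(n)=\frac{1}{2^{n-1}-1}\sum_{k=1}^{\infty}\frac{1}{2^{k+1}}\binom{n+k-1}{k}\zeta(n+k),
\]
which is the stated identity after renaming $k\mapsto i$.

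There is really no hard step: the only things that need checking are the convergence of the binomial series (which holds termwise since $1/(2i)<1$ for $i\ge 1$) and the exchange of the two sums (Tonelli, since every summand is positive). If anything, the mild subtlety is that I silently used $n\ge 2$ so that $2^{n-1}-1\ne 0$; for $n=1$ the original left-hand odd-reciprocal series diverges anyway, and for $n\ge 2$ every $\zeta(n+k)$ on the right is finite and the outer factor is well-defined, so the identity is valid on the intended range.
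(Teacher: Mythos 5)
Your proposal is correct and follows essentially the same route as the paper: restrict to the odd-index sum, expand $\frac{1}{(2i-1)^n}=\frac{1}{(2i)^n}\left(1-\frac{1}{2i}\right)^{-n}$ by the binomial series, interchange the sums, and solve for $\zeta(n)$ after peeling off the $k=0$ term. The only differences are cosmetic additions on your side (explicit Tonelli justification and the $n\ge 2$ caveat), which the paper leaves implicit.
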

\begin{proof}
\begin{align}\notag
\zeta(n)&=\sum_{i=1}^{\infty}\frac{1}{i^n}\\\notag
&=\left(\frac{2^n}{2^n-1}\right)\sum_{i=1}^{\infty}\frac{1}{(2i-1)^n}\\\notag
&=\left(\frac{2^n}{2^n-1}\right)\sum_{i=1}^{\infty}\frac{1}{(2i(1-\frac{1}{2i}))^n}\\\notag
&=\left(\frac{2^n}{2^n-1}\right)\sum_{i=1}^{\infty}\sum_{k=0}^{\infty}\binom{n+k-1}{k}\frac{1}{(2i)^{n+k}}\\\notag
&=\left(\frac{1}{2^n-1}\right)\sum_{k=0}^{\infty}\frac{1}{2^k}\binom{n+k-1}{k}\zeta(n+k)\\\notag
&=\left(\frac{1}{2^n-1}\right)\left(\zeta(n)+\sum_{k=1}^{\infty}\frac{1}{2^k}\binom{n+k-1}{k}\zeta(n+k)\right)\\\notag
&=\left(\frac{1}{2(2^{n-1}-1)}\right)\sum_{k=1}^{\infty}\frac{1}{2^k}\binom{n+k-1}{k}\zeta(n+k)\\\notag
&=\frac{1}{2^{n-1}-1}\sum_{k=1}^{\infty}\frac{1}{2^{k+1}}\binom{n+k-1}{k}\zeta(n+k)\\\notag
\end{align}
\end{proof}
\section{Infinite Linear Systems Of Equations}
Let
\begin{align}\notag
\bar A(i,j)&=\frac{2^{2(i+j)+1}-1}{2^{4j}(2^{2i}-1)(2^{2i+1}-1)} \binom{2(i+j)}{2i}\\\notag
A(i,j)&=\frac{1}{2^{2 j}(2^{2i-1}-1)} \binom{2(i+j)}{n-1}\\\notag
\end{align}
then
\begin{align}\notag
\alpha(n)&=\sum_{i=1}^{\infty} A((n-1)/2,i)\alpha(n+2i)\\\notag
\zeta(n)&=\sum_{i=1}^{\infty}\bar A((n-1)/2,i)\zeta(n+2i)\notag
\end{align}

We have an infinite system of linear equations $K A=0$, where $K$ an infinte upper triangular matrix and  $A$ is the list $(\alpha(3),\alpha(5),\alpha(7),...)$.
Here is a finite approximation
\begin{equation}\notag
\left(
\begin{array}{cccccc}
 1 & -A(1,1) & -A(1,2) & -A(1,3) & -A(1,4) & -A(1,5) \\
 0 & 1 & -A(2,1) & -A(2,2) & -A(2,3) & -A(2,4) \\
 0 & 0 & 1 & -A(3,1) & -A(3,2) & -A(3,3) \\
 0 & 0 & 0 & 1 & -A(4,1) & -A(4,2) \\
 0 & 0 & 0 & 0 & 1 & -A(5,1) \\
 0 & 0 & 0 & 0 & 0 & 1 \\
\end{array}
\right)
\left(
\begin{array}{c}
\alpha (3)\\
\alpha (5)\\
\alpha (7)\\
\alpha (9)\\
\alpha (11)\\
\alpha (13)\\
\end{array}
\right)=
\left(
\begin{array}{c}
 A(1,6)\alpha (15)\\
 A(2,5)\alpha (15)\\
 A(3,4)\alpha (15)\\
A(4,3)\alpha (15)\\
A(5,2)\alpha (15)\\
 A(6,1)\alpha (15)\\
\end{array}
\right)
\end{equation}
We have another set of equations where, in the above set we substitute $\bar{A}$ for $A$ and $\zeta$ for $\alpha$.
Since $\lim_{n\to\infty}\alpha(n)=1$, we replace $\alpha(15)$ with $1$.
So we have a sequence of equations of the form
$$V_n=K_n A_n$$ with solution vectors $X_n$. Call the above, with $\alpha(15)$ replaced with $1$,  $$V_5=K_5 A_5$$

 Hence
\begin{equation}\notag
\left(
\begin{array}{c}
\alpha (3)\\
\alpha (5)\\
\alpha (7)\\
\alpha (9)\\
\alpha (11)\\
\alpha (13)\\
\end{array}
\right)=
\left(
\begin{array}{cccccc}
  1 & -A(1,1) & -A(1,2) & -A(1,3) & -A(1,4) & -A(1,5) \\
 0 & 1 & -A(2,1) & -A(2,2) & -A(2,3) & -A(2,4) \\
 0 & 0 & 1 & -A(3,1) & -A(3,2) & -A(3,3) \\
 0 & 0 & 0 & 1 & -A(4,1) & -A(4,2) \\
 0 & 0 & 0 & 0 & 1 & -A(5,1) \\
 0 & 0 & 0 & 0 & 0 & 1 \\
\end{array}
\right)^{-1}
\left(
\begin{array}{c}
 A(1,6)\\
 A(2,5)\\
 A(3,4)\\
A(4,3)\\
A(5,2)\\
 A(6,1)\\
\end{array}
\right)
\end{equation}
\\

The $\alpha(3)$ approximation equals the first term of the solution vector $X_n$. The first term of the solution vector $X_n$ is also the last term in the first row of the matrix $K_{n+1}^{-1}$. 
\begin{equation}\notag
X_n(1)=K_{n+1}^{-1}(1,n+1)\notag
\end{equation}
In other words, the first row of $K^{-1}$ is the sequence of approximations to $\alpha(3)$.\\
Let
\begin{align}\notag
B(1,n)&=\sum _{i=2}^{n}A(1,i-1)\\\notag
B(2,n)&=\sum _{i=2}^{n-1} A(1,i-1) A(i,n-i)\\\notag
B(3,n)&=\sum _{i=2}^{n-2} \left(\sum _{j=1}^{n-i-1} A(1,i-1) A( i,j) A(i+j,n-i-j)\right)\\\notag
B(4,n)&=\sum _{i=2}^{n-3} \left(\sum _{j=1}^{n-i-1} \left(\sum _{k=1}^{n-i-j-1} A(1,i-1) A( i,j) A(i+j,k) A(i+j+k,n-i-j-k)\right)\right)\\\notag
\vdots\\\notag
\end{align}
And we define
\begin{align}\notag
U(1)&=1\\\notag
U(n)&=\sum_{i=1}^{n-1}B(i,n), \text{ for }n\ge2\\\notag
\end{align}
Then
\begin{equation}\notag
\left(
\begin{array}{cccccc}
U(1)&U(2)&U(3)&U(4)&U(5)&U(6)\\
\end{array}
\right)
\end{equation}
is the first row of $K_5^{-1}$.

The infinite sequence of $U(i)$ form the first row of $K^{-1}$
But this approach has a problem. Notice the second to last row of $K_n$ corresponds to $\alpha(2 n+1)=A(n,1)\alpha(2n+3)+A(n,2)\alpha(2n+5)$. It only uses the2 terms of the series expansion for $\alpha(2n+1)$ and that error compounds through the Gaussian elimination.\\
One way around this is to instead look at equations of the form $$V_n=K_n B_n$$ where, for example,

\begin{equation}\notag
B_5=
\left(
\begin{array}{c}
\alpha (15) A(1,6)+\alpha (17) A(1,7)+\alpha (19) A(1,8)+\alpha (21) A(1,9)+\alpha (23) A(1,10)\\
\alpha (15) A(2,5)+\alpha (17) A(2,6)+\alpha (19) A(2,7)+\alpha (21) A(2,8)+\alpha (23) A(2,9)\\
\alpha (15) A(3,4)+\alpha (17) A(3,5)+\alpha (19) A(3,6)+\alpha (21) A(3,7)+\alpha (23) A(3,8)\\
\alpha (15) A(4,3)+\alpha (17) A(4,4)+\alpha (19) A(4,5)+\alpha (21) A(4,6)+\alpha (23) A(4,7)\\
\alpha (15) A(5,2)+\alpha (17) A(5,3)+\alpha (19) A(5,4)+\alpha (21) A(5,5)+\alpha (23) A(5,6)\\
\alpha (15) A(6,1)+\alpha (17) A(6,2)+\alpha (19) A(6,3)+\alpha (21) A(6,4)+\alpha (23) A(6,5)\\
\end{array}
\right)
\end{equation}
$B_n$ would have $n$ rows with sums of $n$ terms on each row.

$$\alpha(3)=\lim_{n\to\infty} \sum_{i=1}^{n}\prod_{j=1}^{n}U_iA(i,n+j)\alpha(2(n+i+j+1))$$

but since $\lim_{n\to\infty}\alpha(n)=1$. We can replace this with
\begin{equation}\notag
B_5=
\left(
\begin{array}{c}
 A(1,6)+ A(1,7)+ A(1,8)+ A(1,9)+ A(1,10)\\
 A(2,5)+ A(2,6)+ A(2,7)+ A(2,8)+ A(2,9)\\
 A(3,4)+ A(3,5)+ A(3,6)+ A(3,7)+ A(3,8)\\
 A(4,3)+ A(4,4)+ A(4,5)+ A(4,6)+ A(4,7)\\
 A(5,2)+ A(5,3)+ A(5,4)+ A(5,5)+ A(5,6)\\
 A(6,1)+ A(6,2)+ A(6,3)+ A(6,4)+ A(6,5)\\
\end{array}
\right)
\end{equation}

$$\alpha(3)=\lim_{n\to\infty} \sum_{i=1}^{n}\prod_{j=1}^{n}U_iA(i,n+j)$$

\section{{\textbf{Partial List Of Results}}}

\begin{flalign}\notag
\zeta(3)&=\frac{2}{7}\int_{0}^{\pi/2}(x(\pi-x)) \csc\left( x\right)dx\\\notag
\zeta(3)&=\frac{1}{7}\pi^2\left(  1+\sum_{j=1}^{\infty}(-1)^j \left(  (2j+1)+2j(j+1)\ln\left(\frac{j}{j+1}\right)\right) \right)\\\notag
&=\frac{1}{7}\pi^2\left(  1+2\sum_{j=1}^{\infty} \left(        1+2j\left(   \ln\left(\left(\frac{2j-1}{2j+1}\right)\left(\frac{4j^2}{4j^2-1}\right)^{2j}\right) \right)       \right)        \right)\\\notag
&=\frac{2}{7}\pi^2\lim_{n\to\infty}\sum_{j=1}^{n}(-1)^{j}\frac{j(n!)^2}{(n-j)!(n+j)!}\ln\left(    \left(\frac{2j-1}{2j+1}\right)     \left(  \frac{4j^2}{4j^2-1}\right)^{j}\right)\\\notag
&=\frac{4}{7}\pi^2\lim_{n\to\infty}\sum_{j=2}^{n-1}(-1)^{j}\frac{(n!)^2j^2}{(n-j)!(n+j)!}\ln(j)\\\notag
\zeta(3)&=\frac{80\pi^2}{280+3\pi^2}\left(\frac{\pi^2}{144}-\frac{\ln(2)}{6}+\frac{1}{2}+\sum_{j=4}^{\infty}(-1)^{j}\frac{1}{ (j+1)(j+2)}\eta(j)\right)\\\notag
&=\frac{80\pi^2}{280-3\pi^2}\Bigl(\frac{\pi^2}{144}+\frac{\ln(2)}{6}+\sum_{j=4}^{\infty}\frac{1}{(j+1)(j+2)}\eta(j)\Bigr)\\\notag
&=\frac{35\pi^2}{7(35-3\pi^2)}\Bigl(-5-\frac{5\pi^2}{12}+\frac{40\ln(2)}{3}-\sum_{k=6}^{\infty}\Bigl((k-2)+\frac{4}{k}\sum_{j=1}^{k-2}(-1)^j \binom{k}{j+2}\eta(j)\Bigr)\Bigr)\\\notag
&=\lim_{n\to\infty}800\sum_{j=1}^{n-5}(-1)^{j+1}\frac{n!}{(j+5)(j+5)!(n-j)!}\eta(3+j)\\\notag
&=\frac{2}{7}\pi^2\sum_{j=1}^{\infty}\frac{1}{(j+1)(j+2)}\eta(j)\\\notag
&=\frac{2}{7}\pi^2\Bigl(\frac{1}{4}+\sum_{j=1}^{\infty}\left(\frac{1}{(2j+1)(2j+2)} \right)\eta(2j)\Bigr)\\\notag
&=\frac{2}{7}\pi^2\Bigl(\frac{3}{8}+\sum_{j=1}^{\infty}\left(\frac{2j+3}{2^{2j+1}(2j+1)(2j+2)}\right)\eta(2j)\Bigr)\\\notag
&=\frac{1}{7}\pi^2\Bigl(1-\sum_{j=1}^{\infty}\frac{1}{(2^{2j-1}-1)(j+1)(2j+1)}\eta(2j)\Bigr)\\\notag
&=\frac{1}{7}\pi^2\Bigl(2\ln\left(2\right)+\sum_{j=1}^{\infty}\frac{1}{j(2j-1)}\left(\eta(2j-2)-1\right)\Bigr)\\\notag
&=\frac{1}{7}\pi^2\Bigl(\ln(\frac{\pi}{2})-\frac{1}{2}+\sum_{j=1}^{\infty}\frac{1}{j(j+1)}\eta(2j)\Bigr)\\\notag
&=\frac{80}{3}\Bigl(\frac{1}{4}-\frac{\ln(2)}{6}-\sum_{j=2}^{\infty}\frac{1}{(2j+2)(2j+3)}\eta(2j+1)\Bigr)\\\notag
&=\frac{1}{21}\sum_{j=2}^{\infty}\frac{2^{2j+1}-1}{2^{2j-3}\left(2^{2j}-1\right)}(2j)(2j-1)\eta(2j+1)\\\notag
&=\frac{2}{7}\Bigl(\frac{\pi^3}{8}+\sum_{j=1}^{\infty}(-1)^j\frac{\pi^{2j+1}}{(2j+1)2^{2j+1}}\left(\frac{\pi^2}{4(2j)!}E_{2j}+\frac{1}{(2j-2)!}E_{2j-2}\right)\Bigr)\\\notag
\end{flalign}
\begin{flalign}\notag
&=\frac{1}{3}\sum_{j=1}^{\infty}\frac{1}{2^{j+2}}(j+1)(j+2)\zeta(3+j)\\\notag
&=\frac{1}{3}\sum_{j=1}^{\infty}\frac{1}{2^{j+2}-1}(j+1)(j+2)\eta(3+j)\\\notag
&=\frac{2}{3}\Bigl(4\pi-8\ln(2)-\frac{\pi^2}{3}-\sum_{j=1}^{\infty}\frac{1}{2^{j-1}}\eta(3+j)\Bigr)\\\notag
\end{flalign}
\begin{align}
\zeta(k)&=\frac{1}{(2^{k-1}-1)(2^k-1)}\sum_{j=1}^{\infty}\left(\frac{2^{2j+k}-1}{2^{4j}}\right) \binom{k+2j-1}{k-1}\zeta(k+2j)\\\notag
&=\frac{1}{2^{k-1}-1}\sum_{j=1}^{\infty}\frac{1}{2^{j+1}}\binom{k+j-1}{j}\zeta(k+j)\\\notag
&=\frac{1}{k-1}\Bigl(k-\sum_{j=1}^{\infty}\binom{j+k-1}{j+1}(\zeta(k+j)-1)\Bigr)\\\notag
&=\lim_{n\to\infty}\sum_{j=1}^{n}(-1)^{j+1}\binom{n}{j}\zeta(k+j h),{\rm\   }h>0\\\notag
&=\lim_{n\to\infty}\frac{\prod_{j\ {\rm odd}}\zeta(k+jh)^{\binom{n}{j}}}{\prod_{j\ {\rm even}}\zeta(k+jh)^{\binom{n}{j}}}\\\notag
&=\lim_{n\to\infty}\sum_{j=1}^{n}(-1)^{j-1}\binom{n}{j}\zeta(\ln(\exp(k)+j h))\\\notag
&=\lim_{n\to\infty}\sum_{j=1}^{n}(-1)^{j+1}\binom{n}{j} \zeta(k+h\sum_{k=1}^{j}\frac{1}{2^k}))\\\notag
\eta(k)&=\frac{1}{2}\sum_{j=1}^{\infty}\frac{1}{2^{k+j-1}-1}\binom{k+j-1}{j}\eta(k+j)\\\notag
&=\lim_{n\to\infty}\sum_{j=1}^{n}(-1)^{j+1}\binom{n}{j}\eta(k+j h),{\rm\   }h>0\\\notag
B_{2j}&=\frac{(2j)!}{(2^{2j-1}-1)\pi^{2j}}\lim_{n\to\infty}\sum_{i=1}^{n}(-1)^{i+j}\frac{(n!)^2}{i^{2j}(n-i)!(n+i)!}\\\notag
\pi&=\sum_{j=1}^{\infty}a_j\zeta(3)^j\thickspace\thickspace\textrm{           (see cor. {\ref{sumpi}})  }
\end{align}
\begin{flalign}\notag
&\sum_{j=1}^{n}(-1)^{j-1}\frac{(n!)^2}{(n-j)!(n+j)!}=\frac{1}{2}\\\notag
&\sum_{j=1}^{\infty}\frac{1}{2^j}\eta(j)=\frac{\pi}{4}\\\notag
&\sum_{j=1}^{\infty}\frac{1}{(2j)(2j+1)} \eta(2j-1)=\frac{1}{4}\\\notag
&\sum_{j=1}^{\infty}\frac{1}{(2j)(2j+1)} \eta(2j)=\frac{1}{2}\left(1-\ln\left(\frac{\pi}{2}\right)\right)\\\notag
\end{flalign}
\begin{section}{{\textbf{Bibliography}}}
\frenchspacing
\begin{itemize}
\item{[1]} Johan W{\"a}stlund,``Summing inverse squares by euclidean geometry'', \\(PDF) http://www.math.chalmers.se/~wastlund/Cosmic.pdf
\item{[2]} 3BlueBrown (YouTube channel video), \"Why is pi here? And why is it squared? A geometric answer to the Basel problem".
\item{[3]} Wolfram Mathworld,``Apery's Constant", https://mathworld.wolfram.com/AperysConstant.html
\item{[4]} C. Nash and D. O'Connor, ``Determinants of Laplacians, the Ray-Singer torsion on lens
spaces and the Riemann zeta function", J. Math. Phys. 36(1995), 1462-1505.
\item{[5]} Maarten J. Kronenburg,``The Binomial Coefficient for Negative Arguments", arXiv:1105.3689 [math.CO],18 May 2011
\item{[6]} Michael Penn, ``Integral Of $(\ln(\cos(x))^3$'', YouTube video.
\item{[7]} Arkadiusz Wesolowski,``OEIS", Oct.16, 2013
\item{[8]} Peter Paule and Markus Schorn, `` A Mathematica Version of Zeilberger's Algorithm for Proving Binomial Coefficient Identities", J. Symbolic Computation (1994) 11, 1-000
\item{[9]} Gosper, Jr., Ralph William "Bill" (January 1978) [1977-09-26]. "Decision procedure for indefinite hypergeometric summation" (PDF). Proceedings of the National Academy of Sciences of the United States of America.
\item{[10]} Petkovšek, Marko; Wilf, Herbert; Zeilberger, Doron (1996). A = B. Home Page for the Book "A=B". A K Peters Ltd. ISBN 1-56881-063-6. Archived 
\item{[11]} Morse, P. M. and Feshbach, H. Methods of Theoretical Physics, Part I. New York: McGraw-Hill, pp. 411-413, 1953.
\item{[12]} "Finite-difference calculus", Encyclopedia of Mathematics, EMS Press, 2001 [1994]
\item{[13]} Charles Jordan “On Stirling’s numbers”, Tohoku Math. Journal, Vol. 37, 1933
\item{[14]} Henry W. Gould,  "Tables of Combinatorial Identities", edited by Jocelyn Quaintance, Vol. 8
\item{[15]} Philippe Flajolet and Robert Sedgewick, "Mellin transforms and asymptotics: Finite differences and Rice's integrals", Theoretical Computer Science 144 (1995) pp 101–124.
\item{[16]} Murray R. Spiegel, ``Schaum's Outline Of Theorey And Problems Of Calculus Of Finite Differences And Difference Equations'',McGraw-Hill, 1971
\item{[17]} R. Kl\'en, M. Visuri and M. Vuorinen, ``On Jordan type inequalities for hyperbolic functions'',J. Inequal. Appl., Vol 2010, Article no. 362548, 2010
\item{[18]} Samuel Greitzer “Many cheerful Facts” Arbelos 4 (1986), no. 5, 14-17
\item{[20]} L. Fairbanks, ``Powers of Cosine and Sine, arXiv:2308.04437  [math.NT]
\end{itemize}
\end{section}
\end{document}